\documentclass[11pt]{amsart}
\usepackage{geometry}     
\usepackage[parfill]{parskip}    
\usepackage{graphicx}
\usepackage{amssymb}
\usepackage{epstopdf}
\usepackage{xcolor}
\usepackage{fullpage}
\usepackage{tikz-cd}
\usepackage{mathrsfs}
\usepackage[all, 2cell]{xy}
\usepackage{longtable}

\usepackage{hyperref}
\usepackage{marginnote}
\usepackage[normalem]{ulem}

\usepackage[makeroom]{cancel}

\theoremstyle{plain}
\newtheorem{theorem}{Theorem}[section]
\newtheorem*{theorem*}{Theorem}

\newtheorem{lemma}[theorem]{Lemma}
\newtheorem{corollary}[theorem]{Corollary}
\newtheorem{proposition}[theorem]{Proposition}
\newtheorem{question}[theorem]{Question}

\theoremstyle{definition}
\newtheorem{example}[theorem]{Example}

\newtheorem{notation}[theorem]{Notation}
\newtheorem{definition}[theorem]{Definition}

\newtheorem{remark}[theorem]{Remark}
\newtheorem{remarks}[theorem]{Remarks}

\newcommand{\A}{\mathcal{A}}

\newcommand{\C}{\mathcal{C}}
\newcommand{\D}{\mathrm{D}}
\newcommand{\E}{\mathcal{E}}
\newcommand{\F}{\mathcal{F}}
\newcommand{\G}{\mathcal{G}}
\newcommand{\I}{\mathcal{I}}

\newcommand{\M}{\mathcal{M}}
\newcommand{\N}{\mathcal{N}}
\renewcommand{\L}{\mathcal{L}}

\renewcommand{\P}{\mathcal{P}}
\newcommand{\R}{\mathcal{R}}
\renewcommand{\S}{\mathcal{S}}
\newcommand{\T}{\mathcal{T}}
\newcommand{\U}{\mathcal{U}}
\newcommand{\V}{\mathcal{V}}
\newcommand{\W}{\mathcal{W}}
\newcommand{\X}{\mathcal{X}}
\newcommand{\Y}{\mathcal{Y}}
\newcommand{\Z}{\mathcal{Z}}

\renewcommand{\t}{\mathsf{t}}

\renewcommand{\u}{\mathsf{u}}

\newcommand{\fp}[1]{{#1}^{\mathsf{fp}}}
\renewcommand{\c}{\mathrm{c}}

\newcommand{\Mod}[1]{\mathrm{Mod}(#1)}

\renewcommand{\mod}[1]{\mathrm{mod}(#1)}

\newcommand{\Cogen}[1]{\mathrm{Cogen}(#1)}
\newcommand{\Der}[1]{\mathrm{D}(#1)}
\newcommand{\Db}[1]{\mathrm{D}^\mathrm{b}(\mathrm{mod}#1)}
\newcommand{\K}[1]{\mathrm{K}^{[0,1]}(\mathrm{Inj}#1)}
\newcommand{\Prod}[1]{\mathrm{Prod}(#1)}
\newcommand{\Kb}[1]{\mathrm{K}^{\mathrm{b}}(\mathrm{Inj}#1)}
\newcommand{\Inj}[1]{\mathrm{Inj}(#1)}
\newcommand{\Filt}[1]{\mathrm{Filt}(#1)}
\newcommand{\filt}[1]{\mathrm{filt}(#1)}

\newcommand{\Ctf}[1]{\mathcal{C}_{#1}}
\newcommand{\Ind}[1]{\mathrm{Ind}(#1)}

\newcommand{\kermor}[1]{\mathrm{ker}(#1)}
\newcommand{\cokermor}[1]{\mathrm{coker}(#1)}

\newcommand{\y}{\mathrm{y}}

\newcommand{\Ker}[1]{\mathrm{Ker}(#1)}
\newcommand{\Coker}[1]{\mathrm{Coker}(#1)}
\renewcommand{\Im}[1]{\mathrm{Im}(#1)}
\newcommand{\Hom}[1]{\mathrm{Hom}_{#1}}
\newcommand{\Ext}[1]{\mathrm{Ext}_{#1}}

\newcommand{\tstr}[1]{\mathbb{T}_{#1}}
\newcommand{\ststr}[1]{\mathbb{D}_{#1}}
\newcommand{\heart}[1]{\mathcal{H}_{#1}}
\renewcommand{\H}[1]{\mathrm{H}^0_{#1}}

\newcommand{\Zg}[1]{\mathbf{Zg}(#1)}
\newcommand{\open}[1]{\mathscr{O}(#1)}
\newcommand{\closed}[1]{\mathcal{N}_{#1}}
\newcommand{\ZgInt}[1]{\mathbf{Zg}^{[0,1]}}
\newcommand{\InjInd}[1]{\mathbf{Inj}(#1)}
\newcommand{\Sp}[1]{\mathbf{Sp}(#1)}
\newcommand{\Supp}[1]{\mathbf{Supp}(#1)}

\newcommand{\tors}[1]{\mathbf{tors}(#1)}
\newcommand{\ftors}[1]{{{\mathbf{f}\text{-}\mathbf{tors}}}(#1)}
\newcommand{\Cosilt}[1]{\mathbf{Cosilt}(#1)}
\newcommand{\CosiltZg}[1]{\mathbf{MaxRigid}(#1)}
\newcommand{\Cosiltpair}[1]{\mathbf{CosiltPair}(#1)}
\newcommand{\WideInt}[1]{\mathbf{WideInt}(#1)}
\newcommand{\Rigid}[1]{\mathbf{Rigid}(#1)}
\newcommand{\ClRigid}[1]{\mathbf{ClRigid}(#1)}

\newcommand{\Ann}[1]{\mathrm{Ann}(#1)}

\newcommand{\Tpair}[2]{(\mathcal{#1}, \mathcal{#2 })}
\newcommand{\tpair}[2]{(\mathbf{#1}, \mathbf{#2 })}
\newcommand{\Cpair}[2]{(\mathcal{#1}, \mathcal{#2})}

\newcommand{\chl}[1]{{\color{magenta}#1}} 

\title{Mutation of torsion pairs for finite-dimensional algebras}
\author{Lidia Angeleri H\"ugel, Rosanna Laking and Francesco Sentieri}
\thanks{Acknowledgments: The authors were partially supported by  the  Project 2022S97PMY \textit{Structures for Quivers, Algebras and Representations (SQUARE)}
funded by NextGenerationEU under NRRP, Call PRIN 2022 No. 104 of February 2, 2022 of Italian Ministry of University and Research, and by the project \textit{LAVIE - Large views of small phenomena: decompositions, localizations, and representation type}, FIS 00001706, funded by Program FIS2021 of Italian Ministry of University and Research. The first and second named authors are members of the network INdAM-G.N.S.A.G.A.}
\begin{document}
\def\theequation{\Alph{equation}}
\begin{abstract} We study the lattice $\tors{A}$ of torsion pairs in 
the category $\mod{A}$ of finitely generated modules over  an artinian ring $A$. We know from \cite{ALS1} that $\tors{A}$ is isomorphic to a lattice formed by certain closed sets, called maximal rigid,  in the Ziegler spectrum of the unbounded  derived category $\Der{A}$ of $A$. Moreover, the structure of this lattice is described by an operation on maximal rigid sets which encompasses (the dual of) silting mutation \cite{ALSV}. In this paper we provide an explicit description of this operation  and we discuss how it is reflected in the lattice $\tors{A}$. We establish a bijection between the wide intervals in $\tors{A}$ and the closed rigid sets in the Ziegler spectrum of  $\Der{A}$. Moreover, we show that the arrows in the Hasse quiver of $\tors{A}$ correspond to the closed rigid sets that are almost complete, or equivalently, that can be completed to a maximal rigid set in exactly two ways. Our results are most interesting in the case when $A$ is a finite dimensional algebra. In fact, we generalise results from \cite{AdachiIyamaReiten:14}, with an important difference: not every point in a maximal rigid set is mutable. We use the topology on the Ziegler spectrum to determine the mutable points. In the last section of the paper we illustrate our results by the example of a finite dimensional algebra arising from a triangulation of an annulus.  \end{abstract}
\maketitle
\setcounter{tocdepth}{1}
\tableofcontents

\section{Introduction}
The torsion pairs in the category $\mod{A}$ of finite dimensional modules over a finite dimensional algebra $A$ ordered by inclusion of the torsion class form a complete lattice $\tors{A}$ which encodes essential information on $A$. Adachi, Iyama and Reiten \cite{AdachiIyamaReiten:14} proposed to study this lattice from the viewpoint of silting theory. Indeed, they showed that the subposet $\ftors{A}$ formed by the functorially finite torsion pairs is isomorphic to the poset of 2-term silting complexes in $\mathrm{K}^{\mathrm{b}}(\mathrm{proj}(A))$, and its Hasse quiver reflects silting mutation. Their seminal paper opened up connections with a number of other areas of mathematics including cluster algebras, stability conditions and algebraic combinatorics.

A completely dual version of the Adachi-Iyama-Reiten picture (which one may call \emph{co}silting theory) can be obtained by applying the Nakayama equivalence $\nu \colon \mathrm{K}^{\mathrm{b}}(\mathrm{proj}(A)) \overset{\sim}{\to} \mathrm{K}^{\mathrm{b}}(\mathrm{inj}(A))$. In \cite{ALS1} we extend  this theory in two directions: we work on an arbitrary artinian ring rather than a finite dimensional algebra, and we generalise the poset isomorphism between $\ftors{A}$ and the poset of 2-term cosilting complexes. Indeed,  we are able to describe the whole of $\tors{A}$ by extending our view from the cosilting theory of the category $\mathrm{K}^{\mathrm{b}}(\mathrm{inj}(A))$ to that of the entire derived category $\Der{A}$ of $\Mod{A}$. More precisely, we show that torsion pairs in $\mod{A}$ correspond bijectively to sets of indecomposable pure-injective 2-term complexes in $\Der{A}$ which are \emph{rigid}, i.e.~there are no non-trivial extensions between any two elements, and are \emph{maximal} with this property (Definition~\ref{rig}). This generalises the Adachi-Iyama-Reiten cosilting theory in the sense that the maximal rigid sets that correspond to $\ftors{A}$ are exactly the sets of indecomposable summands of the corresponding 2-term cosilting complex in $\mathrm{K}^{\mathrm{b}}(\mathrm{inj}(A))$.

The aim of this paper is to explore the deep connections between the structure of the lattice $\tors{A}$ and mutations of the maximal rigid sets. The existence of a mutation operation on maximal rigid sets essentially comes down to reinterpreting the well-behaved notion of mutation of 2-term cosilting complexes in $\Der{A}$ introduced in \cite{ALSV} using the results of \cite{ALS1}. Having fixed some important notation and preliminaries in Section \ref{Sec: notation}, we dedicate Section \ref{Sec: mutation} to formalising this reinterpretation and laying out some first properties of mutation of maximal rigid sets.  

Given the bijection between $\tors{A}$ and the set of maximal rigid sets, mutation of maximal rigid sets can be interpreted as a mutation of torsion pairs. Indeed, it coincides with the one described by Asai and Pfeifer \cite{AsaiPfeifer}. Using this as our starting point we develop three aspects of the theory of mutations of maximal rigid sets that roughly correspond to Sections \ref{sec: crit+spec}, \ref{Sec: wide closed} and \ref{Sec: top char}. Finally, Section \ref{Sec: example} consists of an explicit description of irreducible mutations for cluster-tilted algebras of type $\tilde{A}$. The remainder of this introduction contains an overview of each of the final four sections of the paper.

 \subsection*{Critical and special objects (Section \ref{sec: crit+spec})}
 It was shown in \cite{ALSV} that two torsion pairs in the Hasse quiver of $\tors{A}$ are adjacent if and only if they are related by irreducible mutation. A more conceptual interpretation is achieved by considering Happel-Reiten-Smal\o\ tilts. Every torsion class is associated to a t-structure in $\Der{A}$; the t-structures arising in this way are called cosilting t-structures.
 It is shown in \cite{ALSV} that two torsion pairs are adjacent in the Hasse quiver of $\tors{A}$ if and only if the corresponding cosilting t-structures are related by a HRS-tilt at a  torsion pair generated by a simple object. 
 
In fact, the existence of this simple object is already implicit in \cite{BarnardCarrollZhu:19, DemonetIyamaReadingReitenThomas:23}, where it is shown that  every arrow $\mathbf{t}\to\mathbf{u}$ in the Hasse quiver of $\tors{A}$ is labeled by a brick $B$. When we pass to the hearts of the cosilting t-structures associated to $\mathbf{t}$ and $\mathbf{u}$, which are locally coherent Grothendieck categories, this brick label $B$ becomes a simple object. We thus have two simple objects, one in each heart, and the HRS-tilt  amounts to exchanging their injective envelopes. 
 
One of the aims of the present paper is to provide an explicit description of this process. Given a  torsion class $\mathbf{t}$ 
 associated to a maximal rigid set $\N_\mathbf{t}$ 
and a cosilting t-structure with heart $\heart{{\mathbf{t}}}$, there is a bijection between
the set of indecomposable injective objects in $\heart{{\mathbf{t}}}$ and  the closed set 
$\N_\mathbf{t}$. In Corollary~\ref{cor: inj env of simples}  we determine the points of $\N_\mathbf{t}$ that correspond to injective envelopes of simple objects in $\heart{{\mathbf{t}}}$.  To this end, we build on previous work in \cite{AHL} and obtain these points, which we call \emph{neg-isolated}, from an inspection of the left almost split maps in the torsion-free class $\F:=\mathbf{t}^{\perp_0}$.  

The simple objects in the heart  $\heart{\mathbf{t}}$  associated to a torsion class $\mathbf{t}$ are divided into two classes, depending on whether they are torsion or torsion-free with respect to the torsion pair in $\heart{\mathbf{t}}$ induced from $\mathbf{t}$. Similarly, their injective envelopes in $\heart{\mathbf{t}}$ and their counterparts in the maximal rigid set $\N_{\mathbf{t}}$, the  neg-isolated points,  
are divided into two classes, the \emph{critical} and the \emph{special} ones. 
An arrow  $\mathbf{t}\to\mathbf{u}$ in the Hasse quiver of $\tors{A}$  then encodes the existence of an exchange triangle which connects a special point   $\rho$ in $\N_\mathbf{t}$ with a critical point $\lambda$ in the maximal rigid set $\N_\mathbf{u}$ associated to $\mathbf{u}$, such that $\N_\mathbf{u}$ arises from $\N_\mathbf{t}$ by removing $\rho$ and replacing it with $\lambda$. This is exactly the situation in which $\N_{\mathbf{t}}$ and $\N_{\mathbf{u}}$ are related by irreducible mutation.

\subsection*{Wide intervals and closed rigid sets (Section \ref{Sec: wide closed})}

An interesting aspect of the bijection between $\tors{A}$ and the collection of maximal rigid sets is that each maximal rigid set $\N$ is a closed subset of the Ziegler spectrum of $\Der{A}$, a topological space whose points are parametrised by isomorphism classes of indecomposable pure-injective objects in $\Der{A}$. This opens up the question of whether this topology plays a role in the structure of $\tors{A}$.  In this section we show that certain closed sets do indeed control the wide intervals of $\tors{A}$.

Every arrow $\mathbf{t}\to\mathbf{u}$ in the Hasse quiver of $\tors{A}$ determines a  subcategory $\mathbf{t}\cap\mathbf{u}^{\perp_0}$ of $\mod{A}$ which consists of the modules which are finitely filtered by the brick label $B$. It is a wide subcategory, i.e.~an abelian subcategory closed under extensions. More generally, we consider the wide intervals $[\mathbf{u},\mathbf{t}]$ in $\tors{A}$, defined by the condition that $\W:=\mathbf{t}\cap\mathbf{u}^{\perp_0}$ is a wide subcategory of $\mod{A}$. In this case $\W$ is determined by a semibrick, a collection of Hom-orthogonal bricks which become simple objects when we pass to the hearts $\heart{{\mathbf{t}}}$ and $\heart{{\mathbf{u}}}$ associated to $\mathbf{t}$ and $\mathbf{u}$.
So, we can again exchange their injective envelopes and express this by exchange triangles  connecting  special points in  $\N_\mathbf{t}$ with critical points in $\N_\mathbf{u}$. This is exactly the situation in which $\N_{\mathbf{t}}$ and $\N_{\mathbf{u}}$ are related by mutation.

The closed rigid set  
$\N_\mathbf{t}\cap\N_\mathbf{u}$ consisting of the points that are not touched by this mutation process turns out to be an invariant of the interval  $[\mathbf{u},\mathbf{t}]$.

{\bf Theorem A.} (Theorem~\ref{thm: wide closed bijection} and Corollary~\ref{Cor: completions})
\emph{The assignment $[\mathbf{u}, \mathbf{t}]\mapsto\N_\mathbf{t}\cap\N_\mathbf{u}$ defines a bijection between the wide intervals in $\tors{A}$ and the closed rigid sets in the Ziegler spectrum of $\Der{A}$. The inverse map assigns to a closed rigid set $\M$  the wide  interval  $[\mathbf{u}_\M,\mathbf{t}_\M]$ in $\tors{A}$ formed by the torsion classes whose associated maximal rigid set  contains  $\M$.}

\subsection*{Topological characterisation of irreducible mutations (Section \ref{Sec: top char})}
In Section \ref{Sec: top char} we have two aims: first we wish to identify the closed rigid sets $\M$ for which the interval $[\mathbf{u}_\M,\mathbf{t}_\M]$ is minimal, i.e., it corresponds to an irreducible mutation, and secondly we wish to use the induced Ziegler topology on a maximal rigid set to identify which elements are swapped in some irreducible mutation.

Addressing the first aim, we find that the closed rigid sets $\M$ for which the interval $[\mathbf{u}_\M,\mathbf{t}_\M]$ is minimal are those that are `almost maximal'. We are able to show that these are also the closed sets for which there exist exactly two ways to complete them to a maximal rigid set. This generalises the (dual of the) result by Adachi, Iyama and Reiten \cite[Cor.~3.8]{AdachiIyamaReiten:14} showing that every almost complete 2-term silting complex has exactly two complements.

{\bf Theorem B.}(Theorem~\ref{Thm: topological char})
\emph{The following statements are equivalent for a closed rigid set $\M$.\\
(1) There is an arrow $\mathbf{t}_\M\to\mathbf{u}_\M$ in the Hasse quiver of $\tors{A}$.\\
(2) There are exactly two ways to complete $\M$ to a maximal rigid set.\\
(3) $\M$ is not maximal but  every closed rigid set  properly containing $\M$ is maximal.}

In contrast to silting mutation, cosilting mutation is not always possible. We say that a point $\mu$ in a maximal rigid set $\N$ is \emph{mutable} if there exists a maximal rigid set 
$\N'$ that arises from $\N$ by removing $\mu$ and replacing it with a point $\nu$    related by mutation with $\mu$. In general, a maximal rigid set  can contain points that are not mutable. For an example, we refer to Proposition \ref{prop: annulus mutable points}.
 
  From the discussion above, we know that the mutable points of $\N$ correspond to injective envelopes of simple objects  arising from brick labels, and the latter are precisely  the finitely presented simple objects in the heart associated to $\N$. In other words, the mutable points are  neg-isolated   points in $\N$ satisfying an additional finiteness condition that is not evident in $\N$.  The following result gives  an intrinsic description, showing that mutability of a point $\mu$ amounts to the fact that the set $\N \setminus \{\mu\}$ can be completed to a maximal rigid set in exactly two ways.  

{\bf Corollary C.} (Corollary~\ref{cor: mutable means exactly two completions})
\emph{Let $\N$ be a maximal rigid set. A point  $\mu\in\N$ is mutable if and only if there exists a unique  maximal rigid set $\N'\not=\N$ containing  $\N \setminus \{\mu\}$. In this case, $\mu$ is an isolated point of $\N$ in the Ziegler subspace topology on $\N$.}

This result raises the question whether the mutable points are precisely the  isolated points in the Ziegler topology. In Theorem~\ref{thm: iso prop} we give a positive answer under a  topological assumption on the   functor category associated to $A$ known as isolation condition. This assumption is verified for many important classes of finite dimensional algebras, in fact it holds whenever there are no  superdecomposable pure-injective $A$-modules.

\subsection*{Example: cluster-tilted algebras of type $\tilde{A}$ (Section \ref{Sec: example})}
In the final section of the paper we consider a family of finite-dimensional algebras over an algebraically closed field $\mathbb{K}$ that arise from triangulations of an annulus. These algebras form a special subfamily of the Jacobian algebras of the quivers with potential associated to marked surfaces introduced by Labardini-Fragoso \cite{LF} and have been shown in \cite{ABCP} to coincide with the family of cluster-tilted algebras of type $\tilde{A}$, i.e. those arising as the endomorphism ring of a cluster-tilting object in the cluster category of an acyclic quiver of type $\tilde{A}$.

Building on the classification of the cosilting modules over these algebras found in \cite{BaurLaking:22}, we give a classification of the maximal rigid sets in terms of asymptotic triangulations of the annulus in the sense of Baur and Dupont \cite{BaurDupont}. Using the theory developed in Section \ref{Sec: top char}, we show in Proposition \ref{prop: annulus mutable points} that the irreducible mutations of the maximal rigid sets correspond either to swapping the so-called Pr\"ufer modules and adic modules or to the combinatorial mutation operation described in \cite{BaurDupont}.

\section{Notation and preliminaries}\label{Sec: notation}

In this section we give an overview of the notions and results we will need in the article. After 
presenting our setup and listing the notation used in the paper, we review some basic results linking torsion pairs with cosilting objects and closed sets in  Ziegler spectra.

\subsection{The setup}

Throughout the paper, $A$ will be a left artinian ring. All subcategories will be strict and full. \mbox{We will fix the following notation for the various categories we will be working in.}
\begin{tabular}{l p{14cm}}
 $ \Mod{A} $ & The category of (all left) $ A$-modules. \\

$ \mod{A} $ & The subcategory of  finitely presented $ A$-modules.\\

 $ \Der{A} $ & The unbounded derived category of $ \Mod{A} $.\\

 $\Kb{A}$ &The subcategory of $\Der{A}$ consisting of objects isomorphic to complexes  of injective modules that are non-zero in only finitely many degrees.\\

$\K{A}$ & The subcategory of $\Der{A}$ consisting  of objects isomorphic to complexes of injective $A$-modules that are non-zero in degrees $0$ and $1$ only. Objects in this subcategory will sometimes be referred to as \textbf{two-term} complexes or two-term complexes of injective modules.
\end{tabular}

We regard modules as stalk complexes in $ \Der{A} $ concentrated in degree zero. Since every object $X$ in $\K{A}$ is determined by a morphism $\mu \colon I^0 \to I^1$ (the 0th differential) up to homotopy, we will often denote $X$ by $\mu$ and treat $\mu$ as a morphism in $\Mod{A}$ whenever it is convenient.

\subsection{Notation}
Given a class of objects $ \X$ in $\Der{A}$ (or in $\Mod{A}$), and a set $I \subseteq \mathbb{Z}$, we fix the following notation. We will use the notation $>0$ for the interval $\{i\in\mathbb{Z}\mid i>0\}$ and similarly for $\leq 0$.  We will write $\X^{\perp_i}$ when $I = \{i\}$.

\begin{tabular}{l p{14.5cm}}
$ \Ind{\X} $ & The class of isomorphism classes of indecomposable objects in $\X$.\\
$\Prod{\X} $ & The subcategory of $ \Der{A} $ (or of $\Mod{A}$, respectively) formed by the objects isomorphic to direct summands of set-indexed products of objects in $ \X $.\\
$\X^{\perp_I}$ & The subcategory of $ \Der{A} $ (respectively of $\Mod{A}$) consisting of the objects $ Y $ with $ \Hom{\Der{A}}(X, Y[i]) = 0 $ (respectively, $\Ext{A}^i (X,Y)=0$) for all $ X \in \mathcal{X} $ and all $i\in I$.   \\
${}^{\perp_I}\X$ & The subcategory of $ \Der{A} $ (respectively of $\Mod{A}$) consisting of the objects $ Y $ with $ \Hom{\Der{A}}(Y, X[i]) = 0 $ (respectively, $\Ext{A}^i (Y,X)=0$) for all $ X \in \mathcal{X} $ and all $i\in I$.   \\
$\Ctf{\X}$ & The subcategory of $\Mod{A}$ given by $\{ M \in \Mod{A} \mid \Hom{\Der{A}}(M, \X [1]) = 0 \}$.  Note that, when $\X\subseteq \K{A}$, we have that $\Ctf{\X}$ coincides with the subcategory given by $\{M\in \Mod{A}\mid \Hom{A}(M, \mu) \text{ is surjective for all } \mu\in\X\}$. If $\X=\{\mu\}$ we just write $\Ctf{\mu}$.
\end{tabular}

For a class of objects $\X$ in an abelian category $\A$ with products, we write the following. 

\begin{tabular}{l p{14.2cm}}  $\Cogen\X$ & The subcategory of $\A$ consisting of all objects isomorphic to subobjects of products of objects in $\X$.\\
$\Filt\X$ & The subcategory of all  objects $M$ which admit an ascending chain $(M_\lambda, \lambda \leq \mu)$ of subobjects indexed over an ordinal number $\mu$ where $M_0 = 0$,  all consecutive factors $M_{\lambda+1}/M_\lambda$ with $\lambda<\mu$ belong to $\X$, and  $M=\bigcup _{\lambda \le \mu} M_\lambda$. The class of objects with a finite filtration of this form is denoted by $\filt{\X}$.\\
$\varinjlim\X$ & The subcategory of $\A$ consisting of all objects obtained as colimits in $\A$ of direct systems in $\X$.
\end{tabular}

We will use the following notation for the various incarnations of the Ziegler spectrum.  
The interested reader can find some basics on the theory of purity in module categories and derived categories in \cite[Sec.~2.4 and App.~A]{ALS1} and the references therein. 

\begin{longtable}{l p{13.7cm}}
$\Sp{\heart{}}$ & The spectrum of a locally coherent Grothendieck category $\heart{}$.  The points of $\Sp{\heart{}}$ are the isomorphism classes of indecomposable injective objects in $\heart{}$, and the sets
	\[\open{C} = \{E \in \Sp{\heart{}} \mid \Hom{\G}(C, E) \neq 0 \}\] where $C$ is a finitely presented object of $\heart{}$ form a basis of open sets for the topology.  We will write $\InjInd{A}$ for special case of $\Sp{\Mod{A}}$.\\

$\Zg{A}$ & The Ziegler spectrum of a ring $A$.  The points of $\Zg{A}$ are the isomorphism classes of indecomposable pure-injective objects in $\Mod{A}$. The closed sets are the sets formed by the indecomposable pure-injective modules in definable subcategories. Here a subcategory of $\Mod{A}$ is said to be definable if it is closed under direct products,  pure submodules and pure epimorphic images. \\

$\Zg{\Der{A}}$ & The Ziegler spectrum of the derived category $\Der{A}$. It is defined in a similar way to $\Zg{A}$ using the notion of purity in triangulated categories.\\

$\ZgInt{A}$ & The subset of $\Zg{\Der{A}}$ consisting of isomorphism classes of complexes contained in $\K{A}$ with the subspace topology. This topological space is denoted by $\mathbf{Zg}^{[0,1]}(D(A))$ in \cite{ALS1}.
\end{longtable}

A crucial manoeuvre that we will need to make many times throughout the paper, is to move between $\K{A}$ and $\Mod{A}$ using the following operations.
\begin{itemize}
\item For a two-term complex $\mu \in \K{A}$, we can consider its zeroth cohomology $\H{}(\mu)$.  If $\N$ is a full subcategory of $\K{A}$, we will denote by $\H{}(\N)$ the subcategory of $\Mod{A}$ consisting of non-zero modules of the form $\H{}(\mu)$ for $\mu\in\N$.
\item For a module $M\in \Mod{A}$, we denote by $\mu_M$ the two-term complex given by the first two terms of the minimal injective resolution of $M$.
\end{itemize}

\subsection{The cosilting bijections}

In this subsection we review some bijections linking the poset of torsion classes $\tors{A}$ with posets formed by objects in the Ziegler spectrum. Let us first define the underlying sets of these posets. 

\begin{definition}\label{rig}
We say that an object $\mu$ in $\Der{A}$ is \textbf{rigid} if $\Hom{\Der{A}}(\mu, \mu[1]) = 0$. 
Furthermore, a set $\M$ of indecomposable objects in $\Der{A}$ is \textbf{rigid} if $\Hom{\Der{A}}(\mu, \nu[1]) = 0$ for all $\mu, \nu \in \M$.  Let $\Rigid{A}$ be the set of all rigid subsets of $\ZgInt{A}$ and let $\CosiltZg{A}$ denote the set of maximal elements of $\Rigid{A}$ with respect to inclusion.
\end{definition}

We collect some useful observations around the notion of rigidity.
\begin{lemma}\label{Csigma}\cite[Rem.~2.15, Lem.~3.5, 3.7 and 4.6]{ALS1} Let  $\sigma,\mu \in \K{A}$  and $M,N\in\Mod{A}$. 
\begin{enumerate}
\item $\Hom{\Der A}(\sigma, \mu[1]) = 0$
   if and only $\H{}(\sigma)\in\C_{\mu}$.
  \item $\Hom{\Der A}(\mu_M, \mu_N[1]) = 0$
if and only if  all 
 submodules of $M$ are contained in ${}^{\perp_1}N$. 
 
 In case $N$ is pure-injective, $\Hom{\Der A}(\mu_M, \mu_N[1])= 0$
   if and only all finitely generated
 submodules of $M$ are contained in ${}^{\perp_1}N$.   
  \item If  the module $\H{}(\sigma)$ is pure-injective  (in particular, this holds when the complex $\sigma$ is pure-injective), then $\C_{\sigma}$ is a  torsion-free class that is closed under directed colimits. 
  \item If $N$ is pure-injective, then $\Hom{\Der A}(\mu_M, \mu_N[1]) = 0$
if and only if   $\Cogen{M}\subseteq{}^{\perp_1}N$.
   \item If $M\in\Zg{A}$ and $\mu_M$ is rigid, then $\mu_M\in\ZgInt{A}$.  
   \end{enumerate}
\end{lemma}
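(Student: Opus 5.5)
We prove the five items in order; each reduces to standard computations with two-term complexes of injectives and the Hom/Ext long exact sequences. Throughout, for $\sigma,\mu\in\K{A}$, write $\sigma=(I^0\xrightarrow{\sigma}I^1)$. Every morphism $\sigma\to\mu[1]$ in the homotopy category of injectives is a chain map concentrated in one degree: a map $f\colon I^1_\sigma\to I^0_\mu$, taken up to homotopy.

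\textbf{Item (1).} There is a triangle $\H{}(\sigma)\to\sigma\to \Coker{\sigma}[-1]\to$. The cone part is concentrated in degree $1$, and $\Hom{\Der A}(\Coker(\sigma)[-1],\mu[1])=\Hom{\Der A}(\Coker\sigma,\mu[2])=0$ because $\mu$ is a two-term complex of injectives. Applying $\Hom{\Der A}(-,\mu[1])$ to the triangle therefore gives a surjection $\Hom(\sigma,\mu[1])\to\Hom(\H{}(\sigma),\mu[1])$. The map is also injective: its kernel is the image of $\Hom(\Coker\sigma,\mu)$, and one checks this image is zero, or argues directly with the chain-map description. Hence $\Hom(\sigma,\mu[1])=0$ iff $\H{}(\sigma)\in\C_\mu$.

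For the reformulation via surjectivity, note that for a module $M$ we have $\Hom{\Der A}(M,\mu[1])=\Coker\Hom{A}(M,\mu)$. Indeed, maps $M\to \mu[1]$ are maps $M\to I^1_\mu$ modulo those factoring through $\mu\colon I^0\to I^1$, since $I^1$ is injective and $M$ sits in degree $0$.

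\textbf{Item (2).} Apply (1) with $\sigma=\mu_M$ and $\mu=\mu_N$, noting $\H{}(\mu_M)=M$. So the condition is that every $g\colon M\to E(N)/N$-part lifts, i.e.\ $\Hom(M,\mu_N)$ is surjective. Here $\mu_N\colon E(N)\to E^1$ has kernel $N$ and image $E(N)/N\subseteq E^1$.

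A map $M\to E^1$ lifts through $\mu_N$ iff it lands in the image and factors through $E(N)\to E(N)/N$. Maps with image meeting $E(N)/N$ trivially cannot lift unless they are zero, so they are handled via injective hulls. This leaves two requirements:
\begin{itemize}
\item every map $M'\to E(N)/N$ from a submodule $M'\le M$, extended to $M\to E^1$, must lift;
\item the obstruction to lifting $M'\to E(N)/N$ to $E(N)$ is exactly its image under the connecting map in $\Ext{A}^1(M',N)$.
\end{itemize}
Combining these gives the criterion: every submodule of $M$ lies in ${}^{\perp_1}N$.

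For pure-injective $N$, a directed-union argument reduces to finitely generated submodules. The functor $\Ext^1(-,N)$ sends a directed union $M'=\bigcup M'_i$ to the inverse limit $\varprojlim\Ext^1(M'_i,N)$, because $\varprojlim^1\Hom(M'_i,N)=0$ by pure-injectivity (algebraic compactness).

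\textbf{Item (3).} If $\H{}(\sigma)$ is pure-injective, then $\C_\sigma$ is closed under submodules: a short exact sequence $0\to M'\to M$ gives $\Hom{\Der A}(M,\sigma[1])\to\Hom(M',\sigma[1])\to\Hom(M,\sigma[2])... $, and the last term should be handled by a degree count. Closure under extensions follows from the long exact sequence. Closure under products and directed colimits uses the pure-injectivity of $\H{}(\sigma)$ together with the description from (2) via finitely generated submodules.

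\textbf{Item (4).} This follows from (2), (3) and $\C_{\mu_N}\supseteq\{M\}$: the class is closed under products and submodules, hence contains $\Cogen M$.

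\textbf{Item (5).} Cite the rigidity/purity transfer: $\mu_M$ is pure-injective when $M$ is. Indecomposability follows from minimality of the resolution.

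The main obstacle is item (2): making the lifting argument precise, and the reduction to finitely generated submodules via $\varprojlim^1$-vanishing for pure-injective $N$. Since these are exactly the cited statements \cite[Rem.~2.15, Lem.~3.5, 3.7, 4.6]{ALS1}, in the paper it suffices to refer to them.
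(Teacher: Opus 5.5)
Your argument for (4) is the paper's: the inclusion $\mathcal{C}_{\mu_N}\subseteq{}^{\perp_1}N$, which completes it, comes from (2). For (1)--(3) the paper only cites \cite[Rem.~2.15, Lem.~3.5]{ALS1}, which matches your last sentence. The self-contained sketches you give for (1)--(3), however, do not all go through, and for (5) you take a different route from the paper.

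The genuine gap is in (3). Closure of $\mathcal{C}_\sigma$ under submodules and extensions is indeed a degree count (the relevant term is $\mathrm{Hom}_{\mathrm{D}(A)}(M/M',\sigma[2])=0$), and it holds for every two-term $\sigma$. But ``pure-injectivity plus the finitely generated criterion in (2)'' does not give closure under products, for two reasons.

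First, (2) only describes $\mathcal{C}_{\mu_N}$. A general $\sigma$ is isomorphic in $\mathrm{D}(A)$ to $\mu_N\oplus E[-1]$ with $N=\mathrm{H}^0(\sigma)$ and $E$ injective, so $\mathcal{C}_\sigma=\mathcal{C}_{\mu_N}\cap{}^{\perp_0}E$, and the second factor needs its own argument.

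Second, you never use that $A$ is left artinian, and this is indispensable. Over $\mathbb{Z}$, the complex $\sigma=\mathbb{Q}[-1]$ has $\mathrm{H}^0(\sigma)=0$ pure-injective, yet $\mathcal{C}_\sigma$ is the class of torsion groups, which is not closed under products.

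The missing inputs are these:
\begin{itemize}
\item A finitely generated $F\le\prod_iM_i$ has finite length, so it embeds into a finite subproduct $\bigoplus_{i\in I_0}M_i\in\mathcal{C}_{\mu_N}$. Hence $F\in{}^{\perp_1}N$, and $\prod_iM_i\in\mathcal{C}_{\mu_N}$ by (2).
\item A finitely generated $F\le\varinjlim M_i$ is finitely presented. So $F\to\varinjlim M_i$ factors through some $M_i$, and $F\to M_i$ is then injective. This gives closure under direct limits.
\item ${}^{\perp_0}E$ is closed under products because, over a left artinian ring, $\mathrm{Hom}_A(M,E)=0$ iff $eM=0$ for each primitive idempotent $e$ with $Ae/\mathrm{rad}(Ae)$ occurring in $\mathrm{soc}\,E$.
\end{itemize}

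There are also slips in (1). The only component of a chain map $\sigma\to\mu[1]$ is a map $I^0_\sigma\to I^1_\mu$, not $I^1_\sigma\to I^0_\mu$. In the long exact sequence from $\mathrm{H}^0(\sigma)\to\sigma\to\mathrm{Coker}(\sigma)[-1]\to$, the vanishing of $\mathrm{Hom}(\mathrm{Coker}\,\sigma,\mu[2])$ gives injectivity, not surjectivity. Surjectivity comes from $\mathrm{Hom}(\mathrm{Coker}\,\sigma,\mu[3])=0$, and $\mathrm{Hom}(\mathrm{Coker}\,\sigma,\mu)$ does not occur at all. Both flanking terms do vanish, so (1) survives.

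In (2), the implication from the Ext-condition to $M\in\mathcal{C}_{\mu_N}$ should be spelled out. Given $g\colon M\to E^1$, put $M'=g^{-1}(E(N)/N)$ and lift $g|_{M'}$ to $E(N)$ using $\mathrm{Ext}^1_A(M',N)=0$. Extend the lift to $h\colon M\to E(N)$. The image of $g-\mu_Nh$ meets the essential submodule $E(N)/N$ trivially, so $g=\mu_Nh$.

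For (5), the paper uses the rigidity hypothesis. By \cite[Lem.~3.7]{ALS1}, the rigid $\mu_M$ is a summand of a two-term cosilting complex, hence pure-injective, and \cite[Lem.~4.6]{ALS1} gives indecomposability. You never use rigidity; instead you assert that $\mu_M$ is pure-injective whenever $M$ is. That claim is true, but it needs a proof rather than an unnamed citation. One proof:
\begin{itemize}
\item With $D=\mathrm{Hom}_{\mathbb{Z}}(-,\mathbb{Q}/\mathbb{Z})$, the pure-injective module $M$ is a summand of $DDM$.
\item If $Q_1\to Q_0$ is a projective presentation of $DM$, then $D(Q_1\to Q_0)$ is a two-term complex of injectives with $\mathrm{H}^0=DDM$.
\item It is pure-injective in $\mathrm{D}(A)$, since $\mathrm{Hom}_{\mathrm{D}(A)}(-,D(Q_\bullet))\cong D\,\mathrm{H}^0(Q_\bullet\otimes^{\mathbf{L}}_A-)$ is the dual of a homological functor.
\item Splitting off an $E'[-1]$ summand shows that $\mu_{DDM}$, and hence its summand $\mu_M$, is pure-injective.
\end{itemize}
Your route thus shows that rigidity is superfluous in (5), while the paper's route is shorter given the cosilting machinery. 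Your indecomposability claim is right: essentiality of $E(M)/M$ in $E^1$ rules out summands of the form $E'[-1]$, and indecomposability of $M=\mathrm{H}^0(\mu_M)$ rules out everything else.
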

\begin{proof} For (1) and (2) we refer to  \cite[Rem.~2.15]{ALS1}, and  (3) is shown in  \cite[Lem.~3.5]{ALS1}. 

(4) When {$N$} is pure-injective, $\C_{\mu_N}$ is closed under products and submodules by (3), and so the claim follows from (1) and (2), as well as the observation that $\C_{\mu_N} \subseteq {}^{\perp_1}N$.

(5)   We know from \cite[Lem.~3.7]{ALS1} that $\mu_M$ is  a direct summand of a pure-injective (2-term cosilting) complex. Thus $\mu_M$ is pure-injective, and by  \cite[Lem.~4.6]{ALS1} it is indecomposable.\end{proof}

Next, we  turn to the ``shadow''  of maximal rigid sets in the module category.

\begin{definition} \label{rpair} A \textbf{cosilting pair} 
 is a pair $(\Z,\I)$ given by a subset $\Z$ of $\Zg{A}$ and a subset  $\I$  of $\InjInd{A}$ such that
 \begin{enumerate} 
\item[(i)] The finitely generated submodules of $X$ are contained in ${}^{\perp_1}Y$ for all $X,Y\in \Z$;
\item[(ii)] $\Hom{A}(\Z,\I)=0$;
\item[(iii)]  Any pair $(\Z',\I')$  with (i) and (ii) such that  $\Z\subseteq\Z'$ and $\I\subseteq\I'$ satisfies $(\Z,\I)=(\Z',\I')$. 
\end{enumerate}
We denote the set of all cosilting pairs in $\Mod{A}$ by $\Cosiltpair{A}$.
\end{definition}

We are now ready to introduce orderings on these objects. 

\begin{definition} We define the following posets for a left artinian ring $A$.

\begin{tabular}{l p{14cm}}
$\tors{A}$ & The poset of torsion pairs $\tpair{u}{v}$ in $\mod{A}$ with $\tpair{u}{v}\leq\tpair{t}{f}$  when $\mathbf{u} \subseteq \mathbf{t}$.  \\

$\Cosilt{A}$ & The poset  of torsion pairs  $\Tpair{U}{V}$ of finite type in $\Mod{A}$ with $ \Tpair{U}{V}\leq \Tpair{T}{F}$  when $\mathcal{U} \subseteq \mathcal{T}$. Here we say that $\Tpair{U}{V}$ has {\bf finite type} if  the torsion-free class $\V$ is closed under directed colimits.\\

$\Cosiltpair{A}$ & The poset of cosilting pairs with $(\Z', \I')\leq (\Z, \I)$ when $\Cogen{\Z} \subseteq \Cogen{\Z'}$.\\

$\CosiltZg{A}$ & The poset of maximal rigid sets with $\M \leq\N$  when $\Hom{\Der{A}}(\N, \M[1])=0$.
\end{tabular}
\end{definition}

\begin{notation}
When the name of the torsion-free class is unimportant, we will often simplify the notation for elements of $\tors{A}$ by writing $\mathbf{u} \leq \mathbf{t}$ instead of $\tpair{u}{v}\leq\tpair{t}{f}$. 
\end{notation}

One of the main results of \cite{ALS1} is that there are order-preserving bijections between these posets. The poset $\tors{A}$ is known to be a complete lattice and so it follows that all four posets are complete lattices. We summarise those bijections in the following theorem.

\begin{theorem}\label{bij_posets}\cite[Cor.~3.11 and Thm.~4.3]{ALS1}
The following are order-preserving bijections for a left artinian ring $A$.
\begin{enumerate}
\item The assignment 
\[   \tors{A} \to \Cosilt{A}\]
given by $ \tpair{t}{f}\mapsto (\varinjlim \mathbf{t}, \varinjlim \mathbf{f})$, 
with inverse assignment  $\Tpair{T}{F}\mapsto (\T \cap \mod{A}, \F \cap \mod{A})$.

\item The assignment \[   \Cosilt{A} \to \Cosiltpair{A}\] defined by $ \Tpair{T}{F}\mapsto  (\Z,\I)$ where $\Z:=\Ind{\F\cap\F^{\perp_1}}$ and $\I:= \Z^{\perp_0} \cap \Inj{A}$,
with inverse assignment given by  $(\Z, \I)\mapsto ({}^{\perp_0}\Z, \Cogen{\Z})$.

\item The assignment \[ \Cosiltpair{A} \to \CosiltZg{A}\]
defined by $(\Z, \I)\mapsto\N:=\{\mu_M \mid M \in \Z\} \cup \{I[-1] \mid I\in\I\}$ with inverse assignment  given by $\N\mapsto (\H{}(\N), \I)$ where $\I := \{I\in \InjInd{A}\mid I[-1] \in \N\}$.
\end{enumerate}
\end{theorem}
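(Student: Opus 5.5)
The statement is a summary of results from \cite{ALS1}, so the plan is to assemble it from three separate bijections and check order-preservation at each stage. Composing them then gives the order-isomorphisms between all four posets. Since $\tors{A}$ is a complete lattice, the other three posets are complete lattices as well.

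For (1) I would use the standard theory of finite type torsion pairs over an artinian ring. Given $\tpair{t}{f}$ in $\mod{A}$, the pair $(\varinjlim\mathbf{t},\varinjlim\mathbf{f})$ is a torsion pair in $\Mod{A}$; this follows from the Crawley-Boevey / Lenzing description of direct limit closures of subcategories of $\mod{A}$. The torsion-free class $\varinjlim\mathbf{f}$ is closed under directed colimits by construction.

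Conversely, let $\Tpair{T}{F}$ be a finite type torsion pair. Every module is a directed union of finitely generated submodules, and $\T$ is closed under colimits, so $\T=\varinjlim(\T\cap\mod{A})$. Since $\F$ is closed under directed colimits, it is likewise determined by its finitely presented part. Both assignments clearly preserve inclusion of torsion classes.

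For (2) the key input is that a finite type torsion-free class $\F$ is definable. Hence $\F=\Cogen{C}$ for a pure-injective cosilting module $C$ whose indecomposable summands, up to the usual closure, are the points of $\Z=\Ind{\F\cap\F^{\perp_1}}$. Ext-projectivity of $C$ in $\F$ gives condition (i) of Definition~\ref{rpair}, via Lemma~\ref{Csigma}(4). Condition (ii) is built into the definition of $\I$.

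I expect maximality (iii) to be the hardest step. To prove it, I would pass to two-term complexes. The pair $(\Z,\I)$ corresponds to the cosilting complex $\sigma$ with $\C_\sigma$ giving $\F$. A larger pair $(\Z',\I')$ would yield a rigid complex $\sigma'$ that has $\sigma$ as a summand. Since $\sigma$ is cosilting, $\Prod{\sigma}=\Prod{\sigma'}$, and $\Z'=\Z$ follows. The inverse direction uses that ${}^{\perp_0}\Z$ and $\Cogen{\Z}$ form a torsion pair of finite type, by Lemma~\ref{Csigma}(3). For order-preservation, note that $\T\subseteq\T'$ if and only if $\F'\subseteq\F$, which is exactly $\Cogen{\Z'}\subseteq\Cogen{\Z}$.

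For (3) I would use Lemma~\ref{Csigma}. Parts (2) and (4) translate condition (i) into $\Hom{\Der{A}}(\mu_M,\mu_N[1])=0$. The vanishing $\Hom{\Der{A}}(\mu_M,I[-1][1])=\Hom{A}(M,I)=0$ translates (ii). Part (5) places $\mu_M$ in $\ZgInt{A}$.

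Every indecomposable rigid two-term complex is either of the form $\mu_M$ with $M=\H{}(\mu)$, or is a shifted injective $I[-1]$. This uses minimality and indecomposability. With this, maximality on both sides corresponds, and the two maps are mutually inverse.

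Finally, order-preservation reduces to the following equivalence: $\Hom{\Der{A}}(\N,\M[1])=0$ holds if and only if $\H{}(\N)\subseteq\C_\M$, by Lemma~\ref{Csigma}(1), and this in turn is equivalent to $\Cogen{\H{}(\N)}\subseteq\Cogen{\H{}(\M)}$. To prove the latter equivalence, I would use that $\C_\M=\Cogen{\H{}(\M)}$ for a maximal rigid $\M$, which is again the cosilting property.
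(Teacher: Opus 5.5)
Your overall route is the one behind \cite{ALS1}: Crawley-Boevey for (1), definable torsion-free classes being cosilting classes for (2), and the dictionary of Lemma~\ref{Csigma} for (3). The formal part of (3) is fine, namely the inclusion-preserving correspondence between rigid subsets of $\ZgInt{A}$ and pairs $(\Z,\I)$ satisfying (i) and (ii).

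The gap is that the one non-formal input is assumed rather than proved. It appears twice: when you say the indecomposable summands of $C$ are ``up to the usual closure'' the points of $\Z=\Ind{\F\cap\F^{\perp_1}}$, and when you say $\Ctf{\M}=\Cogen{\H{}(\M)}$ for maximal rigid $\M$ ``is again the cosilting property''. What is needed is $\Prod{C}=\Prod{\Z}$, i.e.\ $\Prod{\sigma}=\Prod{\Ind{\Prod{\sigma}}}$ for a two-term cosilting complex $\sigma$. This is not automatic: a pure-injective module can have superdecomposable summands, so a priori $\Cogen{\Z}\subsetneq\Cogen{C}=\F$. Every later step leans on it:
\begin{enumerate}
\item The inverse map in (2) returns $\Cogen{\Z}$, and your order argument in (2) silently replaces $\F$ by $\Cogen{\Z}$.
\item In your maximality argument, $\sigma$ must be built from $(\Z,\I)$ (to be a summand of $\sigma'$) and satisfy $\Ctf{\sigma}=\F$. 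But a cosilting $\sigma$ has $\Ctf{\sigma}=\Cogen{\H{}(\sigma)}\subseteq\Cogen{\Z}$, so such a $\sigma$ exists only if $\Cogen{\Z}=\F$. If you use a cosilting complex of $C$ instead, making it rigid together with $\sigma'$ needs $\F\subseteq{}^{\perp_1}X$ for $X\in\Z'$, whereas (i) only yields $\Cogen{\Z}\subseteq{}^{\perp_1}X$.
\item For maximal rigid $\M$, completing $\prod\M$ to a cosilting complex $\prod\M\oplus\tau$ and invoking maximality only gives $\Ind{\Prod{\tau}}\subseteq\M$. Concluding $\tau\in\Prod{\M}$ needs the same fact, and so do $\Ctf{\M}=\Cogen{\H{}(\M)}$ and the order comparison in (3).
\end{enumerate}

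The missing idea is to pass to the heart.
\begin{itemize}
\item For the cosilting t-structure obtained by HRS-tilting at $\Tpair{T}{F}$, the heart is a locally coherent Grothendieck category. Its cohomological functor restricts to an equivalence between $\Prod{\sigma}$ and the injective objects of the heart, commuting with products; this is what underlies Theorem~\ref{Thm: cosilting and Zg}(3).
\item In a locally finitely generated Grothendieck category the injective envelopes of simple objects cogenerate, because a non-zero object contains a non-zero finitely generated subobject, which has a simple quotient. Hence every injective is a summand of a product of indecomposable injectives.
\item Transporting this back gives $\Prod{\sigma}=\Prod{\Ind{\Prod{\sigma}}}$, whence $\Cogen{\Z}=\F$ and $\Ctf{\M}=\Cogen{\H{}(\M)}$.
\end{itemize}

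Two smaller points:
\begin{itemize}
\item In (1), ``every module is a directed union of finitely generated submodules'' only gives $\varinjlim(\T\cap\mod{A})\subseteq\T$, since finitely generated submodules of torsion modules need not be torsion. Argue instead that $\varinjlim(\F\cap\mod{A})=\F$, and combine the Crawley-Boevey result you already invoke with the fact that a torsion pair is determined by its torsion-free class.
\item In (2), the relevant property of $C$ is Ext-injectivity in $\F$, not Ext-projectivity.
\end{itemize}
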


\begin{notation}\label{fromt}
We will often want to consider elements of $\tors{A}, \Cosilt{A}, \Cosiltpair{A}$ and $\CosiltZg{A}$ that correspond to each other under the bijections.  To ease notation, we will always fix $\tpair{t}{f}\in \tors{A}$ and then write $\Tpair{T}{F}$ for the associated torsion pair in $\Cosilt{A}$, and use the  notation $(\Z_{\mathbf{t}}, \I_\mathbf{t})$  and $\N_{\mathbf{t}}$ for the other mathematical objects. 
\end{notation}

\subsection{Relationship with cosilting modules and cosilting complexes}\label{sec: cosilting}
Cosilting complexes in $\Der{A}$ were introduced in  \cite{ZhangWei:17, NicolasSaorinZvonareva:19, PsaroudakisVitoria:18}, dualising and generalising the silting complexes in $\mathrm{K}^{\mathrm{b}}(\mathrm{proj}A)$ introduced by Keller and Vossieck \cite{KellerVossieck:88}.

\begin{definition} A complex $\sigma \in \Kb{A}$ is a \textbf{cosilting complex} if    $\Hom{\Der{A}}(\sigma^I, \sigma[1]) = 0$ for all sets $I$, and $\Kb{A}$ is the smallest thick subcategory of $\Der{A}$ containing $\Prod{\sigma}$. \end{definition}

The zeroth cohomologies of two-term cosilting complexes were characterised as \textbf{cosilting modules} \cite{BreazPop:17} which generalise cotilting modules \cite{CDT}. We take this as the definition of cosilting module.

The maximal rigid subsets of $\K{A}$ and the cosilting pairs in $\Mod{A}$ for a left artinian ring $A$ should be thought of as the sets given by the indecomposable summands of two-term cosilting complexes in $\Der{A}$ and cosilting modules in $\Mod{A}$ respectively. This is not precisely true because we are always considering cosilting complexes and cosilting modules up to equivalence: We say that $\sigma$ and $\mu$ are {\bf equivalent}, written $\sigma\sim \mu$, if $\Prod{\sigma} = \Prod{\mu}$.

 Here is the precise statement establishing the link  to cosilting complexes and cosilting modules.

\begin{theorem}\label{bij_cosilt mod cpx}{\cite[Thm.~3.8, Thm.~2.17 and 4.13, Cor.~4.9]{ALS1}}
Let $A$ be a left artinian ring.  
 
There is a bijection  \[ \CosiltZg{A} \to \left\{\sigma \in \K{A} \mid \sigma \text{ cosilting complex in } \Der{A}\right\}/_{\sim}\] given by the assignment $\N\mapsto \sigma_\N:=\prod_{\mu \in \N} \mu$ with inverse assignment $\sigma\mapsto \Ind{\Prod{\sigma}}$.

There is a bijection given by \[\psi \colon \Cosiltpair{A} \to \left\{ C \in \Mod{A} \mid C \text{ cosilting module in } \Mod{A}\right\}/_{\sim}\] where $\psi((\Z, \I)) := \prod_{X\in \Z} X$ and $\psi^{-1}(C)= (\Z_C, \I_C)$ is given by $\Z_C:=\Ind{\Prod{C}}$ and $\I_C := \Z_C^{\perp_0}\cap\InjInd{A}$. If $A$ is an Artin algebra, then $\I_C = C^{\perp_0}\cap\InjInd{A}$.

In particular, $\Prod{\N}=\Prod{\sigma_\N}$, and the  torsion pair of finite type $\Tpair{T}{F}$ associated to the maximal rigid set $\N$  has torsion-free class  $\mathcal F= \Cogen{\H{}(\N)}=\Ctf{\N}$.  
\end{theorem}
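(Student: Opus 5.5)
The theorem collects results from \cite{ALS1}, so the plan is to assemble it from earlier statements, in particular Theorem~\ref{bij_posets} and Lemma~\ref{Csigma}, together with the cited results on two-term cosilting complexes.

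\emph{First bijection.} Take $\N\in\CosiltZg{A}$ and set $\sigma_\N=\prod_{\mu\in\N}\mu$.
\begin{enumerate}
\item Rigidity of $\sigma_\N$. Since $\Hom{\Der A}(\sigma_\N^I,\mu[1])=0$ is equivalent via Lemma~\ref{Csigma}(1) to $\H{}(\sigma_\N^I)\in\C_\mu$, we need $\H{}(\sigma_\N)^I\in\C_\mu$ for each $\mu\in\N$. Here we use that $\H{}$ commutes with products. By Lemma~\ref{Csigma}(3), $\C_\mu$ is a torsion-free class, hence closed under products. It contains every $\H{}(\nu)$ with $\nu\in\N$, because $\N$ is rigid. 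Hence $\Hom{\Der A}(\sigma_\N^I,\sigma_\N[1])=0$.
\item Generation. We must show that $\Kb{A}$ is the thick closure of $\Prod{\sigma_\N}$. This is where maximality enters. By Theorem~\ref{bij_posets}, $\N$ corresponds to a torsion pair of finite type $\Tpair{T}{F}$. The two-term cosilting complex attached to this pair in \cite{ALS1} has indecomposable summands forming a rigid set containing $\N$. By maximality this set equals $\N$, so the two complexes are equivalent and $\sigma_\N$ is cosilting.
\item Inverse map. Given a two-term cosilting complex $\sigma$, it is pure-injective, so $\Ind{\Prod{\sigma}}$ is a rigid subset of $\ZgInt{A}$. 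For maximality: if $\nu$ extends it rigidly, then $\sigma\oplus\nu$ is still cosilting. By the standard fact that a cosilting complex determines its t-structure, it follows that $\nu\in\Prod{\sigma}$.
\item Mutual inverses. This reduces to $\Ind{\Prod{\sigma_\N}}=\N$. The containment $\supseteq$ is clear. For $\subseteq$, any indecomposable in $\Prod{\sigma_\N}$ is rigid together with $\N$, so it lies in $\N$ by maximality.
\end{enumerate}

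\emph{Second bijection.} The map $\psi$ is obtained by applying $\H{}$ and using Theorem~\ref{bij_posets}(3). Note that $\H{}(\sigma_\N)=\prod_{X\in\Z}X$, since the $I[-1]$ summands have zero cohomology. This product is a cosilting module by \cite{BreazPop:17}, and equivalence classes match because $\Prod{}$ is compatible with $\H{}$. For the Artin algebra description of $\I_C$: since $\Prod{C}$ consists of summands of products of $C$, Hom into an indecomposable injective $I$ vanishes on $\Z_C$ iff it vanishes on $C$. This uses the fact that over an Artin algebra, finitely generated modules map into products componentwise, and $\Hom{A}(C^I,E)=0$ is reduced to $\Hom{A}(C,E)=0$ via the duality injectives $\cong D(A_A)$-summands.

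\emph{Final claim.} $\F=\Cogen{\H{}(\N)}$ is Theorem~\ref{bij_posets}(2)–(3). For $\F=\Ctf{\N}$: the inclusion $\subseteq$ follows from the closure properties of $\C_\mu$ in Lemma~\ref{Csigma}(3). For $\supseteq$: if $M\in\Ctf{\N}$, then $\mu_M$ together with $\N$ is rigid, up to showing $\Hom{}(\N,\mu_M[1])=0$ via cogeneration. By maximality, $\mu_M$'s indecomposable pure-injective pieces lie in $\N$, so $M\in\Cogen{\H{}(\N)}$.

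The main obstacle is the generation/maximality step in the first bijection. I would lean directly on \cite[Thm.~3.8]{ALS1} rather than reprove it.
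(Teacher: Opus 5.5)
The paper gives no argument for this statement; it is quoted from \cite{ALS1}. Your step 1 and the inclusion $\F\subseteq\Ctf{\N}$ are fine, but two points in your reconstruction do not hold up.

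\textbf{The inclusion $\Ctf{\N}\subseteq\F$.} Your argument for this inclusion fails. For an arbitrary $M\in\Ctf{\N}$ the complex $\mu_M$ is in general neither rigid nor indecomposable pure-injective, so maximality of $\N$ says nothing about it. The phrase ``via cogeneration'' does not produce $\Hom{\Der{A}}(\N,\mu_M[1])=0$ either. Already for $\mathbf{t}=0$ one has $\F=\Ctf{\N}=\Mod{A}$, and $\N$ consists of the indecomposable injective modules in degree $0$. Any $M$ with $\Ext{A}^1(M,M)\neq 0$ then gives a non-rigid $\mu_M$ by Lemma~\ref{Csigma}(2). Moreover, $\mu_M\in\Prod{\N}$ would force $M\in\F\cap\F^{\perp_1}$, which fails for most $M\in\F$. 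The inclusion should instead come from the cosilting property, once step 2 is in place. Since $\Hom{\Der{A}}(M,-)$ turns products into products, $\Ctf{\N}=\Ctf{\sigma_\N}$. For a two-term cosilting complex $\sigma$ one has $\Ctf{\sigma}=\Cogen{\H{}(\sigma)}$; this is exactly the Breaz--Pop definition of the cosilting module $\H{}(\sigma)$ \cite{BreazPop:17}. Equivalently, since $\Hom{\Der{A}}(M,\sigma[k])=0$ for $k\geq 2$, $\Ctf{\sigma_\N}$ is the intersection with $\Mod{A}$ of the coaisle ${}^{\perp_{>0}}\N$, which is the HRS-tilt at $\Tpair{T}{F}$. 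Either way, $\Ctf{\N}=\Cogen{\H{}(\sigma_\N)}=\Cogen{\Z}=\F$.

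\textbf{The missing fact $\sigma\in\Prod{\Ind{\Prod{\sigma}}}$.} Several of your steps silently use that a two-term cosilting complex $\sigma$ lies in $\Prod{\Ind{\Prod{\sigma}}}$.
In step 3 you deduce $\Hom{\Der{A}}(\sigma^I,\nu[1])=0=\Hom{\Der{A}}(\nu^I,\sigma[1])$ from rigidity of the \emph{set} $\{\nu\}\cup\Ind{\Prod{\sigma}}$.
In step 2 you deduce $\sigma\sim\sigma_\N$ from $\Ind{\Prod{\sigma}}=\N$.
In step 4 the composite $\sigma\mapsto\sigma_{\Ind{\Prod{\sigma}}}$ must return $\sigma$ up to equivalence, so it does not reduce to $\Ind{\Prod{\sigma_\N}}=\N$ alone.
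In the Artin algebra clause, your computation $\Hom{A}(C^J,I)=0\Leftrightarrow\Hom{A}(C,I)=0$ only gives $C^{\perp_0}\cap\InjInd{A}\subseteq\I_C$; the reverse inclusion needs $C\in\Prod{\Z_C}$.

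A priori $\sigma$ could have a superdecomposable part invisible to its indecomposable summands. So this is the genuinely non-trivial point, whereas the clause $\Prod{\N}=\Prod{\sigma_\N}$ in the statement is tautological. The missing input is the equivalence $\H{\sigma}\colon\Prod{\sigma}\to\Inj{\heart{\sigma}}$ onto the injectives of a locally coherent Grothendieck heart, as in Theorem~\ref{Thm: cosilting and Zg}(3). In such a heart every non-zero object has a non-zero finitely generated subobject, which has a simple quotient. Hence every object embeds in a product of injective envelopes of simples, and every injective is a summand of such a product.

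With this fact your steps 2--4 go through, and so do the module-level statements (using that $\Z$ is closed in $\Zg{A}$ to get $\Ind{\Prod{C}}=\Z$). Two further caveats remain. Injectivity of $\sigma\mapsto\H{}(\sigma)$ on equivalence classes is not a matter of $\mathrm{Prod}$ commuting with $\H{}$. It uses $\Ctf{\sigma}=\Cogen{\H{}(\sigma)}$ again, which pins down the t-structure and hence $\Prod{\sigma}$. Finally, the claim in step 2 that the cosilting complex attached to $\Tpair{T}{F}$ has $\N$ among its indecomposable summands is itself the substance of \cite[Thm.~3.8]{ALS1}. At that point you are citing rather than proving, as the paper does.
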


The upshot of this theorem is that we can easily apply results that hold for cosilting complexes and cosilting modules to maximal rigid sets and cosilting pairs. Indeed, we will often cite results known for cosilting complexes or modules, leaving it to the reader to apply the above theorem. To illustrate how this works, we give an explicit proof of the following result.

\begin{definition}
A module $C\in\Mod{A}$ is called \textbf{cotilting} (or sometimes \textbf{1-cotilting}) if $\Cogen{C} = {}^{\perp_1}C$.  For a cotilting module $C$, the pair $({}^{\perp_0}C, \Cogen{C} = {}^{\perp_1}C)$ is a torsion pair of finite type in $\Mod{A}$.  
The torsion-free class $\Cogen{C}$ is called a \textbf{cotilting class}.
\end{definition}

Let $\Cpair{Z}{I} \in \Cosiltpair{A}$.  We fix the notation $\bar{A} := A/\Ann{\Z}$ and recall that $\Mod{\bar{A}}$ can be identified with the full subcategory of $\Mod{A}$ consisting of modules $M$ such that $\Ann{\Z}\cdot M = 0$.  As such, we have that $\F= \Cogen{\Z}\subseteq \Mod{\bar{A}}$.

\begin{proposition}\label{prop: cotilt/cosilt}
Let $\Cpair{Z}{I}\in\Cosiltpair{A}$ and let $ \Tpair{T}{F}$ be the  torsion pair of finite type assigned to $\Cpair{Z}{I}$ in Theorem \ref{bij_posets}(2).  Then the module $C = \prod_{X\in\Z}X$ is a cotilting $\bar{A}$-module with associated torsion pair $(\bar{\T}, \bar{\F}) := (\T\cap\Mod{\bar{A}}, \F)$ in $\Mod{\bar{A}}$.
\end{proposition}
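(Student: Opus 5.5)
The plan is to verify directly that $\Cogen{C} = {}^{\perp_1}C$ in $\Mod{\bar{A}}$, where $\Ext{}$ and $\Cogen{}$ are now computed over $\bar{A}$, and then identify the torsion pair.

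First, note that $C=\prod_{X\in\Z}X$ is a cosilting $A$-module: this is $\psi(\Cpair{Z}{I})$ in Theorem~\ref{bij_cosilt mod cpx}. Its Prod-class has indecomposables $\Z$, so $\Cogen{C}=\Cogen{\Z}=\F$. The annihilator of $C$ equals $\Ann{\Z}$, so $C$ is a faithful $\bar{A}$-module, and $\F\subseteq\Mod{\bar{A}}$ as observed before the statement. Second, since $\bar{A}$ is again left artinian and $\F$ is closed under directed colimits (finite type), $\F$ is a torsion-free class of finite type in $\Mod{\bar{A}}$ containing $\bar{A}$; indeed $\bar{A}$ embeds in a finite product of copies of $C$ because $C$ is faithful and $\bar{A}$ is artinian. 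Its torsion class in $\Mod{\bar{A}}$ is ${}^{\perp_0}\F\cap\Mod{\bar{A}}=\T\cap\Mod{\bar{A}}$, which gives the claimed torsion pair $(\bar{\T},\bar{\F})$ once we know $C$ is cotilting with $\Cogen{C}=\F$.

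For the inclusion $\Cogen{C}\subseteq{}^{\perp_1}C$ over $\bar{A}$, I use condition (i) of a cosilting pair together with Lemma~\ref{Csigma}(2),(4). Rigidity of $\N$ gives $\Hom{\Der{A}}(\mu_X,\mu_Y[1])=0$ for $X,Y\in\Z$, and hence $\Cogen{X}\subseteq{}^{\perp_1}Y$ over $A$. Passing to products (using $\Ext{}^1(-,\prod Y)=\prod\Ext{}^1(-,Y)$) yields $\F\subseteq{}^{\perp_1_A}C$. An extension in $\Mod{\bar{A}}$ is in particular an extension in $\Mod{A}$, and $\Mod{\bar{A}}$ is closed under extensions inside $\Mod{A}$ only up to the ideal; but ${}^{\perp_1}$ over $A$ implies ${}^{\perp_1}$ over $\bar{A}$, since $\Ext{\bar{A}}^1\hookrightarrow\Ext{A}^1$ (every $\bar{A}$-extension is an $A$-extension, and a split one stays split). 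So $\F\subseteq{}^{\perp_1_{\bar{A}}}C$.

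The reverse inclusion ${}^{\perp_1_{\bar{A}}}C\subseteq\Cogen{C}$ is the main obstacle, and here I will use maximality. Take $M\in\Mod{\bar{A}}$ with $\Ext{\bar{A}}^1(M,C)=0$. Consider the torsion pair $(\bar{\T},\F)$ in $\Mod{\bar{A}}$ and the canonical sequence $0\to tM\to M\to M/tM\to0$. Then $tM\in{}^{\perp_1}C$, since ${}^{\perp_1}C$ is closed under submodules: $\F$ contains $\bar{A}$ and hence has enough projectives, or more simply the sequence $\Hom{}(M,C)\to\Hom{}(tM,C)=0$ kills the obstruction. So it suffices to show that a module $T\in\bar{\T}\cap{}^{\perp_1}C$ is zero. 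For this I would use the known characterisation of cosilting modules: $\Cogen{C}=\{M : \Hom{}(M,\omega)\text{ surjective}\}$, i.e.\ $\Ctf{\sigma}$ for the cosilting complex $\sigma=\mu_C$-type $\omega\colon E^0\to E^1$ from Theorem~\ref{bij_cosilt mod cpx}. Given $T\in{}^{\perp_1}C$ over $\bar{A}$ with $\Hom{}(T,C)=0$, I would take the injective copresentation of $C$ over $\bar{A}$, where injectives are $\Hom{A}(\bar{A},E)$. I would then show that $\Hom{}(T,\omega)$ is surjective, so $T\in\Ctf{\N}=\F$, which gives $T=0$. If this route is messy, the alternative I would try first is the known result of Breaz--Pop that a cosilting module is cotilting over $A/\Ann{C}$, applied to $C$ after identifying $\Ann{C}=\Ann{\Z}$.
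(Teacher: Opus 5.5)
Your bookkeeping is fine: $\Ann{C}=\Ann{\Z}$, $\bar A\in\Cogen{C}=\F$, $(\T\cap\Mod{\bar A},\F)$ is a torsion pair in $\Mod{\bar A}$, and $\F\subseteq{}^{\perp_1}C$ over $\bar A$ because $\Ext{\bar A}^1\to\Ext{A}^1$ is injective. The gap is the reverse inclusion ${}^{\perp_1}C\subseteq\Cogen{C}$ in $\Mod{\bar A}$, which is the real content of the statement and which your direct argument does not prove.

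First, your reduction to $tM$ assumes that ${}^{\perp_1}C$ is closed under submodules, and neither of your justifications works. Apply $\Hom{\bar A}(-,C)$ to $0\to tM\to M\to M/tM\to 0$. This gives $\Ext{\bar A}^1(M,C)\to\Ext{\bar A}^1(tM,C)\to\Ext{\bar A}^2(M/tM,C)$. So the obstruction is $\Ext{\bar A}^2(M/tM,C)$, not $\Hom{\bar A}(tM,C)$; the latter only feeds the connecting map into $\Ext{\bar A}^1(M/tM,C)$. Killing $\Ext{\bar A}^2$ would need $C$ to have injective dimension at most one over $\bar A$. That is a consequence of the claim, not something available beforehand, and $\bar A\in\F$ does not supply it.

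Second, the final step is only announced. Let $\bar\omega\colon\bar E_0\to\bar E_1$ be an injective copresentation of $C$ in $\Mod{\bar A}$. The hypothesis $\Ext{\bar A}^1(T,C)=0$ only says that $\Hom{\bar A}(T,\bar E_0)\to\Hom{\bar A}(T,\bar E_0/C)$ is onto. Surjectivity of $\Hom{\bar A}(T,\bar\omega)$ additionally needs every map $T\to\bar E_1$ to land in $\Im{\bar\omega}$. Nothing in your argument provides this, and in particular $\Hom{\bar A}(T,C)=0$ does not.

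The fallback you mention at the end is exactly the paper's proof. The paper notes $\Ann{C}=\Ann{\Z}$ and that $C$ is cosilting by Theorem~\ref{bij_cosilt mod cpx}. It then quotes \cite[Thm.~3.6]{Angeleri:18} for ``cosilting implies cotilting over $A/\Ann{C}$''. Finally it identifies the torsion pair via $\Cogen{C}=\Cogen{\Z}=\F\subseteq\Mod{\bar A}$, as in your second paragraph. So your proposal stands only by virtue of that citation.

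If you want a self-contained argument, the missing idea is to use faithfulness through an approximation of an injective cogenerator $\bar W$ of $\Mod{\bar A}$, not through a reduction to torsion modules.
\begin{enumerate}
\item Take the $\F$-cover $0\to C_1\to C_0\to\bar W\to 0$; it exists by \cite[Thm.~3.5]{BreazZemlicka:18}.
\item It is surjective because $\bar A\in\F$ and $\F$ is closed under coproducts.
\item Wakamatsu's lemma gives $C_1\in\F\cap\F^{\perp_1}=\Prod{C}$, as in the proof of Proposition~\ref{prop: neg-is summands}(3).
\item For $M\in\Mod{\bar A}$ with $\Ext{\bar A}^1(M,C)=0$ you get $\Ext{\bar A}^1(M,C_1)=0$, so every map $M\to\bar W$ lifts to $C_0$.
\item Hence an embedding of $M$ into a power of $\bar W$ lifts to an embedding into a power of $C_0\in\F$, so $M\in\F=\Cogen{C}$.
\end{enumerate}
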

\begin{proof}
First we observe that $\Ann{\Z} = \Ann{C}$ and that the module $C$ is a cosilting $A$-module by Theorem \ref{bij_cosilt mod cpx}. By \cite[Thm.~3.6]{Angeleri:18}, the module $C$ is a cotilting $\bar{A}$-module.  The associated torsion-free class in $\Mod{\bar{A}}$ consists of all $\bar{A}$-modules cogenerated by $\Prod{C}$. Since we have equalities $\Cogen{C} = \Cogen{\Z} = \F$ inside $\Mod{A}$ and $\F \subseteq \Mod{\bar{A}}$, it is clear that the  torsion pair is of the form stated.
\end{proof}

We will need the following lemma in subsequent sections.  If $(\Z, \I)$ is a cosilting pair with associated torsion pair of finite type $(\T,\F)$, then we know from \cite[Thm.~3.5]{BreazZemlicka:18} that every  $A$-module admits an $\F$-cover. It turns out that we can detect whether an $A$-module  belongs to $\Mod{\bar{A}}$ by properties of its  $\F$-cover, where $\bar{A}:= A/\Ann{\Z}$.

\begin{lemma}\label{lem: surj cover}
An $A$-module $M$ has a surjective $\mathcal{F}$-cover if and only if $M$ is contained in $\Mod{\bar{A}}$.
\end{lemma}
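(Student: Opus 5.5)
Both directions rest on the basic features of $\F=\Cogen{\Z}=\Cogen{C}$, where $C=\prod_{X\in\Z}X$. Since $\F$ is closed under products and submodules, it is closed under subobjects of products. Moreover, $\F\subseteq\Mod{\bar{A}}$, because $\Ann{\Z}$ annihilates $C$ and hence every submodule of a power of $C$. We also use that $\Mod{\bar{A}}$ is closed under quotients in $\Mod{A}$ and that $\bar{A}=A/\Ann{C}$ lies in $\Cogen{C}$.

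($\Rightarrow$) Suppose the $\F$-cover $f\colon F\to M$ is surjective. Then $M$ is a quotient of $F$. Since $F\in\F\subseteq\Mod{\bar{A}}$ and $\Mod{\bar{A}}$ is closed under quotients, $\Ann{\Z}\cdot M=0$, so $M\in\Mod{\bar{A}}$. This direction is immediate.

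($\Leftarrow$) Let $M\in\Mod{\bar{A}}$. It suffices to show that $M$ is an epimorphic image of some module in $\F$: every map from an object of $\F$ to $M$ factors through the $\F$-cover $f$, so $\Im{f}$ contains the image of any such map. Choose a surjection $\bar{A}^{(I)}\to M$. Then it is enough to show $\bar{A}^{(I)}\in\F$. We know $\bar{A}\in\F$: the canonical map $\bar{A}\to C^{C}$, $a\mapsto(ac)_{c\in C}$, is injective precisely because its kernel is $\Ann{C}/\Ann{C}=0$. For the direct sum, one route is that $\F$ has finite type, so it is closed under directed colimits, and a direct sum is the directed union of its finite subsums. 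Alternatively, $\bar{A}^{(I)}$ embeds in $\bar{A}^I$, which lies in $\F$ since $\F$ is closed under products and submodules. Either way $\bar{A}^{(I)}\in\F$, and the composite $\bar{A}^{(I)}\to M$ factors through $f$, forcing $f$ to be surjective.

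The only step that needs care is the claim $\bar{A}\in\Cogen{C}$, that is, that $\Ann{\Z}=\Ann{C}$. This holds since each $X\in\Z$ is a direct factor of $C$, so $\Ann{C}=\bigcap_{X\in\Z}\Ann{X}=\Ann{\Z}$, as already observed in the proof of Proposition~\ref{prop: cotilt/cosilt}. With that, the argument is routine. One could also cite Proposition~\ref{prop: cotilt/cosilt} to say that cotilting $\bar{A}$-classes contain the projective $\bar{A}$-modules, but the direct argument avoids that.
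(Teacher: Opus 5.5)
Your proof is correct and has the same outline as the paper's. The forward direction is identical. The backward direction in both rests on $\F$ generating $\Mod{\bar{A}}$, combined with the precover property of the $\F$-cover; the paper leaves that second step implicit, and you spell it out.

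The only real difference is how you justify generation. The paper appeals to Proposition~\ref{prop: cotilt/cosilt}: $C$ is a cotilting $\bar{A}$-module, and cotilting classes are generating. You check it directly, using the embedding $\bar{A}=A/\Ann{C}\hookrightarrow C^{C}$ and the closure of $\F=\Cogen{C}$ under products and submodules.

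Your route is more elementary and shows that the cotilting property is not needed for this lemma. Any class of the form $\Cogen{C}$ generates $\Mod{A/\Ann{C}}$, so the only input beyond $\F=\Cogen{C}$ and $\Ann{C}=\Ann{\Z}$ is the existence of $\F$-covers.
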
 
\begin{proof}
First suppose that $M$ has a surjective $\mathcal{F}$-cover.  Then there exists a short exact sequence \[0\to K_M \to F_M \overset{f}{\to} M \to 0\] such that $f$ is an $\mathcal{F}$-cover of $M$.  It follows that $M$ is contained in $\Mod{\bar{A}}$, as $ F_M \in \Mod{\bar{A}} $ and this category is closed under quotients.
Now assume that $M$ is contained in $\Mod{\bar{A}}$.  Then, since $\mathcal{F}$ is a cotilting class, it is generating in $\Mod{\bar{A}}$ and hence every module in $\Mod{\bar{A}}$ has a surjective $\mathcal{F}$-cover. 
\end{proof}

\subsection{Cosilting and t-structures}
It is well-known that two-term cosilting complexes are deeply connected to HRS-tilted t-structures. According to Theorem \ref{bij_cosilt mod cpx}, maximal rigid sets and cosilting pairs enjoy analogous relationships with t-structures.  In this section we summarise the results that will be important to us.  We refer to  \cite{Laking:20} \cite{MarksVitoria:18}, \cite{MarksZvonareva:23} and \cite[Prop.~3.2]{ALS1} for details.

\begin{notation} If we denote 
\[\D^{\leq0} = \{X \in \Der{A} \mid \mathrm{H}^i(X) = 0 \text{ for all } i>0\} \text{ and }\D^{\geq0} = \{X \in \Der{A} \mid \mathrm{H}^i(X) = 0 \text{ for all } i<0\}\]
and set $\D^{<0}=\D^{\leq0}[1]$, we obtain a t-structure
$\ststr{} := (\D^{<0}, \D^{\geq 0})$ in $\Der{A}$,  called  the \textbf{standard t-structure}. 
		Note that $\H{\mathbb{D}}=\H{}$ is the standard 0th cohomology functor, and the heart $\heart{\mathbb{D}}$ is equivalent to $\Mod{A}$. 
		We  write $\ststr{}^b := (\D^{<0}\cap \Db{A},\: \D^{\geq 0}\cap\Db{A})$ for  the standard t-structure in $\Db{A}$; its heart is equivalent to $\mod{A}$.
	\end{notation}

The following construction due to Happel, Reiten and Smal{\o} allows to construct  new t-structures from torsion pairs in the heart.

\begin{definition}  \label{HRS}\cite{HappelReitenSmalo:96}
Let $\tstr{} = (\X, \Y)$ be a  t-structure in $\Der{A}$, and assume that $\tstr{}$ is non-degenerate, i.e.~$\bigcap_{i\in\mathbb{Z}}\X[i] = \{0\} = \bigcap_{i\in\mathbb{Z}}\Y[i]$.
Let $t = (\T, \F)$ be a torsion pair in the heart $\heart{\tstr{}}$.  The \textbf{(right) HRS-tilt} of $\tstr{}$ at $t$ is the t-structure $\tstr{t^-} = (\X_{t^-}, \Y_{t^-})$ given by 
	\[\X_{t^-} = \{ X \in \Der{A} \mid \H{{\tstr{}}}(X) \in \T \text{ and } \mathrm{H}^k_{\tstr{}}(X) = 0 \text{ for all } k>0\} \: \text{ and}\]
	\[\Y_{t^-} = \{ X \in \Der{A} \mid \H{{\tstr{}}}(X) \in \F \text{ and } \mathrm{H}^k_{\tstr{}}(X) = 0 \text{ for all } k<0\}.\]
The heart $\heart{t^-}$ of $\tstr{t^-}$ is the full subcategory 
	\[\heart{t^-} = \{X \in \Der{A} \mid \text{ there exists a triangle } F \to X \to T[-1] \to F[1] \text{ where } F\in\F \text{ and } T\in \T\}\]
and it contains a torsion pair $\bar{t} = (\F, \T[-1])$. Let $\H{t^-}$ denote the associated cohomological functor.
\end{definition}

We will consider HRS-tilts of the standard t-structure at the elements of $\Cosilt{A}$ and we will refer to them as \textbf{cosilting t-structures}. We summarise their properties and fix some notation. 

\begin{notation} As in Notation~\ref{fromt}, we start with a torsion pair  
$\tpair{t}{f}$  in $\tors{A}$ with  associated torsion pair  $\Tpair{T}{F}$ in $\Cosilt{A}$ and maximal rigid set $\N_\mathbf{t}\in \CosiltZg{A}$. This data gives rise to a t-structure with some useful properties. We fix some notation for this t-structure and state some such properties; we refer to the dual of \cite[Rem.~4.10]{AngeleriMarksVitoria:16} and \cite[Thm.~5.2]{Saorin:2017} for more details.
\begin{itemize}
\item  $(\X_\mathbf{t}, \Y_\mathbf{t}):=({}^{\perp_{\leq 0}}\N_\mathbf{t}, {}^{\perp_{> 0}}\N_\mathbf{t})$, which coincides with the  HRS-tilt of $\ststr{}$ at  $\Tpair{T}{F}$; 
\item $\heart{\mathbf{t}}$ for the heart of $(\X_\mathbf{t}, \Y_\mathbf{t})$, which is a locally coherent Grothendieck category;
\item $\H{\mathbf{t}}$ for the cohomological functor given by the t-structure $(\X_\mathbf{t}, \Y_\mathbf{t})$;
\item $\fp{\heart{\mathbf{t}}}$ for the subcategory  of finitely presented objects in $\heart{\mathbf{t}}$.
\end{itemize}
\end{notation}

Notice that  the HRS-tilt $(\X'_\mathbf{t}, \Y'_\mathbf{t})$ of the standard t-structure $\ststr{}^b$  at         $\tpair{t}{f}$ coincides with the  restriction  $(\X_\mathbf{t}\cap\ststr{}^b, \Y_\mathbf{t}\cap\ststr{}^b)$ of 
$(\X_\mathbf{t}, \Y_\mathbf{t})$ to $\ststr{}^b$, and its heart $\heart{\mathbf{t}}\cap\ststr{}^b$ equals
$\fp{\heart{\mathbf{t}}}$.
Moreover, when $A$ is a  finite-dimensional algebra over a field $\mathbb K$, it is well known that $\tpair{t}{f}$ is functorially finite if and only if
 $(\X'_\mathbf{t}, \Y'_\mathbf{t})$ is of the form  $(T^{\perp_{>0}}\cap\ststr{}^b, T^{\perp_{\leq 0}}\cap\ststr{}^b)$ for some silting complex $T\in\mathrm{K}^{\mathrm{b}}(\mathrm{proj}{A})$.
In this case
 $\N_\mathbf{t}$ is the set of indecomposable direct summands of $\nu T$ where $\nu $ is the Nakayama functor.

\subsection{Cosilting and the Ziegler spectrum}

In the paper \cite{ALS1}, strong links were made between the posets $\CosiltZg{A}$ and $\Cosiltpair{A}$ and the Ziegler spectra $\Zg{A}$ and $\Zg{\Der{A}}$ respectively. We summarise the important results here.

\begin{theorem}\label{Thm: cosilting and Zg}\cite[Prop.~3.2 and Cor.~3.3]{ALS1}
The following statements hold for a torsion pair  $\tpair{t}{f}$ in $\tors{A}$.
\begin{enumerate}
\item The set $\N_\mathbf{t}\in\CosiltZg{A}$ is a closed set in $\Zg{\Der{A}}$.
\item The set $\Z_\mathbf{t}$ is a closed set of $\Zg{A}$.
\item The cohomological functor $\H{\mathbf{t}}$ restricts to an equivalence of additive categories \[ \H{\mathbf{t}} \colon \Prod{\N_\mathbf{t}} \to \Inj{\heart{\mathbf{t}}}\]
and induces a homeomorphism \[\H{\mathbf{t}}\colon \N_\mathbf{t} \to \Sp{\heart{\mathbf{t}}}\] where $\N_\mathbf{t}$ has the subspace topology $\N_\mathbf{t} \subseteq \Zg{\Der{A}}$.
\end{enumerate}
\end{theorem}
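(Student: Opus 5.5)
The three parts follow by transporting known facts about two-term cosilting complexes along the bijections of Theorem~\ref{bij_cosilt mod cpx}. Fix $\tpair{t}{f}$ and set $\sigma:=\sigma_{\N_\mathbf{t}}=\prod_{\mu\in\N_\mathbf{t}}\mu$. By Theorem~\ref{bij_cosilt mod cpx} this is a two-term cosilting complex with $\Prod{\N_\mathbf{t}}=\Prod{\sigma}$ and $\N_\mathbf{t}=\Ind{\Prod{\sigma}}$. Cosilting objects are pure-injective. So $\N_\mathbf{t}$ is a set of indecomposable pure-injectives, hence a set of points of $\Zg{\Der{A}}$.

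For (1), we use the definable subcategory
\[\mathcal{D}:={}^{\perp_{>0}}\sigma\cap\sigma^{\perp_{>0}}\cap\ldots\]
More concretely, I would identify $\Prod{\sigma}$ with the intersection of the coaisle $\Y_\mathbf{t}$ with ${}^{\perp_{1}}\Y_\mathbf{t}$, that is, with the Ext-injectives of the coaisle. The coaisle $\Y_\mathbf{t}={}^{\perp_{>0}}\N_\mathbf{t}$ is definable, since cosilting t-structures have definable coaisles (Laking, Marks--Vit\'oria). The indecomposable pure-injectives in $\Y_\mathbf{t}$ therefore form a closed set. The remaining task is to show that $\N_\mathbf{t}$ equals $\Ind{\Prod\sigma}$ and is cut out inside this closed set by a further definable condition. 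I would realise this via the heart: an indecomposable pure-injective $Y\in\Y_\mathbf{t}$ lies in $\Prod\sigma$ iff $\H{\mathbf{t}}(Y)$ is injective and $Y\cong$ its image under the coaisle truncation. Alternatively, I would simply cite the statement in \cite{ALS1}, where $\N_\mathbf{t}$ is shown to be the closed set attached to the definable subcategory $\Prod{\sigma}$ closed under pure subobjects inside $\K{A}$.

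For (2), I would apply the continuous map $\H{}$ restricted to two-term complexes together with Theorem~\ref{bij_posets}(3): $\Z_\mathbf{t}=\H{}(\N_\mathbf{t})\setminus\{0\}=\Ind{\F\cap\F^{\perp_1}}$. Since $\F$ is a torsion-free class of finite type, it is definable, being closed under products, submodules and directed colimits. It remains to see that $\F\cap\F^{\perp_1}$ is also definable, or at least that its indecomposable pure-injectives form a closed set. By Proposition~\ref{prop: cotilt/cosilt}, $\F$ is a cotilting class over $\bar A$ and $\F\cap\F^{\perp_1}=\Prod{C}$. I would then invoke the known fact that for a cotilting module $C$ the set $\Ind{\Prod C}$ is closed in the Ziegler spectrum: it is $\Ind{\F}\cap\Ind{{}^{\perp_1}...}$, the Ext-injectives in the definable class $\F$. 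Finally, closedness in $\Zg{\bar A}$ transfers to $\Zg{A}$ because $\Mod{\bar A}$ is definable in $\Mod{A}$.

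For (3), $\H{\mathbf{t}}$ restricted to $\Prod\sigma$ is an equivalence onto $\Inj{\heart{\mathbf{t}}}$; this is the standard cosilting fact that $\sigma$ is an injective cogenerator of the heart of its t-structure. Taking indecomposables gives the bijection $\N_\mathbf{t}\to\Sp{\heart{\mathbf{t}}}$. For the homeomorphism, I would compare basic open sets. A finitely presented $C\in\fp{\heart{\mathbf{t}}}=\heart{\mathbf{t}}\cap\Db{A}$ gives $\Hom{}(C,\H{\mathbf{t}}\mu)\cong\Hom{\Der A}(C,\mu)$ for $\mu\in\Prod\sigma\subseteq\Y_\mathbf{t}$. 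Hence $\open{C}$ pulls back to the Ziegler-open set defined by the compact-ish object $C$; since $C\in\Db{A}$, it is a coherent functor on $\Der A$. Conversely, each basic Ziegler open, given by a coherent functor $\Hom{}(f,-)$ with $f$ a map of compacts, restricted to $\N_\mathbf{t}$ becomes $\Hom{}(\H{\mathbf{t}}(\mathrm{coker}),-)$ on injectives of the heart, which is a finitely presented object. The main obstacle is this converse direction: it requires knowing that $\H{\mathbf{t}}$ sends compacts to finitely presented objects of the locally coherent heart, which uses local coherence from \cite{Saorin:2017}.
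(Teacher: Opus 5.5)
The paper gives no proof of this statement; it only cites \cite{ALS1}. Read as a proof, your proposal has a genuine gap in part (1), which is where most of the content lies.

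Writing $\Prod{\sigma}=\Y_\mathbf{t}\cap\Y_\mathbf{t}^{\perp_1}$ with $\Y_\mathbf{t}$ definable only places $\N_\mathbf{t}$ inside the closed set of indecomposable pure-injectives of $\Y_\mathbf{t}$. The real task is to show that the Ext-injectivity condition $\Y_\mathbf{t}^{\perp_1}$ cuts out a closed subset. This condition is a right orthogonal to a large class of non-compact objects and is not visibly definable, and you never establish it.

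The criterion you propose does not work. The clause that $Y$ is isomorphic to its coaisle truncation is vacuous for $Y\in\Y_\mathbf{t}$. Injectivity of $\H{\mathbf{t}}(Y)$ does not characterise $\Prod{\sigma}$: for $\mu\in\N_\mathbf{t}$, the shift $\mu[-1]$ is an indecomposable pure-injective object of $\Y_\mathbf{t}$ with $\H{\mathbf{t}}(\mu[-1])=0$. Yet $\mu[-1]\notin\Prod{\sigma}$, since $\Hom{\Der{A}}(\mu,\mu[-1][1])\neq 0$. In any case nothing shows the criterion is a closed condition.

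The fallback of citing \cite{ALS1} is circular, and your description of what is cited is wrong. $\Prod{\sigma}$ is not a definable subcategory in general. Definable subcategories are closed under coproducts, whereas $\sigma^{(\mathbb{N})}\in\Prod{\sigma}$ would make $\sigma$, and hence $\H{}(\sigma)$, $\Sigma$-pure-injective. This fails as soon as an adic module $M(\lambda,-\infty)$ occurs, cf.\ Section~\ref{Sec: example}.

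Closedness needs a different input. The natural one is the functor category, as in the paper's proof that $\Prod{\L\cap\R}=\Prod{\L}\cap\Prod{\R}$. Under the homeomorphism $\y\colon\Zg{\Der{A}}\to\Sp{\Mod{\Der{A}^c}}$ of \cite{Krause:02}, closed sets correspond to hereditary torsion pairs of finite type \cite{Herzog:97, Krause:97}. So it suffices to realise the locally coherent heart $\heart{\mathbf{t}}$ as the Gabriel quotient of $\Mod{\Der{A}^c}$ by a localising subcategory of finite type whose torsion-free injectives are $\y(\Prod{\sigma})$. Closedness, the equivalence $\Prod{\N_\mathbf{t}}\simeq\Inj{\heart{\mathbf{t}}}$ and the homeomorphism then all follow together.

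In (2) the ``known fact'' is also only asserted, but the repair is easy and does not need $\bar A$. For pure-injective $N$ one has $\F\subseteq{}^{\perp_1}N$ if and only if $\mathbf{f}\subseteq{}^{\perp_1}N$, because $\F=\varinjlim\mathbf{f}$ and ${}^{\perp_1}N$ is closed under directed colimits. Since $A$ is left noetherian, $\mathbf{f}^{\perp_1}$ is closed under products, directed colimits and pure submodules. Hence $\Z_\mathbf{t}$ is the set of indecomposable pure-injectives of the definable class $\F\cap\mathbf{f}^{\perp_1}$.

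Your direct comparison of basic open sets in (3) is a workable alternative to the localisation argument once two points are fixed.
\begin{enumerate}
\item For $C\in\Db{A}$ the functor $\Hom{\Der{A}}(C,-)$ is not coherent on $\Der{A}$. On complexes concentrated in degrees $0,1$, however, it agrees with $\Hom{\Der{A}}(K,-)$, where $K$ is the brutal truncation in degrees $\geq -1$ of a resolution of $C$ by finitely generated projectives. That is all you need.
\item For $f\colon K\to K'$ between compacts, $\Coker{\Hom{\Der{A}}(f,-)}$ evaluated at $\mu\in\N_\mathbf{t}$ is $\Hom{\heart{\mathbf{t}}}(\Ker{\H{\mathbf{t}}(f)},\H{\mathbf{t}}(\mu))$: a kernel, not a cokernel. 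This defines a basic open set because $\H{\mathbf{t}}(K),\H{\mathbf{t}}(K')\in\heart{\mathbf{t}}\cap\Db{A}=\fp{\heart{\mathbf{t}}}$ and $\heart{\mathbf{t}}$ is locally coherent.
\end{enumerate}
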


\section{Mutation of maximal rigid sets}\label{Sec: mutation}

The mutation operation on cosilting complexes was introduced in \cite{ALSV} and irreducible mutations can be thought of as those corresponding to removing and replacing exactly one indecomposable summand of the cosilting complex. In our setting this can be reinterpreted as the following operation on maximal rigid  sets.

\begin{definition}\label{Def: mutation}
Let $\L$ and $\R$ be maximal rigid sets in $\Der{A}$.  We say that $\L$ and $\R$ are \textbf{related by mutation} if there is a bijection $\Delta \colon \L \to \R$ such that $\Delta(\mu) = \mu$ for every $\mu\in \L\cap\R$ and, for every $\lambda\in \L\setminus \R$, there exists a triangle 
	\begin{equation}\label{eq: exchange triangle}\Delta(\lambda) \overset{\Psi_\lambda}{\longrightarrow} \epsilon_\lambda \overset{\Phi_\lambda}{\longrightarrow} \lambda \to \Delta(\lambda)[1]\end{equation}
where $\Phi_\lambda \colon \epsilon_\lambda \to \lambda$ is a minimal right $\Prod{\L\cap\R}$-approximation of $\lambda$ and $\Psi_\lambda \colon \Delta(\lambda) \to \epsilon_\lambda$ is a minimal left $\Prod{\L\cap\R}$-approximation of $\Delta(\lambda)$.

We say that $\L$ is a \textbf{left mutation of $\R$ (at $\L\setminus\R$)} and that $\R$ is a \textbf{right mutation of $\L$ (at $\R\setminus\L$)}. We say that the mutation is \textbf{irreducible} if $|\L\setminus\R|= 1$.

Let $\mu \in \M$ be an element of a maximal rigid set $\M$. We say that $\mu$ is \textbf{mutable in $\M$} if there exists a maximal rigid set $\N = (\M\setminus\{\mu\})\cup\{\nu\}$ such that $\M$ and $\N$ are related by mutation.
\end{definition}

The setup in \cite{ALSV} is much more general than in Definition \ref{Def: mutation}; a cosilting complex $\sigma'$ is a right mutation of $\sigma$ if there exists a triangle $\epsilon_1 \to \epsilon_0 \overset{\Phi}{\to} \sigma \to \epsilon_1[1]$ such that $\Phi$ is a right $\mathscr{E}$-approximation where $\mathscr{E} = \Prod{\sigma}\cap\Prod{\sigma'}$ and $\epsilon_0 \oplus \epsilon_1$ is a cosilting object equivalent to $\sigma'$. The mutation is called irreducible if there is exactly one indecomposable object (up to isomorphism) in $\Prod{\sigma}$ that is not contained in $\mathscr{E}$. Irreducible left mutation of cosilting objects is defined dually and the two operations induce mutually inverse operations on equivalence classes of cosilting objects.   

We have already seen in Theorem \ref{bij_cosilt mod cpx} that a maximal rigid set $\N$ determines a two-term cosilting object $\sigma_\N = \prod_{\mu\in\N}\mu$ in $\Der{A}$ and that $\N$ is determined uniquely from the equivalence class of $\sigma_\N$. Mutation of maximal rigid sets is nothing but mutation of the associated cosilting complexes.

\begin{lemma} Given  $\R$ and $\L$ in $\CosiltZg{A}$, we have that 
\begin{enumerate}
\item $\Prod{\L\cap\R} = \Prod\L \cap \Prod\R$;
\item $\sigma_{\R}$ is a right mutation of $\sigma_\L$ if and only if $\R$ is a right mutation of $\L$;  
 
\end{enumerate}
\end{lemma}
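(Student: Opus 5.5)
For part (1), I will use Theorem~\ref{bij_cosilt mod cpx}: for a maximal rigid set $\N$ we have $\Prod{\N}=\Prod{\sigma_\N}$, and $\Ind{\Prod{\sigma_\N}}=\N$. The inclusion $\Prod{\L\cap\R}\subseteq\Prod\L\cap\Prod\R$ is immediate. For the reverse inclusion, take $X\in\Prod\L\cap\Prod\R$. Then $X$ is pure-injective, since it lies in $\Prod{\sigma_\L}$. I want to express $X$ as a summand of a product of indecomposables from $\L\cap\R$.

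My first approach is through the heart. The functor $\H{\L}$ restricts to an equivalence $\Prod\L\to\Inj{\heart{\L}}$ by Theorem~\ref{Thm: cosilting and Zg}(3). Since $\heart{\L}$ is locally coherent, every injective object $E$ there is the injective envelope of a direct sum of indecomposable injectives. Hence $X$ is a summand of a product of indecomposables $\mu\in\L$ with $\Hom{\Der{A}}(\mu,X)$ "detecting" them. More concretely, I expect to use the standard purity fact that a pure-injective object $X$ in $\Prod{\N}$ lies in $\Prod{\mathcal S}$, where $\mathcal S$ is the Ziegler-closed set of indecomposable summands of $X$. Here $\mathcal{S}\subseteq\Zg{\Der{A}}$ denotes the closure of the set of indecomposable pure-injective summands of $X$, i.e.\ the support $\Supp{X}$. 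Every $\mu\in\Supp{X}$ lies in the definable closure of $X$, so $\Supp{X}\subseteq\Ind{\Prod\L}=\L$; this uses the fact that $\L$ is closed and that $\Prod\L$ is definable-closed on indecomposables. Symmetrically $\Supp{X}\subseteq\R$, so $\Supp{X}\subseteq\L\cap\R$. Finally $X\in\Prod{\Supp{X}}$, because a pure-injective object is a summand of the product of the indecomposables in its support (via the pure-injective hull of a direct sum). This support argument is the main obstacle. I would rely on the corresponding statement for $\Zg{\Der{A}}$ as in \cite[App.~A]{ALS1}, namely that for a closed set $\N$ the indecomposables in $\Prod\N$ are exactly $\N$, together with the fact that $\Prod{\N}$ is the class of pure-injectives in the definable subcategory determined by $\N$.

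For part (2), I will unwind the definitions. Suppose $\R$ is a right mutation of $\L$. Set $\mathscr E=\Prod{\sigma_\L}\cap\Prod{\sigma_\R}=\Prod{\L\cap\R}$, using (1). Take the product over $\lambda\in\L\setminus\R$ of the exchange triangles; products of triangles are triangles in $\Der{A}$. Adding the trivial triangle $0\to\mu\to\mu\to 0$ for each $\mu\in\L\cap\R$ gives a triangle $\epsilon_1\to\epsilon_0\to\sigma_\L\to\epsilon_1[1]$. Here $\epsilon_1=\prod_{\lambda}\Delta(\lambda)$ and $\epsilon_0=\prod_\lambda\epsilon_\lambda\times\prod_{\mu\in\L\cap\R}\mu$. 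A product of right $\mathscr E$-approximations is a right $\mathscr E$-approximation, because $\mathscr E$ is closed under products and Hom into a product is the product of the Homs. Moreover $\epsilon_0\oplus\epsilon_1$ has $\Prod{}$ equal to $\Prod\R=\Prod{\sigma_\R}$: indeed $\epsilon_1$ together with $\prod_{\L\cap\R}\mu$ realises every point of $\R$, and $\epsilon_0\in\mathscr E\subseteq\Prod\R$. So $\epsilon_0\oplus\epsilon_1\sim\sigma_\R$, and $\sigma_\R$ is a right mutation of $\sigma_\L$.

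Conversely, suppose $\sigma_\R$ is a right mutation of $\sigma_\L$. I would invoke the results of \cite{ALSV} showing that the mutation triangle can be chosen indecomposable-wise. For each indecomposable $\lambda\in\L\setminus\R$ one takes a minimal right $\mathscr E$-approximation $\Phi_\lambda\colon\epsilon_\lambda\to\lambda$; minimal approximations exist since $\mathscr E=\Prod{\L\cap\R}$ is a product-closed class of pure-injectives, hence covering. The cocone $\Delta(\lambda)$ is then indecomposable and lies in $\R\setminus\L$, and $\Psi_\lambda$ is a minimal left approximation. The assignment $\lambda\mapsto\Delta(\lambda)$ is a bijection onto $\R\setminus\L$, using that left and right mutation are mutually inverse. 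I expect the indecomposability and bijectivity here to be cited from \cite{ALSV}, rather than reproved.
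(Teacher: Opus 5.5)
Your part (1) has a genuine gap, at the step you yourself flag as the main obstacle. You define $\mathcal S$ as the Ziegler closure of the set of indecomposable summands of $X$, and with that definition the claim $X\in\Prod{\mathcal S}$ is false in general.

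\begin{itemize}
\item A superdecomposable pure-injective $X\neq 0$ has no indecomposable summands, so $\mathcal S=\emptyset$ and $\Prod{\mathcal S}=\{0\}$.
\item Nothing excludes such $X$ from $\Prod\L\cap\Prod\R$. The paper itself treats the absence of superdecomposables in $\Prod{\N}$ as an extra hypothesis (Proposition~\ref{prop: superdecomp suff cond tri}).
\item For the same reason the justification ``via the pure-injective hull of a direct sum'' fails.
\item So does your first approach. In a locally coherent (as opposed to locally noetherian) Grothendieck category, an injective object need not be the injective envelope of a direct sum of indecomposable injectives.
\item Reading $\Supp{X}$ as in the paper, namely the smallest closed set with $X$ in its $\Prod$, does not help. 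The existence of such a smallest set is itself an instance of (1) for arbitrary intersections.
\end{itemize}

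The step can be repaired as follows.

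\begin{itemize}
\item Replace $\mathcal S$ by the set of indecomposable pure-injectives in the definable subcategory generated by $X$. Equivalently, take $\y^{-1}(\F_X\cap\Sp{\Mod{\Der{A}^c}})$, where $\F_X$ is the smallest hereditary torsion-free class of finite type in $\Mod{\Der{A}^c}$ containing $\y X$.
\item Your argument that $\mathcal S\subseteq\L\cap\R$ goes through unchanged.
\item $X\in\Prod{\mathcal S}$ now holds. By the Herzog--Krause correspondence \cite{Herzog:97, Krause:97}, $\F_X=\Cogen{\F_X\cap\Sp{\Mod{\Der{A}^c}}}$. Hence the injective object $\y X$ embeds into a product of points of $\F_X$, and that embedding splits.
\end{itemize}

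This is exactly the fact you allude to in your last sentence. Once it is available, the support is unnecessary. The paper simply observes that $\Cogen{\y\L}\cap\Cogen{\y\R}$ is a hereditary torsion-free class of finite type whose indecomposable injectives are $\y(\L\cap\R)$. Hence it equals $\Cogen{\y(\L\cap\R)}$, and taking injective objects on both sides gives (1).

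Your part (2) agrees with the paper. For ``$\R$ a right mutation of $\L$ $\Rightarrow$ $\sigma_\R$ a right mutation of $\sigma_\L$'' you use the same product of exchange triangles with the trivial triangles on $\L\cap\R$. For the converse, both you and the paper rely on \cite[Prop.~4.12]{ALSV}. Two small corrections:

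\begin{itemize}
\item Closure under products yields left approximations, not right ones, so ``hence covering'' is not a valid reason. A right $\mathscr E$-approximation of $\lambda$ exists because you can compose the given approximation $\epsilon_0\to\sigma_\L$ with the projection $\sigma_\L\to\lambda$. Minimality, indecomposability of the cocone and bijectivity come from \cite{ALSV}.
\item You should say why $\epsilon_0\oplus\epsilon_1$ is cosilting: being cosilting depends only on $\Prod{-}$. The paper cites \cite[Lem.~3.6]{ALS1} and \cite[Prop.~3.10]{MarksVitoria:18} for this.
\end{itemize}
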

\begin{proof}
(1)  We will use the fact that $\L$ and $\R$ are closed sets of $\Zg{\Der{A}}$ and show the corresponding result in the functor category $\Mod{\Der{A}^c}$. Recall that $\Mod{\Der{A}^c}$ is the category of additive contravariant functors from the category $\Der{A}^c$  of compact objects in $\Der{A}$ to the category of abelian groups. It is well known that $\Mod{\Der{A}^c}$ is a locally coherent Grothendieck category. The 
	isomorphism classes of indecomposable injective objects in  $\Mod{\Der{A}^c}$ form a topological space denoted by $\Sp{\Mod{\Der{A}^\c}}$ with a basis of open sets  given
by the collection of sets
	\[\open{C} = \{E \in \Sp{\Mod{\Der{A}^c}} \mid \Hom{\Mod{\Der{A}^c}}(C, E) \neq 0 \}\] 
where $C$ runs through the finitely presented objects of  $\Mod{\Der{A}^c}$. By {\cite[Fundamental Correspondence]{Krause:02}} there is a homeomorphism 
	\[ \y \colon \Zg{\Der{A}} \to \Sp{\Mod{\Der{A}^c}},\]
	so we can work in $\Sp{\Mod{\Der{A}^c}}$ rather than $\Zg{\Der{A}}$. We note that $\y$ commutes with both direct summands and direct products.
Moreover, as shown in
{\cite{Herzog:97, Krause:97}}, there is a bijection between the closed subsets  of $\Sp{\Mod{\Der{A}^c}}$ and the hereditary torsion pairs $(\T, \F)$ of finite type in $\Mod{\Der{A}^c}$ given by the assignments
	\[\Z \mapsto ({}^{\perp_0}\Z, \Cogen{\Z}) \hspace{10mm} (\T, \F) \mapsto \F \cap \Sp{\Mod{\Der{A}^c}}. \] 
	
 Consider now two closed subsets $\Z_1,\Z_2$ in $\Sp{\Mod{\Der{A}^c}}$. Each  $\Z_i$ is associated with a hereditary torsion pair of finite type in $\Mod{\Der{A}^c}$ with torsion-free class $\Cogen{\Z_i$}. 
Now $\Cogen{\Z_1}\cap\Cogen{\Z_2}$ is also a torsion-free class of finite type, and it is associated with the closed subset $\Cogen{\Z_1}\cap\Cogen{\Z_2}\cap\Sp{\Mod{\Der{A}^c}}=\Z_1\cap\Z_2$.  In other words we have that $\Cogen{\Z_1}\cap\Cogen{\Z_2} = \Cogen{\Z_1\cap\Z_2}$. Noting that $\Prod{\Z}$ is exactly the class of injective objects in $\Cogen{\Z}$ for any $\Z \subseteq \Sp{\Mod{\Der{A}^c}}$, we conclude that $\Prod{\Z_1}\cap\Prod{\Z_2}=\Prod{\Z_1\cap\Z_2}$.

(2)  
The `only-if' part of the statement follows from \cite[Prop.~4.12]{ALSV} and its proof, and the `if' part holds because a right $\Prod{\L\cap \R}$-approximation of $\sigma_\L$ is given by the product of the triangles in (\ref{eq: exchange triangle}) with the triangle 
	\[0 \to {\prod_{\nu \in \L\cap\R} \nu} \overset{\mathrm{id}}{\to} \prod_{\nu \in \L\cap\R}\nu \to 0[1].\]  

The direct sum of the two left-most term of this triangle is given by $\sigma := \sigma_\R \oplus \prod_{\lambda \in \L\setminus \R}\epsilon_\lambda$ and $\Prod{\sigma}=\Prod{\sigma_\R}$.  It follows from \cite[Lem.~3.6]{ALS1} and \cite[Prop.~3.10]{MarksVitoria:18} that $\sigma$ is a cosilting complex equivalent to $\sigma_\R$.
	
Noting that $\Prod{\L\cap\R} = \Prod\L \cap \Prod\R$ by (1), we conclude that $\sigma$ is a right mutation of $\sigma_\L$ and is equivalent to $\sigma_\R$.

\end{proof}

Mutation of maximal rigid sets can be interpreted from different angles: as an HRS-tilt of the associated cosilting t-structure, as an operation that enlarges the associated torsion pair by a wide subcategory, or as an operation exchanging injective envelopes of simples in the associated hearts. Let us explain these points of view.

\subsection{Mutation and HRS-tilting}

In \cite[Sec.~3-4]{ALSV} it was shown that two cosilting complexes are related by mutation if and only if the t-structures determined by them are related by an HRS-tilt of a particular form.  Using the principles of Section \ref{sec: cosilting}, we can make the same statement about two maximal rigid sets. We summarise the main results in the following theorem.

\begin{theorem}
\label{Thm: serre hereditary mutat} \cite[Thm.~3.5 and 7.8, Prop.~7.5]{ALSV}  
 Let $\tpair{u}{v} \leq\tpair{t}{f}$ be torsion pairs in $\tors{A}$.  
\begin{enumerate}

\item The t-structure $(\X_\mathbf{t}, \Y_\mathbf{t})$ in $\Der{A}$ is a right HRS-tilt of  the t-structure $(\X_\mathbf{u}, \Y_\mathbf{u})$ at a torsion pair $(\S, \R)$ in the heart $\heart{\mathbf{u}}$  with $\S = \T\cap \V$. 
\item The following statements are equivalent:
\begin{enumerate}
\item The maximal rigid sets $\N_\mathbf{t}$ and $\N_\mathbf{u}$ are related by mutation.

\item The torsion pair $(\S, \R)$ in $\heart{\mathbf{u}}$ is hereditary, i.e., $\S$ is a localising subcategory of $\heart{\mathbf{u}}$.
\item The torsion pair $(\S, \R)$ in the heart $\heart{\mathbf{u}}$  coincides with the hereditary torsion pair $({}^{\perp_0}\H{\mathbf{u}}(\E), \Cogen{\H{\mathbf{u}}(\E)})$ where $\E = {\N_\mathbf{t}}\cap {\N_\mathbf{u}}$.
\end{enumerate}
\end{enumerate}
\end{theorem}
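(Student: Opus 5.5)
This theorem is cited from \cite{ALSV}, so the proof will be a translation: we transfer the statements about two-term cosilting complexes to maximal rigid sets using Theorem~\ref{bij_cosilt mod cpx} and the preceding lemma. Throughout, write $\sigma_\mathbf{t}=\sigma_{\N_\mathbf{t}}$ and $\sigma_\mathbf{u}=\sigma_{\N_\mathbf{u}}$ for the cosilting complexes attached to $\mathbf{t}$ and $\mathbf{u}$.

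For (1), the inclusion $\mathbf{u}\subseteq\mathbf{t}$ gives $\U\subseteq\T$ by Theorem~\ref{bij_posets}(1), hence $\F\subseteq\V$. Both t-structures are HRS-tilts of the standard one, so this yields the inclusions $\X_\mathbf{t}\subseteq\X_\mathbf{u}\subseteq\X_\mathbf{t}[-1]$. I will then invoke the standard fact, due to Polishchuk and recorded in \cite[Thm.~3.5]{ALSV}, that a pair of t-structures with $\X'\subseteq\X\subseteq\X'[-1]$ is related by a right HRS-tilt of the larger aisle at the torsion pair $(\X'\cap\heart{},\,\Y'\cap\heart{})$ in its heart. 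Here this torsion pair is $(\X_\mathbf{t}\cap\heart{\mathbf{u}},\,\Y_\mathbf{t}\cap\heart{\mathbf{u}})$.

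To identify the torsion class $\S$, I describe $\heart{\mathbf{u}}$ through triangles $V\to X\to U[-1]$ with $V\in\V$ and $U\in\U$. Applying the standard cohomology $\mathrm{H}^0$, an object $X\in\heart{\mathbf{u}}$ lies in $\X_\mathbf{t}$ exactly when its $\mathrm{H}^1$-part vanishes and $\mathrm{H}^0(X)\in\T$. This shows $\S=\T\cap\V$.

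For (2), (a)$\Leftrightarrow$(b) follows by combining part (2) of the preceding lemma with \cite[Thm.~7.8]{ALSV}. That theorem says $\sigma_\mathbf{t}$ and $\sigma_\mathbf{u}$ are related by mutation if and only if the HRS-tilt in (1) is taken at a hereditary torsion pair. For (b)$\Rightarrow$(c), recall that $\H{\mathbf{u}}$ identifies $\Prod{\N_\mathbf{u}}$ with $\Inj{\heart{\mathbf{u}}}$ by Theorem~\ref{Thm: cosilting and Zg}(3). A hereditary torsion pair in a locally coherent Grothendieck category is determined by the injective objects of its torsion-free class, as $\R=\Cogen{\R\cap\Inj{\heart{\mathbf{u}}}}$. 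So it suffices to show that an indecomposable injective $\H{\mathbf{u}}(\mu)$ is torsion-free, i.e.\ lies in $\Y_\mathbf{t}$, if and only if $\mu\in\N_\mathbf{t}$. This is \cite[Prop.~7.5]{ALSV}, transported via part (1) of the preceding lemma, which gives $\Prod{\E}=\Prod{\N_\mathbf{t}}\cap\Prod{\N_\mathbf{u}}$. For (c)$\Rightarrow$(b), note that a pair of the form $({}^{\perp_0}\mathcal{Q},\Cogen{\mathcal{Q}})$ with $\mathcal{Q}$ injective is automatically hereditary, since $\Cogen{\mathcal{Q}}$ is closed under injective envelopes.

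I expect the main obstacle to be the identification of the injectives of $\R$ with $\H{\mathbf{u}}(\E)$. One direction is easy: any $\mu\in\E$ lies in $\N_\mathbf{t}\subseteq\Y_\mathbf{t}$, so $\H{\mathbf{u}}(\mu)\in\R$. For the converse, let $\mu\in\N_\mathbf{u}$ with $\H{\mathbf{u}}(\mu)\in\R$. Then $\mu\in\Y_\mathbf{t}\cap{}^{\perp_{>0}}\N_\mathbf{u}$, and we must conclude $\mu\in\N_\mathbf{t}$. My plan is to show that $\mu$ is $\Hom{}(-,[1])$-orthogonal to $\N_\mathbf{t}$ in both directions and then apply maximality of $\N_\mathbf{t}$. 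This step is exactly where the hereditary hypothesis should enter, and I would follow the argument of \cite[Prop.~7.5]{ALSV} here.
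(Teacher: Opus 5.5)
Your route is the paper's: the theorem is imported from \cite{ALSV} and transported to maximal rigid sets via Theorem~\ref{bij_cosilt mod cpx} and the lemma preceding Theorem~\ref{Thm: serre hereditary mutat}. Your computation $\S=\T\cap\V$ from the triangles $V\to X\to U[-1]$ is also correct. Two points need fixing, though.

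\textbf{The aisle inclusion in (1) is backwards.} In the paper's conventions the standard aisle is $\D^{<0}$ and a right HRS-tilt enlarges the aisle. So $\U\subseteq\T$ gives $\X_\mathbf{u}\subseteq\X_\mathbf{t}\subseteq\X_\mathbf{u}[-1]$, equivalently $\Y_\mathbf{u}[-1]\subseteq\Y_\mathbf{t}\subseteq\Y_\mathbf{u}$. The Polishchuk-type fact then reads as follows: the t-structure with the larger aisle is the right HRS-tilt of the one with the smaller aisle, at the torsion pair cut out in the heart of the smaller one. Your misstated inclusion and your misoriented general fact cancel, so you land on the correct pair $(\X_\mathbf{t}\cap\heart{\mathbf{u}},\Y_\mathbf{t}\cap\heart{\mathbf{u}})$. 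Both statements should still be corrected.

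\textbf{In (b)$\Rightarrow$(c) the difficulty is misplaced, and the reduction to indecomposable injectives is not justified as written.} Identifying the injectives in $\R$ needs no hereditary hypothesis. Take $\sigma\in\Prod{\N_\mathbf{u}}$ with $\H{\mathbf{u}}(\sigma)\in\R$.
\begin{itemize}
\item The cone of $\H{\mathbf{u}}(\sigma)\to\sigma$ lies in $\Y_\mathbf{u}[-1]\subseteq\Y_\mathbf{t}$, so $\sigma\in\Y_\mathbf{t}$.
\item Since $\Y_\mathbf{t}\subseteq\Y_\mathbf{u}$, we have $\sigma\in\Y_\mathbf{u}^{\perp_1}\subseteq\Y_\mathbf{t}^{\perp_1}$.
\item Hence $\sigma\in\Y_\mathbf{t}\cap\Y_\mathbf{t}^{\perp_1}=\Prod{\N_\mathbf{t}}$ by \cite[Lem.~4.5(iii)]{PsaroudakisVitoria:18}. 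Then $\sigma\in\Prod{\N_\mathbf{t}}\cap\Prod{\N_\mathbf{u}}=\Prod{\E}$ by part (1) of the preceding lemma.
\end{itemize}
For indecomposable $\mu$ these are just your two orthogonalities. The one you expected to be hard, $\Hom{\Der{A}}(\N_\mathbf{t},\mu[1])=0$, is simply the relation $\N_\mathbf{u}\leq\N_\mathbf{t}$ from Theorem~\ref{bij_posets}. Conversely, $\H{\mathbf{u}}(\Prod{\E})\subseteq\R$ by the same triangle.

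So $\R\cap\Inj{\heart{\mathbf{u}}}=\Prod{\H{\mathbf{u}}(\E)}$ holds for every $\mathbf{u}\leq\mathbf{t}$. Hereditariness is used only to get $\R=\Cogen{\R\cap\Inj{\heart{\mathbf{u}}}}$, from which (c) follows at once.

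Arguing with arbitrary $\sigma\in\Prod{\N_\mathbf{u}}$ rather than indecomposable $\mu$ matters, because $\heart{\mathbf{u}}$ may contain superdecomposable injectives. If you prefer to stay with indecomposables, you must add that $(\S,\R)$ is of finite type. This holds because its HRS-tilt is cosilting (\cite[Prop.~4.5]{ALSV}, as used in the proof of Proposition~\ref{prop: closed sets}), so a hereditary $\R$ is cogenerated by the indecomposable injectives it contains.

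Incidentally, the paper invokes \cite[Prop.~7.5]{ALSV} for part (1), not for this step; see the proof of Lemma~\ref{lem: t-str from M}(3).
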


\subsection{Mutation and wide intervals}\label{sec: wide intervals}

In \cite[Sec.~7-9]{ALSV}, a strong relationship is established between mutation of cosilting complexes and so-called wide intervals of $\tors{A}$ and $\Cosilt{A}$. In \cite{AsaiPfeifer}, it is shown that the larger torsion class in the interval can be obtained from the smaller one by `enlarging' it by a wide subcategory. Using Theorem \ref{bij_cosilt mod cpx}, we can translate these results into theorems about maximal rigid sets and cosilting pairs.  In this subsection, we summarise the particular such results that we will use later in the article.

\begin{definition}
A \textbf{wide} subcategory of an abelian subcategory is one that is closed under kernels, cokernels and extensions. 
Two torsion pairs $\tpair{u}{v} \leq\tpair{t}{f}$ in $\tors{A}$ form a \textbf{wide  interval} in $\tors{A}$ if the intersection $\mathbf{t}\cap\mathbf{v}$ is a wide subcategory of $\mod{A}$. We will abuse notation slightly and denote these intervals by $[\mathbf{u}, \mathbf{t}]$.

\end{definition}

\begin{theorem}\cite[Thm.~8.8]{ALSV}\label{Thm: wide}
Two torsion pairs $\tpair{u}{v} \leq\tpair{t}{f}$ in $\tors{A}$ form a wide interval $[\mathbf{u}, \mathbf{t}]$ if and only if the maximal rigid sets $\N_\mathbf{t}$ and $\N_\mathbf{u}$ are related by mutation.
\end{theorem}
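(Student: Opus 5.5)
This statement is \cite[Thm.~8.8]{ALSV} transported along the bijections of Theorem~\ref{bij_cosilt mod cpx}. So the plan is to reduce it to Theorem~\ref{Thm: serre hereditary mutat} and then identify when the torsion pair $(\S,\R)$ in $\heart{\mathbf{u}}$ is hereditary.

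By Theorem~\ref{Thm: serre hereditary mutat}(2), $\N_\mathbf{t}$ and $\N_\mathbf{u}$ are related by mutation if and only if $(\S,\R)$ is hereditary, where $\S=\T\cap\V$. The task is therefore to show that $\S=\T\cap\V$ is closed under subobjects in $\heart{\mathbf{u}}$ exactly when $\mathbf{t}\cap\mathbf{v}$ is wide in $\mod{A}$.

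The first step is to locate $\S$ inside the heart. We have $\V\subseteq\heart{\mathbf{u}}$, and $\T\cap\V$ is a torsion class there. Since $\heart{\mathbf{u}}$ is locally coherent and $(\S,\R)$ is of finite type, both $\S$ and the hereditary property are controlled by finitely presented objects. Concretely, I would use $\T\cap\V=\varinjlim(\mathbf{t}\cap\mathbf{v})$, which follows from Theorem~\ref{bij_posets}(1) together with the fact that $\T$ and $\V$ are both closed under directed colimits. I would also use that $\fp{\heart{\mathbf{u}}}$ is the HRS-heart of $\mod{A}$ at $\tpair{u}{v}$. This reduces the question to whether $\mathbf{t}\cap\mathbf{v}$ is closed under subobjects in the abelian category $\fp{\heart{\mathbf{u}}}$.

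The second step is to compare subobjects in the heart with those in $\mod{A}$. Let $W\in\mathbf{t}\cap\mathbf{v}$. A monomorphism $X\to W$ in $\fp{\heart{\mathbf{u}}}$ corresponds to a triangle $X\to W\to Y\to X[1]$ with $Y$ in the heart. Taking standard cohomology gives $\mathrm{H}^{-1}$ and $\H{}$ data: $X\in\mathbf{v}$ is a submodule of $W$, and the cokernel $W/X$ sits in an exact sequence $0\to W/X\to\H{}(Y)$ whose $\mathrm{H}^1$-part lies in $\mathbf{u}$.

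In this language, heredity says that every $\mathbf{v}$-submodule $X$ of $W$ whose quotient $W/X$ fits in this way again lies in $\mathbf{t}\cap\mathbf{v}$. I would then check the two directions separately.

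For wide $\Rightarrow$ hereditary, closure of $\mathbf{t}\cap\mathbf{v}$ under kernels and cokernels should give that such an $X$ lies in $\mathbf{t}$.

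For hereditary $\Rightarrow$ wide, I would argue as follows. Given a morphism $f$ in $\mathbf{t}\cap\mathbf{v}$, its image is in $\mathbf{t}\cap\mathbf{v}$, since that class is closed under quotients in $\mathbf{t}$ and under submodules in $\mathbf{v}$. I would then realise $\Ker{f}$ as a subobject in the heart, and use closure under extensions to handle the cokernel.

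The main obstacle is this dictionary between heart-subobjects and module data. I would instead follow \cite{AsaiPfeifer} and \cite[Sec.~8]{ALSV}, invoking their characterisation that $\S$ is a Serre subcategory of $\heart{\mathbf{u}}$ if and only if $\mathbf{t}\cap\mathbf{v}$ is wide. This characterisation is then combined with Theorem~\ref{bij_cosilt mod cpx}, which translates mutation of cosilting complexes into mutation of maximal rigid sets.
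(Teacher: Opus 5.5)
Your fallback is exactly what the paper does. The paper gives no independent proof: it cites \cite[Thm.~8.8]{ALSV} and transports it along the cosilting bijections (Theorem~\ref{bij_cosilt mod cpx}), and your route through Theorem~\ref{Thm: serre hereditary mutat} to heredity of $(\S,\R)$ plus the characterisation in \cite[Sec.~8]{ALSV} amounts to the same citation.

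The direct sketch you abandoned uses a wrong dictionary, though. A subobject $X$ of $W\in\mathbf{t}\cap\mathbf{v}$ in $\fp{\heart{\mathbf{u}}}$ is the cocone of a map $W\to Y$ with $Y\in\mathbf{v}$ and $\mathrm{H}^1(X)=\Coker{W\to Y}\in\mathbf{u}$, so it need not be a module. Consequently, for $f\colon W_1\to W_2$ in $\mathbf{t}\cap\mathbf{v}$, closure under cokernels comes from heredity killing the heart-kernel $U[-1]$ of $\Im{f}\to W_2$, where $U$ is the $\mathbf{u}$-torsion part of $\Coker{f}$; it does not come from closure under extensions.
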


It was shown by Ringel that wide subcategories of a length category are again length categories and so their simple objects play an important role.  They form a collection of Hom-orthogonal bricks, called a \textbf{semibrick}. Given a wide interval $[\mathbf{u}, \mathbf{t}]$  in $\tors{A}$,   the semibrick given by the simple objects in $\mathbf{t}\cap \mathbf{v}$  controls the mutation process, as we are going to see next.

\subsection{Mutation and simple objects}\label{sec: mutation simples}

The next result shows that mutation of maximal rigid subsets can be regarded as exchanging injective envelopes of finitely presented simple objects in hearts via the homeomorphism in Theorem~\ref{Thm: cosilting and Zg}(3). To formulate this we need the following terminology.

\begin{definition} Let $\tpair{t}{f}\in \tors{A}$ and let $S \in \heart{\mathbf{t}}$ be a simple object in the heart $\heart{\mathbf{t}}$.  The {\textbf{$\N_\mathbf{t}$-injective} envelope of $S$} is defined to be the unique element $\mu \in \N_\mathbf{t}$ such that $\Hom{\Der{A}}(S, \mu) \neq 0$.
\end{definition}

\begin{remark}      \label{Rmk: N-inj env are inj env}
The $\mathcal{N_\mathbf{t}}$-injective envelope exists of any simple $S\in\heart{\mathbf{t}}$.  Indeed, the injective envelope $E$ of $S$ exists because $\heart{\mathbf{t}}$ is a Grothendieck category and it is indecomposable because $S$ is simple.  Thus the homeomorphism $H^0_{\mathbf{t}} \colon \N_{\mathbf{t}} \to \Sp{\heart{\mathbf{t}}}$  yields $\mu\in \N_{\mathbf{t}}$ such that $H^0_{\mathbf{t}}(\mu)\cong E$.  Moreover $\Hom{\Der{A}}(S, \mu')\cong \Hom{\heart{\mathbf{t}}}(S, H^0_{\mathbf{t}}(\mu'))$ for any $\mu'\in\N_{\mathbf{t}}$ and so $\mu$ is the unique element of $\N_{\mathbf{t}}$ such that $\Hom{\Der{A}}(S, \mu) \neq 0$ because $E$ enjoys the same property among the elements of $\Sp{\heart{\mathbf{t}}}$.
\end{remark}

\begin{theorem}[{\cite[Thm.~9.7, Lem.~9.11]{ALSV}}]
\label{Thm: simple mutation}

 Let $\tpair{u}{v} \leq\tpair{t}{f}$ be torsion pairs in $\tors{A}$.  
 The following statements are equivalent.	
 \begin{enumerate}
\item The maximal rigid sets $\N_\mathbf{t}$ and $\N_\mathbf{u}$ are related by mutation.
\item There exists a semibrick $\Omega_\mathbf{u}$  in $\mod{A}$ such that $\mathbf{t} \cap \mathbf{v} = \filt{\Omega_\mathbf{u}}$ in $\fp{\heart{}}_{\mathbf{u}}$ and every $S\in\Omega_\mathbf{u}$ is simple in $\fp{\heart{}}_{\mathbf{u}}$.
\item There exists a semibrick $\Omega_\mathbf{t}$ in $\mod{A}$ such that $\mathbf{t}[-1] \cap \mathbf{v}[-1] = \filt{\Omega_\mathbf{t}[-1]}$ in $\fp{\heart{}}_{\mathbf{t}}$ and every $S[-1]\in\Omega_\mathbf{t}[-1]$ is simple in $\fp{\heart{}}_{\mathbf{t}}$.
\end{enumerate}
In this case $\Omega := \Omega_\mathbf{u} = \Omega_\mathbf{t}$, and the bijection $\Delta\colon \N_\mathbf{u}\to \N_\mathbf{t}$  witnessing the mutation takes the $\N_\mathbf{u}$-injective envelope of $S$ to the $\N_\mathbf{t}$-injective envelope of $S[-1]$ for each $S\in\Omega$  and fixes the remaining elements of $\N_\mathbf{u}$.

\end{theorem}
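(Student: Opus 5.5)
The statement is a translation of \cite[Thm.~9.7, Lem.~9.11]{ALSV} from two-term cosilting complexes to maximal rigid sets, so the plan is to transport it through Theorem~\ref{bij_cosilt mod cpx} and then recheck the ingredients directly in the hearts. Under the bijection $\N\mapsto\sigma_\N$, mutation of $\N_\mathbf{t}$ and $\N_\mathbf{u}$ is mutation of $\sigma_{\N_\mathbf{t}}$ and $\sigma_{\N_\mathbf{u}}$ by the lemma preceding Theorem~\ref{Thm: serre hereditary mutat}. The hearts $\heart{\mathbf{t}},\heart{\mathbf{u}}$ and their finitely presented parts depend only on these t-structures. Hence conditions (1)--(3) are literally those of \cite{ALSV}. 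I would nevertheless sketch the argument intrinsically, as follows.

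For (1)$\Rightarrow$(2) I would use Theorem~\ref{Thm: wide}. Mutation means $\W:=\mathbf{t}\cap\mathbf{v}$ is wide in $\mod{A}$, hence a length category by Ringel, and so $\W=\filt{\Omega}$ for the semibrick $\Omega$ of its simple objects. It remains to see that each $S\in\Omega$ is simple in $\fp{\heart{}}_{\mathbf{u}}$.
\begin{itemize}
\item Since $\W\subseteq\fp{\heart{}}_{\mathbf{u}}=\heart{\mathbf{u}}\cap\ststr{}^b$, I would take a subobject $X\hookrightarrow S$ in $\fp{\heart{}}_{\mathbf{u}}$.
\item By Theorem~\ref{Thm: serre hereditary mutat}, $\S=\T\cap\V$ is a hereditary torsion class in $\heart{\mathbf{u}}$, so $X\in\S\cap\fp{\heart{}}_{\mathbf{u}}=\mathbf{t}\cap\mathbf{v}=\W$.
\item The cokernel of $X\hookrightarrow S$ in the heart also lies in $\W$. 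Thus $X\hookrightarrow S$ is a monomorphism in $\W$, and simplicity of $S$ in $\W$ forces $X=0$ or $X=S$.
\end{itemize}

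For (2)$\Rightarrow$(1) I would run this backwards. $\filt{\Omega}$ is then closed under subobjects and quotients inside $\fp{\heart{}}_{\mathbf{u}}$. Its direct-limit closure is therefore a hereditary torsion class of finite type in the locally coherent category $\heart{\mathbf{u}}$, and one checks it equals $\S$. Theorem~\ref{Thm: serre hereditary mutat}(2) then gives mutation. The equivalence with (3) is dual: work in $\heart{\mathbf{t}}$ with the torsion pair $(\mathbf{t}\cap\mathbf{v})[-1]$ sitting in the torsion-free part $\T[-1]$ of the tilted heart. The same $\Omega$ occurs because $\W$ determines its simple objects.

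For the description of $\Delta$ I would apply $\H{\mathbf{u}}$ to an exchange triangle $\Delta(\lambda)\to\epsilon_\lambda\to\lambda\to$ (with roles set by left/right mutation). Using $\Hom{\Der{A}}(S,-)$ and the fact that $\Hom{\Der{A}}(S,\E)=0$ for $\E=\N_\mathbf{t}\cap\N_\mathbf{u}$ and $S\in\S={}^{\perp_0}\H{\mathbf{u}}(\E)$, I would show that each point of $\N_\mathbf{u}\setminus\N_\mathbf{t}$ is the $\N_\mathbf{u}$-injective envelope of some $S\in\Omega$. The triangle then shows its image is the envelope of $S[-1]$ in $\heart{\mathbf{t}}$. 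Remark~\ref{Rmk: N-inj env are inj env} supplies the uniqueness used in this identification.

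I expect the main obstacle to be the converse (2)$\Rightarrow$(1), namely passing from finitely presented simples to a localising subcategory of the whole heart. This needs local coherence and a finite-type argument, which is where I would defer to \cite[Thm.~9.7]{ALSV}.
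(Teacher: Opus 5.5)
Your proposal is correct and follows the paper's approach: the paper gives no independent proof but imports \cite[Thm.~9.7, Lem.~9.11]{ALSV} through the bijection $\N\mapsto\sigma_\N$ of Theorem~\ref{bij_cosilt mod cpx}, together with the lemma in Section~\ref{Sec: mutation} identifying mutation of maximal rigid sets with mutation of the associated cosilting complexes, which is exactly your first paragraph. Your additional intrinsic sketch is sound as far as it goes, and the step you defer in (2)$\Rightarrow$(1) (showing that $\T\cap\V$ is the direct-limit closure of $\mathbf{t}\cap\mathbf{v}$ in $\heart{\mathbf{u}}$, hence hereditary) is covered by the same citation.
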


We then have the following direct corollary for irreducible mutations.

\begin{definition}
Let $ (L, \le ) $ be a poset, $ x, y \in L $.  We say that $ y $ \textbf{covers} $ x $ if $ x < y $ and for any $ z \in L $ such that $ x \le z \le y $, either $ z = x $ or $ z = y $.
\end{definition}

We will sometimes refer to the \textbf{Hasse quiver} of the poset $L$. This is the directed graph where the vertices of the graph are the elements of $L$ and there is an arrow from $y$ to $x$ whenever $y$ covers $x$. In the case where the poset is $\tors{A}$, we will simplify notation and write $\mathbf{t} \to \mathbf{u}$ whenever $ \tpair{t}{f}$ covers $\tpair{u}{v}$.

\begin{corollary}\label{Thm: mutation}  Let $\tpair{u}{v} \leq\tpair{t}{f}$ be torsion pairs in $\tors{A}$.    The following statements are equivalent.
\begin{enumerate}
\item The maximal rigid sets $\N_\mathbf{t}$ and $\N_\mathbf{u}$ are related by irreducible mutation.
\item There is a module $S$ in $\mod{A}$ that is simple in $\fp{\heart{}}_{\mathbf{u}}$ such that $\mathbf{t} \cap \mathbf{v} = \filt{S}$.
\item There is a module $S$ in $\mod{A}$ that is simple in $\fp{\heart{}}_{\mathbf{t}}$ such that $\mathbf{t}[-1] \cap \mathbf{v}[-1] = \filt{S[-1]}$.
\item $\N_\mathbf{t}$ covers $\N_\mathbf{u}$ in $\CosiltZg{A}$.
\item $ \tpair{t}{f}$ covers $\tpair{u}{v}$  in $\tors{A}$.
\end{enumerate}

Moreover, under the equivalent conditions above, if $\N_\mathbf{u}\setminus\N_\mathbf{t} = \{\lambda\}$ 
and $\N_\mathbf{t}\setminus\N_\mathbf{u} = \{\rho\}$, then 
 $\lambda$ is the $\N_\mathbf{u}$-injective envelope of the simple object $S$ in $\heart{\mathbf{u}}$  and $\rho$ is the $\N_\mathbf{t}$-injective envelope of the simple object $S[-1]$ in $\heart{\mathbf{t}}$.

In particular, for any $\mathbf{t}\in\tors{A}$, an element $\mu\in\N_\mathbf{t}$ is mutable in $\N_\mathbf{t}$ if and only if it is the $\N_\mathbf{t}$-injective envelope of a finitely presented simple object in $\heart{\mathbf{t}}$.
\end{corollary}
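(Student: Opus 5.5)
The plan is to derive everything from Theorem~\ref{Thm: simple mutation}, Theorem~\ref{Thm: wide} and the order isomorphisms of Theorem~\ref{bij_posets}. The general case is specialised to the situation where the semibrick $\Omega$ is a single brick, and this specialisation is then matched with covering relations in $\tors{A}$.

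The first step is the equivalences (1)$\Leftrightarrow$(2)$\Leftrightarrow$(3). Given a mutation, Theorem~\ref{Thm: simple mutation} provides a semibrick $\Omega$ with $\mathbf{t}\cap\mathbf{v}=\filt{\Omega}$. The bijection $\Delta$ moves exactly the $\N_\mathbf{u}$-injective envelopes of the $S\in\Omega$ and fixes all other points. Distinct simples in $\heart{\mathbf{u}}$ have non-isomorphic injective envelopes, so by Remark~\ref{Rmk: N-inj env are inj env} the assignment $S\mapsto$ its $\N_\mathbf{u}$-injective envelope is injective. Hence $|\N_\mathbf{u}\setminus\N_\mathbf{t}|=|\Omega|$. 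It follows that the mutation is irreducible exactly when $\Omega=\{S\}$ is a single brick, which gives (1)$\Leftrightarrow$(2) and, using $\Omega_\mathbf{u}=\Omega_\mathbf{t}$, also (1)$\Leftrightarrow$(3). The same theorem yields the ``moreover'' statement identifying $\lambda$ and $\rho$ as the injective envelopes of $S$ and $S[-1]$.

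The second step is (4)$\Leftrightarrow$(5), which is immediate because the composite of the bijections in Theorem~\ref{bij_posets} is an order isomorphism $\tors{A}\cong\CosiltZg{A}$.

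The third step is (2)$\Leftrightarrow$(5), and this is the main obstacle.
\begin{itemize}
\item For (2)$\Rightarrow$(5): take $\mathbf{u}\le\mathbf{w}\le\mathbf{t}$. The subcategory $\mathbf{w}\cap\mathbf{v}$ is a torsion class inside $\mathbf{t}\cap\mathbf{v}=\filt{S}$. Since this subcategory is filtered by a single brick, any nonzero torsion class in it contains $S$ (take the top of a nonzero object) and hence all of $\filt{S}$. So $\mathbf{w}\cap\mathbf{v}$ is $0$ or $\filt{S}$, and the standard identities $\mathbf{w}=\mathbf{u}\ast(\mathbf{w}\cap\mathbf{v})$ give $\mathbf{w}=\mathbf{u}$ or $\mathbf{w}=\mathbf{t}$.
\item For (5)$\Rightarrow$(2): I would invoke the brick labelling of \cite{BarnardCarrollZhu:19, DemonetIyamaReadingReitenThomas:23}. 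For a cover $\mathbf{t}\to\mathbf{u}$, the subcategory $\mathbf{t}\cap\mathbf{v}$ equals $\filt{B}$ for a brick $B$, so the interval is wide. Theorem~\ref{Thm: wide} then gives a mutation, and Theorem~\ref{Thm: simple mutation} gives a semibrick $\Omega$ with $\filt{\Omega}=\filt{B}$. Simples of a length category are determined by the category, so $\Omega=\{B\}$.
\item Alternatively, in the same direction, one could argue directly: if $|\Omega|\ge2$, then $\mathbf{u}\ast\filt{S}$ for a single $S\in\Omega$ would be a torsion class strictly between $\mathbf{u}$ and $\mathbf{t}$. The delicate part of this route is checking that this is closed under extensions and quotients, using Hom-orthogonality and $\Hom{}(\mathbf{u},S)=0$.
\end{itemize}

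Finally, the last assertion about mutable points follows as follows. If $\mu$ is mutable, the replacement is irreducible, and either $\N_\mathbf{t}$ is a left or a right mutation of the new set. By the ``moreover'' statement applied in the appropriate direction, $\mu$ is then the $\N_\mathbf{t}$-injective envelope of $S$ or of $S[-1]$. Both are finitely presented simples of $\heart{\mathbf{t}}$.

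For the converse, take a finitely presented simple $S'$ of $\heart{\mathbf{t}}$. It lies in $\fp{\heart{\mathbf{t}}}$, which is the heart of the HRS-tilt of $\ststr{}^b$, so it belongs to $\mathbf{f}$ or to $\mathbf{t}[-1]$ by the torsion pair $\bar t$ in the heart.
\begin{itemize}
\item If $S'=S[-1]$ with $S\in\mathbf{t}$, then $\mathbf{u}:=\mathbf{t}\cap{}^{\perp_0}S$ is a torsion class, and I would check that $\mathbf{t}\cap\mathbf{u}^{\perp_0}=\filt{S}$. Condition (3) then holds, so there is an irreducible mutation at $\mu$.
\item If $S'\in\mathbf{f}$, I would dually take $\mathbf{t}':=\mathbf{t}\ast\filt{S'}$ and use condition (2) with $\mathbf{t}$ in the role of $\mathbf{u}$.
\end{itemize}
Verifying these two torsion-class computations is the remaining routine work.
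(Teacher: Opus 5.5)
Your proposal is correct and follows essentially the paper's route: (1)--(3) and the ``moreover'' part come from specialising Theorem~\ref{Thm: simple mutation} to a one-element semibrick, (4)$\Leftrightarrow$(5) comes from the order isomorphisms of Theorem~\ref{bij_posets}, and covers are linked to single bricks via the brick labelling of \cite{BarnardCarrollZhu:19, DemonetIyamaReadingReitenThomas:23}, which is exactly what underlies \cite[Cor.~9.14]{ALSV} that the paper cites; you simply make the (2)$\Leftrightarrow$(5) and mutable-point arguments explicit. One step you use only tacitly is that the $\N_\mathbf{u}$-injective envelope of each $S\in\Omega$ lies in $\N_\mathbf{u}\setminus\N_\mathbf{t}$, i.e.\ is genuinely moved by $\Delta$; this holds because $S\in\mathbf{t}\subseteq\X_\mathbf{t}={}^{\perp_{\le 0}}\N_\mathbf{t}$, so $\Hom{\Der{A}}(S,\N_\mathbf{t})=0$.
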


 The equivalence with statement (4) can be found in \cite[Cor.~9.14]{ALSV}.
In order to prove it, it was necessary to connect simples in the HRS-tilted heart to the brick labels in the Hasse quiver of $\tors{A}$, or in other words, to the minimal extending and minimal coextending modules  considered in \cite{BarnardCarrollZhu:19}.  The objects in the following definition coincide with minimal extending and minimal coextending modules when $\A=\mod{A}$. 

 \begin{definition}\label{almost}
Let $\A$ be an abelian category and let $t = (\T, \F)$ be a torsion pair in $\A$. 
We say that $ B $ is \textbf{torsion almost torsion-free} with respect to $t$, if:
\begin{enumerate}
\item[(a)] $ B \in \mathcal{T} $, but every proper subobject of $ B $ is contained in $ \mathcal{ F } $; 
\item[(b)] for every short exact sequence $ 0 \to M \to T \to B \to 0 $, if $ T \in \mathcal{T} $, then $ M \in \mathcal{T} $.
\end{enumerate}
Dually, one defines   \textbf{torsion-free almost torsion} modules with respect to $t$.
\end{definition}

The next result connects these objects to simples in the heart and shows  that the set $\Omega$ in Theorem~\ref{Thm: simple mutation} is precisely the semibrick arising from the wide interval $[\mathbf u, \mathbf t]$.

\begin{proposition}\label{simples in the heart} 
Let $\tpair{t}{f}\in\tors{A}$ and let $\Tpair{T}{F}$ be the  associated torsion pair in $\Cosilt{A}$. 
\begin{enumerate}
\item 
\cite[Thm.~3.6]{AHL}  The simple objects in  $\heart{\mathbf{t}}$  are precisely the objects of the form $T[-1]$  or $F$ where $T\in\Mod{A}$ is torsion almost torsion-free, and $F\in\Mod{A}$ is torsion-free almost torsion with respect to $\Tpair{T}{F}$. 
  The simple objects  in $\fp{\heart{}}_{\mathbf{t}}$ are those given by finitely generated $T$ and $F$.
\item
\cite[Prop.~2.11]{Sentieri:22} 
The modules that are torsion almost torsion-free with respect to $\Tpair{T}{F}$ are always finitely generated and coincide with the torsion almost torsion-free modules with respect to $\tpair{t}{f}$. The modules  that are  torsion-free almost torsion with respect to $\Tpair{T}{F}$ and are finitely generated coincide with the torsion-free almost torsion modules with respect to $\tpair{t}{f}$.
\item \cite[Lem.~9.6]{ALSV} Suppose $\tpair{u}{v}\in\tors{A}$ such that $[\mathbf u, \mathbf t]$ is a wide interval of $\tors{A}$.  The semibrick formed by the simple objects of $\mathbf{t}\cap \mathbf{v}$ consists of the modules  in $\mathbf v$ that are torsion almost torsion-free with respect to $\tpair{t}{f}$,  or equivalently, the modules in $\mathbf t$ that are torsion-free almost torsion with respect to $\tpair{u}{v}$.
\end{enumerate}
\end{proposition}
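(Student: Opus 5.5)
The proposition collects results from the literature, so the plan is to reduce each part to the cited statement and to check only the small translation steps that the citations do not literally cover. For (1), I would invoke \cite[Thm.~3.6]{AHL} for the HRS-tilt of the standard t-structure at the torsion pair of finite type $\Tpair{T}{F}$. By the preliminaries, this tilt is the cosilting t-structure $(\X_\mathbf{t},\Y_\mathbf{t})$ with heart $\heart{\mathbf{t}}$. That theorem gives the simple objects of $\heart{\mathbf{t}}$ as the shifts $T[-1]$ of torsion almost torsion-free modules together with the torsion-free almost torsion modules $F$.

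For the statement about $\fp{\heart{}}_{\mathbf{t}}$, two facts are needed.
\begin{itemize}
\item[(a)] Every simple object of $\fp{\heart{}}_{\mathbf{t}}$ is simple in $\heart{\mathbf{t}}$. Since $\heart{\mathbf{t}}$ is locally coherent, any nonzero subobject $U$ of a finitely presented $S$ contains a nonzero finitely generated subobject. This subobject is the image of a map from a finitely presented object, so by coherence it is finitely presented. If $S$ is simple in $\fp{\heart{}}_{\mathbf{t}}$, this subobject equals $S$, hence $U=S$. Conversely, a simple object of $\heart{\mathbf{t}}$ is finitely generated, being generated by any nonzero element, and it is finitely presented exactly when it lies in $\fp{\heart{}}_{\mathbf{t}}$, where it is clearly simple.
\item[(b)] A simple object of $\heart{\mathbf{t}}$ is finitely presented if and only if the corresponding module is finitely generated. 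I would use $\fp{\heart{}}_{\mathbf{t}}=\heart{\mathbf{t}}\cap\ststr{}^b$, noted after the definition of cosilting t-structures. Thus $T[-1]$ or $F$ lies in $\fp{\heart{}}_{\mathbf{t}}$ exactly when its unique nonzero cohomology lies in $\mod{A}$. Since $A$ is left artinian, a module in $\mod{A}$ is the same as a finitely generated module.
\end{itemize}

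Part (2) is \cite[Prop.~2.11]{Sentieri:22}. The only point to check is that the torsion pair there is $(\varinjlim\mathbf{t},\varinjlim\mathbf{f})$, which is the pair of Theorem~\ref{bij_posets}(1); its restriction to $\mod{A}$ is $\tpair{t}{f}$. The argument runs as follows. A torsion almost torsion-free module $B$ has all proper submodules in $\F$. If $B$ were not finitely generated, it would be the directed union of its proper finitely generated submodules. These lie in $\F$, which is closed under directed colimits, so $B\in\F\cap\T=0$, a contradiction. Once $B$ is finitely generated, conditions (a) and (b) of Definition~\ref{almost} transfer between $\Tpair{T}{F}$ and $\tpair{t}{f}$. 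The reason is that $\T\cap\mod{A}=\mathbf{t}$ and $\F\cap\mod{A}=\mathbf{f}$, and any short exact sequence $0\to M\to T\to B\to 0$ with $T\in\T$ can be tested on finitely generated subobjects. This test step is the only place where care is needed, and I would simply defer it to the cited proof.

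Part (3) is \cite[Lem.~9.6]{ALSV}, combined with Theorem~\ref{Thm: simple mutation}, which identifies the semibrick $\Omega$ of simples of $\mathbf{t}\cap\mathbf{v}$. The main obstacle, if any, is bookkeeping: making sure the conventions for HRS-tilts and hearts in the cited sources match the notation used here. In particular, the shift $[-1]$ on the torsion part has to agree with Definition~\ref{HRS}. I would therefore state explicitly that the t-structure $(\X_\mathbf{t},\Y_\mathbf{t})$ used here is the one to which \cite{AHL} and \cite{ALSV} apply.
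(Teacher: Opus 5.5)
Your proposal is correct and matches the paper, which gives no argument of its own for this proposition beyond the three citations \cite[Thm.~3.6]{AHL}, \cite[Prop.~2.11]{Sentieri:22} and \cite[Lem.~9.6]{ALSV}. The translation checks you add are sound: the finitely presented simples via $\fp{\heart{}}_{\mathbf{t}}=\heart{\mathbf{t}}\cap\Db{A}$, and the finite generation of torsion almost torsion-free modules from closure of $\F$ under directed unions. The appeal to Theorem~\ref{Thm: simple mutation} in part (3) is harmless but not needed.
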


\section{Critical and special objects}\label{sec: crit+spec}
Throughout Section \ref{sec: crit+spec} we fix $\tpair{t}{f} \in  \tors{A}$ and we denote by $\Tpair{T}{F}\in \Cosilt{A}$, by $\Cpair{Z}{I} := (\mathcal{Z}_{\mathbf{t}}, \mathcal{I}_{\mathbf{t}}) \in \Cosiltpair{A}$ and by $\mathcal{N} :=  \mathcal N_{\mathbf{t}} \in \CosiltZg{A}$ the mathematical structures corresponding to $\tpair{t}{f}$ under the cosilting bijections.

By Theorem~\ref{bij_posets}, the modules  $N\in\Z$ and  $I\in\I$ correspond to complexes $\mu_N$ and  $I[-1]$ in $\N$. We aim to determine when these complexes   are $\N$-injective envelopes of simples, employing the description of the simple objects in  $\heart{\mathbf{t}}$ given above.

\begin{lemma}\label{lem: N-injectives in ModA}
Let $F\in\F$ be torsion-free almost torsion  and let $T\in\T$ be torsion almost torsion-free with respect to $\Tpair{T}{F}$.  The following statements hold for $N\in \Z$ and $I\in\I$.  
\begin{enumerate}
\item The complex $\mu_N$ is the $\N$-injective envelope of $F$ if and only if $\Hom{A}(F,N) \neq 0$.
\item The complex $I[-1]$ cannot be the $\N$-injective envelope of $F$.
\item The complex $\mu_N$ is the $\N$-injective envelope of $T[-1]$ if and only if $T$ is not in $\Ctf{\mu_N}$.
\item The complex $I[-1]$ is the $\N$-injective envelope of $T[-1]$ if and only if $\Hom{A}(T, I)\neq 0$.
\end{enumerate}
\end{lemma}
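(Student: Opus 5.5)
By Remark~\ref{Rmk: N-inj env are inj env}, the $\N$-injective envelope of a simple $S\in\heart{\mathbf{t}}$ is the unique $\mu\in\N$ with $\Hom{\Der{A}}(S,\mu)\neq 0$. Proposition~\ref{simples in the heart}(1) tells us that $F$ and $T[-1]$ are simple in $\heart{\mathbf{t}}$. So each of the four claims comes down to computing a Hom-space in $\Der{A}$ from a module (or a shifted module) into $\mu_N$ or $I[-1]$. In each case we check whether it vanishes.

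For (1), I would use the triangle $N\to \mu_N \to \Coker{}[-1]\to N[1]$. This triangle comes from viewing $\mu_N\colon I^0\to I^1$ as the truncation of the minimal injective coresolution of $N$. Its cohomology is $\H{}(\mu_N)=N$ in degree $0$ and some module $C=\mathrm{H}^1(\mu_N)$ in degree $1$. Since $F$ is a module concentrated in degree $0$, we have $\Hom{\Der{A}}(F,C[-1])=0$. This gives an injection $\Hom{A}(F,N)\hookrightarrow \Hom{\Der{A}}(F,\mu_N)$. The surjectivity comes from the next term, $\Hom{\Der{A}}(F, C[-1])=0$.

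A cleaner route is to compute directly with the complex of injectives: a map $F\to\mu_N$ in $\Der{A}$ is a homotopy class of chain maps $F\to I^0$ with composite to $I^1$ zero. Such maps are exactly the maps $F\to\Ker(I^0\to I^1)=N$, and no nonzero homotopies exist in negative degrees. Hence $\Hom{\Der{A}}(F,\mu_N)\cong\Hom{A}(F,N)$, which proves (1).

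For (2), we have $\Hom{\Der{A}}(F,I[-1])=\Ext{A}^{-1}(F,I)=0$. So $I[-1]$ never receives a nonzero map from $F$ and cannot be its envelope.

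For (3), shifting gives $\Hom{\Der{A}}(T[-1],\mu_N)\cong\Hom{\Der{A}}(T,\mu_N[1])$. By Lemma~\ref{Csigma}(1) applied with $\sigma=\mu_T$, or directly by the definition of $\Ctf{\mu_N}$, this is zero if and only if $T\in\Ctf{\mu_N}$. Here I would use the description of $\Ctf{\X}$ from the notation table: $\Hom{\Der{A}}(M,\X[1])=0$. So $\mu_N$ is the envelope exactly when $T\notin\Ctf{\mu_N}$.

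For (4), $\Hom{\Der{A}}(T[-1],I[-1])\cong\Hom{A}(T,I)$, which gives (4) immediately.

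The only step requiring any care is the identification $\Hom{\Der{A}}(F,\mu_N)\cong\Hom{A}(F,N)$ in (1). Even this is a standard computation: $\mu_N$ is a complex of injectives concentrated in degrees $0$ and $1$, so morphisms from a module $M$ in degree $0$ are computed in the homotopy category. They are the maps $M\to I^0$ killed by the differential, i.e.\ the maps $M\to\Ker=\H{}(\mu_N)=N$. Everything else is a direct application of the uniqueness in Remark~\ref{Rmk: N-inj env are inj env}.
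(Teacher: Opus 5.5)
Your proposal is correct and follows the paper's proof: it uses Remark~\ref{Rmk: N-inj env are inj env} to reduce each claim to whether $\Hom{\Der{A}}(F,\mu)$ or $\Hom{\Der{A}}(T[-1],\mu)$ vanishes, and then computes these spaces via chain maps up to homotopy. One small slip in your first route for (1) is that injectivity of $\Hom{A}(F,N)\to\Hom{\Der{A}}(F,\mu_N)$ comes from $\Hom{\Der{A}}(F,C[-2])=0$, not from $\Hom{\Der{A}}(F,C[-1])=0$; your direct chain-map computation already settles (1) without it.
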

\begin{proof}
By Remark \ref{Rmk: N-inj env are inj env}, an element $\mu$ of $\N$ is an $\N$-injective envelope of $F$ (respectively $T[-1]$) if and only if there is a non-zero homomorphism $F \to \mu$ (respectively $T[-1]\to \mu$).  The statements then follow from a straightforward analysis of the possible chain maps up to homotopy.
\end{proof}

\begin{definition} 
(1) A morphism $ f : N \to \overline{N} $ in $ \mathcal{F} $ is \textbf{left almost split in $ \mathcal{F} $}  if it is not a split monomorphism and for every morphism $ g : N \to N' $ in $\F$ which is not a split monomorphism there exists $ h : \overline{N} \to N' $ such that $ g = h \circ f $. 

(2) We say that $N\in\F$ is \textbf{neg-isolated} in $\F$ if there exists a left almost split map $N\to\overline{N}$ in $\F$.

(3) We say that $N\in\F$ is \textbf{critical} in $\F$ if there exists a left almost split map $N\to\overline{N}$ in $\F$ that is an epimorphism.

(4) A module $N\in\F\cap\F^{\perp_1}$ is called \textbf{special} in $\F$ if there exists a left almost split map  $N\to\overline{N}$ in $\F$ that is a monomorphism.
\end{definition}

\begin{remark}\label{Rem: critical or special}
Clearly, there exists a left almost split map $ N \to \overline{N}$ in $\mathcal{F}$ that is a monomorphism if and only if every left almost split map starting at $N$ is a monomorphism. If this is not the case, then there is a left almost split map $ N \to \overline{N}$ in $\mathcal{F}$ that is an epimorphism,  as shown in \cite[Lem.~4.3]{AHL}. In other words, every module $N\in \F\cap\F^{\perp_1}$ that is neg-isolated in $\F$ is either critical or special in $\F$. Morover, every critical module in $\F$ is contained in $\F \cap \F^{\perp_1}$ by \cite[Prop.~5.16]{AHL}.
\end{remark}

The interplay between critical and torsion-free almost torsion modules is worked out in \cite{ALS1}.  The following theorem summarises the main result.

\begin{theorem}[{\cite[Thm.~5.11]{ALS1}}]\label{Thm: critical tf/t}
There is a bijection between the torsion-free almost torsion modules with respect to $\Tpair{T}{F}$ and the critical modules in $\F$.  The bijection assigns to $F$  the unique module $N\in\Z$ such that $\Hom{A}(F, N)\neq 0$.
\end{theorem}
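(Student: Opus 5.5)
Theorem~\ref{Thm: critical tf/t} is cited from \cite[Thm.~5.11]{ALS1}, but it can be reconstructed from the machinery of the current paper. The strategy is to pass through the heart $\heart{\mathbf{t}}$ and use the homeomorphism of Theorem~\ref{Thm: cosilting and Zg}(3), together with the left almost split analysis of \cite{AHL}.

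\textbf{Step 1: the map is well defined.} Let $F$ be torsion-free almost torsion with respect to $\Tpair{T}{F}$. By Proposition~\ref{simples in the heart}(1), $F$ is simple in $\heart{\mathbf{t}}$, so by Remark~\ref{Rmk: N-inj env are inj env} it has a unique $\N$-injective envelope $\mu\in\N$. By Lemma~\ref{lem: N-injectives in ModA}(2), $\mu$ is not of the form $I[-1]$. Hence $\mu=\mu_N$ for some $N\in\Z$, and by Lemma~\ref{lem: N-injectives in ModA}(1) $N$ is the unique module in $\Z$ with $\Hom{A}(F,N)\neq0$. Distinct simples have non-isomorphic injective envelopes, so the assignment $F\mapsto N$ is injective.

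\textbf{Step 2: the image consists of critical modules.} Fix $F$ and $N$ as above; we must produce an epimorphic left almost split map $N\to\overline N$ in $\F$. Since $F\in\F$ is cogenerated by $\Z$ and only $N$ receives maps from $F$, there is a nonzero map $f\colon F\to N$. Every proper quotient of $F$ is torsion, because $F$ is almost torsion, so $f$ is a monomorphism. Consider $N\to N/F$. I would use \cite[Lem.~4.3, Prop.~5.16]{AHL} in the dual form: the left almost split map from $N$ in $\F$ should be the cokernel of $F\hookrightarrow N$, composed with the $\F$-reflection (torsion-free quotient) of $N/F$. This is an epimorphism. To check that it is left almost split, take $g\colon N\to N'$ in $\F$ that is not a split monomorphism. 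Since $N$ is the injective envelope of $F$ in the heart, the map $\H{\mathbf t}(g)$ restricted to the socle $F$ must vanish; otherwise $g$ would be monic on the essential socle and, $N$ being $\Ext$-injective, would split. So $g$ kills $F$ and factors through $N/F$, and then through its torsion-free quotient because $N'\in\F$.

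\textbf{Step 3: surjectivity.} Let $N$ be critical in $\F$, with epimorphic left almost split map $f\colon N\to\overline N$. Then $N\in\F\cap\F^{\perp_1}$ by Remark~\ref{Rem: critical or special}, so $N\in\Z$. Set $F=\Ker f$. This is nonzero, since otherwise $f$ would be an isomorphism and hence split. We show $F$ is torsion-free almost torsion. For a nonzero quotient $F\to F/F'$ with $F/F'\in\F$, form the pushout of $N$ along $F\to F/F'$; the resulting map from $N$ is not split mono, so it factors through $f$, which forces $F'=F$'s complement to be trivial, and hence every proper quotient of $F$ is torsion. The extension condition is shown in the same way via pushouts, using that $N\in\F^{\perp_1}$. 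Finally $\Hom{A}(F,N)\neq0$, so Step~1 sends $F$ back to $N$.

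\textbf{Main obstacle.} The hardest part is Step~2: showing that a non-split $g$ vanishes on $F$ requires translating essentiality and injectivity in $\heart{\mathbf t}$ back into $\Mod{A}$ via $\H{\mathbf t}$. I would first try to do this purely with Lemma~\ref{Csigma} and Lemma~\ref{lem: N-injectives in ModA}.
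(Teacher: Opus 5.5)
Your three-step plan works, and Steps 1 and 3 are fine. In condition (b) of Step 3 you do not even need $N\in\F^{\perp_1}$: push out the preimage of $\operatorname{t}M$ along $F\hookrightarrow N$. The problem is Step 2, the step you flag as the main obstacle, which is justified by a false claim.

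\textbf{The false claim.} $N$ is \emph{not} the injective envelope of $F$ in $\heart{\mathbf t}$. That envelope is $\H{\mathbf t}(\mu_N)$. Its standard cohomology is $N$ in degree $0$ and the torsion part of $\Coker{E_0\to E_1}$ in degree $1$, where $0\to N\to E_0\to E_1$ is the minimal injective copresentation. These two objects differ in general. For example, take the path algebra of $1\to 2\to 3$ modulo the path of length two, with $\F=\mathrm{add}(S_3)$. Then $F=N=S_3$, but $\Ext{\heart{\mathbf t}}^1(S_1[-1],S_3)\cong\Ext{A}^2(S_1,S_3)\neq 0$, so $S_3$ is not injective in the heart. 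Hence your sentence ``$g$ is monic on the essential socle, hence splits'' is unsupported as written. It is a statement about subobjects in $\heart{\mathbf t}$, which Lemmas~\ref{Csigma} and~\ref{lem: N-injectives in ModA} do not address.

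\textbf{The repair.} With respect to the torsion pair $(\F,\T[-1])$ of $\heart{\mathbf t}$, $N$ is the torsion part of $\H{\mathbf t}(\mu_N)$. So $N$ is a subobject, in the heart, of the indecomposable, hence uniform, injective $\H{\mathbf t}(\mu_N)$. Your nonzero $f\colon F\to N$ is a monomorphism in the heart, since $F$ is simple there. Therefore $F$ is an essential subobject of $N$ in $\heart{\mathbf t}$. Now let $g\colon N\to N'$ with $N'\in\F$ and $gf\neq 0$. Then
\[
\ker_{\heart{\mathbf t}}(g)\cap F=\ker_{\heart{\mathbf t}}(gf)=0,
\]
so $g$ is a monomorphism in $\heart{\mathbf t}$. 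For a morphism between objects of $\F$, being a monomorphism in the heart means being injective with cokernel in $\F$. This is the dual of the fact quoted from \cite[Lem.~2.3]{AHL} in the proof of Proposition~\ref{prop: special injectives}. Only at this point does $N\in\F^{\perp_1}$ split $g$. Applying the same fact to $f$ shows $N/F\in\F$ already, so your left almost split map is just $N\to N/F$ and the torsion-free reflection is superfluous.

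\textbf{Comparison with the paper.} The paper gives no argument of its own; it cites \cite[Thm.~5.11]{ALS1}. Judging from the parallel Proposition~\ref{prop: upgrade t/tf special}, that proof passes to the cotilting pair $(\bar{\T},\bar{\F})$ over $\bar A=A/\Ann{\Z}$ and imports the sequences $0\to F\to N\to\overline{N}\to 0$ from \cite{AHL}. Once Step 2 is repaired, your argument is a self-contained alternative. It works directly with $(\T,\F)$ and $\heart{\mathbf t}$, and it avoids both the reduction to $\bar A$ and the comparison of almost torsion modules over $A$ and over $\bar A$.
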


Following the terminology above, the $\N$-injective envelope of the simple object $F$ in $\heart{\mathbf{t}}$ is $\mu_N$ for the corresponding critical module $N\in\Z$.  

We now consider the $\N$-injective envelope of $T[-1]$ for $T$ torsion almost torsion-free with respect to $\Tpair{T}{F}$.  We will follow a similar line of reasoning to \cite[Sec.~5]{ALS1}. In particular, we will compare the torsion almost torsion-free modules and special modules with respect to $\Tpair{T}{F}$ in $\Mod{A}$ with those with respect to $(\bar{\T}, \bar{\F})$ in $\Mod{\bar{A}}$. See Proposition \ref{prop: cotilt/cosilt} for an explanation of the notation.

\begin{proposition}\label{prop: upgrade t/tf special}
\begin{enumerate}
\item An $A$-module is torsion almost torsion-free with respect to $(\bar{\T}, \bar{\F})$ if and only if it is torsion almost torsion-free with respect to $\Tpair{T}{F}$ and it admits a surjective $ \mathcal{F}-$cover.
\item  Consider a short exact sequence 
\[
\begin{tikzcd}
0 \arrow[r] & N \arrow[r, "b"] & \overline{N} \arrow[r, "f"] & T \arrow[r] & 0
\end{tikzcd}
\] in $\Mod{A}$.  We have that $ T $ is torsion almost torsion-free with respect to $ \Tpair{T}{F} $ and $ f $ is a $ \F-$cover if and only if $N$ is special in $\F$ and $b$ is a  left almost split map in $\F$.  In this case the module $N$ is indecomposable and hence $N\in\Z$.
 
\end{enumerate}
\end{proposition}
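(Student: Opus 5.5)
For part (1) the plan is to compare the two definitions of torsion almost torsion-free directly, using Lemma~\ref{lem: surj cover}. By that lemma, admitting a surjective $\F$-cover is the same as lying in $\Mod{\bar{A}}$. Moreover $\bar{\F}=\F$ and $\bar{\T}=\T\cap\Mod{\bar{A}}$ by Proposition~\ref{prop: cotilt/cosilt}. For a module $B\in\Mod{\bar{A}}$, condition (a) therefore reads identically in the two settings, since subobjects of $B$ stay in $\Mod{\bar{A}}$.

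For condition (b), one direction is easy. If $B$ satisfies (b) for $\Tpair{T}{F}$, then it satisfies (b) for $(\bar\T,\bar\F)$, because a sequence $0\to M\to T\to B\to 0$ in $\Mod{\bar A}$ with $T\in\bar\T$ gives $M\in\T\cap\Mod{\bar A}=\bar\T$.

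The converse is the delicate point. Take $0\to M\to T\to B\to 0$ with $T\in\T$ an arbitrary $A$-module. I would pass to $\bar A$ by replacing $T$ with $T':=T/\Ann{\Z}T$. Since $\Ann{\Z}T\subseteq M$ (because $\Ann{\Z}B=0$), we get an exact sequence $0\to M/\Ann{\Z}T\to T'\to B\to0$ in $\Mod{\bar A}$, with $T'\in\T$ as a quotient of $T$. Applying (b) for $\bar{\T}$ gives $M/\Ann{\Z}T\in\T$. Next, $\Ann{\Z}T$ is a quotient of a sum of copies of $T$ (it is $\sum aT$), so it lies in $\T$. Then $M\in\T$ by closure of $\T$ under extensions.

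For part (2), the plan is to work inside $\Mod{\bar A}$ with the cotilting module $C$ of Proposition~\ref{prop: cotilt/cosilt}. In a cotilting torsion pair, the corresponding statement, relating torsion almost torsion-free modules to special modules via $\F$-covers, is the dual of the argument in \cite{ALS1}, Sec.~5, and I expect it is available in \cite{AHL} for cotilting classes. The reduction to that setting goes as follows. If $f$ is an $\F$-cover with $T$ torsion almost torsion-free, then $f$ is surjective, so $T\in\Mod{\bar A}$ and by (1) $T$ is torsion almost torsion-free for $(\bar\T,\bar\F)$. Conversely, if $b$ is a monic left almost split map in $\F$, then $\overline N\in\F\subseteq\Mod{\bar A}$ and $T$ is a quotient of it. Both sides of the equivalence are therefore statements in $\Mod{\bar A}$, where $\F$ is unchanged.

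It then remains to prove the cotilting case, in four steps:
\begin{itemize}
\item Given that $f$ is an $\F$-cover, Wakamatsu's lemma gives $N\in\F^{\perp_1}$, so $N\in\F\cap\F^{\perp_1}$.
\item $N$ is not injective-like in $\F$, because $T\notin\F$, so $b$ is not split.
\item For a non-split-mono $g\colon N\to N'$ in $\F$, take the pushout of the sequence along $g$, giving $0\to N'\to P\to T\to0$. Minimality of $T$ (every proper subobject is in $\F$) together with property (b) should force either $P\in\F$, in which case $g$ factors through $b$ via the cover property, or a splitting that contradicts the hypothesis that $g$ is not a split mono.
\item For the converse, start from the monic left almost split $b$. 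Show that the cokernel $T$ is torsion, using that $\F$ is closed under extensions and $b$ is not split. Then obtain (a) and (b) from the almost split property, and show that $f$ is right minimal because $b$ is left almost split, hence lies in the radical.
\end{itemize}

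Indecomposability of $N$ follows because $\mathrm{End}(N)$ is local when $N$ admits a left almost split map. Since $N\in\F\cap\F^{\perp_1}$, this gives $N\in\Z$.

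I expect the main obstacle to be the pushout step, namely showing that $P\in\F$ exactly when $g$ is not a split monomorphism. The natural tool is the torsion part of $P$ surjecting onto $T$.
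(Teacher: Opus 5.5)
\emph{Part (1).} Reducing modulo $\Ann{\Z}T$ is a reasonable idea, but the step ``$\Ann{\Z}T=\sum_a aT$ is a quotient of a sum of copies of $T$, hence torsion'' is wrong.

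\begin{itemize}
\item For $a\in\Ann{\Z}$ the map $t\mapsto at$ is not $A$-linear.
\item In general $IT$ is not generated by $T$. For the path algebra of $1\to 2$ with $I=\mathrm{rad}\,A$ and $T=P_1$, one gets $IT\cong S_2$, while $\Hom{A}(P_1,S_2)=0$.
\end{itemize}

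Since $\T$ is not closed under submodules, the claim $\Ann{\Z}T\in\T$ needs a real argument. The claim is true. Put $I:=\Ann{\Z}$. The $A$-linear maps $I\to T$, $i\mapsto it$, exhibit $IT$ as a quotient of $I^{(T)}$, so it suffices to show that $I\in\T$ as a left module.

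\begin{itemize}
\item For $F\in\F$ we have $IF=0$. Hence the restriction $\Hom{A}(A,F)\to\Hom{A}(I,F)$ is zero, and so $\Hom{A}(I,F)\cong\Ext{A}^1(\bar A,F)$.
\item This Ext group vanishes. Indeed $\bar A\in\F$, because a cotilting class in $\Mod{\bar A}$ contains $\bar A$. Any extension of $\bar A$ by $F$ therefore lies in $\F\subseteq\Mod{\bar A}$, and it splits there.
\end{itemize}

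With this supplement your route works. The paper avoids the ideal altogether. When $K\in\F$, it pulls back $T\to B$ along a surjective $\F$-cover of $B$. This shows that $T$ is a quotient of a module in $\F$, hence $T\in\Mod{\bar A}$ and $K\in\bar\T\cap\F=0$. The general case follows by pushing out along $K\to K/tK$.

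\emph{Part (2).} Reducing to the cotilting case in $\Mod{\bar A}$ matches the paper, which simply refers to the analogous proof in \cite{ALS1}. You should note that $\Ext{A}^1$ and $\Ext{\bar A}^1$ agree on $\F$, since $\F$ is extension-closed, so ``special'' means the same thing over $A$ and over $\bar A$. Your sketch of the cotilting case, however, is not right as written.

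In Step~3 the dichotomy is reversed: $P\in\F$ holds exactly when $g$ \emph{is} a split monomorphism.
\begin{itemize}
\item If $P\in\F$, lift $P\to T$ through $f$. Right minimality of $f$ then forces $g$ to be a split monomorphism.
\item If $g$ is not a split monomorphism, then $P\notin\F$. The image of $tP$ in $T$ is a nonzero torsion submodule, hence equals $T$ by (a). Its kernel $tP\cap N'$ lies in $\T\cap\F=0$ by (b).
\item So the pushout sequence splits, and this is precisely what makes $g$ factor through $b$.
\end{itemize}

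In Step~4, torsionness of the cokernel cannot follow from ``$b$ is monic, left almost split and not split''. If $b$ is such a map, so is $\binom{b}{0}\colon N\to\overline N\oplus X$ for any $0\neq X\in\F$, and its cokernel $T\oplus X$ is not torsion. A concrete instance: over the path algebra of $1\to2$, take $\F=\mathrm{Add}(S_2\oplus P_1)$, $N=S_2$ and $b\colon S_2\hookrightarrow P_1$.

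So $b$ must be taken left minimal, which is presumably intended in the statement. Under that assumption, suppose $T/tT\neq 0$. Using $N\in\F^{\perp_1}$ and the almost split property, one gets a decomposition $\overline N=N''\oplus X'$ with $b(N)\subseteq N''$ and $X'\cong T/tT\neq 0$. This contradicts left minimality of $b$.
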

\begin{proof}
(1) Recall from  Lemma \ref{lem: surj cover} that an $A$-module $ B $ admits a surjective cover if and only if it is contained in $ \Mod{\bar{A}} $. 
It is clear that, in this situation, if $ B $ is torsion almost torsion-free with respect to $(\T, \F)$, it is also torsion almost torsion-free with respect to $(\bar{\T}, \bar{\F}) $. 

For the converse, let $B$ be torsion almost torsion-free with respect to $(\bar{\T}, \bar{\F})$ and note that $B\in\bar{\T}$ has a surjective $\F$-cover by Lemma \ref{lem: surj cover}. Notice that $ B $ is surely a torsion module and that all its proper submodules are contained in $ \bar{\F} = \mathcal{F} $. Thus it   only remains  to check condition (b) in Definition~\ref{almost}. Let $ 0 \to K \to T \to B \to 0 $ be an exact sequence in $ \Mod{A} $ with $ T \in \mathcal{T} $. We start with the case when $ K $ is torsion-free. Then using the surjective cover  $ f : F \to B $ of $ B $, we can construct the following commutative diagram:
\[
\begin{tikzcd}
0 \arrow[r] & K \arrow[d, equals] \arrow[r] & P \arrow[r] \arrow[d, two heads] & F \arrow[r] \arrow[d, "f", two heads] & 0 \\
0 \arrow[r] & K \arrow[r] & T \arrow[r] & B \arrow[r] & 0 
\end{tikzcd}
\]
By assumption $ K $ and $ F $ are in $ \mathcal{F} $, thus $ P \in \mathcal{F} $ and therefore $ T $ has a surjective cover. This forces $ T $ to be in $ \Mod{\bar{A}} $ by Lemma \ref{lem: surj cover}. Since $B$ is torsion almost torsion-free, we conclude that $ K \in \bar{\T} $, that is, $ K \in  \bar{\T} \cap \mathcal{F} = 0 $. 
In the general case, let $\operatorname{t}K$ denote the torsion part of $K$, and consider the pushout diagram
\[
\begin{tikzcd}
0 \arrow[r] & K \arrow[r] \arrow[d, two heads] & T \arrow[r] \arrow[d] & B \arrow[r] \arrow[d, equals] & 0 \\
0 \arrow[r] & K/\operatorname{t}K \arrow[r] & T' \arrow[r] & B \arrow[r] & 0 
\end{tikzcd}
\]
Apply the special case to the lower sequence to conclude that $ K = \operatorname{t}K $ is a torsion module.
 This shows that $ B $ is torsion almost torsion-free with respect to $(\T, \F)$.
 
 (2) The proof is completely analogous to the proof of \cite[Prop.~5.10(2)]{ALS1}.
\end{proof}

It remains to consider the torsion almost torsion-free modules whose $\F$-cover is not surjective.

\begin{proposition}
\label{prop:tTfHaveUniqueMax}  
Let $ T $ be a torsion almost torsion-free module with respect to $\Tpair{T}{F}$ whose $ \mathcal{F}$-cover is not surjective. 
Then $ T $ has a unique maximal submodule $L$, and there is a short exact sequence
\[
\begin{tikzcd}
0 \arrow[r] & L \arrow[r, "f"] & T \arrow[r] & S \arrow[r] & 0
\end{tikzcd}
\] 
where $f$ is an $\F$-cover in $\Mod{A}$ and the simple $A$-module $ S$ belongs to $ \mathcal{F}^{\perp_0} $.  Moreover,  $ T \in \mathcal{C}_{\mu_N} $ for all $N\in\Z$ and the injective envelope $I$ of $S$ is the unique  module  in $\mathcal{I}$ such that $ \Hom{A}(T, I) \ne 0 $. 
\end{proposition}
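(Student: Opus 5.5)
The plan is to first identify $L$ with the image of the $\F$-cover of $T$. Then I derive the properties of $S$, and finally get the statements about $\Ctf{\mu_N}$ and $\I$ from Lemma~\ref{lem: N-injectives in ModA} together with the uniqueness of $\N$-injective envelopes (Remark~\ref{Rmk: N-inj env are inj env}).

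\emph{Step 1: the maximal submodule.} Let $g\colon F_T\to T$ be the $\F$-cover; it exists by \cite[Thm.~3.5]{BreazZemlicka:18}. Put $L:=\Im{g}$, which is a proper submodule of $T$ by hypothesis. By Definition~\ref{almost}(a), every proper submodule $M$ of $T$ lies in $\F$. Hence the inclusion $M\hookrightarrow T$ factors through $g$, so $M\subseteq L$. Thus $L$ contains every proper submodule, and it is the unique maximal submodule of $T$. Since $T$ is finitely generated by Proposition~\ref{simples in the heart}(2) and $A$ is artinian, $T$ does have maximal submodules, so $S:=T/L$ is a simple module.

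Since $L$ is proper, $L\in\F$, so the inclusion $f\colon L\to T$ is itself an $\F$-precover. The corestriction $F_T\to L$ and $f$ are then two precovers compared via maps both ways. Right minimality of $g$ forces $F_T\to L$ to be an isomorphism, so $f$ is the $\F$-cover. I expect this bookkeeping with minimality to be the only delicate point.

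\emph{Step 2: $S\in\F^{\perp_0}$.} Let $h\colon X\to S$ with $X\in\F$. Form the pullback $P$ of $T\to S$ along $h$, which gives an exact sequence $0\to L\to P\to X\to 0$. As $\F$ is extension closed, $P\in\F$. Therefore $P\to T$ factors through the cover $f$, so the composite $P\to T\to S$ vanishes. This composite equals $P\to X\xrightarrow{h}S$, and $P\to X$ is surjective, so $h=0$.

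\emph{Step 3: the injective $I$.} Let $I=E(S)$, which is indecomposable injective. I claim $\Hom{A}(N,I)=0$ for all $N\in\Z$. Indeed, a nonzero map $N\to I$ has an image with a nonzero finitely generated submodule $M'$. This $M'$ has simple socle $S$, so $S$ appears as a subquotient; more precisely, some finitely generated submodule $N'\subseteq N$ (a preimage) maps onto a module with top containing $S$. That produces a nonzero map from a module in $\F$ (as $\F$ is closed under submodules) to $S$, contradicting Step 2. So $I\in\Z^{\perp_0}\cap\InjInd{A}=\I$ by Theorem~\ref{bij_posets}(2). Moreover $T\to S\hookrightarrow I$ is nonzero, so $\Hom{A}(T,I)\neq0$.

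By Lemma~\ref{lem: N-injectives in ModA}(4), $I[-1]$ is then the $\N$-injective envelope of the simple object $T[-1]$ of $\heart{\mathbf t}$. By the uniqueness in Remark~\ref{Rmk: N-inj env are inj env}, no other element of $\N$ is. Applying Lemma~\ref{lem: N-injectives in ModA}(3) and (4) again, this gives $T\in\Ctf{\mu_N}$ for every $N\in\Z$, and $I$ is the unique member of $\I$ with $\Hom{A}(T,I)\neq0$.
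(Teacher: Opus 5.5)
Your proposal is correct and follows essentially the same route as the paper, which likewise uses that every proper submodule lies in $\F$ and hence factors through the $\F$-cover (phrased there as: two distinct maximal submodules would give a surjection $F_1\oplus F_2\to T$ from a module in $\F$), the same pullback argument for $S\in\F^{\perp_0}$, and the same appeal to Lemma~\ref{lem: N-injectives in ModA}(3),(4) together with uniqueness of $\N$-injective envelopes. Your Step 3 is worded loosely; the clean version, as in the paper, is that $S$ is the essential socle of $I$, so any nonzero $c\colon N\to I$ has $S\subseteq\Im{c}$, and then $c^{-1}(S)\subseteq N$ is a module in $\F$ mapping onto $S$, contradicting Step 2.
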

\begin{proof}
By Proposition \ref{simples in the heart}(2), $T$ is finitely generated. Thus $T$ has at least one maximal submodule. If $ T $ had two maximal submodules $ F_1 \ne F_2 $, then the summation map $ F_1 \oplus F_2 \to T $ would be surjective, contradicting the assumption we made on the $ \mathcal{F}-$cover. 

Hence $T$ has a unique maximal submodule $L$, and the embedding $f:L\hookrightarrow T$ is an $\F$-cover. This shows the existence of a simple module $ S $ fitting in the short exact sequence. If we had a non-zero morphism from a torsion-free module to $ S $, then it would be an epimorphism, and we could obtain a surjective map from a torsion-free module to $ T $ via pullback. This would again contradict the assumption, thus $ S \in \mathcal{F}^{\perp_0} $.

Next we prove the last statement so let $I$ be the injective envelope of $S$.  By Theorem \ref{bij_posets}, we have that $\I$ is the set of indecomposable injective $A$-modules that are contained in $\Z^{\perp_0}$.  Let $N\in\Z$ and consider $c\colon N\to I$.  Then $S$ must be contained in the image of $c$ and so the inverse image $F$ of $S$ under $c$ is a torsion-free module and the restriction $F \to S$ is non-zero.  This contradicts the previous paragraph so $I$ must be contained in $\I$.  Since $\Hom{A}(T, I)\neq 0$, it follows from Lemma \ref{lem: N-injectives in ModA}(4) that $I[-1]$ is the $\N$-injective envelope of $T[-1]$.  The uniqueness and the fact that $T\in \Ctf{\mu_N}$ for all $N\in\Z$ then follows from Lemma \ref{lem: N-injectives in ModA}(3) and the uniqueness of $\N$-injective envelopes.
\end{proof}

It follows from the previous proposition and Lemma \ref{lem: N-injectives in ModA}, that $I[-1]$ is the $\N$-injective envelope of $T[-1]$. The next proposition will allow us to identify which elements of $\I$ arise in this way via properties of their simple socles.

\begin{proposition}\label{prop: special injectives}
Consider a short exact sequence \[ 0 \longrightarrow L \overset{f}{\longrightarrow}  T \overset{g}{\longrightarrow} S \longrightarrow 0 \] in $\Mod{A}$.  We have that $ T $ is torsion almost torsion-free with respect to $ \Tpair{T}{F} $ and $ f $ is an $ \F-$cover if and only if $S$ is contained in $\F^{\perp_0}$, and the map $g$ has domain $T\in \T$ and kernel $L\in \F$, and  is universal with these properties, i.e.~for all non-zero $g' \colon T' {\rightarrow} S$ with $T'\in \T$ and $\ker{g'}\in \F$, there exists $h\colon T \to T'$ such that $g'h = g$.\end{proposition}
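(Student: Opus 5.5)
The plan is to prove the two implications separately, using three ingredients throughout: the closure properties of $\Tpair{T}{F}$, the pullback/pushout arguments of Proposition~\ref{prop: upgrade t/tf special}(1), and the elementary fact $\Hom{A}(\T,\F)=0$. I will read the statement with $S$ a (nonzero) simple module, as in Proposition~\ref{prop:tTfHaveUniqueMax}. Then any nonzero map $g'\colon T'\to S$ is surjective. The recurring trick is the following: if $e\colon T\to T$ satisfies $ge=g$, then $1-e$ factors through $L=\Ker g\in\F$. Since $T\in\T$, this forces $1-e=0$.

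\emph{($\Rightarrow$)} Assume $T$ is torsion almost torsion-free and $f$ is an $\F$-cover. First, $T\in\T$ and $L\in\F$ are immediate.

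To get $S\in\F^{\perp_0}$, take a nonzero $F'\to S$ with $F'\in\F$; it is surjective. Its pullback along $g$ is an extension of $F'$ by $L$, hence lies in $\F$, and it maps onto $T$. This map must factor through the cover $f$, which forces $g=0$, a contradiction.

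For universality, let $g'\colon T'\to S$ be nonzero with $T'\in\T$ and $K':=\Ker g'\in\F$, and form the pullback $P$ of $g$ and $g'$. This gives an exact sequence $0\to K'\to P\to T\to 0$. Consider the image of the torsion part $\operatorname{t}P$ in $T$.
\begin{itemize}
\item If this image is proper, it lies in $\F$ by condition (a), so it is zero. Then $\operatorname{t}P\subseteq K'\in\F$, hence $P\in\F$. Now $P\to T$ factors through $f$, so the surjection $P\to T\to S$ is zero, which is impossible.
\item Hence $\operatorname{t}P\to T$ is onto, with kernel $\operatorname{t}P\cap K'$. Condition (b) makes this kernel torsion, and it is also in $\F$, so it is zero.
\end{itemize}
Thus $\operatorname{t}P\cong T$ gives a section $T\to P$. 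Composing with $P\to T'$ yields the required $h$ with $g'h=g$.

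\emph{($\Leftarrow$)} Assume the conditions on $S$ and $g$. I first check that $f$ is an $\F$-cover. Any map $F'\to T$ with $F'\in\F$ composes to zero in $S$ because $S\in\F^{\perp_0}$, so it factors through $L$. Right minimality is automatic because $f$ is a monomorphism.

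For condition (a), let $U\subsetneq T$. If $g(\operatorname{t}U)\neq0$, apply universality to the restriction $g|_{\operatorname{t}U}$, whose kernel lies in $L\in\F$. This gives $h\colon T\to\operatorname{t}U$ with $g\iota h=g$. By the trick, $\iota h=1$, so $U=T$, a contradiction. Hence $\operatorname{t}U\subseteq L$, so $\operatorname{t}U=0$ and $U\in\F$.

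For condition (b), take $0\to M\to T'\xrightarrow{p}T\to0$ with $T'\in\T$.
\begin{itemize}
\item First reduce to $M\in\F$ by pushing out along $M\to M/\operatorname{t}M$, exactly as in Proposition~\ref{prop: upgrade t/tf special}(1).
\item Then $gp$ is nonzero, and its kernel is an extension of $L$ by $M$, so it lies in $\F$.
\item Universality gives $h$ with $gph=g$, and the trick gives $ph=1$. The sequence splits, so $M$ is a summand of $T'$ and lies in $\T\cap\F=0$.
\end{itemize}

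I expect the main obstacle to be the universality step in ($\Rightarrow$): producing the splitting of the pullback sequence. This needs the careful analysis of $\operatorname{t}P$, which uses both almost torsion-free conditions and the cover property together.
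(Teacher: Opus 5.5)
Your argument is correct, but it takes a different route from the paper.

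\textbf{The paper's route.} It works inside the heart $\heart{\mathbf{t}}$. For $Y,X\in\T$, a map $h\colon Y\to X$ gives a monomorphism $h[-1]$ in $\heart{\mathbf{t}}$ exactly when $\ker h\in\F$. So the maps $g'$ in the statement are precisely the non-zero subobjects of $S[-1]$.
\begin{itemize}
\item For ($\Rightarrow$), the paper uses Proposition~\ref{prop:tTfHaveUniqueMax} and the $\N$-injective envelope $I[-1]$. These show that $T[-1]\to S[-1]\to\H{\mathbf{t}}(I[-1])$ is an essential monomorphism with simple domain. Hence $T[-1]$ lies in every non-zero subobject of $S[-1]$.
\item For ($\Leftarrow$), universality says exactly that $T[-1]$ lies in every non-zero subobject of $S[-1]$, so it is simple in the heart. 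Proposition~\ref{simples in the heart} (i.e.\ \cite[Thm.~3.6]{AHL}) turns this back into ``torsion almost torsion-free''.
\end{itemize}

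\textbf{Your route.} You never leave $\Mod{A}$. You check conditions (a), (b) and universality by hand, using the pullback, the analysis of $\operatorname{t}P$, and the observation that $ge=g$ forces $e=1$ because $\Hom{A}(T,L)=0$. That observation is the module-level shadow of ``$g[-1]$ is a monomorphism''.

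\textbf{Comparison.}
\begin{itemize}
\item The paper's proof is shorter given the machinery already in place. It also explains universality conceptually: $T[-1]$ is the socle of $S[-1]$ in $\heart{\mathbf{t}}$.
\item Your proof is self-contained. It needs neither the description of simples in the heart nor $\N$-injective envelopes.
\item Your cover argument is cleaner: a monomorphic $\F$-precover is automatically right minimal, whereas the paper compares $f$ with an arbitrary cover $f'$.
\end{itemize}

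\textbf{One small repair.} In ($\Rightarrow$) you should derive that $S$ is simple rather than assume it; this takes one line. By (a), every proper submodule $U\subsetneq T$ lies in $\F$. So $U\hookrightarrow T$ factors through the cover $f$, giving $U\subseteq L$. Hence $L$ is the unique maximal submodule, which is what the paper extracts from Proposition~\ref{prop:tTfHaveUniqueMax}. In ($\Leftarrow$) your argument never uses simplicity of $S$, only $g\neq 0$.
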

\begin{proof} We begin by noting that, for any $h \colon Y \to X$ with $Y,X \in \T$, the morphism $h[-1]$ is a monomorphism in $\heart{\mathbf{t}}$ if and only if $\ker{h} \in \F$ (see, for example, \cite[Lem.~2.3]{AHL}). Since $S$ is contained in $\T$ and the class $\T[-1]$ is closed under subobjects in $\heart{\mathbf{t}}$, the morphisms $g'$ in the statement correspond to the collection of non-zero subobjects $g'[-1] \colon T'[-1] \to S[-1]$ of $S[-1]$ in $\heart{\mathbf{t}}$.

Suppose that $T$ is torsion almost torsion-free with respect to $\Tpair{T}{F}$ and $f$ is an $\F$-cover. Clearly $T$ is contained in $\T$ and $L$ is contained in $\F$. Also $S$ is simple and contained in $\F^{\perp_0}$ by Proposition \ref{prop:tTfHaveUniqueMax}. Moreover $I[-1]$ is the $\N$-injective envelope of $T[-1]$ where $I$ is the injective envelope of $S$ in $\Mod{A}$. In particular, there is an essential monomorphism \[T[-1] \overset{g[-1]}{\longrightarrow} S[-1] \to \H{\mathbf{t}}(I[-1]).\]  It follows that $g[-1]$ is an essential monomorphism with simple domain and the required factorisation property for $g$ then follows easily.

Suppose that $S\in\F^{\perp_0}$, $T\in \T$, $L\in \F$ and $g$ has the property that, for all non-zero $g' \colon T' {\rightarrow} S$ with $T'\in \T$ and $\ker{g'}\in \F$, there exists $h\colon T \to T'$ such that $g'h = g$. The condition on the morphism $g$ means that the subobject $g[-1] \colon T[-1] \to S[-1]$ is contained in all non-zero subobjects of $S[-1]$ in $\heart{\mathbf{t}}$. As a consequence, the object $T[-1]$ cannot have any proper non-zero subobjects. In other words, $T[-1]$ is simple in $\heart{\mathbf{t}}$ and hence $T$ is torsion almost torsion-free by \ref{simples in the heart}. Finally we consider an $\F$-cover $f' \colon F \to T$ and the induced map $h \colon L \to F$ such that $f = f'h$. Since $S\in \F^{\perp_0}$, we also have that $gf'=0$ and hence there is some $h' \colon F \to L$ such that $f' = fh'$. Then $fh'h = f'h = f$ implies that $h'h$ is the identity because $f$ is a monomorphism. Moreover $f'hh' = fh' = f'$ implies that $hh'$ is an isomorphism because $f'$ is minimal. Thus $h$ is an isomorphism and so $f$ is an $\F$-cover.
\end{proof}

\begin{definition}
We say that $I\in\I$ is \textbf{special} if its socle $S:= \mathrm{Soc}(I)$ belongs to $\F^{\perp_0}$ and there exists $g \colon T \to S$ which is universal with the property  that $T\in \T$ and $\ker{g} \in \F$.
\end{definition}

\begin{theorem}\label{all in Ical appear}
\begin{enumerate}
\item There is a bijection between torsion almost torsion-free modules $T$ with surjective $\F$-cover and special modules with respect to $\Tpair{T}{F}$.  The bijection assigns $T$ to the unique module $N\in\Z$ such that $T\notin \Ctf{\mu_N}$.  Moreover $\Hom{A}(T, I)=0$ for all $I\in\I$.

\item There is a bijection between torsion almost torsion-free modules $T$ without surjective $\F$-cover and special modules in $\I$. The bijection assigns to $T$ the unique module $I$ in $\I$ such that $\Hom{A}(T, I)\neq 0$. Moreover $T\in\Ctf{\mu_N}$ for all $N\in \Z$.
\end{enumerate}\end{theorem}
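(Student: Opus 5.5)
The plan is to assemble both bijections from Propositions \ref{prop: upgrade t/tf special}, \ref{prop:tTfHaveUniqueMax} and \ref{prop: special injectives}. Uniqueness statements will come from the uniqueness of $\N$-injective envelopes (Remark \ref{Rmk: N-inj env are inj env}) combined with Lemma \ref{lem: N-injectives in ModA}. Throughout, $T$ denotes a torsion almost torsion-free module with respect to $\Tpair{T}{F}$. By Proposition \ref{simples in the heart}, $T[-1]$ is simple in $\heart{\mathbf{t}}$, so it has a unique $\N$-injective envelope. This envelope is either $\mu_N$ for some $N\in\Z$, or $I[-1]$ for some $I\in\I$. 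By Lemma \ref{lem: N-injectives in ModA}(3),(4), the first case means exactly $T\notin\Ctf{\mu_N}$, and the second means exactly $\Hom{A}(T,I)\neq0$. Hence, for each $T$, exactly one element of $\Z\cup\I$ satisfies the corresponding condition, and all other elements satisfy the opposite condition.

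For (1), let $T$ have a surjective $\F$-cover $f\colon \overline N\to T$, and set $N=\ker f$. By Proposition \ref{prop: upgrade t/tf special}(2), $N$ is special in $\F$ and $N\in\Z$, and the inclusion $b\colon N\to\overline N$ is left almost split in $\F$. I then check that $T\notin\Ctf{\mu_N}$. If $\Hom{A}(T,\mu_N)$ were surjective, then, since $\overline N\in\F$, the class $\Ctf{\mu_N}\supseteq\F$ and $\Ctf{\mu_N}$ is closed under extensions. Applying $\Hom{\Der A}(-,\mu_N[1])$ to $0\to N\to\overline N\to T\to0$ would then show that $\Hom{\Der A}(N,\mu_N[1])\to\Hom{\Der A}(\overline N,\mu_N[1])$ behaves as for a split sequence. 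More concretely, $\Ext{A}^1(T,N)\to\Ext{A}^1(T,\mu_N\text{-data})$ would kill the class of the sequence, forcing $b$ to split through the injective envelope of $N$. That contradicts $b$ being non-split. Alternatively, and more cleanly, I would invoke the uniqueness argument: the envelope of $T[-1]$ cannot be some $I[-1]$, because $\Hom{A}(T,I)=0$. Indeed, any map $T\to I$ composed with $f$ gives $\overline N\to I$, which vanishes since $I\in\Z^{\perp_0}$ and $\overline N\in\Cogen{\Z}$. As $f$ is surjective, the map $T\to I$ is zero. So the envelope is $\mu_{N'}$ for a unique $N'$, and I must show $N'=N$. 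This identification is the main obstacle. I would compute a nonzero morphism $T[-1]\to\mu_N$ in $\Der{A}$ directly from the connecting map of the sequence $N\to\overline N\to T$: the composite $T[-1]\to N\to\mu_N$ is nonzero, because otherwise $N\to\mu_N$, and hence $N\to E(N)$, would factor through $\overline N$, splitting $b$.

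Conversely, given a special $N\in\F$ with left almost split mono $b$, Proposition \ref{prop: upgrade t/tf special}(2) shows that $T:=\Coker b$ is torsion almost torsion-free and that $b$'s cokernel map is an $\F$-cover. This cover is surjective. The two assignments are mutually inverse because left almost split maps and $\F$-covers are unique up to isomorphism, and by the uniqueness of the envelope.

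For (2), given $T$ without surjective $\F$-cover, Proposition \ref{prop:tTfHaveUniqueMax} provides $L$, $S$ and $I=E(S)\in\I$, with $I$ the unique element of $\I$ satisfying $\Hom{A}(T,I)\neq0$ and with $T\in\Ctf{\mu_N}$ for all $N$. Proposition \ref{prop: special injectives} then gives the universal map $g\colon T\to S$ with $S\in\F^{\perp_0}$, so $I$ is special. Conversely, if $I\in\I$ is special with universal $g\colon T\to S$, then the "if" direction of Proposition \ref{prop: special injectives} shows that $T$ is torsion almost torsion-free and that $\ker g\to T$ is an $\F$-cover. This cover is not surjective since $S\neq0$. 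Moreover $\Hom{A}(T,I)\neq0$ via $T\to S\subseteq I$. Injectivity of $I\mapsto T$ follows from the universal property, which determines $g$ up to isomorphism; injectivity of $T\mapsto I$ follows from the uniqueness of envelopes. This completes the plan.
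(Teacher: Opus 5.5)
Your proposal is correct and is essentially the paper's proof. Part (2) is assembled from Propositions \ref{prop:tTfHaveUniqueMax} and \ref{prop: special injectives} together with uniqueness of $\N$-injective envelopes, exactly as in the paper. In (1), your nonzero composite $T[-1]\to N\to\mu_N$ is an unpacked version of the paper's observation that the non-split sequence gives $\Ext{A}^1(T,N)\neq 0$, hence $T\notin\Ctf{\mu_N}$ by Lemma \ref{Csigma}(2).

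Drop your first, garbled attempt. Also, the correct reason the composite is nonzero is that $\Hom{\Der{A}}(\overline{N},\mu_N)\cong\Hom{A}(\overline{N},N)$, with the canonical map $N\to\mu_N$ corresponding to $\mathrm{id}_N$. A factorisation through $b$ would therefore give a retraction of $b$. By contrast, a factorisation of $N\to E(N)$ through $b$ always exists and proves nothing.
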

\begin{proof}
(1) In order to establish the first bijection,  we check that, given a short exact sequence $0\to N \to \overline{N} \to T \to 0$ as in Proposition~\ref{prop: upgrade t/tf special}(2) with $N$ special and $T$ torsion almost torsion-free with respect to $\Tpair{T}{F}$, we can characterise  $N$ as the unique element of $\Z$ such that $T\notin \C_{\mu_N}$.  By Lemma \ref{Csigma}(2), the module  $T$ lies in $\C_{\mu_X}$ for some module $X$ if and only if all submodules of $T$ lie in ${}^{\perp_1}X$.  So, we certainly have that $T \notin \C_{\mu_N}$ as  $\Ext{A}^1(T, N) \neq 0$.  The rest follows from Lemma \ref{lem: N-injectives in ModA} and the uniqueness of $\N$-injective envelopes (see Remark \ref{Rmk: N-inj env are inj env}).

(2) The existence of the second bijection follows from Propositions \ref{prop:tTfHaveUniqueMax} and \ref{prop: special injectives}. The final statement is a consequence of Lemma \ref{lem: N-injectives in ModA} and the uniqueness of $\N$-injective envelopes (see Remark \ref{Rmk: N-inj env are inj env}).
\end{proof}

In the case that $A$ is a finite-dimensional algebra, we can make use of \cite [Theorem 3.1]{DemonetIyamaJasso:19} in order to improve the previous theorem.

\begin{theorem}\label{Thm: I is special fd algebra}
If $A$ is a finite-dimensional algebra, then every element of $\I$ is special. In other words, there is a bijection between torsion almost torsion-free modules $T$ without surjective $\F$-cover and modules in $\I$. Moreover, $\I$ consists precisely of the indecomposable injective modules in $\F^{\perp_0}$.
\end{theorem}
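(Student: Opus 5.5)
We need to show that, for a finite-dimensional algebra $A$, every $I\in\I$ is special. Concretely, for $S=\mathrm{Soc}(I)$ we must check that $S\in\F^{\perp_0}$ and that there is a universal map $g\colon T\to S$ with $T\in\T$ and $\ker g\in\F$. The remaining assertions then follow formally. Theorem~\ref{all in Ical appear}(2) gives the bijection, and the description of $\I$ follows from $\I=\Z^{\perp_0}\cap\InjInd{A}$ together with $\F=\Cogen{\Z}$. Indeed, an indecomposable injective $I$ lies in $\Z^{\perp_0}$ iff $\Hom{A}(\F,I)=0$ (since $\F=\Cogen\Z$ and $I$ is injective, a map from a submodule of a product of copies of objects of $\Z$ to $I$ extends to the product; for a finite-dimensional algebra, $\Hom{A}(\prod Z_i,I)$ vanishes when $I$ is $\Sigma$-injective with $\Hom{A}(Z_i,I)=0$). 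The condition $\Hom{A}(\F,I)=0$ is in turn equivalent to $\Hom{A}(\F,S)=0$, via the pullback argument of Proposition~\ref{prop:tTfHaveUniqueMax}.

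So the first condition, $S\in\F^{\perp_0}$, comes for free: a nonzero map $F\to S$ with $F\in\F$ would lift along $S\subseteq I$ (using injectivity applied to $F\to S\to I$), giving a nonzero map $F\to I$. This contradicts $\Hom{A}(\F,I)=0$, since $I\in\Z^{\perp_0}$ and $\F=\Cogen{\Z}$. Note that $S$ is finitely generated, in fact simple.

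The heart of the matter is producing the universal $g$. Equivalently, by the first paragraph of the proof of Proposition~\ref{prop: special injectives}, we must show that $S[-1]\in\heart{\mathbf{t}}$ has a simple subobject that is contained in every nonzero subobject, i.e.\ that $S[-1]$ has simple essential socle. Here I would use \cite[Thm.~3.1]{DemonetIyamaJasso:19}. As I recall it, for a finite-dimensional algebra and a torsion pair $\tpair{t}{f}$ in $\mod{A}$, every module $X\in\mathbf{f}^{\perp_0}$ (or a suitable dual statement) admits a minimal presentation relative to $\mathbf{t}$. More pertinently, it states that $\mathbf{t}$ is closed under a minimal ``$\mathbf{f}$-reflection'' construction, ensuring that a brick label exists for each simple outside $\mathbf f$. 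Concretely, I would apply it to $S\in\mathbf{f}^{\perp_0}$: since $S$ is not in $\mathbf{f}$, there is a minimal nonzero submodule-type object. I would consider the family of finitely generated $T'\in\mathbf{t}$ with a map $g'\colon T'\to S$ such that $\ker g'\in\mathbf{f}$. These correspond to finitely presented nonzero subobjects of $S[-1]$ in $\fp{\heart{}}_{\mathbf{t}}$. Since $\fp{\heart{}}_{\mathbf{t}}=\heart{\mathbf t}\cap\ststr{}^b$ is a finite length category whenever $A$ is finite-dimensional (this is where I would invoke \cite{DemonetIyamaJasso:19}, which provides the length-type finiteness via $\tau$-tilting reduction), $S[-1]$ has a simple subobject $T[-1]$ there.

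Main obstacle: uniqueness and universality, i.e.\ showing that every nonzero subobject of $S[-1]$ in $\heart{\mathbf{t}}$ contains this $T[-1]$. For finitely presented subobjects, I would argue that two distinct simple subobjects $T_1[-1],T_2[-1]$ yield $T_1\oplus T_2\to S$ with kernel in $\mathbf f$ of dimension $\dim T_1+\dim T_2-1$. This would give a torsion-free submodule mapping onto a torsion module, and I would then derive a contradiction with $S\in\F^{\perp_0}$, again via the pullback trick. Arbitrary subobjects are directed unions of finitely presented ones, because $\heart{\mathbf t}$ is locally coherent, so the general case reduces to the finitely presented one. Once $T[-1]$ is the essential simple socle, Proposition~\ref{prop: special injectives} yields the universal $g$, and $I$ is special.
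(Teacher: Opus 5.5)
There is a genuine gap at the step that carries the content of the theorem. You need a simple subobject of $S[-1]$ in $\heart{\mathbf t}$: a module $T$ that is torsion almost torsion-free for $\tpair{t}{f}$, together with a surjection $T\to S$ whose kernel lies in $\mathbf f$.

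You obtain it from the assertion that $\fp{\heart{}}_{\mathbf t}$ is a length category whenever $A$ is finite-dimensional. This is false once $\mathbf t$ is not functorially finite, and the theorem concerns arbitrary $\mathbf t\in\tors{A}$. Take the Kronecker algebra, with $\mathbf t$ the modules without preprojective summands. Every nonzero map $P_n\to P_{n+1}$ between consecutive indecomposable preprojectives is injective with regular cokernel $R$. So it yields a short exact sequence $0\to R[-1]\to P_n\to P_{n+1}\to 0$ in the heart. Hence $P_0\to P_1\to P_2\to\cdots$ is an infinite chain of proper epimorphisms in $\fp{\heart{}}_{\mathbf t}$, and $P_0$ has infinite length.

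Nor does \cite[Thm.~3.1]{DemonetIyamaJasso:19} say what you recall. It states that if $\mathcal U\subsetneq\mathcal T$ are torsion classes with $\mathcal T$ functorially finite, there is an arrow $\mathcal T\to\mathcal T'$ in the Hasse quiver with $\mathcal U\subseteq\mathcal T'$. It gives no finiteness for hearts of arbitrary torsion pairs.

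A minimality argument does not rescue this either. Choosing $T'$ of minimal dimension only forces proper subobjects of $T'[-1]$ to come from extensions $0\to K\to T''\to T'\to 0$ with $K\in\mathbf f$ and $T''\in\mathbf t$. Such $T''$ can be larger than $T'$, so there is no bound on descending chains.

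The missing idea is an a priori bound coming from a torsion pair to which that result does apply. The paper proceeds as follows.
\begin{enumerate}
\item Take $\mathbf v={}^{\perp_0}I\cap\mod{A}$. This is a Serre subcategory, so $(\mathbf u,\mathbf v)$ is functorially finite with $\mathbf u\neq 0$.
\item \cite[Thm.~3.1]{DemonetIyamaJasso:19} together with \cite[Thm.~2.11]{BarnardCarrollZhu:19} gives a module $U$ that is torsion almost torsion-free for $(\mathbf u,\mathbf v)$. By Proposition~\ref{prop:tTfHaveUniqueMax}, $U$ has a unique maximal submodule with quotient $S$.
\item Among the $\mathbf t$-simple quotients of $U$ with a nonzero map to $S$, pick $T$ of maximal length; the length of $U$ bounds this.
\item A pullback argument using the almost torsion-free property of $U$ shows that $T$ is maximal in the epibrick of $\mathbf t$-simples. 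Hence $T$ is torsion almost torsion-free for $\tpair{t}{f}$ by \cite{Enomoto:21}.
\item The inclusion of $\ker(T\to S)$ is an $\F$-cover, and Proposition~\ref{prop: special injectives} yields the universal $g$.
\end{enumerate}

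Your uniformity argument is correct and could replace the last step. If two nonzero subobjects $T_1[-1],T_2[-1]$ of $S[-1]$ met trivially, the kernel of $T_1\oplus T_2\to S$ would lie in $\F$ and map onto $S$ via the first projection, contradicting $S\in\F^{\perp_0}$. But uniformity cannot produce a simple subobject, and that existence is exactly where finite-dimensionality is used.

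A minor point concerns your treatment of the last assertion. $\Sigma$-injectivity is not the reason $\Hom{A}(\prod Z_i,I)=0$: over $\mathbb Z$, the $\Sigma$-injective module $\mathbb Q$ kills every $\mathbb Z/p^n$ but not their product. The correct reason is that $\Hom{A}(M,I)\neq 0$ if and only if $eM\neq 0$ for a suitable idempotent $e$, and $e$ commutes with products.
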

\begin{proof}
Since we are dealing with a question about torsion almost torsion-free modules, we can work with the restricted torsion pair $\tpair{t}{f} $ in $ \mod{A} $, see  Proposition~\ref{simples in the heart}(2). We want to apply the (dual of the) techniques in \cite{Enomoto:21}. Let us briefly recall the relevant notions and results from there.
A module $B\in\mathbf{t}$ is $\mathbf{t}$-{\em simple} if it satisfies condition (a) in Definition~\ref{almost}, that is,  every proper subobject of $ B $ is contained in $ \mathbf{f}$. The $ \mathbf{t}-$simple modules form a so-called {\em epibrick}, i.e.~a collection of bricks with the only non-zero maps between them being epimorphisms. There is an obvious partial order on this epibrick by setting $T\le T'$ if there is an epimorphism $T'\to T$. Now the dual of \cite[Lem.~4.4]{Enomoto:21} combined with \cite[Prop.~3.10]{AngeleriSentieri:23+} states that the torsion almost torsion-free modules with respect to $\tpair{t}{f} $ are precisely the maximal objects in this epibrick. 

Let us now turn to our statement. Let $I\in\mathcal{I} $. First, notice that $\mathbf{v}= {}^{\perp_0}I \cap \mod{A} $ is a Serre subcategory of $ \mod{A} $. The torsion pair $ (\mathbf{u}, \mathbf{v}) $  is then functorially finite and  distinct from the trivial torsion pair $(0, \mod{A})$. By \cite [Theorem 3.1]{DemonetIyamaJasso:19} and  \cite[Thm.~2.11]{BarnardCarrollZhu:19},  there exists a torsion pair covered by $ ( \mathbf{u}, \mathbf{v})$ in $\mathbf{tors}(A)$, and  thus  a torsion almost torsion-free module $ U $ with respect to  $ ( \mathbf{u}, \mathbf{v})$. The $ \mathbf{v}-$cover of $U$ can't be surjective as  $ \mathbf{v} $ is closed under quotients and $ 0 \ne U \in \mathbf{u} $. If $(\U,\V)=( \varinjlim\mathbf{u}, \varinjlim\mathbf{v})$ is the corresponding  torsion pair in $\Cosilt{A}$, then the  $\V$-cover of $U$ is not surjective either. From Proposition \ref{prop:tTfHaveUniqueMax} we infer that $ U $ has  a unique maximal submodule, and there is a sequence
\[
0 \to V \to U \to S \to 0
\]
where $V\in\V$ and $S$ is a simple module. Observe that $S=\textrm{Soc}\,I$. Indeed, $ \Hom{A}(U, I) \ne 0 $, so $\textrm{Soc}\,I$ is a composition factor of $ U $, but it is  not a composition factor of any of its proper submodules.

Now $ U $ is a module in $ \mathbf{u} \subseteq  \mathbf{t} $, however it might not be $ \mathbf{t}-$simple. So, we consider the set $ \mathcal{S} $ consisting of $ \mathbf{t}-$simple modules occurring as quotients of $ U $ and having a non-zero morphism to $ S $. This set is clearly non-empty, as the simple module $ S $ itself satisfies these conditions. 
Let $ T $ be a module in $ \mathcal{S} $ of maximal length. We claim that $ T $ is torsion almost torsion-free with respect to $ \tpair{t}{f} $. It is $ \mathbf{t}-$simple by construction, thus it remains to show that it is maximal in the epibrick of $ \mathbf{t}-$simple modules.
To this end, we pick  a $ \mathbf{t}-$simple module $ T' $ with an epimorphism $ T' \to T $.
Consider the following pull-back diagram:
\[
\begin{tikzcd}
0 \arrow[r] & K \arrow[r] \arrow[d, equals] & P \arrow[d, two heads] \arrow[r] & U \arrow[r] \arrow[d, two heads] & 0 \\
0 \arrow[r] & K \arrow[r] & T' \arrow[r] & T \arrow[r] & 0
\end{tikzcd}
\]
By $\mathbf{t}-$simplicity of $ T' $, the kernel $ K \in \mathbf{f} \subseteq \mathbf{v} $. Since $ U $ is torsion almost torsion-free with respect to $ \tpair{u}{v}$, we have two possibilities: either $ P \in \mathbf{v} $ or the upper sequence splits (cf.~condition (3') in \cite[Def.~2.1]{Sentieri:22}). In the first case we reach a contradiction, since we would obtain a non-zero map $ P \to T' \to T \to S \to I$. In the second case we obtain a  non-zero map $ U \to T' $, which must be an epimorphism, as every proper submodule of $ T' $ is an element of $ \mathbf{f} \subseteq \mathbf{v} $. This means that $ T' $ is an element of the set $ \mathcal{S} $, thus, by maximality of the length of $ T $, we must have $ T' \simeq T $. 

We have shown that $ T $ is torsion almost torsion-free with respect to $\Tpair{T}{F}$, and by construction there is  a sequence
\[
\begin{tikzcd}
0 \arrow[r] & F \arrow[r, "f"] & T \arrow[r] & S \arrow[r] & 0
\end{tikzcd}
\] 
with $F\in\mathcal{F}$, where $f$ is an $\F$-cover since $\mathcal{F}\subseteq{}^{\perp_0}I\subseteq{}^{\perp_0}S$. Thus $T$ has the required properties.

The last statement is proven in \cite[Thm.~4.14]{ALS1}.
\end{proof}

\begin{corollary}\label{cor: inj env of simples}
The $\N$-injective envelopes of simple objects $S$ in $\heart{\mathbf{t}}$ are exactly the following complexes in $\N$:
\begin{enumerate}
\item The complexes $\mu_N$ for $N\in \Z$ a critical module in $\F$.  These are precisely the $\N$-injective envelopes of $S\cong F$ with $F$ torsion-free almost torsion with respect to $\Tpair{T}{F}$.
\item The complexes $\mu_N$ for $N\in \Z$ a special module in $\F$.  These are precisely the $\N$-injective envelopes of $S \cong T[-1]$ with $T$ torsion almost torsion-free with respect to $\Tpair{T}{F}$ with a surjective $\F$-cover.
\item The complexes $I[-1]$ for special $I\in \I$.  These are precisely the $\N$-injective envelopes of $S \cong T[-1]$ with $T$ torsion almost torsion-free with respect to $\Tpair{T}{F}$ without a surjective $\F$-cover.
\end{enumerate}
\end{corollary}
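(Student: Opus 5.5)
The corollary is essentially a bookkeeping consequence of three earlier results. We combine the classification of simple objects in $\heart{\mathbf{t}}$ (Proposition~\ref{simples in the heart}(1)) with the computation of $\N$-injective envelopes in Lemma~\ref{lem: N-injectives in ModA}, Theorem~\ref{Thm: critical tf/t} and Theorem~\ref{all in Ical appear}. By Proposition~\ref{simples in the heart}(1), every simple $S\in\heart{\mathbf{t}}$ is of one of two kinds: $S\cong F$ with $F$ torsion-free almost torsion, or $S\cong T[-1]$ with $T$ torsion almost torsion-free with respect to $\Tpair{T}{F}$. The second kind splits further according to whether the $\F$-cover of $T$ is surjective or not. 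By Remark~\ref{Rmk: N-inj env are inj env}, each such $S$ has a unique $\N$-injective envelope. It therefore suffices to identify, case by case, the set of envelopes arising, and to check that the three sets listed exhaust them.

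Case (1): for $S\cong F$, Lemma~\ref{lem: N-injectives in ModA}(2) excludes envelopes of the form $I[-1]$. Lemma~\ref{lem: N-injectives in ModA}(1) says the envelope is $\mu_N$ with $\Hom{A}(F,N)\neq0$. Theorem~\ref{Thm: critical tf/t} identifies this $N$ as the critical module attached to $F$, and since that assignment is a bijection onto the critical modules in $\F$, every critical $N$ occurs. (Critical modules lie in $\F\cap\F^{\perp_1}$ and are indecomposable, hence in $\Z$, by Remark~\ref{Rem: critical or special}.)

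Case (2): for $S\cong T[-1]$ with surjective $\F$-cover, Theorem~\ref{all in Ical appear}(1) gives $\Hom{A}(T,I)=0$ for all $I\in\I$. By Lemma~\ref{lem: N-injectives in ModA}(4) the envelope is therefore not of the form $I[-1]$, so it is $\mu_N$ with $T\notin\Ctf{\mu_N}$ by Lemma~\ref{lem: N-injectives in ModA}(3). This is exactly the special module bijectively attached to $T$ in Theorem~\ref{all in Ical appear}(1). Surjectivity of that bijection gives that all special $N$ occur.

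Case (3): for $S\cong T[-1]$ without surjective $\F$-cover, Theorem~\ref{all in Ical appear}(2) gives $T\in\Ctf{\mu_N}$ for all $N\in\Z$. So no $\mu_N$ is the envelope, by Lemma~\ref{lem: N-injectives in ModA}(3). The envelope is then $I[-1]$ for the unique $I\in\I$ with $\Hom{A}(T,I)\neq0$, which by Theorem~\ref{all in Ical appear}(2) runs bijectively over the special $I\in\I$.

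The only point needing care is the word ``precisely'': the three descriptions must not overlap, so that each listed complex is the envelope of simples of the stated type only. For (3) versus (1)–(2) this is clear, since $I[-1]$ is not of the form $\mu_N$. For (1) versus (2), a module $N$ cannot be both critical and special, since an indecomposable neg-isolated $N$ cannot admit left almost split maps that are epimorphisms and others that are monomorphisms unless $N=0$ (Remark~\ref{Rem: critical or special}). Alternatively, the distinction follows directly: the envelope of $F$ receives a map from a torsion-free simple, while that of $T[-1]$ receives one from a simple in $\T[-1]$. Checking this disjointness is the one step beyond direct citation. Everything else is a routine assembly of the cited bijections.
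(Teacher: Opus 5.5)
Your proposal is correct and is essentially the paper's argument: the paper gives no separate proof, treating the corollary as a direct assembly of Proposition~\ref{simples in the heart}(1), Lemma~\ref{lem: N-injectives in ModA}, Theorem~\ref{Thm: critical tf/t} and Theorem~\ref{all in Ical appear}, exactly as in your three cases. One minor imprecision: a module cannot be both critical and special because an epimorphic left almost split map that is also monic would be an isomorphism, hence a split monomorphism, which is excluded. The phrase ``unless $N=0$'' is not the right justification, although disjointness is not actually needed for the stated set equalities.
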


We can now describe mutation of cosilting pairs as follows. The proof combines Theorems~\ref{bij_posets} and~\ref{Thm: simple mutation} with Corollary~\ref{cor: inj env of simples} and is left to the reader.

\begin{corollary}
\label{thm:howToDoMutation}
Let $\tpair{u}{v},\tpair{t}{f}\in\tors{A}$ and let 
 $(\mathcal{Z}', \mathcal{I}'), (\Z, \I)$ be the associated cosilting pairs respectively. 
The following statements are equivalent.
\begin{enumerate}
\item $(\mathcal{Z}, \mathcal{I})$ covers $ (\mathcal{Z}', \mathcal{I}') $ in $\Cosiltpair{\Lambda}$.
\item There is a module $T\in\mod{A}$ which is torsion, almost torsion-free with respect to $\tpair{t}{f}$ and there is a unique (critical) module $M$ in $\Z'$ such that $\Hom{\Lambda}(T,M)\neq 0$, satisfying one of the following statements:
\begin{enumerate}
\item $T$ has a surjective $\F$-cover  and $(\Z,\mathcal I)$ is obtained from $( \mathcal{Z}',\, \mathcal{I}' )$ by removing $M$ from $\Z'$ and replacing it with the  unique (special) module $N$ in $\Z$ such that $T\notin\C_{\mu_N}$;
\item $T$ has an injective $\F$-cover and $(\Z,\mathcal I)$ is obtained from $( \mathcal{Z}',\, \mathcal{I}' )$ by removing $M$ from $\Z'$ and replacing it with   the unique (special) module $I$ in $\mathcal{I}$ such that $\Hom{\Lambda}(T,I)\neq 0$.
\end{enumerate}
\item There is a module $V\in\mod{A}$ which is torsion-free, almost torsion with respect to $\tpair{u}{v}$ and there is a unique   (critical) module $M$ in $\Z'$ such that $\Hom{\Lambda}(V,M)\neq 0$, satisfying one of the following statements:
\begin{enumerate}
\item $V$ has a surjective $\F$-cover and $(\Z',\mathcal I')$ is obtained from $( \mathcal{Z},\, \mathcal{I} )$ by removing the unique (special) module  $N$ in $\Z$ such that $V\notin\C_{\mu_N}$ and replacing it with $M$;
\item $V$ has an injective $\F$-cover  and $(\Z',\mathcal I')$ is obtained from $( \mathcal{Z},\, \mathcal{I} )$ by removing the unique (special)  module $I$ in $\mathcal I$ such that $\Hom{\Lambda}(V,I)\neq 0$ and replacing it with $M$.
\end{enumerate}
\end{enumerate}
\end{corollary}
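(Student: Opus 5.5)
The plan is to transport everything through the bijections of Theorem~\ref{bij_posets} and reduce the statement to Corollary~\ref{Thm: mutation} combined with Corollary~\ref{cor: inj env of simples}. Since the bijections in Theorem~\ref{bij_posets} are order-preserving, $(\Z,\I)$ covers $(\Z',\I')$ in $\Cosiltpair{A}$ if and only if $\mathbf{t}$ covers $\mathbf{u}$ in $\tors{A}$. By Corollary~\ref{Thm: mutation}, this is equivalent to $\N_\mathbf{t}$ and $\N_\mathbf{u}$ being related by irreducible mutation. In that case there is a brick $S$, with $\mathbf{t}\cap\mathbf{v}=\filt{S}$, such that:
\begin{itemize}
\item $\N_\mathbf{u}\setminus\N_\mathbf{t}=\{\lambda\}$, where $\lambda$ is the $\N_\mathbf{u}$-injective envelope of $S$ in $\heart{\mathbf{u}}$;
\item $\N_\mathbf{t}\setminus\N_\mathbf{u}=\{\rho\}$, where $\rho$ is the $\N_\mathbf{t}$-injective envelope of $S[-1]$ in $\heart{\mathbf{t}}$.
\end{itemize}
By Proposition~\ref{simples in the heart}(3), the brick $S$ is torsion almost torsion-free with respect to $\tpair{t}{f}$ and torsion-free almost torsion with respect to $\tpair{u}{v}$. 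We set $T:=V:=S$.

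For (1)$\Rightarrow$(2), we identify $\lambda$ and $\rho$ as modules. Since $T$ is torsion-free almost torsion for $\tpair{u}{v}$, it is, by Proposition~\ref{simples in the heart}(2), also torsion-free almost torsion for the associated pair $(\U,\V)$ in $\Cosilt{A}$. Corollary~\ref{cor: inj env of simples}(1) applied to $\mathbf{u}$ together with Theorem~\ref{Thm: critical tf/t} then gives $\lambda=\mu_M$ for the unique critical $M\in\Z'$ with $\Hom{A}(T,M)\neq0$. On the $\mathbf{t}$ side, Corollary~\ref{cor: inj env of simples}(2),(3) and Theorem~\ref{all in Ical appear} split into two cases according to whether the $\F$-cover of $T$ is surjective:
\begin{itemize}
\item if it is surjective, $\rho=\mu_N$ for the unique special $N\in\Z$ with $T\notin\C_{\mu_N}$;
\item otherwise, $\rho=I[-1]$ for the unique special $I\in\I$ with $\Hom{A}(T,I)\neq0$.
\end{itemize}
In the second case Proposition~\ref{prop:tTfHaveUniqueMax} shows that the $\F$-cover of $T$ is the inclusion of the unique maximal submodule, hence injective. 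Translating back via Theorem~\ref{bij_posets}(3), removing $\lambda$ and adding $\rho$ means exactly removing $M$ from $\Z'$ and adding either $N$ to $\Z'$ or $I$ to $\I'$. This gives (a) or (b).

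For (2)$\Rightarrow$(1), we use the given $T$ to produce a covered torsion pair. By \cite{BarnardCarrollZhu:19} (equivalently Corollary~\ref{Thm: mutation}(3) via Proposition~\ref{simples in the heart}), a torsion almost torsion-free module $T$ for $\mathbf{t}$ determines an arrow $\mathbf{t}\to\mathbf{u}''$ with brick label $T$. The argument above then shows that $(\Z,\I)$ arises from the cosilting pair of $\mathbf{u}''$ by exactly the same exchange. Since in (2) the pair $(\Z',\I')$ is obtained from $(\Z,\I)$ by the inverse exchange (remove $N$ or $I$, add $M$), the pairs $(\Z',\I')$ and the one for $\mathbf{u}''$ coincide, so $\mathbf{u}=\mathbf{u}''$. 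The equivalence (1)$\Leftrightarrow$(3) is proved in the same way, starting from the torsion-free almost torsion module $V$ for $\tpair{u}{v}$, which labels the arrow into $\mathbf{u}$.

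The main obstacle is this converse direction: (2) must pin down $\mathbf{u}$ uniquely. This rests on the fact that each brick label determines the arrow, together with injectivity of the bijections in Theorem~\ref{bij_posets}. Beyond that, the proof is bookkeeping, with one further point to check: the dichotomy ``surjective versus injective $\F$-cover'' is exhaustive, which is the content of Proposition~\ref{prop:tTfHaveUniqueMax}.
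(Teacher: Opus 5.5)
Your forward implications (1)$\Rightarrow$(2) and (1)$\Rightarrow$(3) are correct. They are exactly the combination the paper intends: Theorem~\ref{bij_posets}, Corollary~\ref{Thm: mutation} and Corollary~\ref{cor: inj env of simples}, with Theorems~\ref{Thm: critical tf/t} and~\ref{all in Ical appear} naming the exchanged modules.

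The converse has a genuine gap, at the sentence asserting that $(\Z',\I')$ coincides with the cosilting pair $(\Z'',\I'')$ of $\mathbf{u}''$. In case (a), applying (1)$\Rightarrow$(2) to $\mathbf{t}\to\mathbf{u}''$ gives $\Z''=(\Z\setminus\{N\})\cup\{M''\}$, where $M''$ is the unique module of $\Z''$ with $\Hom{A}(T,M'')\neq 0$. Hypothesis (2) gives $\Z'=(\Z\setminus\{N\})\cup\{M\}$, where $M$ is the unique module of $\Z'$ with $\Hom{A}(T,M)\neq 0$. These two modules are defined relative to different sets, and nothing in your argument forces $M=M''$. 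All you actually know is that $(\Z',\I')$ and $(\Z'',\I'')$ are two cosilting pairs, both different from $(\Z,\I)$, containing $(\Z\setminus\{N\},\I)$ (resp.\ $(\Z,\I\setminus\{I\})$ in case (b)). The two facts you name in your last paragraph do not close this. Uniqueness of the arrow with brick label $T$ presupposes that $\mathbf{t}\to\mathbf{u}$ is an arrow, which is what you are proving. Injectivity of the bijections only helps once the two pairs are known to agree.

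The missing ingredient is that this almost complete set has exactly one completion other than $(\Z,\I)$. This is not bookkeeping; in the paper it comes from Section~\ref{Sec: wide closed}. Let $\rho\in\N_{\mathbf{t}}$ be the element ($\mu_N$ or $I[-1]$) determined by $T$, and put $\M:=\N_{\mathbf{t}}\setminus\{\rho\}=\N_{\mathbf{t}}\cap\N_{\mathbf{u}''}$, which is closed and rigid.
\begin{itemize}
\item Since $[\mathbf{u}'',\mathbf{t}]$ is wide, Theorem~\ref{thm: wide closed bijection} gives $[\mathbf{u}_\M,\mathbf{t}_\M]=[\mathbf{u}'',\mathbf{t}]$.
\item By Corollary~\ref{Cor: completions}, every torsion pair whose maximal rigid set contains $\M$ lies in this two-element interval.
\item Hypothesis (2) gives $\M\subseteq\N_{\mathbf{u}}$.
\item Also $\mathbf{u}\neq\mathbf{t}$, because $M\notin\Z$: indeed $T\in\mathbf{t}\subseteq{}^{\perp_0}\Z$ while $\Hom{A}(T,M)\neq 0$.
\end{itemize}
Hence $\mathbf{u}=\mathbf{u}''$. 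Equivalently, you may invoke Corollary~\ref{cor: mutable means exactly two completions pairs}, since $N$ (resp.\ $I$) is mutable.

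The implication (3)$\Rightarrow$(1) has the same gap. There you would need the element of $(\Z,\I)$ removed in (3) to be the one exchanged along the arrow $\mathbf{t}''\to\mathbf{u}$ labelled by $V$. The same fix works: apply Corollary~\ref{Cor: completions} to $\N_{\mathbf{u}}\setminus\{\mu_M\}=\N_{\mathbf{u}}\cap\N_{\mathbf{t}''}$, which is contained in $\N_{\mathbf{t}}$ by hypothesis (3), to get $\mathbf{t}=\mathbf{t}''$.
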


Since $\F$ is a covering class in $\Mod{A}$, there exists an $\F$-cover of any injective cogenerator of $\Mod{A}$.  This $\F$-cover and its kernel will be useful to us in subsequent sections, so we collect together some  properties of these modules. Recall that a module $N$ in $\F$ is said to be \textbf{split-injective in} $\F$ if any monomorphism $N\to F$ with $F\in \F$ splits.

\begin{proposition}\label{prop: neg-is summands}

Let $E$ be an injective cogenerator of $\Mod{A}$ and consider the exact sequence \[0\to C_1 \overset{}{\to} C_0 \overset{f}{\to} E\] where $f$ is an $\F$-cover of $E$.  Then 
the following statements hold.
\begin{enumerate}
\item A module $N\in \F$ is critical in $\F$ if and only if it  is neg-isolated and split-injective in $\F$.
\item The module $C_0$ is split-injective in $\F$, and the module $C_1$ does not have any non-zero direct summands that are split-injective in $\F$.
\item The set $\Prod{C_0\oplus C_1}$ coincides with $\Prod{\Z}$.
\item The direct summands of $C_0$ that are neg-isolated in $\F$ are exactly the critical modules in $\F$.
\item The direct summands of $C_1$ that are neg-isolated in $\F$ are exactly the special modules in $\F$.
\end{enumerate}
\end{proposition}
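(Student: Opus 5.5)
The plan is to work throughout inside $\Mod{\bar A}$. There, by Proposition~\ref{prop: cotilt/cosilt}, $\F$ is a cotilting class. By Theorem~\ref{bij_cosilt mod cpx}, $\Z=\Ind{\Prod{C}}$ for $C=\prod_{X\in\Z}X$. The first step is to record what the minimal $\F$-cover $f\colon C_0\to E$ gives us.

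\begin{itemize}
\item By Wakamatsu's lemma, $C_1=\ker f\in\F^{\perp_1}$.
\item $C_0$ is split-injective in $\F$. Given a monomorphism $C_0\to F$ with $F\in\F$, injectivity of $E$ extends $f$ to a map $F\to E$. This map factors through the cover $f$, giving an endomorphism of $C_0$ which restricts along $C_0\to F$ to an endomorphism $\phi$ with $f\phi=f$. Minimality of the cover makes $\phi$ an automorphism, so the monomorphism splits.
\item Since $\F$ is closed under extensions, split-injectivity gives $C_0\in\F^{\perp_1}$: any extension $0\to C_0\to X\to F'\to 0$ with $F'\in\F$ has $X\in\F$, hence splits.
\end{itemize}

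For the second half of (2), suppose $X$ is a direct summand of $C_1$ that is split-injective in $\F$. Its inclusion into $C_0$ is a monomorphism in $\F$, so it splits. Then $X$ is a summand of $C_0$ contained in $\ker f$, which contradicts minimality of the $\F$-cover unless $X=0$.

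For (1), first let $N$ be critical, with epic left almost split map $a\colon N\to\overline N$. Suppose $g\colon N\to F$ is a non-split monomorphism in $\F$. Then $g=ha$, so $a$ is a monomorphism, hence an isomorphism. This contradicts that $a$ is not split, so $N$ is split-injective. Conversely, let $N$ be neg-isolated and split-injective. As above, split-injectivity puts $N$ in $\F\cap\F^{\perp_1}$. So by Remark~\ref{Rem: critical or special} $N$ is critical or special. It cannot be special, since a monic left almost split map would split.

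For (3), I would show that $C_0\oplus C_1$ is a cotilting $\bar A$-module with $\Cogen{C_0\oplus C_1}=\F$. This is the standard construction of a cotilting module from the $\F$-cover of an injective cogenerator, applied to the image of $f$, which is an injective cogenerator situation in $\Mod{\bar A}$ by Lemma~\ref{lem: surj cover}. It then follows that $\Prod{C_0\oplus C_1}=\F\cap\F^{\perp_1}=\Prod{C}$, because both $C_0\oplus C_1$ and $C$ are cotilting $\bar A$-modules with the same cotilting class, and equivalent cotilting modules have the same $\Prod$. Taking indecomposables gives $\Prod{\Z}$. I expect to quote the cotilting-theory identification $\Prod{C}=\F\cap\F^{\perp_1}$ rather than reprove it.

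For (4) and (5), argue as follows.
\begin{itemize}
\item A neg-isolated summand of $C_0$ is split-injective, hence critical by (1).
\item Conversely, let $N$ be critical. Embed $N$ into some $E^I$. The product map $f^I\colon C_0^I\to E^I$ is an $\F$-precover, since $\F$ is closed under products, so the embedding factors through a map $N\to C_0^I$. That map is monic, and by split-injectivity it splits. Since $N$ is indecomposable pure-injective with local endomorphism ring, it is then a summand of $C_0$ itself.
\item A neg-isolated summand of $C_1$ lies in $\F\cap\F^{\perp_1}$ and is not split-injective by (2). By Remark~\ref{Rem: critical or special} and (1), it is therefore special.
\end{itemize}

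The remaining direction, showing that every special $N$ occurs as a summand of $C_1$, is the main obstacle. My plan is to use Proposition~\ref{prop: upgrade t/tf special}(2). It provides a sequence $0\to N\to\overline N\to T\to 0$ with $\overline N\to T$ an $\F$-cover and $T$ torsion almost torsion-free. Embed $T$ into $E$ and compare its $\F$-cover with $f$ via the induced map of short exact sequences. We already know $N\in\Prod{C_0\oplus C_1}$ and that $N$ is not split-injective, so $N$ is not a summand of $C_0$. Local endomorphism rings then force $N$ to be a summand of $C_1^J$, and I would need to descend from $C_1^J$ to $C_1$. I would try to get this descent from the comparison map $N\to C_1$ being a split monomorphism, using that $N\to\overline N$ is left almost split.
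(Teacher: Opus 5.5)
Your arguments for (1), (2), (3), and for the inclusions ``neg-isolated summand of $C_0$ $\Rightarrow$ critical'' and ``neg-isolated summand of $C_1$ $\Rightarrow$ special'', are correct. They are more self-contained than the paper, which simply cites \cite{AHL} and \cite{BreazZemlicka:18} for (1)--(3).

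The gap is in the two reverse inclusions. There you must pass from $N$ being a summand of a product ($C_0^I$, or $(C_0\oplus C_1)^I$) to $N$ being a summand of $C_0$ or $C_1$ itself.

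In (4) you justify this by $N$ being indecomposable pure-injective with local endomorphism ring. That only gives the exchange property for finite direct sums and says nothing about infinite products. For example, over $\mathbb{Z}$ the module $\hat{\mathbb{Z}}_p$ is a pure, hence split, submodule of $\prod_n\mathbb{Z}/p^n$. This product is a direct summand of $M^{\mathbb{N}}$ for $M=\bigoplus_n\mathbb{Z}/p^n$, but $\hat{\mathbb{Z}}_p$ is not a summand of the torsion module $M$.

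In (5) the same issue appears twice. Not being a summand of $C_0$ does not exclude being a summand of $C_0^I$. You also explicitly leave the descent from $C_1^J$ to $C_1$ open, and the proposed comparison of the $\F$-cover of $T$ with $f$ is never carried out.

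The missing idea, which the paper uses via \cite[Prop.~9.29]{Prest:88}, is that neg-isolation itself gives the descent. Let $a\colon N\to\overline N$ be left almost split in $\F$, and let $s\colon N\to\prod_i M_i$ be a split monomorphism with retraction $r$ and all $M_i\in\F$. Suppose no component $\pi_i s$ were a split monomorphism. Then $\pi_i s=h_i a$ for suitable $h_i$, so $s=(h_i)_i\,a$ and $1_N=r\,(h_i)_i\,a$. This makes $a$ a split monomorphism, which is impossible.

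Now let $N$ be critical or special. Then $N\in\Prod{C_0\oplus C_1}$ by (3), so the argument above makes $N$ a summand of $C_0\oplus C_1$. Since this sum is finite, the local endomorphism ring now legitimately makes $N$ a summand of $C_0$ or of $C_1$. Which one is decided as follows:
\begin{itemize}
\item A critical $N$ is split-injective by (1), so by (2) it cannot be a summand of $C_1$.
\item A special $N$ is not split-injective, since its monic left almost split map would split. Hence it cannot be a summand of the split-injective module $C_0$.
\end{itemize}
This settles both reverse inclusions at once and also repairs your embedding argument in (4). No analysis of $T$ is needed.
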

\begin{proof}
To simplify terminology, we will assume throughout this proof that all terms (split-injective, critical, special, neg-isolated) are being used relative to $\F$.

(1) This is a special case of \cite[Prop.~5.16]{AHL}.

(2) The first statement is \cite[Lem.~3.3]{BreazZemlicka:18},  the second is a consequence of the minimality of $f$.  

(3) We know from \cite[Thm.~3.5]{BreazZemlicka:18} that $C_0\oplus C_1$ is a cosilting module with cosilting class $\F$.   Hence $\Prod{C_0\oplus C_1} = \F\cap\F^{\perp_1} = \Prod{\Z}$.

(4),(5)
Since $C_0$ is split-injective by (2), so is any direct summand of $C_0$.  It follows from (1) that any neg-isolated summand of $C_0$ must be critical.  Similarly, any neg-isolated summand of $C_1$ cannot be critical by  (2), but it is contained in $\F\cap\F^{\perp_1}$ by (3), so it must be special by Remark~\ref{Rem: critical or special}.

Now let $N$ be  critical or special.  Since $N\in\Z\subseteq \Prod{C_0\oplus C_1}$, we must have that $N$ is a direct summand of $C_0$ or $C_1$ by properties of neg-isolated modules 
(see \cite[Prop.~9.29]{Prest:88}).  We have already seen that  the former case applies when $N$ is critical and  the latter case when $N$ is special.
\end{proof}

Now we transfer these properties to the derived category. 

\begin{lemma}\label{lem: summands of products}
Suppose $\mu\in\N$ is the $\N$-injective envelope of a simple in $\heart{\mathbf{t}}$ and that $\mu$ is a direct summand of $\prod_{i\in J} \mu_i$ for some family $\{\mu_i\}_{i\in J}$ of complexes in $\Prod{\N}$.  Then $\mu$ is a direct summand of $\mu_i$ for some $i\in J$.
\end{lemma}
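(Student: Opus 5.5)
Transfer the statement to the heart $\heart{\mathbf{t}}$ via the equivalence $\H{\mathbf{t}}\colon \Prod{\N}\to\Inj{\heart{\mathbf{t}}}$ of Theorem~\ref{Thm: cosilting and Zg}(3), and then argue with simple socles there.

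First I will note that $\H{\mathbf{t}}$ commutes with products. The t-structure $(\X_\mathbf{t},\Y_\mathbf{t})=({}^{\perp_{\leq 0}}\N, {}^{\perp_{>0}}\N)$ has coaisle closed under products. Objects of $\Prod{\N}$ lie in the heart, since $\Prod{\N}=\Prod{\sigma_\N}$ and $\sigma_\N$ is a cosilting object whose product powers lie in the heart. So $\prod_{i\in J}\mu_i$ computed in $\Der{A}$ lies in $\heart{\mathbf{t}}$ and is the product there. Put $E=\H{\mathbf{t}}(\mu)$ and $E_i=\H{\mathbf{t}}(\mu_i)$. Then $E$ is a direct summand of $\prod_{i} E_i$ in $\heart{\mathbf{t}}$, and $E$ is the injective envelope of a simple object $S$.

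Next, since $E$ is a summand of the product, the inclusion $S\hookrightarrow E\hookrightarrow\prod_i E_i$ is non-zero. Hence some component $S\to E_i$ is non-zero, and it is a monomorphism because $S$ is simple. Since $E_i$ is injective, this monomorphism extends along the essential monomorphism $S\hookrightarrow E$ to a map $g\colon E\to E_i$. The map $g$ is a monomorphism: its kernel meets $S$ trivially, and $S$ is essential in $E$, so the kernel is zero. As $E$ is injective, $g$ splits, so $E$ is a direct summand of $E_i$. Applying the inverse of the equivalence $\H{\mathbf{t}}$ on $\Prod{\N}$, we get that $\mu$ is a direct summand of $\mu_i$.

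The main point to verify is the first step: that $\prod_i\mu_i$ lies in $\Prod{\N}\subseteq\heart{\mathbf{t}}$ and that $\H{\mathbf{t}}$ preserves this product. Membership holds because $\Prod{\N}$ is closed under products by definition, and the restriction of $\H{\mathbf{t}}$ to the heart is the identity. Alternatively, the splitting can be checked entirely inside $\Der{A}$ using $\Hom{\Der{A}}(S,-)$, which commutes with products, together with the fact that $\Hom{\Der{A}}(S,\mu')\cong\Hom{\heart{\mathbf{t}}}(S,\H{\mathbf{t}}(\mu'))$ from Remark~\ref{Rmk: N-inj env are inj env}. The rest is the standard argument that an indecomposable injective with simple socle is local in the relevant sense.
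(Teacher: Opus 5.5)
Your strategy is the paper's: transport the split monomorphism to $\heart{\mathbf{t}}$ via the equivalence $\H{\mathbf{t}}\colon\Prod{\N}\to\Inj{\heart{\mathbf{t}}}$, then use that $E=\H{\mathbf{t}}(\mu)$ is the injective envelope of a simple object $S$. Your socle argument (a non-zero component $S\to E_i$, extension along $S\hookrightarrow E$, essentiality, splitting, and pulling back along the equivalence) is a correct write-up of the ``straightforward argument'' that the paper leaves to the reader.

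The justification you give for the step you single out as the main point is wrong. $\Prod{\N}$ is \emph{not} contained in $\heart{\mathbf{t}}$ in general, and $\sigma_\N$ need not lie in the heart: it lies there only if $\Hom{\Der{A}}(\sigma_\N,\sigma_\N[-1])=0$. This is why Theorem~\ref{Thm: cosilting and Zg}(3) is phrased as an equivalence induced by $\H{\mathbf{t}}$ rather than as an equality $\Prod{\N}=\Inj{\heart{\mathbf{t}}}$.

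By Definition~\ref{HRS}, a two-term complex $X$ lies in $\heart{\mathbf{t}}$ if and only if $\mathrm{H}^0(X)\in\F$ and $\mathrm{H}^1(X)\in\T$. For $I\in\I$, the complex $I[-1]$ has $\mathrm{H}^1=I$, and $I$ fails to be torsion as soon as $\Hom{A}(I,N)\neq 0$ for some $N\in\Z$. For instance, take the path algebra of type $A_2$ and $\mathbf{t}=\mathrm{add}\,S$ with $S$ simple projective. Then $\Z=\{S'\}$ and $\I=\{P\}$, where $S'$ is simple injective and $P$ is projective-injective. Since $\Hom{A}(P,S')\neq 0$, the point $P[-1]\in\N$ is not in the heart.

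The conclusion you need is nevertheless true, for a different reason. $\Prod{\N}$ is closed under products in $\Der{A}$, and products of injectives in the Grothendieck category $\heart{\mathbf{t}}$ are injective. Hence the equivalence carries $\prod_i\mu_i$, together with its projections, to a product of the $E_i$ in $\heart{\mathbf{t}}$; this is the ``commutes with products'' that the paper uses.

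Your alternative via $\Hom{\Der{A}}(S,-)$ also works and avoids products in the heart altogether. You must note, however, that $\Hom{\Der{A}}(S,Y)\cong\Hom{\heart{\mathbf{t}}}(S,\H{\mathbf{t}}(Y))$ holds for every $Y$ in the coaisle $\Y_\mathbf{t}$, which contains $\Prod{\N}$. Remark~\ref{Rmk: N-inj env are inj env} states it only for $Y\in\N$. With either repair the proof is complete and agrees with the paper's.
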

\begin{proof}
Applying $\H{\mathbf{t}}$ to the split monomorphism $\mu\to\prod_{i\in J} \mu_i$ and recalling from Theorem \ref{Thm: cosilting and Zg} that there is an additive equivalence $\H{\mathbf{t}}\colon \Prod{\N} \to \Inj{\heart{\mathbf{t}}}$, we obtain a split monomorphism $\H{\mathbf{t}}(\mu)\to\prod_{i\in J} \H{\mathbf{t}}(\mu_i)$ where $\H{\mathbf{t}}(\mu)$ is the injective envelope of a simple.  A straight-forward argument using properties of injective envelopes yields that $\H{\mathbf{t}}(\mu)$ is a direct summand of $\H{\mathbf{t}}(\mu_i)$ for some $i\in J$.  Applying the additive equivalence from Theorem \ref{Thm: cosilting and Zg} again, we obtain the required conclusion.
\end{proof}

\begin{definition}
Let $\mu\in \N$. \begin{enumerate}
\item If $\mu =\mu_N$ for $N\in \Z$ critical in $\F$, then we say that $\mu$ is \textbf{critical} in $\N$.
\item If $\mu =\mu_N$ for $N\in\Z$ special or $\mu=I[-1]$ for $I\in\I$ special, then we say that $\mu$ is \textbf{special} in $\N$.
\item If $\mu$ is special or critical in $\N$, then we say that $\mu$ is \textbf{neg-isolated} in $\N$.
\end{enumerate}
\end{definition}

\begin{remark}
The neg-isolated objects $\N$ are exactly the $\N$-injective envelopes of simples in $\heart{\mathbf{t}}$ by Corollary \ref{cor: inj env of simples}.
\end{remark}

\begin{proposition}\label{prop: inj env of simples mu_1, mu_0}
Let $E,C_0,C_1,f$ be as in Proposition~\ref{prop: neg-is summands}, and consider $\mu_0 := \mu_{C_0}$.  Let \[\mu_1 \to \mu_0 \overset{f'}{\to} E \to \mu_1[1]\] be the triangle completed from the morphism $f' \colon \mu_0 \to E$ induced by the $\F$-cover $f$.  Then the following statements hold.
\begin{enumerate}
\item The set $\Prod{\mu_0 \oplus \mu_1}$ coincides with $\Prod{\N}$.
\item The direct summands of $\mu_0$ that are neg-isolated in $\N$ are exactly the critical elements of $\N$.
\item The direct summands of $\mu_1$ that are neg-isolated in $\N$ are exactly the special elements of $\N$.
\end{enumerate}
\end{proposition}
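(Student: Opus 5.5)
We first identify $\mu_1$ explicitly. Take a minimal injective resolution $0\to C_0\to I^0\to I^1$, so $\mu_0=(I^0\to I^1)$. The chain map $f'\colon\mu_0\to E$ is the composite of the truncation $\mu_0\to C_0$ in degree $0$ with $f$; note that $\H{}(\mu_0)=C_0$ and $f'$ induces $f$ on $\H{}$.

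Consider the long exact cohomology sequence of the triangle $\mu_1\to\mu_0\to E\to\mu_1[1]$. It gives $\H{}(\mu_1)\cong\Ker f=C_1$. It also gives $\mathrm{H}^1(\mu_1)\cong\Coker f\oplus\mathrm{H}^1(\mu_0)$, up to an extension. Hence $\mu_1$ is a two-term complex of injectives in $\K{A}$, with zeroth cohomology $C_1$. Concretely, $\mu_1$ is represented by $I^0\to I^1\oplus E$.

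\textbf{Part (1).} We show that $\mu_0\oplus\mu_1\in\Prod{\N}$ and that it generates.

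For membership, argue in the heart. Since $E[0]\in\heart{}$ and $f$ is an $\F$-cover, apply $\Hom{\Der{A}}(-,\N[1])$ to the triangle. Use the fact that $\Prod{\N}$ consists exactly of the two-term $\sigma$ with $\H{}(\sigma)\in\F\cap\F^{\perp_1}$-type rigidity. Rigidity of $\mu_1$ against $\N$ then reduces, via Lemma~\ref{Csigma}(1) and (4), to $C_0,C_1\in\Prod{\Z}$ (Proposition~\ref{prop: neg-is summands}(3)), plus control of the $I[-1]$-summands.

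Cleaner is to cite the dual of the known construction, cf.~\cite[Thm.~3.5]{BreazZemlicka:18} together with \cite[Lem.~3.6]{ALS1}. The triangle $\mu_1\to\mu_0\to E$ is an approximation triangle of the injective cogenerator $E$, which is itself a cosilting object for the standard t-structure. The standard argument then shows that $\mu_0\oplus\mu_1$ is a cosilting complex whose associated torsion-free class is $\F=\Cogen{C_0\oplus C_1}$. By Theorem~\ref{bij_cosilt mod cpx}, $\Prod{\mu_0\oplus\mu_1}=\Prod{\N}$. I expect this to be the main obstacle: verifying that $\Hom{}(\mu_0\oplus\mu_1,\mu_1[1])=0$ uses that $f$ is an $\F$-precover (surjectivity of $\Hom{}(\mu_0,-)$ against maps into $E$). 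This is exactly \cite[Prop.~3.10]{MarksVitoria:18}-style reasoning, which I would invoke.

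\textbf{Parts (2) and (3).} Let $\mu\in\N$ be neg-isolated. By (1), $\mu$ is a summand of $\mu_0\oplus\mu_1$, and so by Lemma~\ref{lem: summands of products} it is a summand of $\mu_0$ or of $\mu_1$. We then sort the cases.

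If $\mu=\mu_N$ with $N\in\Z$, then a summand of $\mu_0$ has $\H{}$ a summand of $C_0$, since $\H{}$ is additive and $\H{}(\mu_0)=C_0$. Likewise a summand of $\mu_1$ has $\H{}$ a summand of $C_1$. Proposition~\ref{prop: neg-is summands}(4),(5) then say critical iff summand of $C_0$, and special iff summand of $C_1$. Since $C_0$ and $C_1$ share no neg-isolated summands, $N$ cannot lie in both, and the correct membership follows.

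If $\mu=I[-1]$ with $I\in\I$, then $\mu$ is not a summand of $\mu_0$. Indeed, $\mu_0$ is the minimal injective copresentation of $C_0$, which has no summand of the form $I[-1]$ because its degree-$0$ map is minimal. Hence $\mu$ is a summand of $\mu_1$, consistent with special $I[-1]$ belonging to (3). Critical elements are never of the form $I[-1]$ by definition, which completes the sorting.
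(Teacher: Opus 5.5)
Your proposal is correct and follows the paper's proof. For (1), the paper also identifies $\mu_0\oplus\mu_1$ as the two-term cosilting complex of $\N$ via $\Ctf{\mu_0\oplus\mu_1}=\Ctf{\mu_1}=\Cogen{C_0}=\Cogen{C_0\oplus C_1}=\F$, taken from the proof of \cite[Prop.~3.5]{Angeleri:18}: the precover property of $f$ gives $\F\subseteq\Ctf{\mu_1}$, and the cogenerator $E$ gives the reverse inclusion. For (2) and (3), it uses Lemma~\ref{lem: summands of products}, $\H{}(\mu_1)\cong C_1$, Proposition~\ref{prop: neg-is summands} and the minimality of $\mu_0$ exactly as you do.

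The only slips are harmless. There is no truncation morphism $\mu_0\to C_0$ in $\Der{A}$; $f'$ comes from an extension $I^0\to E$ of $f$, which is what your cocone $I^0\to I^1\oplus E$ actually uses. Also, the discarded description of $\Prod{\N}$ via $\H{}(\sigma)\in\F\cap\F^{\perp_1}$ is not correct as stated, but your final argument does not rely on it.
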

\begin{proof}
(1) In the proof of \cite[Prop.~3.5]{Angeleri:18} it is shown that $\sigma:=\mu_0\oplus\mu_1$ satisfies
$\C_\sigma=\C_{\mu_1}=\Cogen{C_0}=\Cogen{C_1\oplus C_0}=\F$. In other words, $\sigma$ is the cosilting complex associated to $\N$ under the bijection in Theorem~\ref{bij_cosilt mod cpx}, and so we have $\Prod{\sigma}=\Prod{\N}$.

(2),(3) Let $\mu$ be neg-isolated in $\N$. Since $\mu \in \Prod{\mu_0\oplus \mu_1}$, it follows that $\mu$ is a direct summand of $\mu_0$ or $\mu_1$ by Lemma \ref{lem: summands of products}.  Note that, by considering the long exact sequence of cohomologies associated to the triangle $\mu_1 \to \mu_0 \to E \to \mu_1[1]$, we have that $\H{}(\mu_1)\cong C_1$.  It follows that, if $\mu$ is a direct summand of $\mu_i$ for $i=0,1$, then $\H{}(\mu)$ is a direct summand of $C_i$.

Now, if $\mu$ is a neg-isolated summand of $\mu_0$, then it cannot be isomorphic to $I[-1]$ for some $I\in\I$.  Indeed, $\mu_0$ is the minimal injective copresentation of $C_0$ by construction, so it cannot contain a direct summand of the form $I[-1]$ or it would contradict minimality.
 It then follows from Proposition~\ref{prop: neg-is summands} and the previous paragraph that $\mu$ is a neg-isolated summand of $\mu_0$ if and only if   $\mu=\mu_N$ for some critical module $N$ in $\F$, proving statement (2).
  
Statement (3) is proven similarly  by using that every neg-isolated element of $\N$ is   critical or special by definition, and it cannot be both by  Proposition \ref{prop: neg-is summands}.
\end{proof}

We saw above that  the complexes $\mu_0$ and $\mu_1$ each determine the torsion-free class $\F$.  We record this fact here.

\begin{corollary}\label{cor: tf class mu}
If $\mu_0$ and $\mu_1$ are as in Proposition \ref{prop: inj env of simples mu_1, mu_0}, then $\F =\Ctf{\mu_1} = \Cogen{\H{}(\mu_0)}$.
\end{corollary}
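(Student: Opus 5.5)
The corollary has two equalities, and each can be read off from material already established in the proof of Proposition~\ref{prop: inj env of simples mu_1, mu_0}(1). That proof cites \cite[Prop.~3.5]{Angeleri:18} for the chain
\[
\C_\sigma=\C_{\mu_1}=\Cogen{C_0}=\Cogen{C_1\oplus C_0}=\F, \qquad \sigma=\mu_0\oplus\mu_1 .
\]
So the plan is to extract the two needed equalities from this chain and then identify $\H{}(\mu_0)$ with $C_0$.

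For the second equality, we use that $\mu_0=\mu_{C_0}$ is by definition the minimal injective copresentation of $C_0$. Hence $\H{}(\mu_0)\cong C_0$, and therefore $\Cogen{\H{}(\mu_0)}=\Cogen{C_0}$.

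We can also check $\Cogen{C_0}=\F$ directly, so as not to depend only on the citation.
\begin{itemize}
\item The inclusion $\Cogen{C_0}\subseteq\F$ holds because $C_0\in\F$ and $\F$ is a torsion-free class closed under products. Closure under products follows from Theorem~\ref{bij_cosilt mod cpx}, since $\F=\Cogen{\H{}(\N)}$.
\item For the reverse inclusion, take $F\in\F$ and a monomorphism $F\to E^I$. Since $f^I$ is an $\F$-precover of $E^I$ (products of precovers are precovers, as $\F$ is closed under products), this monomorphism factors through $C_0^I$. The factored map is still injective, so $F\in\Cogen{C_0}$.
\end{itemize}

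For the first equality, $\F=\Ctf{\mu_1}$, we use the chain from Proposition~\ref{prop: inj env of simples mu_1, mu_0}(1) directly. If we want an argument independent of the citation, we apply $\Hom{\Der{A}}(M,-)$ to the triangle $\mu_1\to\mu_0\to E\to\mu_1[1]$ for $M\in\Mod{A}$. Since $E$ is injective, $\Hom{\Der{A}}(M,E[1])=0$, so the long exact sequence gives
\[
\Hom{A}(M,C_0)\to\Hom{A}(M,E)\to\Hom{\Der{A}}(M,\mu_1[1])\to\Hom{\Der{A}}(M,\mu_0[1]).
\]
Here we use $\Hom{\Der{A}}(M,\mu_0)=\Hom{A}(M,C_0)$. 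Then:
\begin{itemize}
\item If $M\in\F$, the map $\Hom{A}(M,C_0)\to\Hom{A}(M,E)$ is surjective because $f$ is an $\F$-precover. Also $\Hom{\Der{A}}(M,\mu_0[1])=0$, because $M\in\Cogen{C_0}$ and $\mu_0$ is rigid, so Lemma~\ref{Csigma}(4) applies. Hence $\Hom{\Der{A}}(M,\mu_1[1])=0$, i.e. $M\in\Ctf{\mu_1}$.
\item Conversely, if $M\in\Ctf{\mu_1}$, then every map $M\to E$ factors through $f$, whose image is torsion-free. If $M$ had a nonzero torsion part, choose a nonzero map from it into $E$ and extend it to $M$ by injectivity of $E$. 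This map factors through a torsion-free module, so its restriction to the torsion part vanishes, a contradiction. Hence $M\in\F$.
\end{itemize}

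The only slightly delicate point is the rigidity of $\mu_0$. This follows because $\mu_0\in\Prod{\N}$ by part (1) of Proposition~\ref{prop: inj env of simples mu_1, mu_0}. Alternatively, we can avoid the rigidity argument altogether and quote the chain of equalities from the proof of Proposition~\ref{prop: inj env of simples mu_1, mu_0}, which is how we expect the corollary is intended.
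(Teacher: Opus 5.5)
Your proposal is correct and follows the paper's route: the corollary is just the chain $\Ctf{\sigma}=\Ctf{\mu_1}=\Cogen{C_0}=\Cogen{C_1\oplus C_0}=\F$ from the proof of Proposition~\ref{prop: inj env of simples mu_1, mu_0}(1), combined with $\H{}(\mu_0)\cong C_0$. Your optional direct checks of both equalities are also sound; the vanishing of $\Hom{\Der{A}}(M,\mu_0[1])$ for $M\in\Cogen{C_0}$ follows most cleanly from Lemma~\ref{Csigma}(1) and (3), using that $\mu_0\in\Prod{\N}$ is pure-injective and rigid.
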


\section{Wide intervals and closed rigid subsets}\label{Sec: wide closed}

The main aim of this section is to prove the following theorem.

\begin{theorem}\label{thm: wide closed bijection}
There is a bijection $\Phi \colon \WideInt{A} \to \ClRigid{A}$ given by $\Phi([\mathbf{u}, \mathbf{t}]) = \N_{\mathbf{u}} \cap \N_{\mathbf{t}}$ where \[\WideInt{A} := \{[\mathbf{u}, \mathbf{t}]  \mid \tpair{u}{v}, \tpair{t}{f} \in \tors{A} \text{ such that } \mathbf{t}\cap\mathbf{v} \text{ is wide}\}\] and \[\ClRigid{A} := \{\M \in \Rigid{A} \mid \M \text{ closed in } \Zg{\Der{A}}\}.\]
\end{theorem}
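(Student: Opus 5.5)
We prove well-definedness, injectivity and surjectivity of $\Phi$ in that order, using the mutation description of wide intervals. The intersection $\N_\mathbf{u}\cap\N_\mathbf{t}$ of two maximal rigid sets is clearly rigid, and it is closed in $\Zg{\Der{A}}$ because both sets are closed by Theorem~\ref{Thm: cosilting and Zg}(1). So $\Phi$ is well defined.

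For injectivity, and in order to identify the inverse map, we show that a wide interval is recovered from $\E:=\N_\mathbf{u}\cap\N_\mathbf{t}$. By Theorem~\ref{Thm: wide}, $\N_\mathbf{t}$ and $\N_\mathbf{u}$ are related by mutation. By Theorem~\ref{Thm: serre hereditary mutat}(2)(c), the torsion pair $(\S,\R)$ in $\heart{\mathbf{u}}$ at which we HRS-tilt is $({}^{\perp_0}\H{\mathbf{u}}(\E),\Cogen{\H{\mathbf{u}}(\E)})$. Hence $(\X_\mathbf{t},\Y_\mathbf{t})$, and so $\mathbf{t}$, is determined by $\mathbf{u}$ and $\E$. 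Dually, left-tilting in $\heart{\mathbf{t}}$ recovers $\mathbf{u}$ from $\mathbf{t}$ and $\E$. This shows only that one endpoint determines the other, so we still need an intrinsic description of the endpoints. Define $\mathbf{t}_\E$ and $\mathbf{u}_\E$ as the largest and smallest torsion classes $\mathbf{s}$ with $\E\subseteq\N_\mathbf{s}$. Their existence comes from the lattice structure: $\E\subseteq\N_\mathbf{s}$ holds precisely when $\Ctf{\N_\mathbf{s}}\subseteq\Ctf{\E}$ and $\E\subseteq\N_\mathbf{s}$ is compatible with the Cogen side. More concretely, translating via Theorem~\ref{bij_posets}, the condition is that $\H{}(\E)\subseteq\F_\mathbf{s}\cap\F_\mathbf{s}^{\perp_1}$ and that the injectives $I$ with $I[-1]\in\E$ lie in $\F_\mathbf{s}^{\perp_0}$. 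The torsion-free classes satisfying this form an interval. Its bottom is $\Cogen{\H{}(\E)}$ closed up under extensions. Its top is $\Ctf{\E}\cap{}^{\perp_0}\{I : I[-1]\in\E\}$, which is a torsion-free class of finite type by Lemma~\ref{Csigma}(3). We then show $\mathbf{t}=\mathbf{t}_\E$ and $\mathbf{u}=\mathbf{u}_\E$ for any wide interval with $\Phi([\mathbf{u},\mathbf{t}])=\E$. Take any $\mathbf{s}$ with $\E\subseteq\N_\mathbf{s}$. The set $\E$ is a set of summands common to $\N_\mathbf{s}$ and $\N_\mathbf{u}$. Comparing the hereditary torsion pairs cogenerated by $\H{\mathbf{u}}(\E)$, we show that $\heart{\mathbf{s}}$ is obtained from $\heart{\mathbf{u}}$ by tilting at a torsion class contained in $\S$, which forces $\mathbf{u}\le\mathbf{s}\le\mathbf{t}$.

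For surjectivity, given $\M\in\ClRigid{A}$, we first complete $\M$ to maximal rigid sets. We expect $\M\subseteq\N_\mathbf{s}$ for some $\mathbf{s}$, obtained via the cosilting module associated with a Bongartz-type completion. Concretely, we take $\F:=\Ctf{\M}\cap{}^{\perp_0}\{I: I[-1]\in\M\}$ and show that $\M\subseteq\N$ for the corresponding maximal rigid set. Here closedness of $\M$ is used, through the purity arguments of Lemma~\ref{Csigma}. This gives the largest element $\mathbf{t}_\M$, and dually the smallest element $\mathbf{u}_\M$ arises from the torsion-free class generated by $\Cogen{\H{}(\M)}$. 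Then we must show two things: that $[\mathbf{u}_\M,\mathbf{t}_\M]$ is wide, and that $\N_{\mathbf{u}_\M}\cap\N_{\mathbf{t}_\M}=\M$. For wideness we verify Theorem~\ref{Thm: serre hereditary mutat}(2)(b)/(c). We show that the tilting torsion pair in $\heart{\mathbf{u}_\M}$ equals $({}^{\perp_0}\H{\mathbf{u}_\M}(\M),\Cogen{\H{\mathbf{u}_\M}(\M)})$. This pair is hereditary of finite type because $\M$ is closed: $\H{\mathbf{u}_\M}$ is a homeomorphism onto $\Sp{\heart{\mathbf{u}_\M}}$ by Theorem~\ref{Thm: cosilting and Zg}(3), and closed subsets correspond to hereditary torsion pairs of finite type, as in the proof of the earlier lemma on $\Prod{\L\cap\R}$.

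The main obstacle is this surjectivity step. In particular we must show that the maximal rigid set obtained by HRS-tilting $\heart{\mathbf{u}_\M}$ at this hereditary pair is exactly $\N_{\mathbf{t}_\M}$, and that it meets $\N_{\mathbf{u}_\M}$ in $\M$ and in nothing larger. We would try the following. By the theory of \cite{ALSV}, tilting at a hereditary torsion pair of finite type yields a mutation that fixes precisely the points in the torsion-free part, namely $\M$. The resulting torsion class contains $\M$ in its maximal rigid set, so it is at most $\mathbf{t}_\M$. Maximality then follows from the intrinsic characterisation of $\mathbf{t}_\M$ established in the injectivity step.
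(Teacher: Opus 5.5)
The well-definedness step is fine, but the injectivity step has a genuine gap, and the surjectivity step depends on it unnecessarily.

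\textbf{Injectivity.} You claim that for every $\mathbf{s}$ with $\E\subseteq\N_\mathbf{s}$ the heart $\heart{\mathbf{s}}$ is obtained from $\heart{\mathbf{u}}$ by tilting at a torsion class contained in $\S$. This presupposes what is to be proved. $(\X_\mathbf{s},\Y_\mathbf{s})$ is a right HRS-tilt of $(\X_\mathbf{u},\Y_\mathbf{u})$ only if $\X_\mathbf{u}\subseteq\X_\mathbf{s}$, that is, only if $\mathbf{u}\le\mathbf{s}$ already holds. ``Comparing the hereditary torsion pairs cogenerated by $\H{\mathbf{u}}(\E)$'' gives no reason why such an $\mathbf{s}$ could not be incomparable with $\mathbf{u}$.

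All that $\E\subseteq\N_\mathbf{s}$ gives for free is $\Cogen{\H{}(\E)}\subseteq\Cogen{\H{}(\N_\mathbf{s})}=\Ctf{\N_\mathbf{s}}\subseteq\Ctf{\E}$. The missing ingredient is that removing the mutated points does not change the endpoint classes: $\Ctf{\E}=\Ctf{\N_\mathbf{u}}$ and $\Cogen{\H{}(\E)}=\Cogen{\H{}(\N_\mathbf{t})}$.

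The paper obtains this in three steps:
\begin{enumerate}
\item It identifies $\N_\mathbf{u}\setminus\E$ as critical points that are injective envelopes of finitely presented simples, and $\N_\mathbf{t}\setminus\E$ as special points (Theorem~\ref{Thm: simple mutation}, Corollary~\ref{cor: inj env of simples}).
\item It uses that $\Supp{\mu_1}$ omits exactly such critical points while $\Ctf{\mu_1}=\Ctf{\N_\mathbf{u}}$.
\item It uses that $\Cogen{\H{}(\N_\mathbf{t})}$ is cogenerated by the critical points alone (Lemmas~\ref{lem: support mu_1} and~\ref{lem: tf classes closed subset}).
\end{enumerate}

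You could also close the gap with data you already invoke. By Theorem~\ref{Thm: serre hereditary mutat}, $\S={}^{\perp_0}\H{\mathbf{u}}(\E)$ and $\S=\T\cap\V$ consists of modules. Take any $U\in\U\cap\Ctf{\E}$. Then $\Hom{\heart{\mathbf{u}}}(U[-1],\H{\mathbf{u}}(\mu))\cong\Hom{\Der{A}}(U,\mu[1])=0$ for all $\mu\in\E$, so $U[-1]\in\S\cap\U[-1]=0$. Since $\Ctf{\E}$ is closed under submodules, this gives $\Ctf{\E}\subseteq\V$. Hence $\mathbf{u}$ is determined by $\E$, and your observation that $\mathbf{t}$ is determined by $\mathbf{u}$ and $\E$ finishes injectivity.

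\textbf{Surjectivity.} Your labels are reversed. $\Ctf{\M}$ is the \emph{largest} torsion-free class, so it yields the \emph{smallest} torsion class $\mathbf{u}_\M$, while $\Cogen{\H{}(\M)}$ yields $\mathbf{t}_\M$. As written, you would be right-tilting from the larger endpoint.

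With the labels corrected, your outline matches the paper (Lemma~\ref{lem: t-str from M}, Corollary~\ref{cor: Phi surjective, Psi injective}). It should not lean on the unproved injectivity step, and it does not need to. What you need is:
\begin{enumerate}
\item $\M\subseteq\N_{\mathbf{u}_\M}$. This follows from rigidity, not closedness: $\M\subseteq\Y_{\mathbf{u}_\M}\cap\Y_{\mathbf{u}_\M}^{\perp_1}=\Prod{\N_{\mathbf{u}_\M}}$, where $\Y_{\mathbf{u}_\M}={}^{\perp_1}\M\cap\D^{\geq 0}$.
\item The torsion class ${}^{\perp_0}\H{\mathbf{u}_\M}(\M)$ lies in $\Ctf{\M}$, so that the tilt is again a two-term cosilting t-structure.
\item Theorem~\ref{Thm: serre hereditary mutat}(2)(c) then gives $\Cogen{\H{\mathbf{u}_\M}(\E)}=\Cogen{\H{\mathbf{u}_\M}(\M)}$ for the intersection $\E$. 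Hence $\Prod{\E}=\Prod{\M}$, and $\E=\M$ by closedness.
\end{enumerate}
This proves surjectivity without identifying the tilt with $\mathbf{t}_\M$ through a maximality argument.
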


Since $\N_{\mathbf{u}}$ and $\N_{\mathbf{t}}$ are closed subsets of $\Zg{\Der{A}}$ by \cite[Prop.~3.2]{ALS1}, the assignment is well-defined.  The next results will allow us to define the inverse assignment.

\begin{lemma}\label{lem: u_M and t_M}
The following statements hold for $\M \in \Rigid{A}$.
\begin{enumerate}
\item There exists a torsion pair of finite type $u_\M := ({}^{\perp_0}\Ctf{\M}, \Ctf{\M}) \in \Cosilt{A}$. 
\item There is an inclusion $\Cogen{\H{}(\M)} \subseteq  \Ctf{\M}$.
\item There exists a torsion pair $t_\M := ({}^{\perp_0}\H{}(\M), \Cogen{\H{}(\M)})$ in $\Mod{A}$.
\end{enumerate}
\end{lemma}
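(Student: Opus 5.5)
We prove the three statements in order, using Lemma~\ref{Csigma} throughout; (1) and (2) give (3) almost directly.

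For (1), recall that $\Ctf{\M}=\bigcap_{\mu\in\M}\Ctf{\mu}$. Each $\mu\in\M$ is a point of $\ZgInt{A}$, so it is an indecomposable pure-injective two-term complex. By Lemma~\ref{Csigma}(3), each $\Ctf{\mu}$ is therefore a torsion-free class in $\Mod{A}$ closed under directed colimits.
\begin{itemize}
\item An intersection of torsion-free classes is closed under submodules, products and extensions, so it is again a torsion-free class.
\item An intersection of classes closed under directed colimits is again closed under directed colimits.
\end{itemize}
Hence $\Ctf{\M}$ is a torsion-free class of finite type. Its torsion class is ${}^{\perp_0}\Ctf{\M}$, so $u_\M\in\Cosilt{A}$. (If $\M$ is empty, $\Ctf{\M}=\Mod{A}$ and the statement is trivial.)

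For (2), let $\sigma\in\M$ with $\H{}(\sigma)\neq 0$. Rigidity of $\M$ gives $\Hom{\Der{A}}(\sigma,\mu[1])=0$ for every $\mu\in\M$. By Lemma~\ref{Csigma}(1) this means $\H{}(\sigma)\in\Ctf{\mu}$ for all $\mu\in\M$, so $\H{}(\M)\subseteq\Ctf{\M}$. Since $\Ctf{\M}$ is a torsion-free class by (1), it is closed under products and submodules. Therefore $\Cogen{\H{}(\M)}\subseteq\Ctf{\M}$.

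For (3), the class $\Cogen{\H{}(\M)}$ is always closed under products and submodules. It remains to show it is closed under extensions; then it is a torsion-free class with torsion class ${}^{\perp_0}\H{}(\M)$. This extension-closure is the only real point of the lemma, and the key input is the vanishing of $\Ext{A}^1$.

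1. Each $M:=\H{}(\sigma)$ with $\sigma\in\M$ is pure-injective: $\sigma$ is pure-injective and $\H{}$ preserves pure-injectivity (Lemma~\ref{Csigma}(3)).
2. By (2), $\Cogen{\H{}(\M)}\subseteq\Ctf{\M}\subseteq\Ctf{\sigma}$ for each $\sigma\in\M$. Since $\Ctf{\sigma}\subseteq{}^{\perp_1}\H{}(\sigma)$, we get $\Ext{A}^1(X,\H{}(\sigma))=0$ for every $X\in\Cogen{\H{}(\M)}$.
3. Take an exact sequence $0\to X\to Y\to Z\to 0$ with $X,Z\in\Cogen{\H{}(\M)}$. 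Embed $X\hookrightarrow P:=\prod_i M_i$ with $M_i\in\H{}(\M)$.
4. Since $\Ext{A}^1(Z,P)=\prod_i\Ext{A}^1(Z,M_i)=0$ by step 2, the embedding $X\to P$ extends to a map $Y\to P$.
5. Combining this with $Y\to Z\hookrightarrow P'$ (another such product) gives a monomorphism $Y\hookrightarrow P\times P'$.

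Hence $Y\in\Cogen{\H{}(\M)}$, as required.
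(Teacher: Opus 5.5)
Your proof is correct and is essentially the paper's argument. The only differences are that in (1) you intersect the classes $\Ctf{\mu}$, $\mu\in\M$, instead of applying Lemma~\ref{Csigma}(3) once to the pure-injective product $\sigma=\prod_{\mu\in\M}\mu$ (for which $\Ctf{\sigma}=\Ctf{\M}$), and that in (3) you verify by hand that $\Ext{A}^1(\Cogen{\H{}(\M)},\H{}(\M))=0$ forces $\Cogen{\H{}(\M)}$ to be extension-closed, which is exactly the content of \cite[Rmk.~3.4]{Angeleri:18} that the paper cites (your step 1 in (3), pure-injectivity of $\H{}(\sigma)$, is harmless but unused there).
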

\begin{proof} 
(1) Let $\sigma = \prod_{\mu\in \M} \mu$.  Then $\sigma$ is pure-injective and so $\Ctf{\sigma} = \Ctf{\M}$ is a torsion-free class closed under directed colimits by Lemma~\ref{Csigma}.

(2) Since $\M$ is rigid, we have that $\H{}(\mu') \in \Ctf{\M}$ for all $\mu'\in\M$ by Lemma~\ref{Csigma}(1). It follows from Lemma~\ref{Csigma}(3) that $\Cogen{\H{}(\M)} \subseteq \Ctf{\M}$.  

(3) The inclusion in (2) implies by Lemma~\ref{Csigma} that $\Cogen{\H{}(\M)}$ is contained in ${}^{\perp_1}\H{}(\M)$ and so $t_M$ is a torsion pair by \cite[Rmk.~3.4]{Angeleri:18}.
\end{proof}

\begin{remark}\label{rem: tp coincide if max}(Theorem~\ref{bij_cosilt mod cpx}, \cite[Thm.~2.17 and Def.~2.14]{ALS1})
We have that $u_\M = t_\M$ if and only if $\M \in \CosiltZg{A}$.
\end{remark}

Since we have torsion pairs in $\Mod{A}$, we can consider their HRS-tilts at the standard t-structure. 

\begin{lemma}\label{lem: t-str from M}
Let $\M\in\Rigid{A}$ and consider the right HRS-tilts $\ststr{t^-}$ and $\ststr{u^-}$ of $\ststr{}$ at the torsion pairs $t:=t_\M$ and $u:=u_\M$ defined in Lemma \ref{lem: u_M and t_M}.  The following statements hold.
\begin{enumerate}
\item The aisle of $\ststr{t^-}$ has the form \[\X_{t^-} = {}^{\perp_{\leq0}}\M \cap \D^{\leq 0} = {}^{\perp_{0}}\M \cap \D^{\leq 0}.\]
\item The coaisle of $\ststr{u^-}$ has the form \[\Y_{u^-} = {}^{\perp_{>0}}\M \cap \D^{\geq 0} = {}^{\perp_{1}}\M \cap \D^{\geq 0}.\]
\item The t-structure $\ststr{t^-}$ is the right HRS-tilt of $\ststr{u^-}$ at the hereditary torsion pair \[s = ({}^{\perp_0}\H{u^-}(\M), \Cogen{\H{u^-}(\M)})\] in $\heart{u^-}$.
\end{enumerate}
\end{lemma}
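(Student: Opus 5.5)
For (1) and (2) I would unwind the definition of the right HRS-tilt (Definition~\ref{HRS}) and translate the membership conditions using the two-term nature of the complexes in $\M$. Take $\mu\in\M$ represented by $I^0\to I^1$ in degrees $0,1$. For $X\in\D^{\leq 0}$, any map $X\to\mu[i]$ with $i\geq 1$ vanishes, since $\mu[i]$ is a complex of injectives concentrated in degrees $\le 0$ shifted so that $X\to\mu[i]$ factors through truncations that vanish. In fact $\Hom{\Der{A}}(X,\mu[i])=0$ for $i>0$ as $\mu[i]\in\D^{\geq -i+0}$ has injective terms in degrees $\geq -i$ and $X\in \D^{\le 0}$; more precisely I would argue via $\Hom{}(\D^{\leq 0},\D^{\geq 1})=0$ and the triangle $I^1[-1]\to\mu\to I^0\to$. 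So in (1) the two perps agree on $\D^{\leq0}$. For $X\in\D^{\leq 0}$ one has $\Hom{\Der{A}}(X,\mu)\cong\Hom{\Der{A}}(\H{}(X),\mu)$, because $\mu\in \D^{\geq 0}$ and $\tau^{\geq0}X=\H{}(X)$. Moreover $\Hom{\Der{A}}(M,\mu)\cong\Hom{A}(M,\H{}(\mu))$ for a module $M$. Hence $X\in{}^{\perp_0}\M\cap\D^{\leq 0}$ if and only if $\H{}(X)\in{}^{\perp_0}\H{}(\M)$, which is the torsion class of $t_\M$. This is exactly the description of $\X_{t^-}$.

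For (2) I would argue dually. Take $Y\in\D^{\geq 0}$. Then $\Hom{\Der{A}}(Y,\mu[i])=0$ for $i\geq 2$, since $\mu[i]$ has injective terms in degrees $\leq -1$, so it lies in $\mathrm{K}(\mathrm{Inj})$ with a bounded-above, degreewise-injective representation. The condition at $i=1$ reduces via the truncation triangle $\H{}(Y)\to Y\to\tau^{\geq1}Y\to$. Here $\Hom{}(\tau^{\geq 1}Y,\mu[1])=0$, because $\mu[1]$ is concentrated in degrees $-1,0$. Also $\Hom{}(\tau^{\geq1}Y[-1],\mu[1])$ vanishes for the same reason. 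This gives $\Hom{}(Y,\mu[1])\cong\Hom{}(\H{}(Y),\mu[1])$. By Lemma~\ref{Csigma}(1) this vanishes for all $\mu\in\M$ exactly when $\H{}(Y)\in\Ctf{\M}$, which is the torsion-free class of $u_\M$. That matches $\Y_{u^-}$.

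For (3), I first note that $t_\M\le u_\M$, because $\Cogen{\H{}(\M)}\subseteq\Ctf{\M}$ by Lemma~\ref{lem: u_M and t_M}(2). It is standard that $\ststr{t^-}$ is then a right HRS-tilt of $\ststr{u^-}$ at the torsion pair $(\X_{t^-}\cap\heart{u^-},\Y_{t^-}\cap\heart{u^-})$; this is the same argument as in Theorem~\ref{Thm: serre hereditary mutat}(1). It remains to identify this pair with $s$, and I expect this to be the main obstacle.

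By (1), the torsion part is $\heart{u^-}\cap{}^{\perp_0}\M$. Every $\mu\in\M$ lies in $\Y_{u^-}$ by (2) and rigidity. It also lies in $\X_{u^-}$: rigidity gives $\H{}(\mu)\in\Ctf{\M}$, while the degree $1$ cohomology of $\mu$ lies in ${}^{\perp_0}\Ctf{\M}$, which I would check using Lemma~\ref{Csigma} and injectivity of $I^1$. So $\mu\in\heart{u^-}$ up to a shift by $\H{u^-}$. More carefully, I would show that $\Hom{\Der{A}}(H,\mu)\cong\Hom{\heart{u^-}}(H,\H{u^-}(\mu))$ for $H\in\heart{u^-}$, using $\mu\in\Y_{u^-}$. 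This gives that the torsion class is ${}^{\perp_0}\H{u^-}(\M)$. I would then show that $\H{u^-}(\mu)$ is injective in $\heart{u^-}$: Ext$^1$ in the heart is computed by $\Hom{\Der{A}}(-,\mu[1])$, which vanishes on $\heart{u^-}\subseteq\Y_{u^-}\subseteq{}^{\perp_1}\M$. A torsion class of the form ${}^{\perp_0}$ of a set of injectives is hereditary, with torsion-free class $\Cogen{}$ of that set. This identifies the pair with $s$.
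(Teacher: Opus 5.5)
\textbf{The gap is in (3), at the step you yourself flagged as the main obstacle.} Write $t_{\M}=(\T_{\M},\F_{\M})$ and $u_{\M}=(\U_{\M},\V_{\M})$.

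Statement (1) does not give $\heart{u^-}\cap{}^{\perp_0}\M$ as the torsion part. It gives
$\X_{t^-}\cap\heart{u^-}={}^{\perp_0}\M\cap\D^{\leq 0}\cap\heart{u^-}=\T_{\M}\cap\V_{\M}$.
The two differ because $\heart{u^-}$ is not contained in $\D^{\leq 0}$: it contains $U[-1]$ for every $U\in\U_{\M}$.

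Combined with your (correct) isomorphism $\Hom{\Der{A}}(H,\mu)\cong\Hom{\heart{u^-}}(H,\H{u^-}(\mu))$, your argument only yields the inclusion $\T_{\M}\cap\V_{\M}\subseteq{}^{\perp_0}\H{u^-}(\M)$. The reverse inclusion is the actual content of (3): an object of the heart with no maps to $\M$ must have vanishing $\mathrm{H}^1$. This is where $\U_{\M}\cap\Ctf{\M}=0$ has to be used.

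The paper does it as follows.
\begin{itemize}
\item Take $Y\in{}^{\perp_0}\H{u^-}(\M)$ and its canonical sequence $0\to V\to Y\to U[-1]\to 0$ in $\heart{u^-}$, with $V\in\V_{\M}$ and $U\in\U_{\M}$.
\item Since ${}^{\perp_0}\H{u^-}(\M)$ is closed under quotients, $0=\Hom{\heart{u^-}}(U[-1],\H{u^-}(\mu))\cong\Hom{\Der{A}}(U,\mu[1])$ for all $\mu\in\M$.
\item Hence $U\in\U_{\M}\cap\Ctf{\M}=0$.
\item So $Y\cong V$ is a module in $\V_{\M}$ with $\Hom{A}(Y,\H{}(\M))=0$, i.e.\ $Y\in\T_{\M}\cap\V_{\M}$.
\end{itemize}
With this inserted, the rest of your identification goes through: injectivity of $\H{u^-}(\mu)$, hence heredity, with torsion-free class $\Cogen{\H{u^-}(\M)}$. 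Your truncation-triangle proofs of (1) and (2) are tidier than the paper's chain-level computations.

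\textbf{Two smaller errors.}

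In (1) you have the indices reversed. The extra conditions in ${}^{\perp_{\leq 0}}\M$ are the indices $i<0$. There $\mu[i]\in\D^{\geq 1}$, and $\Hom{\Der{A}}(\D^{\leq 0},\D^{\geq 1})=0$ gives the vanishing. Your claim that $\Hom{\Der{A}}(X,\mu[i])=0$ for $i>0$ and $X\in\D^{\leq 0}$ is false; for $i=1$ it is precisely what $\Ctf{\M}$ measures.

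In (3), the claim $\mathrm{H}^1(\mu)\in{}^{\perp_0}\Ctf{\M}$ (equivalently, $\mu\in\heart{u^-}$) is false in general. Over the path algebra of $1\to 2$, let $S$ be the simple injective and $P$ the projective-injective. Then $\M=\{S,P[-1]\}$ is maximal rigid and $S\in\Ctf{\M}$, but $\mathrm{H}^1(P[-1])=P$ maps onto $S$. You do not need this claim, since the Hom-isomorphism only requires $\mu\in\Y_{u^-}$.

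Also, in the paper's order (by torsion classes) one has $u_{\M}\leq t_{\M}$, not the reverse; the inclusion $\Cogen{\H{}(\M)}\subseteq\Ctf{\M}$ you state is the correct one.
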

\begin{proof}
To avoid confusion with where orthogonal classes are defined (e.g. in $\Der{A}$, in $\Mod{A}$ or in some heart), let us denote the torsion pairs $t$ and $u$ in $\Mod{A}$ by $(\T_\M, \F_\M)$ and $(\U_\M, \V_\M)$ respectively, and the torsion pair $s$ in $\heart{u^-}$ by $(\S_\M, \R_\M)$.  When we use the ``$\perp_I$" notation in this proof, we will always mean taking the orthogonal in $\Der{A}$. 

(1) Recall from Definition~\ref{HRS} that $\X_{t^-} = \{ X  \in \D^{\leq0} \mid \H{}(X ) \in \T_\M\}$.	 
For degree reasons, we have that $ {}^{\perp_{\leq 0}}\M \cap \D^{\leq0} =  {}^{\perp_{ 0}}\M \cap \D^{\leq0}$.
So, we have to show that a complex $X \in\D^{\leq0}$ satisfies  $H^0(X )\in \T_\M$  if and only if $\Hom{\Der{A}}(X ,\mu)=0$ for all $\mu\in\M$.
Without loss of generality, we can assume that $X $ is a complex with zeroes in positive degrees.  Since the objects in $\M$ are isomorphic to complexes of injective modules that are non-zero only in degrees 0 and 1, any morphism $X  \to \mu$ is a map of chain complexes 
	\[ \xymatrix{\dots \ar[r]  & X^{-1} \ar[r]^-{d^{-1}} \ar[d]  & X^0 \ar[r] \ar[d]^{g} & 0 \ar[r] \ar[d] & 0 \ar[r] \ar[d] & \dots \\
				\dots \ar[r]  & 0 \ar[r] 						 & E_0 \ar[r]^{\mu} & E_1 \ar[r] & 0 \ar[r] & \dots }\]
up to homotopy.  Since there are no non-zero homotopies,  this morphism in $\Der{A}$ is zero if and only if $g = 0$.  As $g \circ d^{-1} = 0$,  there exists some $h \colon \Coker{d^{-1}} \cong \H{}(X ) \to E_0$ such that $g = h \circ \cokermor{d^{-1}}$.  Moreover $\mu \circ h \circ \cokermor{d^{-1}} = \mu \circ g = 0$ and so $\mu \circ h = 0$.  Then there exists $l \colon \H{}(X ) \to \Ker{\mu} \cong \H{}(\mu)$ with $h = \kermor{\mu}\circ l$.  If we suppose that $\H{}(X ) \in  \T_\M$, it follows that $l=0$ by definition of $\T_\M$, thus $g = 0$, and so $\Hom{\Der{A}}(X ,\mu)=0$ as required. The reverse implication is shown similarly.

(2) We recall from Definition~\ref{HRS} that $\Y_{u^-} = \{ X  \in  \D^{\geq0} \mid \H{}(X) \in \V_\M \}$, and we note that  ${}^{\perp_{> 0}}\M \cap \D^{\geq0}={}^{\perp_{1}}\M \cap \D^{\geq0}$
 for degree reasons.
So, we have to show that a complex $X \in\D^{\geq0}$ satisfies  $H^0(X )\in \V_\M$  if and only if $\Hom{\Der{A}}(X ,\mu[1])=0$ for all $\mu\in\M$. We  assume that $X $ is a complex with zeroes in negative degrees. 

Consider a morphism
\[ \xymatrix{\dots \ar[r]  & 0 \ar[r] \ar[d]  & 0 \ar[r] \ar[d] & X^0 \ar[r]^{d^0} \ar[d]^{g} & X^1 \ar[r] \ar[d] & \dots \\
				\dots \ar[r]  & 0 \ar[r] 						 & E_0 \ar[r]^{\mu} & E_1 \ar[r] & 0 \ar[r] & \dots }\]
Let $g' := g\circ \kermor{d^0} \colon \Ker{d^0} \cong \H{}(X ) \to E_1$.  If we suppose that $\H{}(X ) \in \V_\M$, there exists  $h_0' \colon \H{}(X ) \to E_0$ such that $\mu\circ h_0' = g'$ by definition of $\V_\M$.  Using that $E_0$ is injective, we obtain a morphism $h_0 \colon X^0 \to E_0$ such that $h_0 \circ \kermor{d^0} = h_0'$.  Now consider the short exact sequence 
	\[0 \to \H{}(X ) \overset{\kermor{d^0}}{\longrightarrow} X^0  \overset{\pi}{\longrightarrow} \Im{d^0} \to 0.\]
Note that $\mu \circ h_0 \circ \kermor{d^0} = \mu \circ h_0' = g' = g\circ\kermor{d^0}$, so $0 = (g - \mu \circ h_0)\circ\kermor{d^0}$.  Therefore there exists a morphism $h_1' \colon \Im{d^0} \to E_1$ such that $h_1'\circ \pi = g-\mu\circ h_0$.  Since $E_1$ is injective,  there exists $h_1 \colon X^1 \to E_1$ such that $h_1\circ \iota = h_1'$ where $\iota \colon \Im{d^0}\to X_1$ is the inclusion.  A routine computation shows that $h_0$ and $h_1$ are the only two non-trivial terms in a homotopy witnessing that $g=0$ in $\Der{A}$, and so $\Hom{\Der{A}}(X ,\mu[1])=0$ as required. The reverse implication is shown similarly.

(3)   By Lemma \ref{lem: u_M and t_M}(2) we have that $\F_\M \subseteq \V_\M$ and so it follows from \cite[Prop.~7.5]{ALSV} that $\ststr{t^-}$ is a right HRS-tilt of $\ststr{u^-}$ at a torsion pair 
$(\S, \R)$ in $\heart{u^-}$ with $\S=\T_\M\cap\V_\M$. We have to show that $\S=\S_\M$. Note that for a module $Y\in\Mod{A}$ which belongs to $\heart{u^-}$ and for an element $\mu\in\M$ we have that
$$\Hom{A}(Y,\H{}(\mu))\cong \Hom{\Der{A}}(Y, \mu) \cong \Hom{\heart{u^-}}(Y, \H{u^-}(\mu)),$$ thus $Y$ lies in $\T_\M$ if and only if  it lies in $\S_\M$. This shows the inclusion $\S\subseteq\S_\M$. For the reverse inclusion, let $Y\in\S_\M$ and consider the short exact sequence $0 \to V \to Y \to U[-1] \to 0$ given by the torsion pair $(\V_\M, \U_\M[-1])$ in $\heart{u^-}$.  Since $\S_\M$ is a torsion class, we have that $U[-1]\in \S_\M$.  By definition of $\S_\M$, this means that \[0 = \Hom{\heart{u^-}}(U[-1], \H{{u^-}}(\M)) \cong \Hom{\Der{A}}(U[-1], \M) \cong \Hom{\Der{A}}(U, \M[1]).\]  It follows that $U\in \U_\M \cap \V_\M = 0$ and so $Y\cong V \in \V_\M$. Since $Y$ is a module, we have already observed that $Y\in\T_\M$, and so $Y\in\T_\M\cap\V_\M=\S$ as desired.
\end{proof}

Next we establish when $t_\M$ is contained in $\Cosilt{A}$. Given a class of indecomposable objects $\X$ in an additive category with products, we  say that $\X$ is \textbf{product-rigid} if $\Ind{\Prod{\X}} = \X$.

\begin{remark}\label{closedisproductrigid}  Every closed set $\M$ in $\Zg{\Der{A}}$ or $\Zg{A}$ is product-rigid. Indeed, by the definition of the Ziegler topology, if $\M$ is a closed set, then there exists a definable subcategory $\mathcal{D}$  such that $\M$ consists of the indecomposable pure-injective objects in  $\mathcal{D}$. In that case $\M = \Ind{\Prod{\M}}$ because $\mathcal{D}$ is closed under products and summands.
\end{remark}

\begin{proposition}\label{prop: closed sets}
 The following statements are equivalent for $\M\in\Rigid{A}$. 
\begin{enumerate}
\item The set $\M$ is closed in $\Zg{\Der{A}}$.
\item The set $\M$ is product-rigid, and $t_\M = ({}^{\perp_0}\H{}(\M), \Cogen{\H{}(\M)}$ belongs to  $\Cosilt{A}$.
\item The set $\H{}(\M)$ is closed in $\Zg{A}$.
\end{enumerate}
\end{proposition}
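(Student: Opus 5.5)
The plan is to show (1)$\Rightarrow$(2)$\Rightarrow$(3)$\Rightarrow$(1), using throughout the torsion pair $t_\M=(\T_\M,\F_\M)$ from Lemma~\ref{lem: u_M and t_M}(3). We also use the fact that the indecomposable pure-injective modules in a torsion-free class of finite type $\F$ which lie in $\F\cap\F^{\perp_1}$ form a closed set $\Z$ (Theorem~\ref{Thm: cosilting and Zg}(2)).

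\emph{(1)$\Rightarrow$(2).} Product-rigidity is Remark~\ref{closedisproductrigid}. It remains to see that $\F_\M=\Cogen{\H{}(\M)}$ is closed under directed colimits. First note that $\sigma:=\prod_{\mu\in\M}\mu$ is pure-injective, so $\H{}(\sigma)=\prod\H{}(\mu)$ is pure-injective and $\F_\M=\Cogen{\H{}(\sigma)}$. Next, $\M$ is closed, hence equal to the indecomposable pure-injectives of a definable subcategory $\mathcal D\subseteq\Der{A}$. The functor $\H{}$ restricted to two-term complexes of injectives is well behaved with respect to purity (it is a coherent functor, as $\H{}=\Hom{\Der{A}}(A,-)$). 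Hence I expect the closure of $\H{}(\mathcal D\cap\K{A})$ under products and pure submodules to be a definable subcategory of $\Mod{A}$ whose indecomposable pure-injectives are exactly $\H{}(\M)$; this also gives (3) directly. To get finite type, I would show that $\Cogen{\H{}(\M)}$ equals the torsion-free class of finite type obtained by closing this definable class under submodules. The key observation is that a module cogenerated by a pure-injective $C$ with $\Cogen C\subseteq{}^{\perp_1}C$ (which holds by Lemma~\ref{lem: u_M and t_M}(2) and Lemma~\ref{Csigma}) generates a torsion-free class closed under direct limits. For this I would invoke \cite[Rmk.~3.4]{Angeleri:18} together with the fact that $\Cogen{C}$ is then a cotilting class over $A/\Ann{C}$, hence of finite type, and deduce finite type over $A$ since $\Mod{\bar A}$ is closed under directed colimits.

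\emph{(2)$\Rightarrow$(3).} Set $\F:=\F_\M$ of finite type, with associated cosilting pair $(\Z,\I)$, where $\Z=\Ind{\F\cap\F^{\perp_1}}$ is closed in $\Zg{A}$. Rigidity gives $\H{}(\M)\subseteq\F\cap{}^{\perp_1}$-orthogonal conditions, so $\H{}(\M)\subseteq\F\cap\F^{\perp_1}$ by Lemma~\ref{Csigma}(4), and hence $\H{}(\M)\subseteq\Z$. Conversely, $\Prod{\Z}$ and $\Prod{\H{}(\M)}$ are both the product-complete cogenerators of $\F$ inside $\F\cap\F^{\perp_1}$. Using product-rigidity of $\M$, and that $\mu\mapsto\H{}(\mu)$ is injective on non-injective-shift objects (Theorem~\ref{bij_posets}(3) style), I would show $\Ind{\Prod{\H{}(\M)}}=\H{}(\M)$. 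One then needs $\Z=\Ind{\Prod{\H{}(\M)}}$, which I expect to follow from the fact that the cosilting module $\prod\H{}(\M)$ cogenerates $\F$ with $\Cogen\subseteq{}^{\perp_1}$, and that cosilting modules with the same class are product-equivalent (Theorem~\ref{bij_cosilt mod cpx}). Hence $\H{}(\M)=\Z$ is closed.

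\emph{(3)$\Rightarrow$(1).} Here the main obstacle is the shifted injectives $I[-1]\in\M$, which $\H{}$ does not see. Let $\Z:=\H{}(\M)$ be closed, so that $\mathcal D:=\Cogen{\Z}$ is a torsion-free class of finite type. The set $\M$ is then $\{\mu_N: N\in\Z\}\cup\{I[-1]: I\in\I_\M\}$, where $\I_\M$ is contained in $\Z^{\perp_0}\cap\InjInd{A}$. I would show that the map $\Zg{A}\supseteq\Z\mapsto\{\mu_N\}$ is the restriction of a homeomorphism onto a closed set. Concretely, $\{\mu_N\}$ is the set of points of $\ZgInt{A}$ lying in the definable class cut out by $\H{}(-)\in\langle\Z\rangle$ and $\Hom{}(-,?)$-conditions ensuring minimality. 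The set $\{I[-1]\}$ is closed because $\InjInd{A}$ is finite for artinian $A$, and finite unions of closed sets are closed. Verifying that the first set is closed is the delicate step; I would attempt it via the homeomorphism in Theorem~\ref{Thm: cosilting and Zg}(3) applied to the maximal rigid set associated to $\Cogen{\Z}$, identifying $\{\mu_N:N\in\Z\}$ with a closed subset of $\N$, namely those points with $\H{}\ne0$, i.e.\ the complement of the finitely many points $I[-1]$.
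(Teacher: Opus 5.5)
Your three-step cycle is a reasonable plan, but each implication rests on a step that is either false or only asserted.

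\emph{(1)$\Rightarrow$(2).} Your ``key observation'' is false. A pure-injective $C$ with $\Cogen{C}\subseteq{}^{\perp_1}C$ need not have $\Cogen{C}$ closed under direct limits. \cite[Thm.~3.6]{Angeleri:18} says a \emph{cosilting} module is cotilting over $A/\Ann{C}$; the inclusion $\Cogen{C}\subseteq{}^{\perp_1}C$ does not give the equality that cotilting requires. For a counterexample, take the Kronecker algebra and a $\lambda$-adic module $C$. It is a summand of a cotilting module, so $\Cogen{C}\subseteq{}^{\perp_1}C$. But the generic module $G$ lies in the definable closure of $C$, while $\Hom{A}(G,C)=0$. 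Hence $\Cogen{C}$ is not definable, and so not of finite type.

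Notice also that this argument never uses that $\M$ is closed. It would therefore give $t_\M\in\Cosilt{A}$ for every rigid $\M$, which the example refutes.

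Your alternative sketch claims that the definable closure of $\H{}(\mathcal D\cap\K{A})$ has exactly $\H{}(\M)$ as its indecomposable pure-injectives. That is (1)$\Rightarrow$(3) stated as an expectation. Definable closures do acquire new points, as the example shows, so this is the whole difficulty rather than a formality.

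The missing idea is to work in the heart of $u_\M$:
\begin{itemize}
\item Via the homeomorphism of Theorem~\ref{Thm: cosilting and Zg}(3), $\M\subseteq\N_{u}$ becomes a set of indecomposable injectives of $\heart{u^-}$.
\item Closedness of the product-rigid set $\M$ is then equivalent to the hereditary torsion pair $s=({}^{\perp_0}\H{u^-}(\M),\Cogen{\H{u^-}(\M)})$ being of finite type.
\item By Lemma~\ref{lem: t-str from M}(3) and \cite[Prop.~4.5]{ALSV}, this holds iff the HRS-tilt $\ststr{t^-}$ is cosilting, i.e.\ iff $t_\M$ is of finite type.
\end{itemize}
This gives (1)$\Leftrightarrow$(2) in one stroke.

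\emph{(2)$\Rightarrow$(3) and (3)$\Rightarrow$(1).} Both steps identify $\H{}(\M)$ with all of $\Z$, where $(\Z,\I)$ is the cosilting pair of $t_\M$. In (3)$\Rightarrow$(1) this takes the form of identifying $\{\mu_N\mid N\in\H{}(\M)\}$ with the points of $\N$ not of the form $I[-1]$. Only the inclusion $\H{}(\M)\subseteq\Z$ holds. For $A=\mathbb{K}(1\to 2)$, take $\M=\{P_1\}$, where $P_1=I_2$ is projective-injective. Then $\M$ is closed and rigid, $\Cogen{P_1}=\mathrm{add}(S_2\oplus P_1)$, and $\Z=\{S_2,P_1\}\neq\H{}(\M)$. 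More generally, $\M=\N_\mathbf{t}\setminus\{\mu_N\}$ with $N$ special has $t_\M=\mathbf{t}$ by Lemma~\ref{lem: tf classes closed subset}(2).

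The faulty step is the claim that $\prod\H{}(\M)$ is cosilting. A module in $\F\cap\F^{\perp_1}$ that cogenerates $\F$ need not be: $P_1$ above is one example, and in general so is $C_0$ from Corollary~\ref{cor: tf class mu}, which lacks the special summands. Separately, finitely many points $I[-1]$ need not form a closed set, since $\Zg{\Der{A}}$ is not $T_1$ in general.

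The paper avoids the false identification as follows.
\begin{itemize}
\item For (2)$\Rightarrow$(3), it uses only $\H{}(\M)\subseteq\Z$ and passes to $\bar A$, where $\Cogen{\H{}(\M)}$ is a cotilting class. There the elements of $\M$ are stalk modules of injective dimension at most one. Closedness in $\Zg{\Der{\bar A}}$, obtained from (2)$\Rightarrow$(1), transfers to $\Zg{\bar A}$ and then to $\Zg{A}$ by \cite{Prest:96}.
\item For (3)$\Rightarrow$(1), it goes through (2). The class $\Cogen{\H{}(\M)}$ of a closed set of $\Zg{A}$ is definable, hence of finite type. Product-rigidity passes from $\H{}(\M)$ to $\M$, with the points $I[-1]$ handled by neg-isolation and Lemma~\ref{lem: summands of products}.
\end{itemize}
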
 
\begin{proof}
To simplify notation, we will fix the following throughout the proof: $u:=u_\M$, $t :=t_\M$ and we will denote the restriction of $t$ to $\mod{A}$ by $\tpair{t}{f}$. 

We have just seen in Remark~\ref{closedisproductrigid} that every closed set $\M$  is product-rigid. Recall from Theorem \ref{Thm: cosilting and Zg} that $\H{u^-}\colon \Prod{\N_\u} \to  \Inj{\heart{u^-}}$ is an additive equivalence that commutes with direct summands and products. Therefore   $\H{u^-}(\M)$ is product-rigid as well, and so is  $\H{}(\M)$ by a standard argument. Moreover, if $\H{}(\M)$ is product-rigid, then so is $\M$.  Indeed, suppose $\H{}(\M)$ is product-rigid and let $X\in \Ind{\Prod{\M}}$. Then either $X \cong I[-1]$ for some indecomposable injective $I$ or $X\cong \mu_M$ for some $M\in \H{}(\Ind{\Prod{\M}})$ by \cite[Lem.~4.4(2)]{ALS1}. In the former case, it follows directly from Corollary~\ref{cor: inj env of simples} that $X$ is neg-isolated in any completion of $\M$ to a maximal rigid set (which exists by \cite[Lem.~3.7]{ALS1}) and so $X \in \M$ by Lemma \ref{lem: summands of products}. In the latter case, we observe that it follows from \cite[Lem.~4.4]{ALS1} that $\H{}(\Ind{\Prod{\M}}) = \Ind{\Prod{\H{}(\M)}}$ which is equal to $\H{}(\M)$ by assumption. Thus $M\in \H{}(\M)$ and so $\mu_M\in \M$, again, by \cite[Lem.~4.4]{ALS1}(1).

We employ the homeomorphism  $\H{u^-}\colon \N_\u \to \Sp{\heart{u^-}}$ from Theorem \ref{Thm: cosilting and Zg} to verify that (1) and (2) are equivalent.  Indeed, $\M$ is closed in $\Zg{\Der{A}}$ if and only if it is closed in the subspace topology on the closed set $\N_\u$, if and only if $\H{u^-}(\M)$ is closed in $\Sp{\heart{u^-}}$. By definition of the topology on $\Sp{\heart{u^-}}$ and because $\H{u^-}(\M)$ is product-rigid, the latter means that the hereditary torsion pair $s = ({}^{\perp_0}\H{u^-}(\M), \Cogen{\H{u^-}(\M)})$ in $\heart{u^-}$ is of finite type, see \cite[Thm.~A6]{ALS1}. Now we use \cite[Prop.~4.5]{ALSV} which states that a torsion pair  in the heart $\heart{}$ of  a t-structure $\tstr{}$ is of finite type if and only if the corresponding HRS-tilt of $\tstr{}$   is a cosilting t-structure (such a torsion pair is a cosilting torsion pair in the terminology of \cite[Def.~3.9]{ALSV}). We apply this statement both to the t-structure $\ststr{u^-}$ with  torsion pair $s$ in  $\heart{u^-}$ and  to the standard t-structure $\ststr{}$ with  torsion pair $t$ in  $\Mod{A}$. Since $\ststr{t^-}$ is a right HRS-tilt of $\ststr{u^-}$ at $s$,  we conclude that $s$ is of finite type in $\heart{u^-}$ if and only if  $t$ is a torsion pair of finite type in $\Mod{A}$.

Now we show that (2) implies (3).  If (2) holds, then $\H{}(\M)$ is contained in $\Z_{\mathbf{t}}$ because $\Z_{\mathbf{t}}$ consists of Ext-injectives in $\Cogen{\H{}(\M)}$ and Lemma \ref{lem: u_M and t_M}(2) implies that the inclusion $\Cogen{\H{}(\M)}\subseteq {}^{\perp_1}\H{}(\M)$ holds.  In the case where $\Cogen{\H{}(\M)}$ is a cotilting class, we have that $\M = \H{}(\M)$ and $\N_{\mathbf{t}} = \Z_{\mathbf{t}}$ in $\Der{A}$.  We can then conclude that $\M = \H{}(\M)$ is a closed set in $\Zg{\Der{A}}$ by the first part of the proof and hence also in $\Zg{A}$.  In general $\Cogen{\H{}(\M)}$ is a cotilting class in $\Mod{\bar{A}}$ and so we have just seen that $\H{}(\M)$ is a closed set in $\Zg{\bar{A}}$. As $\Zg{\bar{A}}$ is homeomorphic to the closed set $\Zg{A}\cap \Mod{\bar{A}}$ in $\Zg{A}$ with the subspace topology (by \cite[Thm.~1, Cor.~9]{Prest:96}), we can conclude that $\H{}(\M)$ is also a closed set in $\Zg{A}$.

Finally, if (3) holds, then $\Cogen{\H{}(\M)}$ is definable, as it can be regarded as the class of subobjects of some definable subcategory, see \cite[Prop.~3.4.15]{Prest:09}, namely the definable subcategory corresponding to the closed set $\H{}(\M)$, see \cite[Prop.~5.4.53]{Prest:09}. Definable subcategories are closed under direct limits, so $t$ is a  torsion pair of finite type. We have already seen that $\M$ is product rigid, so (2) is verified.
\end{proof}

In Lemma \ref{lem: u_M and t_M} we have defined two torsion pairs $u_\M$ and $t_\M$ in $\Mod{A}$.  By restricting them to $\mod{A}$, we can produce a pair of torsion pairs in $\tors{A}$, which we will denote by $\tpair{u_\M}{v_\M}$ and $\tpair{t_\M}{f_\M}$. By Lemma \ref{lem: u_M and t_M}(2), we have that $\tpair{u_\M}{v_\M} \leq \tpair{t_\M}{f_\M}$.  

\begin{corollary}\label{cor: Phi surjective, Psi injective}
The assignment \[\Psi \colon \ClRigid{A} \to \WideInt{A}, \, \M \mapsto [\mathbf{u}_\M, \mathbf{t}_\M]\]  
is well-defined, and $\Phi\circ\Psi$ is the identity on $\ClRigid{A}$.
\end{corollary}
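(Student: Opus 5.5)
Given $\M\in\ClRigid{A}$, the plan is first to show that $[\mathbf{u}_\M,\mathbf{t}_\M]$ is a wide interval, and then to show that $\N_{\mathbf{u}_\M}\cap\N_{\mathbf{t}_\M}=\M$.

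By Proposition~\ref{prop: closed sets}, $t_\M\in\Cosilt{A}$. By Lemma~\ref{lem: u_M and t_M}(1), $u_\M\in\Cosilt{A}$. Theorem~\ref{bij_posets}(1) shows that these are the finite type torsion pairs attached to $\mathbf{t}_\M$ and $\mathbf{u}_\M$. Their cosilting t-structures are therefore $\ststr{t^-}=(\X_{\mathbf{t}_\M},\Y_{\mathbf{t}_\M})$ and $\ststr{u^-}=(\X_{\mathbf{u}_\M},\Y_{\mathbf{u}_\M})$. Lemma~\ref{lem: t-str from M}(3) says that $\ststr{t^-}$ is the right HRS-tilt of $\ststr{u^-}$ at the hereditary torsion pair $s$. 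Theorem~\ref{Thm: serre hereditary mutat}(2) then gives that $\N_{\mathbf{t}_\M}$ and $\N_{\mathbf{u}_\M}$ are related by mutation, and Theorem~\ref{Thm: wide} gives that $[\mathbf{u}_\M,\mathbf{t}_\M]$ is wide. So $\Psi$ is well defined.

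For $\Phi\Psi=\mathrm{id}$, write $\E:=\N_{\mathbf{t}_\M}\cap\N_{\mathbf{u}_\M}$. Theorem~\ref{Thm: serre hereditary mutat}(2)(c) says the tilting torsion pair in $\heart{\mathbf{u}_\M}$ is $({}^{\perp_0}\H{\mathbf{u}_\M}(\E),\Cogen{\H{\mathbf{u}_\M}(\E)})$. Since HRS-tilts determine the torsion pair, this equals $s=({}^{\perp_0}\H{\mathbf{u}_\M}(\M),\Cogen{\H{\mathbf{u}_\M}(\M)})$. Hence $\Cogen{\H{\mathbf{u}_\M}(\E)}=\Cogen{\H{\mathbf{u}_\M}(\M)}$.

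This comparison needs $\M\subseteq\N_{\mathbf{u}_\M}$. By definition of $u_\M$ we have $\M\subseteq \N_{\mathbf{u}_\M}$, and $\H{}(\M)\subseteq\Ctf{\M}$ by Lemma~\ref{lem: u_M and t_M}(2). One checks via Lemma~\ref{Csigma} and Theorem~\ref{bij_cosilt mod cpx} that $\M\cup\N$ stays rigid, so maximality of $\N_{\mathbf{u}_\M}$ gives the containment. I expect this check, including the $I[-1]$ components, to be a delicate point.

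In the locally coherent category $\heart{\mathbf{u}_\M}$, a closed subset of $\Sp{\heart{\mathbf{u}_\M}}$ is recovered as the indecomposable injectives of its hereditary torsion-free class. This is the correspondence recalled in the proof of the Lemma before Theorem~\ref{Thm: serre hereditary mutat}, applied through the homeomorphism of Theorem~\ref{Thm: cosilting and Zg}(3). Here $\H{\mathbf{u}_\M}(\M)$ is closed because $\M$ is closed in $\Zg{\Der{A}}$, and $\E$ is closed as an intersection of closed sets. Hence
\[\H{\mathbf{u}_\M}(\E)=\Ind\bigl(\Cogen{\H{\mathbf{u}_\M}(\E)}\cap\Inj{\heart{\mathbf{u}_\M}}\bigr)=\H{\mathbf{u}_\M}(\M),\]
and injectivity of $\H{\mathbf{u}_\M}$ on $\N_{\mathbf{u}_\M}$ gives $\E=\M$.

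The main obstacle is justifying $\M\subseteq\N_{\mathbf{u}_\M}$, and within it making sure the finite type torsion pair of $\N_{\mathbf{u}_\M}$ is exactly $u_\M$. If direct rigidity checks fail, I would instead complete $\M$ to a maximal rigid set $\N$. Comparing $\Ctf{\N}\subseteq\Ctf{\M}$ alone gives the wrong inequality, so I would use Lemma~\ref{lem: t-str from M}(2): it identifies $\Y_{u^-}$ with ${}^{\perp_{>0}}\M\cap\D^{\geq0}$. From this I would read off that $\M\subseteq\Y_{u^-}^{\perp_0}$-type injectives of $\heart{u^-}$, which are exactly $\Prod{\N_{\mathbf{u}_\M}}$.
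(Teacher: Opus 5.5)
Your proposal is correct and follows the paper's proof. Wideness comes from Lemma~\ref{lem: t-str from M}(3) combined with Theorems~\ref{Thm: serre hereditary mutat} and~\ref{Thm: wide}, and $\M=\E$ comes from $\Cogen{\H{u^-}(\M)}=\Cogen{\H{u^-}(\E)}$ together with closedness of $\M$ and $\E$. The paper phrases this last step via $\mathrm{Prod}$ and product-rigidity rather than the closed-set/hereditary-torsion-pair correspondence in $\heart{u^-}$, but this amounts to the same thing.

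The paper uses the containment $\M\subseteq\N_{\mathbf{u}_\M}$ that you flag tacitly here, and only verifies it later, in the proof of Corollary~\ref{Cor: completions}, via $\M\subseteq\Y_{u^-}\cap\Y_{u^-}^{\perp_1}$. Your rigidity check is neither ``by definition'' nor delicate. Since $u_\M\in\Cosilt{A}$ is the finite-type pair of $\N_{\mathbf{u}_\M}$, Theorem~\ref{bij_cosilt mod cpx} gives $\Ctf{\N_{\mathbf{u}_\M}}=\Ctf{\M}$. Lemma~\ref{Csigma}(1) holds for all objects of $\K{A}$, including the $I[-1]$. So rigidity of $\M\cup\N_{\mathbf{u}_\M}$ reduces to the rigidity of each set separately.
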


\begin{proof}
We fix $\M \in \ClRigid{A}$ and consider the torsion pairs $u:=u_\M$, $t :=t_\M$ in $\Cosilt{A}$  together  with the restricted torsion pairs $\tpair{u}{v}:=\tpair{u_\M}{v_\M}$ and 
$\tpair{t}{f}:=\tpair{t_\M}{f_\M}$   in $\mod{A}$.  
By Lemma \ref{lem: t-str from M}, we know that $\ststr{t^-}$ is the right HRS-tilt of $\ststr{u^-}$ at the hereditary torsion pair $s = ({}^{\perp_0}\H{u^-}(\M), \Cogen{\H{u^-}(\M)})$.
Combining Theorems~\ref{Thm: serre hereditary mutat} and~\ref{Thm: wide} we see that $[\mathbf{u}, \mathbf{t}]$ is a wide interval. This proves that
the assignment $\Psi$ is well-defined. 
To prove that $\Phi\circ\Psi$ is the identity, we set  $\E:= \Phi([\mathbf{u}, \mathbf{t}])=\N_\mathbf{u} \cap \N_{\mathbf{t}}$ and prove that $\M = \E$.   By Theorem~\ref{Thm: serre hereditary mutat} we have that $\Cogen{\H{u^-}(\M)} = \Cogen{\H{u^-}(\E)}$, and thus $\Prod{\H{u^-}(\M)} = \Prod{\H{u^-}(\E)}$.  Since $\H{u^-}\colon \Prod{\N_\mathbf{u}} \to  \Inj{\heart{u^-}} $ is an additive equivalence that commutes with direct summands and products (see Theorem \ref{Thm: cosilting and Zg}), it follows that $\Prod{\M} = \Prod{\E}$ and hence $\M = \Ind{\Prod{\M}} = \Ind{\Prod{\E}} = \E$, where the first and last equality hold because $\M$ and $\E $ are closed sets of $\Zg{\Der{A}}$.  
\end{proof}

In order to prove Theorem \ref{thm: wide closed bijection}, that is, that $\Phi$ and $\Psi$ are mutually inverse bijections, we first need to prove some technical lemmas.

\begin{definition}
Let $\mu$ be a pure-injective object in $\Der{A}$.  We say that a subset $\M$ of $\Zg{\Der{A}}$ is the \textbf{support of $\mu$} if $\M$ is the smallest closed subset of $\Zg{\Der{A}}$ such that $\mu \in \Prod{\M}$.  We use the notation $\Supp{\mu}:=\M$.
\end{definition}

\begin{lemma}\label{lem: support mu_1}
Let $\N\in\CosiltZg{A}$ and consider the triangle $\mu_1 \to \mu_0 \to E \to \mu_1[1]$ defined in Proposition \ref{prop: inj env of simples mu_1, mu_0}.  Then the support $\Supp{\mu_1}$ of $\mu_1$ coincides with the closed set obtained by removing all critical elements of $\N$ that are $\N$-injective envelopes of finitely presented simples.
\end{lemma}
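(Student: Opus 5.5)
The plan is to work in the heart $\heart{\mathbf{t}}$ and transport everything along the homeomorphism $\H{\mathbf{t}}\colon \N\to\Sp{\heart{\mathbf{t}}}$ of Theorem~\ref{Thm: cosilting and Zg}(3). Under the Herzog--Krause correspondence (used in the proof of the lemma in Section~\ref{Sec: mutation}, here applied to the locally coherent category $\heart{\mathbf{t}}$), closed subsets of $\Sp{\heart{\mathbf{t}}}$ correspond to hereditary torsion pairs of finite type in $\heart{\mathbf{t}}$. Under this correspondence, a closed set $\Z'$ has $\Prod{\Z'}$ equal to the injectives of $\Cogen{\Z'}$. Hence $\H{\mathbf{t}}(\Supp{\mu_1})$ is the set of indecomposable injectives in the smallest finite-type hereditary torsion-free class containing $H:=\H{\mathbf{t}}(\mu_1)$. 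Let $\S$ denote the corresponding hereditary torsion class. Since $\S$ is of finite type, its complement $\N\setminus\Supp{\mu_1}$ is the union of the sets $\open{C}$ with $C\in\fp{\heart{\mathbf{t}}}$ and $\Hom{}(C,H)=0$. It therefore suffices to identify exactly which points are hit by such $\open{C}$.

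First I show that each critical point that is the $\N$-injective envelope of a finitely presented simple is removed. Let $\mu_N$ be critical with simple $F\in\fp{\heart{\mathbf{t}}}$ (so $F$ is torsion-free almost torsion and lies in $\mathbf f$, by Corollary~\ref{cor: inj env of simples} and Proposition~\ref{simples in the heart}). A nonzero map $F\to H$ would be a monomorphism into an injective object, so the injective envelope $\H{\mathbf{t}}(\mu_N)$ would be a summand of $H$. Then $\mu_N$ would be a summand of $\mu_1$, by Lemma~\ref{lem: summands of products} and the equivalence $\Prod{\N}\simeq\Inj{\heart{\mathbf{t}}}$. This contradicts Proposition~\ref{prop: inj env of simples mu_1, mu_0}(2),(3), which place critical summands in $\mu_0$ and special ones in $\mu_1$, never both. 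Hence $\Hom{}(F,H)=0$. Moreover $\open{F}=\{\H{\mathbf{t}}(\mu_N)\}$ because $F$ is simple, so $\mu_N\notin\Supp{\mu_1}$.

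Conversely, I show that no other point is removed. Let $0\neq C\in\fp{\heart{\mathbf{t}}}$ with $C\in\S$, so $\Hom{}(C,\mu_1)=0$. Since $\mu_1$ is Ext-injective in the heart, applying $\Hom{}(C,-)$ to the triangle $\mu_1\to\mu_0\to E\to\mu_1[1]$ gives $\Hom{}(C,\mu_0)\cong\Hom{}(C,E)\cong\Hom{A}(\H{}(C),E)$. If $\H{}(C)=0$, then $C\cong T[-1]$ and $\Hom{}(C,\mu_0\oplus\mu_1)=0$. By Proposition~\ref{prop: inj env of simples mu_1, mu_0}(1), $\H{\mathbf{t}}(\mu_0\oplus\mu_1)$ is an injective cogenerator of $\heart{\mathbf{t}}$, so $C=0$. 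Applying this to torsion quotients, every object of $\S\cap\fp{\heart{\mathbf{t}}}$ has no nonzero quotient in $\T[-1]$, so these objects lie in $\mathbf f$. Subobjects in the heart of a module in $\mathbf f$ are submodules. Hence a nonzero subobject of $C$ lying in $\S$ and of minimal module length is a simple $F$ of $\fp{\heart{\mathbf{t}}}$ that is torsion-free almost torsion, and its envelope is a critical point, by Corollary~\ref{cor: inj env of simples}(1).

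The main obstacle is to pass from this to the statement that every point $E'$ of $\open{C}$ is such an envelope. My plan is to prove that every object of $\S$ is filtered by these finitely presented simples. The finite-type hereditary torsion class $\S$ is generated by $\S\cap\fp{\heart{\mathbf{t}}}$, and each of its finitely presented objects has finite length by the argument above. Then the image of $C$ in $E'$ is a nonzero object of $\S$, so it contains one of these finitely presented simples $F$. Since $E'$ is indecomposable injective, it is the envelope of $F$, i.e.\ $E'$ corresponds to a critical point with finitely presented simple. The delicate point is that images of finitely presented objects in $\heart{\mathbf{t}}$ need only be finitely generated, not finitely presented. I would handle this by running the length argument on $\H{}(-)$ of finitely generated subobjects of $\S$-objects, again using that such objects lie in $\mathbf f$.
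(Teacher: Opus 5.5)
Your proposal is correct and follows essentially the paper's route. Both use Herzog's description of the smallest finite-type hereditary torsion-free class containing $\H{\mathbf{t}}(\mu_1)$, show that its torsion objects have no $\T[-1]$-part, and derive a critical-summand contradiction from Proposition~\ref{prop: inj env of simples mu_1, mu_0}. The differences are that you rederive the needed consequence of Corollary~\ref{cor: tf class mu} from the triangle and the cogenerator property, and you replace the cited lemma giving $\S=\filt{\Omega}$ by a direct minimal-length argument.

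One claim needs restricting. Subobjects in the heart of an arbitrary module in $\mathbf{f}$ need not be modules: a short exact sequence $0\to F\to F'\to T\to 0$ with $F,F'\in\F$ and $0\neq T\in\T$ yields a monomorphism $T[-1]\to F$ in $\heart{\mathbf{t}}$. However, subobjects and quotients of objects of your hereditary class $\S$ stay in $\S\subseteq\F$, so the relevant sequences are exact in $\Mod{A}$, and this is all you use. The same observation settles your ``delicate point'': the image of $C$ in $E'$ is then a quotient module of $C$, hence finitely presented in $\heart{\mathbf{t}}$.
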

\begin{proof}
Since $\mu_1\in\Prod{\N}$, we have that $\Supp{\mu_1}\subseteq \N$. Thus $\Supp{\mu_1}$ will coincide with the smallest closed subset $\M$ in the subspace topology on $\N$ such that $\mu_1\in\Prod{\M}$.

Let $u = \Tpair{U}{V}\in\Cosilt{A}$ be the  torsion pair of finite type associated to $\N$ under the bijections in Theorem \ref{bij_posets}.
 
Fix the notation $E_1 := \H{u^-}(\mu_1)\in\Inj{\heart{u^-}}$.  We will identify the smallest hereditary torsion-free class of finite type in $\heart{u^-}$ containing $E_1$ and the corresponding closed set $\L$ of $\Sp{\heart{u^-}}$ is the smallest closed set such that $E_1\in\Prod{\L}$. Using the additive equivalence $\H{u^-}\colon \Prod{\N} \to \H{u^-}(\Prod{\N}) = \Inj{\heart{u^-}}$ and the homeomorphism $\H{u^-}\colon \N \to \Sp{\heart{u^-}}$ from Theorem \ref{Thm: cosilting and Zg}, we may then conclude that $\Supp{\mu_1}$ is the inverse image of $\L$ under $\H{u^-}$.

According to \cite[Thm.~3.8]{Herzog:97}, the smallest hereditary torsion-free class of finite type in $\heart{u^-}$ containing $E_1$ is given by $(\varinjlim{\S}, \S^{\perp_0})$ where $\S = {}^{\perp_0}E_1 \cap \fp{\mathcal H}_{u^-}$.  Now consider any $U[-1] \in \U[-1]$ and note that 
	\[\Hom{\heart{u^-}}(U[-1], E_1) \cong \Hom{\Der{A}}(U[-1], \mu_1) \cong \Hom{\Der{A}}(U, \mu_1[1]).\]  
So $U[-1]\in {}^{\perp_0}E_1$ if and only if $U\in \U \cap \Ctf{\mu_1}$ by Lemma \ref{Csigma}(1).  But since $\Ctf{\mu_1} = \V$ by Corollary \ref{cor: tf class mu}, we have that $U[-1]\in {}^{\perp_0}E_1$ if and only if $U = 0$.  In other words, we have that ${}^{\perp_0}E_1 \subseteq \V$ and hence $\S = {}^{\perp_0}E_1 \cap \fp{\mathcal H}_{u^-} \subseteq \mathbf{v}$.  In particular, we have that $\S$ is a Serre subcategory of $\fp{\mathcal H}_{u^-}$ contained in $\mathbf{v}$.  By \cite[Lem.~1.2.24]{LakingICRA}, there exists some set of finitely presented simples $\Omega\subseteq \mathbf{v}$ such that $\S = \filt{\Omega}$.  Moreover, if $V\in\V$ is simple in $\heart{u^-}$ and $\Hom{\heart{u^-}}(V, E_1) \neq 0$, then $\mu_1$ has a critical summand, contradicting Proposition \ref{prop: inj env of simples mu_1, mu_0}.  Thus the simple objects contained in ${}^{\perp_0}E_1$ are exactly those contained in $\V$ and, in particular, the set $\Omega$ consists of all of the simple objects in $\mathbf{v}$.

Now we have showed that $\L$ is given by $\S^{\perp_0} \cap \Sp{\heart{u^-}} = \Omega^{\perp_0} \cap \Sp{\heart{u^-}}$ and so $\L$ is obtained by removing all of the injective envelopes of finitely presented simples $V\in\V$ in from the set $\Sp{\heart{u^-}}$.  The statement of the lemma follows from Corollary \ref{cor: inj env of simples}.
\end{proof}

\begin{lemma}\label{lem: tf classes closed subset}
Let $\N\in\CosiltZg{A}$.  \begin{enumerate}
\item If $\M \subseteq \N$ is closed and $\N\setminus \M$ consists of critical objects that are $\N$-injective envelopes of finitely presented simples, then we have the equality $\Ctf{\N} = \Ctf{\M}$.
\item If $\M \subseteq \N$ is closed and $\N\setminus \M$ consists of special objects, then we have the equality $\Cogen{\H{}(\N)} = \Cogen{\H{}(\M)}$.
\end{enumerate}
\end{lemma}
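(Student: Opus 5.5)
For (1), the inclusion $\Ctf{\N}\subseteq\Ctf{\M}$ is immediate from $\M\subseteq\N$, so the work lies in the reverse inclusion. I would reduce to the complex $\mu_1$ from Proposition~\ref{prop: inj env of simples mu_1, mu_0}. By Corollary~\ref{cor: tf class mu}, $\Ctf{\N}=\F=\Ctf{\mu_1}$. By Lemma~\ref{lem: support mu_1}, $\Supp{\mu_1}$ is exactly $\N$ with all critical $\N$-injective envelopes of finitely presented simples removed. Since $\M$ is closed and $\N\setminus\M$ consists only of such points, $\M$ contains $\Supp{\mu_1}$, hence $\mu_1\in\Prod{\Supp{\mu_1}}\subseteq\Prod{\M}$. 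The functor $\Hom{\Der{A}}(X,-[1])$ sends products to products and vanishes on summands, so $\Ctf{\M}\subseteq\Ctf{\Prod{\M}}\subseteq\Ctf{\mu_1}=\F=\Ctf{\N}$. This proves (1).

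For (2), the inclusion $\Cogen{\H{}(\M)}\subseteq\Cogen{\H{}(\N)}=\F$ is trivial. For the reverse inclusion, the plan is the dual argument using $\mu_0=\mu_{C_0}$. By Corollary~\ref{cor: tf class mu}, $\F=\Cogen{\H{}(\mu_0)}=\Cogen{C_0}$, so it suffices to show $C_0\in\Prod{\H{}(\M)}$, or better $\mu_0\in\Prod{\M}$; then applying $\H{}$, which commutes with products, gives $C_0\in\Prod{\H{}(\M)}$. Thus I need the analogue of Lemma~\ref{lem: support mu_1}: $\Supp{\mu_0}$ avoids all special elements of $\N$. I would prove this by the same method. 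Let $E_0:=\H{u^-}(\mu_0)$. Using \cite[Thm.~3.8]{Herzog:97}, the closed set determined by $E_0$ is $\S^{\perp_0}\cap\Sp{\heart{u^-}}$ with $\S={}^{\perp_0}E_0\cap\fp{\mathcal H}_{u^-}$, so I need $\S$ to contain every finitely presented simple whose injective envelope is special. If $\mu_0$ had no special summand, a simple $S$ with special envelope would satisfy $\Hom{}(S,E_0)=0$, because any nonzero map from a simple to an injective in $\Inj{\heart{u^-}}\simeq\Prod{\N}$ factors through the envelope as a summand, using the argument of Lemma~\ref{lem: summands of products}. Proposition~\ref{prop: inj env of simples mu_1, mu_0}(2) says the neg-isolated summands of $\mu_0$ are critical, so $\Hom{}(S,E_0)=0$ for every special $S$. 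However, special elements need not be envelopes of finitely presented simples, and this is where the support argument weakens.

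The main obstacle is precisely these non-finitely-presented simples. Herzog's description only removes envelopes of finitely presented simples from the support. A special point that is the envelope of a non-finitely-presented simple could still lie in $\Supp{\mu_0}$, and then need not lie in $\M$. The fallback I would try first is to avoid supports and argue directly in $\Mod{A}$. By Proposition~\ref{prop: neg-is summands}, special modules are summands of $C_1$ and not of $C_0$, and special injectives $I[-1]$ contribute nothing to $\H{}$. So it suffices to show that every module in $\F$ embeds in a product of modules in $\H{}(\M)$. For a special module $N$ there is a left almost split monomorphism $b\colon N\to\overline{N}$ with $\overline{N}\in\F$. I would show $\overline{N}\in\Cogen{\H{}(\M)}$ using that $\M$ is closed and product-rigid (Remark~\ref{closedisproductrigid}), so that $\Cogen{\H{}(\M)}$ is a torsion-free class of finite type by Proposition~\ref{prop: closed sets}. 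Then $N\hookrightarrow\overline{N}$ gives $N\in\Cogen{\H{}(\M)}$. Finally, $\H{}(\N)\subseteq\Cogen{\H{}(\M)}$ yields the equality.
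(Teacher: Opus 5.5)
Your proof of (1) is correct and is the paper's argument. For (2), the argument you actually commit to has a gap at its crux. Everything rests on $\overline{N}\in\Cogen{\H{}(\M)}$, and the reason you give does not imply it: knowing that $\Cogen{\H{}(\M)}$ is a torsion-free class of finite type (Proposition~\ref{prop: closed sets}) says nothing about membership of $\overline{N}$. The module $\overline{N}$ is just the $\F$-cover of the torsion almost torsion-free module $T$, an essentially arbitrary object of $\F$. Showing it lies in $\Cogen{\H{}(\M)}$ is no easier than showing $\F\subseteq\Cogen{\H{}(\M)}$, which is the whole claim. Nothing you say prevents $\overline{N}$ from having special summands outside $\M$, in which case the step merely regresses.

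The obstacle that made you abandon your first route is not real. Every special point of $\N$ is the $\N$-injective envelope of a simple $T[-1]$ with $T$ torsion almost torsion-free (Corollary~\ref{cor: inj env of simples}). Such $T$ are always finitely generated by Proposition~\ref{simples in the heart}(2), so $T[-1]$ is a finitely presented simple of the heart.

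Hence your $\mu_0$-computation goes through. For a special point with simple $S$, you correctly showed $\mathrm{Hom}(S,E_0)=0$ via Proposition~\ref{prop: inj env of simples mu_1, mu_0}(2). Since $S$ is finitely presented, $S\in\S$, so the envelope of $S$ is not in $\S^{\perp_0}$. Therefore $\Supp{\mu_0}$ contains no special point. This gives $\Supp{\mu_0}\subseteq\M$, hence $\mu_0\in\Prod{\M}$ and $C_0=\H{}(\mu_0)\in\Prod{\H{}(\M)}$. By Corollary~\ref{cor: tf class mu}, $\F=\Cogen{C_0}\subseteq\Cogen{\H{}(\M)}$.

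This is a valid proof of (2), exactly dual to (1) and staying inside the support machinery of Lemma~\ref{lem: support mu_1}. The paper argues differently and more briefly. By \cite[Prop.~5.16]{AHL}, the critical modules $\Z_0$ already cogenerate $\F$. Since $\N\setminus\M$ consists only of special points, all critical points lie in $\M$, so $\Cogen{\Z}=\Cogen{\Z_0}\subseteq\Cogen{\H{}(\M)}$.
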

\begin{proof}
(1) By Lemma \ref{lem: support mu_1}, we have that $\Supp{\mu_1} \subseteq \M$ so $\Ctf{\M} \subseteq \Ctf{\Supp{\mu_1}}= \Ctf{\Prod{\Supp{\mu_1}}}\subseteq \Ctf{{\mu_1}}=\Ctf{\N} \subseteq \Ctf{\M}$ where the last equality is given by Corollary \ref{cor: tf class mu}.

(2) Let $\N_0$ denote the set of critical elements of $\N$ and note that $\N_0 \subseteq \M$.  Then $\Z_0 := \H{}(\N_0)$ is the set of critical elements of $\Z=\H{}(\N)$ by definition. Then, by \cite[Prop.~5.16]{AHL}, we have 
\[\Cogen{\Z} = \Cogen{\Z_0} \subseteq \Cogen{\H{}(\M)} \subseteq \Cogen{\Z}.\]
\end{proof}

\begin{proof}[Proof of Theorem \ref{thm: wide closed bijection}]
We already have the following well-defined assignments 
	\begin{equation*}\begin{split}\Phi \colon \WideInt{A} &\to \ClRigid{A} \\  [\mathbf{u},\mathbf{t}]&\mapsto \closed{\mathbf{u}} \cap \closed{\mathbf{t}}  \end{split}\end{equation*}
		\begin{equation*}\begin{split} \Psi \colon \ClRigid{A} &\to \WideInt{A} \\  \M &\mapsto [\mathbf{u}_\M, \mathbf{t}_\M] \end{split}\end{equation*}
such that $\Phi\circ\Psi $ is the identity on $\ClRigid{A}$ by Corollary \ref{cor: Phi surjective, Psi injective}.  We conclude by showing that $\Psi \circ \Phi$ is the identity on $\WideInt{A}$. Let $[\mathbf{u},\mathbf{t}] \in \WideInt{A}$ and denote by $\M := \Phi([\mathbf{u}, \mathbf{t}]) = \closed{\mathbf{u}}\cap \closed{\mathbf{t}}$. By Theorem~\ref{Thm: simple mutation} and Corollary~\ref{cor: inj env of simples} we have that $\closed{\mathbf{u}}\setminus \M$ consists of critical $\closed{\mathbf{u}}$-injective envelopes of finitely presented simples and 
$\closed{\mathbf{t}}\setminus \M$ consists of special elements of $\closed{\mathbf{u}}$.
It then follows from Lemma \ref{lem: tf classes closed subset} that $\Ctf{\closed{\mathbf{u}}} = \Ctf{\M}$ and thus $\mathbf{u} = \mathbf{u}_\M$, and furthermore, $\Cogen{\H{}(\closed{\mathbf{t}})} = \Cogen{\H{}(\M)}$ and hence $\mathbf{t} = \mathbf{t}_\M$. In other words, we have shown that $\Psi \circ \Phi([\mathbf{u},\mathbf{t}]) = [\mathbf{u},\mathbf{t}]$.
\end{proof}

\begin{corollary}\label{Cor: completions}
Let $\M\in\ClRigid{A}$. The interval $[\mathbf{u}_\M, \mathbf{t}_\M]$  consists of the   torsion pairs in $\tors{A}$ whose associated maximal rigid set $\N_{}$  contains the closed rigid set $\M$.
\end{corollary}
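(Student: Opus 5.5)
We have to show two inclusions: every $\tpair{s}{w}\in\tors{A}$ with $\mathbf{u}_\M\le\mathbf{s}\le\mathbf{t}_\M$ has $\M\subseteq\N_\mathbf{s}$, and conversely every $\mathbf{s}$ with $\M\subseteq\N_\mathbf{s}$ lies in $[\mathbf{u}_\M,\mathbf{t}_\M]$. Throughout, $\U_\M,\V_\M,\T_\M,\F_\M$ denote the finite-type torsion pairs $u_\M$ and $t_\M$ of Lemma~\ref{lem: u_M and t_M}, and $\S,\W$ the torsion and torsion-free classes of finite type in $\Cosilt{A}$ associated to $\mathbf{s}$.

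\emph{Second inclusion.} Suppose $\M\subseteq\N_\mathbf{s}$. Then $\Ctf{\N_\mathbf{s}}\subseteq\Ctf{\M}=\V_\M$, and by Theorem~\ref{bij_cosilt mod cpx} we have $\Ctf{\N_\mathbf{s}}=\W$. Hence $\W\subseteq\V_\M$, and restricting to $\mod{A}$ gives $\mathbf{u}_\M\le\mathbf{s}$. Likewise $\H{}(\M)\subseteq\H{}(\N_\mathbf{s})$ gives $\F_\M=\Cogen{\H{}(\M)}\subseteq\Cogen{\H{}(\N_\mathbf{s})}=\W$, so $\mathbf{s}\le\mathbf{t}_\M$.

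\emph{First inclusion.} Let $\mathbf{u}_\M\le\mathbf{s}\le\mathbf{t}_\M$. By Theorem~\ref{thm: wide closed bijection} (via $\Phi\circ\Psi=\mathrm{id}$) we know $\M=\N_{\mathbf{u}_\M}\cap\N_{\mathbf{t}_\M}$. I would use the rigidity criterion Lemma~\ref{Csigma}(1) together with maximality of $\N_\mathbf{s}$. It suffices to show that $\M\cup\N_\mathbf{s}$ is rigid; maximality then forces $\M\subseteq\N_\mathbf{s}$. Thus there are two families of vanishings to check.

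First, $\Hom{\Der{A}}(\nu,\mu[1])=0$ for $\nu\in\N_\mathbf{s}$ and $\mu\in\M$. By Lemma~\ref{Csigma}(1) this is equivalent to $\H{}(\nu)\in\Ctf{\mu}$. This holds because $\H{}(\nu)\in\W\subseteq\V_\M=\Ctf{\M}\subseteq\Ctf{\mu}$, where $\W\subseteq\V_\M$ follows from $\mathbf{u}_\M\le\mathbf{s}$ after passing to direct limits via Theorem~\ref{bij_posets}(1).

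Second, $\Hom{\Der{A}}(\mu,\nu[1])=0$, i.e.\ $\H{}(\mu)\in\Ctf{\nu}$ for all $\nu\in\N_\mathbf{s}$, which amounts to $\H{}(\mu)\in\Ctf{\N_\mathbf{s}}=\W$. This holds because $\H{}(\mu)\in\F_\M\subseteq\W$, using $\mathbf{s}\le\mathbf{t}_\M$ together with $\F_\M$ being of finite type (Proposition~\ref{prop: closed sets}).

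Rigidity within $\M$ holds by hypothesis, so $\M\cup\N_\mathbf{s}$ is rigid and the first inclusion follows.

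The only delicate point is transporting the order relations in $\tors{A}$ to inclusions of the finite-type torsion-free classes in $\Mod{A}$. This goes through the direct-limit bijection of Theorem~\ref{bij_posets}(1), and uses that both $u_\M$ and $t_\M$ lie in $\Cosilt{A}$, which is Lemma~\ref{lem: u_M and t_M}(1) and Proposition~\ref{prop: closed sets} respectively. Once this is in place the argument is formal; I expect no further obstacle.
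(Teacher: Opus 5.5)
Your proof is correct. The second inclusion is the same as the paper's, but for the first inclusion you take a genuinely more direct route.

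\textbf{The paper's route.} It first reduces to showing $\M\subseteq\N_{\mathbf{u}_\M}$ and $\M\subseteq\N_{\mathbf{t}_\M}$. The two cross-vanishings between $\M$ and $\N_\mathbf{s}$ then come from the order relations $\N_{\mathbf{u}_\M}\le\N_\mathbf{s}\le\N_{\mathbf{t}_\M}$ in $\CosiltZg{A}$. The two containments are proved with t-structures: by Lemma~\ref{lem: t-str from M}, $\M$ lies in the coaisle $\Y_{t^-}\subseteq\Y_{u^-}$ and in $\Y_{u^-}^{\perp_1}$. Then the result of Psaroudakis--Vit\'oria that $\Y\cap\Y^{\perp_1}=\Prod{\N}$ is combined with product-rigidity.

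\textbf{Your route.} You check the cross-vanishings directly through Lemma~\ref{Csigma}(1). This turns them into the inclusions $\H{}(\N_\mathbf{s})\subseteq\W\subseteq\V_\M=\Ctf{\M}$ and $\H{}(\M)\subseteq\F_\M\subseteq\W=\Ctf{\N_\mathbf{s}}$. Both follow from the order relations in $\tors{A}$ after passing to direct limits. You correctly identify that this needs $u_\M$ and $t_\M$ to be of finite type (Lemma~\ref{lem: u_M and t_M}(1) and Proposition~\ref{prop: closed sets}).

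\textbf{Comparison.} Your argument avoids the HRS-tilt description of Lemma~\ref{lem: t-str from M} and the Psaroudakis--Vit\'oria identification altogether. It also treats every $\mathbf{s}$ in the interval uniformly. Taking $\mathbf{s}$ to be an endpoint, it recovers $\M\subseteq\N_{\mathbf{u}_\M}\cap\N_{\mathbf{t}_\M}$ as a special case. The paper's route instead stays within the t-structure framework used throughout Section~\ref{Sec: wide closed}.

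Your opening sentence invoking $\Phi\circ\Psi=\mathrm{id}$ to get $\M=\N_{\mathbf{u}_\M}\cap\N_{\mathbf{t}_\M}$ is never used afterwards and can be dropped. That fact would, however, have given the paper's two containments immediately.
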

\begin{proof}
If $\closed{}$ contains $\M$, then we have inclusions of torsion-free classes $$\Cogen{\H{}(\M)} \subseteq \Cogen{\H{}(\closed{})} = \Ctf{\closed{}} \subseteq \Ctf{\M}$$ and  the statement is clear by definition of the torsion pairs $u_\M$ and $t_\M$.

For the converse,  assume that $\N$ is associated to a torsion pair in the interval $[\mathbf{u}, \mathbf{t}]:=[\mathbf{u}_\M, \mathbf{t}_\M]$.  By  Theorem \ref{bij_posets} we have that $\closed{\mathbf{u}} \leq \closed{} \leq \closed{\mathbf{t}}$. It suffices to show that $\M\subseteq \closed{\mathbf{u}}$ and $\M \subseteq \closed{\mathbf{t}}$ since we would then have that $\Hom{\Der{A}}(\M, \closed{}[1])=0$ by $\closed{\mathbf{u}} \leq \closed{}$ and $\Hom{\Der{A}}(\closed{}, \M[1])=0$ by $\closed{} \leq \closed{\mathbf{t}}$, and so $\M \subseteq \closed{}$ holds by maximality of $\closed{}$.

We show that $\M$ is contained both in $\Y_{u^-}\cap{\Y_{u^-}}^{\perp_1}$ and $\Y_{t^-}\cap {\Y_{t^-}}^{\perp_1}$. This will imply our claim, because we know from \cite[Lem.~4.5(iii)]{PsaroudakisVitoria:18} that these intersections coincide with $\Prod{\closed{\mathbf{u}}}$ and $\Prod{\closed{\mathbf{t}}}$, respectively, and the closed sets ${\closed{\mathbf{u}}}$ and ${\closed{\mathbf{t}}}$ are  product-rigid.
First we note that $\Y_{t^-}\subseteq \Y_{u^-}$ and so we only need to show that $\M\subseteq \Y_{t^-}$ and that $\M \subseteq {\Y_{u^-}}^{\perp_1}$.  But this follows directly from Lemma \ref{lem: t-str from M} because $\M\subseteq ({}^{\perp_0}\M \cap \D^{\leq 0})^{\perp_0} = {\X_{t^-}}^{\perp_0} = \Y_{t^-}$ and $\M \subseteq ({}^{\perp_1}\M \cap \D^{\geq 0})^{\perp_1} = {\Y_{u^-}}^{\perp_1}$.
\end{proof}

\section{Topological characterisations of irreducible mutation}\label{Sec: top char}

We have seen in Corollary~\ref{Thm: mutation} that irreducible mutations in $\CosiltZg{A}$ correspond to intervals $[\mathbf{u}, \mathbf{t}]$ in $\tors{A}$ such that $\mathbf{t}$ covers $\mathbf{u}$. Such intervals are wide and so we can apply the results of Section \ref{Sec: wide closed} to characterise irreducible mutations in terms of the corresponding closed subset given by Theorem \ref{thm: wide closed bijection}.

\begin{notation}
Let $\M\in\ClRigid{A}$.  Then we denote the maximal rigid set $\closed{\mathbf{t}_\M}$ by $\M^+$ and the maximal rigid set $\closed{\mathbf{u}_\M}$ by $\M^-$.
\end{notation}

\begin{definition} A closed rigid set $\M$ is called \textbf{almost complete} if is not contained in $\CosiltZg{A}$ and every $\N\in\ClRigid{A}$ that properly contains $\M$ is contained in $\CosiltZg{A}$.
\end{definition}

\begin{theorem}\label{Thm: topological char}
The following statements are equivalent for $\M\in \ClRigid{A}$.
\begin{enumerate}
\item The maximal rigid set $\M^+$ is an irreducible right mutation of $\M^-$.
\item The maximal rigid set $\M^-$ is an irreducible left mutation of $\M^+$.
\item The torsion pair $(\mathbf{t}_\M, \mathbf{f}_\M)$ covers $(\mathbf{u}_\M, \mathbf{v}_\M)$ in $\tors{A}$.
\item There are exactly two ways to complete $\M$ to a maximal rigid set.
\item The set $\M$ is almost complete.
\end{enumerate}
\end{theorem}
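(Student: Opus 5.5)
The plan is to route everything through the bijection $\Phi,\Psi$ of Theorem~\ref{thm: wide closed bijection} and the description of completions in Corollary~\ref{Cor: completions}. By that corollary, the maximal rigid sets containing $\M$ are exactly the $\N_\mathbf{s}$ with $\mathbf{s}\in[\mathbf{u}_\M,\mathbf{t}_\M]$. Since $\N\mapsto$ torsion pair is a bijection, the number of completions of $\M$ equals the cardinality of this interval. Also $\M^\pm$ are the endpoints, $\M=\M^-\cap\M^+$ by $\Phi\circ\Psi=\mathrm{id}$, and $\M^-$, $\M^+$ are related by mutation (Theorems~\ref{Thm: wide} and~\ref{Thm: simple mutation}).

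\textbf{Equivalence of (1), (2), (3).} These follow directly from Corollary~\ref{Thm: mutation} applied to $\mathbf{u}=\mathbf{u}_\M\le\mathbf{t}=\mathbf{t}_\M$. Irreducible mutation between $\N_\mathbf{t}$ and $\N_\mathbf{u}$ is equivalent to $\mathbf{t}$ covering $\mathbf{u}$. Right mutation from $\M^-$ to $\M^+$ and left mutation back are the same relation by Definition~\ref{Def: mutation}.

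\textbf{(3)$\Leftrightarrow$(4).} The interval $[\mathbf{u}_\M,\mathbf{t}_\M]$ has exactly two elements if and only if $\mathbf{u}_\M<\mathbf{t}_\M$ and $\mathbf{t}_\M$ covers $\mathbf{u}_\M$. Here I have to rule out $\mathbf{u}_\M=\mathbf{t}_\M$, which would give exactly one completion, so this direction is consistent.

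\textbf{(5)$\Leftrightarrow$(3).} First, $\M$ is maximal rigid if and only if $\mathbf{u}_\M=\mathbf{t}_\M$. If $\M$ is maximal, Remark~\ref{rem: tp coincide if max} applies. Conversely, if $\mathbf{u}_\M=\mathbf{t}_\M$, the unique completion is $\M^+=\M^-$, so $\M=\M^+\cap\M^-$ is maximal.

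Next, suppose $\M\subsetneq\M'$ with $\M'$ closed rigid. Every completion of $\M'$ is a completion of $\M$, so by Corollary~\ref{Cor: completions} we get $[\mathbf{u}_{\M'},\mathbf{t}_{\M'}]\subseteq[\mathbf{u}_\M,\mathbf{t}_\M]$. Since $\Psi$ is injective, this inclusion is strict.
\begin{itemize}
\item[(3)$\Rightarrow$(5):] If the interval has two elements, the smaller interval is a singleton, so $\M'$ is maximal. Also $\M$ itself is not maximal.
\item[(5)$\Rightarrow$(3):] Suppose $\M$ is almost complete, so $\mathbf{u}_\M<\mathbf{t}_\M$, and there is $\mathbf{s}$ with $\mathbf{u}_\M<\mathbf{s}<\mathbf{t}_\M$. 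Then $[\mathbf{u}_\M,\mathbf{s}]$ is a subinterval of a wide interval, and we need it to be wide in order to apply $\Phi$.
\end{itemize}

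The main obstacle is proving that $[\mathbf{u}_\M,\mathbf{s}]$ is wide, or avoiding this need. To avoid it, take $\M':=\M^-\cap\N_\mathbf{s}$, which is closed and rigid. It contains $\M$, since $\M\subseteq\N_\mathbf{s}$ by Corollary~\ref{Cor: completions}.

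It remains to see that $\M'$ is not maximal. If it were, then $\M'=\N_\mathbf{s}=\M^-$, forcing $\mathbf{s}=\mathbf{u}_\M$, a contradiction.

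For $\M'\ne\M$: if $\M'=\M$, then $\M^-\cap\N_\mathbf{s}=\M^-\cap\M^+$. Thus every element of $\M^-\setminus\M$ lies outside $\N_\mathbf{s}$.

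I would then use Theorem~\ref{Thm: simple mutation}: $\M^-\setminus\M$ consists of $\M^-$-injective envelopes of the simples $S\in\Omega$, and $\mathbf{t}_\M\cap\mathbf{v}_\M=\filt\Omega$. Since $\mathbf{s}$ lies strictly between, some but not all $S\in\Omega$ lie in $\mathbf{s}$. Choose $S\in\Omega$ with $S\notin\mathbf{s}$, hence $S\in\mathbf{s}^{\perp_0}$ because $\Omega$ is a semibrick in a wide interval. I expect, via Lemma~\ref{lem: N-injectives in ModA}, that the critical envelope of $S$ then survives in $\N_\mathbf{s}$. This would show $\M'\supsetneq\M$ non-maximal, contradicting almost completeness.

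If this route stalls, the fallback is to cite that intervals $[\mathbf{u},\mathbf{s}]$ inside a wide interval with $\mathbf{s}=\mathbf{u}\vee\filt{\Omega'}$ are wide for $\Omega'\subseteq\Omega$ (Asai–Pfeifer). Then I would apply $\Phi$ to $[\mathbf{u}_\M,\mathbf{u}_\M\vee\filt{\{S\}}]$ for a single $S\in\Omega$, obtaining a closed rigid set strictly between $\M$ and maximal.
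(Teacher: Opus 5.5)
Your arguments for (1)$\Leftrightarrow$(2)$\Leftrightarrow$(3)$\Leftrightarrow$(4) and for (3)$\Rightarrow$(5) are correct and essentially match the paper's. Your preliminary observations in (5)$\Rightarrow$(3) are also correct: some but not all $S\in\Omega$ lie in $\mathbf{s}$, and $S\notin\mathbf{s}$ forces $S\in\mathbf{s}^{\perp_0}$.

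\textbf{The gap.} It is in (5)$\Rightarrow$(3), at the step where you expect that for $S\in\Omega$ with $S\notin\mathbf{s}$, the $\M^-$-injective envelope of $S$ survives in $\N_\mathbf{s}$. For an arbitrary $\mathbf{s}$ strictly inside the interval this is false, and then $\M^-\cap\N_\mathbf{s}$ can equal $\M$.

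Here is a counterexample. Let $A$ be the path algebra of $1\to 2$ over $\mathbb{K}$, and let $P_1$ be the projective-injective module, with top $S_1$ and socle $S_2$; so $I_1=S_1$ and $I_2=P_1$. Take $\mathbf{u}=0$ and $\mathbf{t}=\mod{A}$. Then $\M=\N_0\cap\N_{\mod{A}}=\emptyset$ and $\Omega=\{S_1,S_2\}$. Take $\mathbf{s}=\mathrm{add}(P_1\oplus S_1)$.
\begin{itemize}
\item $S_2$ is the only element of $\Omega$ not in $\mathbf{s}$, and indeed $S_2\in\mathbf{s}^{\perp_0}$.
\item Its envelope in $\M^-=\N_0=\{\mu_{S_1},\mu_{P_1}\}$ is $\mu_{P_1}$.
\item But $\varinjlim\mathbf{s}^{\perp_0}=\mathrm{Add}(S_2)$, so $\N_\mathbf{s}=\{\mu_{S_2},S_1[-1]\}$.
\item Hence $\M^-\cap\N_\mathbf{s}=\emptyset=\M$.
\end{itemize}
The underlying reason is that the critical module $N$ with $\Hom{A}(S,N)\neq 0$ need not lie in the smaller torsion-free class, even when $S$ does. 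Correspondingly, $[\mathbf{u},\mathbf{s}]$ is not wide here: $\mathbf{s}\cap\mathbf{v}$ contains $P_1\to S_1$ but not its kernel $S_2$. So your detour does not actually avoid the wideness issue you flagged.

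\textbf{Your fallback.} It does close the gap. If $\mathbf{s}$ is the torsion class generated by $\mathbf{u}$ and a single $S\in\Omega$, then $\mathbf{s}\cap\mathbf{v}=\filt{\{S\}}$ is wide. Injectivity of $\Phi$ then gives $\M\subsetneq\N_\mathbf{u}\cap\N_\mathbf{s}$, and this set is non-maximal since $\mathbf{s}\neq\mathbf{u}$. However, the wideness step imports an outside lattice-theoretic result (Asai--Pfeifer's identification of $[\mathbf{u},\mathbf{t}]$ with the torsion classes of $\filt{\Omega}$), which the paper never needs.

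\textbf{The paper's route.} It uses no intermediate torsion class. By Theorem~\ref{Thm: simple mutation}, $\M^-$ is the disjoint union of $\M$ with the envelopes of the finitely presented simples $S\in\Omega$ of $\heart{\mathbf{u}}$. Since $\open{S}=\{E_\mathbf{u}(S)\}$ is open in $\Sp{\heart{\mathbf{u}}}$, the homeomorphism $\H{\mathbf{u}}\colon\M^-\to\Sp{\heart{\mathbf{u}}}$ shows that removing one such envelope from $\M^-$ leaves a closed rigid set $\L$. If $|\Omega|\ge 2$, then $\M\subsetneq\L\subsetneq\M^-$, contradicting almost completeness. Also $\Omega\neq\emptyset$ because $\M$ is not maximal. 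Hence $|\Omega|=1$, which gives (5)$\Rightarrow$(1) directly. You already had every ingredient for this repair.
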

\begin{proof}
(1)$\Leftrightarrow$(2) This is by definition.

(2)$\Leftrightarrow$(3) This is Corollary~\ref{Thm: mutation}.

(3)$\Leftrightarrow$(4) This follows from Corollary \ref{Cor: completions}.

(3)$\Rightarrow$(5)  Suppose that $\N \in \ClRigid{A}$ with $\M\subsetneq \N$.  Then $\mathbf{u}_\M \subseteq \mathbf{u}_\N \subseteq \mathbf{t}_\N \subseteq \mathbf{t}_\M$.  Since $\M\neq \N$, we cannot have that $\mathbf{u}_\M = \mathbf{u}_\N$ and $\mathbf{t}_\M = \mathbf{t}_\N$ by Theorem \ref{thm: wide closed bijection}.  As the interval $[\mathbf{u}_\M, \mathbf{t}_\M]$ is simple, it follows that either $\mathbf{u}_\M = \mathbf{u}_\N = \mathbf{t}_\N$ or $\mathbf{u}_\N = \mathbf{t}_\N = \mathbf{t}_\M$.  In both cases we can conclude that $\N$ is maximal rigid because $\mathbf{u}_\N = \mathbf{t}_\N$. See Remark \ref{rem: tp coincide if max}.

(5)$\Rightarrow$(1)  Let $\mathbf{u}:=\mathbf{u}_\M$,  and $\mathbf{t} := \mathbf{t}_\M$. We know that $[\mathbf{u}, \mathbf{t}]$ is a wide interval by Corollary \ref{cor: Phi surjective, Psi injective} and so $\M^+$ is a right mutation of $\M^-$ by Theorem~\ref{Thm: wide}.  By 
Theorem~\ref{Thm: simple mutation}
there exists a set $\Omega$ of (pairwise non-isomorphic) finitely presented simple objects in $\heart{\mathbf{u}}$ such that $\M^-$ is the disjoint union of $\M=\M^+\cap\M^-$ with the set of $\M^-$-injective envelopes of the objects in $\Omega$. Using the homeomorphism $\H{\mathbf{u}}\colon \M^- \to \Sp{\heart{\mathbf{u}}}$ from Theorem \ref{Thm: cosilting and Zg}, we see that  $\Sp{\heart{\mathbf{u}}}$ is the disjoint union of $\H{\mathbf{u}}(\M)$ with the set of injective envelopes $E_\mathbf{u}(S)$ of the simple objects  $S\in\Omega$. By definition of the topology on $\Sp{\heart{\mathbf{u}}}$ every $S\in\Omega$ induces a closed set  $\Sp{\heart{\mathbf{u}}}\setminus\{E_\mathbf{u}(S)\}=\Sp{\heart{\mathbf{u}}}\setminus\open{S}$ in $\Sp{\heart{\mathbf{u}}}$. Now, if we assume that $\M^+$ is not an irreducible  mutation of $\M^-$, then the set $\Omega$ contains more than one simple, and $\Sp{\heart{\mathbf{u}}}\setminus \{E_{\mathbf{u}}(S)\}$  strictly contains $\H{\mathbf{u}}(\M)$.  Using the homeomorphism $\H{\mathbf{u}}\colon \M^{\chl{-}} \to \Sp{\heart{\mathbf{u}}}$, this yields a closed rigid set $\L$ such that $\M \subsetneq \L \subsetneq \M^{\chl{-}}$ and so the condition (5) does not hold.
\end{proof}

Let $\tpair{t}{f} \in \tors{A}$. Recall that we call $\mu\in\N_{\mathbf{t}}$ mutable if there exists $\N = (\N_{\mathbf{t}} \setminus \{\mu\}) \cup \{\nu\}$ in $\CosiltZg{A}$ such that $\N_{\mathbf{t}}$ and $\N$ are related by irreducible mutation.  By Corollary \ref{Thm: mutation}, this is equivalent to $\mu$ being the $\N_{\mathbf{t}}$-injective envelope of a finitely presented simple object in $\heart{\mathbf{t}}$.

\begin{corollary}\label{cor: mutable means exactly two completions}
Let $\N\in\CosiltZg{A}$ and $\mu\in\N$.  Then $\mu$ is mutable if and only if there exists a unique $\N'\in\CosiltZg{A}$ such that $\N'\neq \N$ and $\N\setminus \{\mu\} \subseteq \N'$.
\end{corollary}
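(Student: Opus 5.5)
The plan is to reduce the statement to Theorem~\ref{Thm: topological char} applied to $\M := \N\setminus\{\mu\}$, once we know this set is closed and rigid. Rigidity is immediate, since it is a subset of $\N$. Closedness is not automatic, because removing a point from a closed set yields a closed set only if that point is isolated. So the argument has to handle the topology carefully in each direction.

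($\Rightarrow$) Suppose $\mu$ is mutable. By Corollary~\ref{Thm: mutation}, $\mu$ is the $\N$-injective envelope of a finitely presented simple $S$ in $\heart{\mathbf{t}}$, where $\N=\N_\mathbf{t}$. Under the homeomorphism $\H{\mathbf{t}}\colon \N\to\Sp{\heart{\mathbf{t}}}$ of Theorem~\ref{Thm: cosilting and Zg}(3), $\mu$ corresponds to $E(S)$. The basic open set $\open{S}$ equals $\{E(S)\}$, because $E(S)$ is the only indecomposable injective receiving a nonzero map from $S$. Hence $\mu$ is isolated in $\N$, and $\M=\N\setminus\{\mu\}$ is closed in $\N$, and therefore closed in $\Zg{\Der{A}}$ since $\N$ is closed. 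The mutation gives $\N'=\M\cup\{\nu\}$, and by Definition~\ref{Def: mutation} and Corollary~\ref{Thm: mutation} the pair $\N,\N'$ are the two ends of a covering relation. Since $\M=\N\cap\N'$, Theorem~\ref{thm: wide closed bijection} shows that the corresponding wide interval is exactly $[\mathbf{u}_\M,\mathbf{t}_\M]$, so condition (3) of Theorem~\ref{Thm: topological char} holds. Condition (4) then says $\M$ has exactly two completions, namely $\N$ and $\N'$, which gives the required uniqueness.

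($\Leftarrow$) Suppose $\N'\ne\N$ is the unique other maximal rigid set containing $\M=\N\setminus\{\mu\}$. The main obstacle is to show that $\M$ is closed, or to avoid needing this.

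I would pass to the closure $\overline{\M}$ of $\M$ in $\Zg{\Der{A}}$. Since $\M\subseteq\N\cap\N'$ and the intersection of two closed sets is closed, $\overline{\M}\subseteq\N\cap\N'$. Moreover, $\N\cap\N'\ne\N$, because otherwise $\N\subseteq\N'$ and maximality would force $\N=\N'$. Hence $\overline{\M}\subseteq\N\cap\N'\subsetneq\N$, and $\overline{\M}\supseteq\N\setminus\{\mu\}$, so $\overline{\M}=\M$ is closed.

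It remains to see that $\M$ is not maximal rigid. This holds because it is properly contained in $\N$, and the maximal rigid sets form an antichain.

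The completions of $\M$ are exactly $\N$ and $\N'$: any maximal rigid set containing $\M$ is either $\N$ or, by the uniqueness hypothesis, $\N'$. So condition (4) of Theorem~\ref{Thm: topological char} holds, and hence condition (1) holds as well, meaning $\M^+$ and $\M^-$ are related by irreducible mutation. The two completions $\M^\pm$ are $\N$ and $\N'$ in some order, and $\M=\N\cap\N'$ by Corollary~\ref{cor: Phi surjective, Psi injective}. Therefore $\N'=\M\cup\{\nu\}$ for a single point $\nu$, and $\N,\N'$ are related by irreducible mutation, which means $\mu$ is mutable.
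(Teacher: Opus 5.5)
Your proof is correct and takes essentially the same route as the paper: in both directions you reduce to the equivalence (3)$\Leftrightarrow$(4) of Theorem~\ref{Thm: topological char}, applied to $\M=\N\setminus\{\mu\}=\N\cap\N'$. The differences are cosmetic. In ($\Rightarrow$) you deduce that $\M$ is closed from the isolation of $\mu$, whereas the paper simply notes $\M=\N\cap\N'$. You also identify $\{\N,\N'\}=\{\M^-,\M^+\}$ through the bijection of Theorem~\ref{thm: wide closed bijection} rather than directly through Lemma~\ref{lem: tf classes closed subset}, but that lemma is exactly what proves the bijection anyway.
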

\begin{proof}
$(\Rightarrow)$ Let $\M := \N\setminus \{\mu\}$. Since $\mu$ is mutable, there exists $\N' = \M \cup \{\nu\}\in\CosiltZg{A}$ such that $\N'$ is either an irreducible right or irreducible left mutation of $\N$.  We will consider the case where $\N'$ is a right mutation of $\N$; the case of left mutation is similar.  First we note that $\M$ is equal to $\N\cap\N'$ and hence is contained in $\ClRigid{A}$.  By Corollaries \ref{Thm: mutation} and \ref{cor: inj env of simples}, we have that $\mu$ is critical in $\N$ and an $\N$-injective envelope of a finitely presented simple, and $\nu$ is special in $\N'$.  By Lemma \ref{lem: tf classes closed subset}, we have that $\N = \M^-$ and $\N'=\M^+$.  The statement then follows directly from Theorem \ref{Thm: topological char}.

$(\Leftarrow)$ Let $\M := \N\setminus \{\mu\}$. By assumption, we have that $\M$ is equal to $\N \cap \N'$ and so is closed.  Thus $\M$ satisfies condition (4) of Theorem \ref{Thm: topological char} meaning that we must have $\{\M^+, \M^-\} = \{\N, \N'\}$ and $\N$ and $\N'$ are related by irreducible mutation.
\end{proof}

In the previous corollary we are using that, when $\mu\in \N\in \CosiltZg{A}$ is mutable, the set $\M := \N \setminus \{\mu\}$ is closed and so we can employ the theory developed in Section \ref{Sec: wide closed}. In other words, $\mu$ is an isolated point of $\N$ in the Ziegler subspace topology on $\N$. 
\begin{question}
Is it possible to have an element $\mu\in \N\in \CosiltZg{A}$ that is isolated but it is not mutable? This would mean that it would be possible to complete $\M:= \N \setminus \{\mu\}$ to multiple different maximal rigid sets but \emph{not} via irreducible mutation. 
\end{question}

\subsection{The weak isolation property in the derived category}
In this section we will focus on the setting where being isolated in $\N$ is also a sufficient condition for mutability.  

\begin{definition}
Let $\heart{}$ be a locally coherent Grothendieck category.  We say that $\heart{}$ has the \textbf{weak isolation property} if every isolated point $E$ in $\Sp{\heart{}}$ is the injective envelope of a finitely presented simple object in $\heart{}$.

We say that $\N\in\CosiltZg{A}$ has the \textbf{weak isolation property} if $\N=\N_{\mathbf{t}}$ for $\tpair{t}{f}\in\tors{A}$ and $\heart{\mathbf{t}}$ has the weak isolation property.
\end{definition}

\begin{remarks}
\begin{enumerate}
\item A maximal rigid set $\N_{\mathbf{t}}$ has the weak isolation property if and only if every isolated point in $\N_{\mathbf{t}}$ is the $\N_{\mathbf{t}}$-injective envelope of a finitely presented simple in $\heart{\mathbf{t}}$. 

\item Let $\heart{}$ be a locally coherent Grothendieck category.  If $E\in\Sp{\heart{}}$ is the injective envelope of a finitely presented simple object $S$, then $E$ is isolated.  Indeed, following the notation of \cite[App.~A]{ALS1}, we have that $\mathscr{O}(S) = \{E\}$. 
\end{enumerate}
\end{remarks}

The following lemma then follows easily from the definitions and Corollary \ref{Thm: mutation}.

\begin{lemma}\label{lem: weak isol mutability}
The following statements are equivalent for $\N\in\CosiltZg{A}$.
\begin{enumerate}
\item The maximal rigid set $\N$ has the weak isolation property.
\item An object $\mu\in\N$ is mutable if and only if it is isolated in  $\N$.
\end{enumerate}
\end{lemma}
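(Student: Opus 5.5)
The plan is to translate both statements into the heart $\heart{\mathbf{t}}$ via the homeomorphism $\H{\mathbf{t}}\colon \N \to \Sp{\heart{\mathbf{t}}}$ of Theorem~\ref{Thm: cosilting and Zg}(3), where $\N=\N_{\mathbf{t}}$, and then compare the two resulting conditions on points of $\Sp{\heart{\mathbf{t}}}$. Two preliminary facts are needed. First, since $\H{\mathbf{t}}$ is a homeomorphism, $\mu\in\N$ is isolated in the subspace topology on $\N$ if and only if $\H{\mathbf{t}}(\mu)$ is isolated in $\Sp{\heart{\mathbf{t}}}$. Second, by the last part of Corollary~\ref{Thm: mutation}, $\mu$ is mutable in $\N$ if and only if $\mu$ is the $\N$-injective envelope of a finitely presented simple $S$ of $\heart{\mathbf{t}}$. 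By Remark~\ref{Rmk: N-inj env are inj env}, the latter holds if and only if $\H{\mathbf{t}}(\mu)$ is the injective envelope of $S$ in $\heart{\mathbf{t}}$.

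With these translations, statement (2) reads: a point $E\in\Sp{\heart{\mathbf{t}}}$ is the injective envelope of a finitely presented simple if and only if it is isolated. Part (2) of the preceding Remarks gives one direction unconditionally: if $E$ is the injective envelope of a finitely presented simple $S$, then $\open{S}=\{E\}$, so $E$ is isolated. Equivalently, every mutable point of $\N$ is isolated; this is also the observation recorded after Corollary~\ref{cor: mutable means exactly two completions}. Consequently (2) is equivalent to the single implication ``isolated $\Rightarrow$ injective envelope of a finitely presented simple'' for points of $\Sp{\heart{\mathbf{t}}}$. This is exactly the weak isolation property of $\heart{\mathbf{t}}$, that is, condition (1), taken with the same $\mathbf{t}$ for which $\N=\N_{\mathbf{t}}$.

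The only point requiring care is that the definition of the weak isolation property for $\N$ refers to some $\mathbf{t}$ with $\N=\N_{\mathbf{t}}$. By the bijections of Theorem~\ref{bij_posets}, this $\mathbf{t}$ is unique, so the heart $\heart{\mathbf{t}}$ is determined by $\N$ and the translation above is unambiguous. Hence (1)$\Leftrightarrow$(2). I do not anticipate a real obstacle; the main thing to verify is that the ``isolated'' in (2) is with respect to the Ziegler subspace topology on $\N$, which is precisely the topology transported by Theorem~\ref{Thm: cosilting and Zg}(3).
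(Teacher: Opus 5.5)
Your proposal is correct and follows the paper's own (brief) argument. The paper likewise derives the lemma from the definitions together with Corollary~\ref{Thm: mutation} (mutable means $\N_{\mathbf{t}}$-injective envelope of a finitely presented simple) and the preceding Remarks (such injective envelopes are always isolated), transported along the homeomorphism $\H{\mathbf{t}}\colon \N_{\mathbf{t}}\to\Sp{\heart{\mathbf{t}}}$. This leaves the weak isolation property as exactly the missing implication ``isolated $\Rightarrow$ mutable''.
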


Let us  summarise our findings.
\begin{proposition}
Suppose $\N\in\CosiltZg{A}$ has the weak isolation property. Then following statements are equivalent for $\mu\in\N$ and $\M := \N\setminus\{\mu\}$.
\begin{enumerate}
\item There exists a (unique) $\N' \in\CosiltZg{A}$ such that $\N'\neq \N$ and $\M\subseteq \N'$.
\item The object $\mu$ is isolated in $\N$.
\item The object $\mu$ is mutable in $\N$.
\end{enumerate}
\end{proposition}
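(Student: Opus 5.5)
The plan is to assemble the proposition from Corollary~\ref{cor: mutable means exactly two completions}, Lemma~\ref{lem: weak isol mutability} and the remark following Corollary~\ref{cor: mutable means exactly two completions}. The equivalence of (1) and (3), including the uniqueness in (1), holds for any maximal rigid set and does not need the weak isolation property. It is precisely Corollary~\ref{cor: mutable means exactly two completions}.

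One small point needs care. Statement (1) should be read as asserting the existence of a unique $\N'\neq\N$ containing $\M$. Corollary~\ref{cor: mutable means exactly two completions} gives exactly this in both directions. In the direction (3)$\Rightarrow$(1), uniqueness comes from Theorem~\ref{Thm: topological char}(4), since $\M$ then has exactly two completions, $\M^+$ and $\M^-$. In the direction (1)$\Rightarrow$(3), $\M=\N\cap\N'$ is closed, so Theorem~\ref{Thm: topological char} applies.

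For (3)$\Leftrightarrow$(2), we use the hypothesis. Since $\N$ has the weak isolation property, Lemma~\ref{lem: weak isol mutability} gives directly that $\mu$ is mutable if and only if it is isolated in $\N$. It is worth recording the two halves separately.

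\begin{itemize}
\item[(3)$\Rightarrow$(2)] This holds without any assumption. By Corollary~\ref{Thm: mutation}, a mutable $\mu$ is the $\N$-injective envelope of a finitely presented simple $S$ in $\heart{\mathbf{t}}$. Under the homeomorphism $\H{\mathbf{t}}\colon\N\to\Sp{\heart{\mathbf{t}}}$ of Theorem~\ref{Thm: cosilting and Zg}, the point $\mu$ corresponds to $E(S)$, and $\open{S}=\{E(S)\}$ is open. Hence $\mu$ is isolated.
\item[(2)$\Rightarrow$(3)] If $\mu$ is isolated, then $\H{\mathbf{t}}(\mu)$ is isolated in $\Sp{\heart{\mathbf{t}}}$. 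By the weak isolation property it is the injective envelope of a finitely presented simple, and Corollary~\ref{Thm: mutation} then makes $\mu$ mutable.
\end{itemize}

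There is no real obstacle. The only care needed is to check that the reading of (1) matches Corollary~\ref{cor: mutable means exactly two completions} exactly, and to transport isolatedness along the homeomorphism of Theorem~\ref{Thm: cosilting and Zg}, where the subspace topology on $\N$ is the one used throughout.
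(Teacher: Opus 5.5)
Your arguments for (2)$\Leftrightarrow$(3) and for (3)$\Rightarrow$(1), including uniqueness, are correct and match the paper. The gap is in how you read (1).

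\textbf{The reading of (1).} The parenthetical ``(unique)'' marks uniqueness as a \emph{conclusion}, not as part of the hypothesis. Statement (1) only asserts that some $\N'\in\CosiltZg{A}$ with $\N'\neq\N$ and $\M\subseteq\N'$ exists. The proposition then says that, under the weak isolation property, such an $\N'$ is automatically unique. The corollary right after the proposition depends on this reading. To get that every $\mu$ is either mutable or immutable, one must conclude mutability as soon as $\N$ is not the only maximal rigid set containing $\M$, without knowing in advance that there is just one other completion.

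\textbf{Why your argument falls short.} Because you build uniqueness into (1), you prove a weaker statement, and the weak isolation property is used only in (2)$\Rightarrow$(3). Your remark that (1)$\Leftrightarrow$(3) holds without the hypothesis is true only for this strengthened (1). For mere existence, Corollary~\ref{cor: mutable means exactly two completions} does not apply: its proof needs condition (4) of Theorem~\ref{Thm: topological char}, namely exactly two completions. The Question after that corollary asks precisely whether this can fail in general.

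\textbf{The missing step.} It is short, and you already observe its main ingredient. Suppose $\N'\neq\N$ is maximal rigid with $\M\subseteq\N'$. Then $\mu\notin\N'$, since otherwise $\N\subseteq\N'$ and maximality of $\N$ forces $\N=\N'$. Hence $\N\cap\N'=\M$. This set is closed, because $\N$ and $\N'$ are closed in $\Zg{\Der{A}}$ by Theorem~\ref{Thm: cosilting and Zg}(1). So $\{\mu\}=\N\setminus\M$ is open in $\N$, that is, $\mu$ is isolated. This is the paper's (1)$\Rightarrow$(2) and uses only existence. Combined with your (2)$\Rightarrow$(3) and (3)$\Rightarrow$(1), it proves the full statement, with uniqueness as a consequence.
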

\begin{proof}
(1)$\Rightarrow$(2) The subset $\N \cap \N'= \M$ is closed in $\N$ by Theorem \ref{Thm: cosilting and Zg}(1) and $\{\mu\} = \N \setminus \M$.

(2)$\Leftrightarrow$(3) This is Lemma \ref{lem: weak isol mutability}.

(3)$\Rightarrow$(1) This is Corollary \ref{cor: mutable means exactly two completions}.
\end{proof}

\begin{corollary}
Suppose $\N\in\CosiltZg{A}$ has the weak isolation property. Then, for every $\mu\in\N$, either $\mu$ is mutable or $\N$ is the unique maximal rigid subset containing $\N\setminus\{\mu\}$. In the latter case we will call $\mu$ \textbf{immutable}.
\end{corollary}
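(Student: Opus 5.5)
The corollary is a dichotomy read off from the preceding proposition. Fix $\mu\in\N$ and put $\M:=\N\setminus\{\mu\}$. I would split into two cases according to whether $\mu$ is mutable in $\N$. If it is, the first alternative holds and there is nothing to prove.

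Suppose $\mu$ is not mutable. Since $\N$ has the weak isolation property, the preceding proposition applies. The equivalence (1)$\Leftrightarrow$(3) there says that a maximal rigid set $\N'\neq\N$ with $\M\subseteq\N'$ exists if and only if $\mu$ is mutable. So in this case no maximal rigid $\N'\neq\N$ contains $\M$.

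It remains to see that $\N$ itself contains $\M$, which is trivial. Hence $\N$ is the unique maximal rigid set containing $\M$, which is the second alternative. The two alternatives are also mutually exclusive: if $\mu$ is mutable, Corollary~\ref{cor: mutable means exactly two completions} supplies a second completion $\N'\neq\N$ of $\M$, so $\N$ is not the unique one. This is consistent with calling the second case ``immutable''.

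There is no real obstacle, since the content lies in the preceding proposition, namely in the implication (1)$\Rightarrow$(2)$\Rightarrow$(3). That implication uses the fact that $\N\cap\N'$ is closed, so $\mu$ is isolated, and then the weak isolation property through Lemma~\ref{lem: weak isol mutability}. The only point to check is that the proposition is applied to the same $\N$ carrying the weak isolation property, and it is.
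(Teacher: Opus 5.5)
Your proof is correct and matches the paper, which reads this corollary directly off the preceding proposition. You also rightly note that the implication (1)$\Rightarrow$(2)$\Rightarrow$(3) there needs only the \emph{existence} of some maximal rigid $\N'\neq\N$ containing $\N\setminus\{\mu\}$, because then $\N\cap\N'=\N\setminus\{\mu\}$ is closed; this is exactly what lets non-mutability force $\N$ to be the only completion.
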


A locally coherent Grothendieck category $\heart{}$ is said to have the \textbf{isolation property} if every localisation $\heart{}/\T$ with $(\T, \F)$ a hereditary torsion pair of finite type in $\heart{}$ has the weak isolation property. Clearly the isolation property implies the weak isolation property.  

This stronger condition was introduced by Burke in \cite{Burke:01}, generalising a definition used in the model theory of modules (see, for example, \cite{Prest:09}), and he showed that any locally coherent Grothendieck category without superdecomposable injective objects has the isolation property.  Recall that an object $X$ in a category $\C$ is called \textbf{superdecomposable} if it has no indecomposable direct summands.  Using this result, we can deduce the following proposition.

\begin{proposition}\label{prop: superdecomp suff cond tri}
Let $\t\in\tors{A}$. The following statements hold. 
\begin{enumerate}
\item If $\heart{\mathbf{t}}$ does not contain a superdecomposable injective object, then $\N_{\mathbf{t}}$ has the weak isolation property.
\item If $\Prod{\N_{\mathbf{t}}}$ does not contain any superdecomposable object, then $\N_{\mathbf{t}}$ has the weak isolation property.
\item If there are no superdecomposable pure-injective objects in $\K{A}$, then every $\N\in\CosiltZg{A}$ has the weak isolation property.
\end{enumerate}
\end{proposition}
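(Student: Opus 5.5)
The plan is to reduce all three statements to Burke's theorem, which says that a locally coherent Grothendieck category without superdecomposable injective objects has the isolation property, and hence the weak isolation property.

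For (1), $\heart{\mathbf{t}}$ is a locally coherent Grothendieck category, by the notation preceding Theorem~\ref{Thm: cosilting and Zg}. If it has no superdecomposable injective object, Burke's result gives the isolation property for $\heart{\mathbf{t}}$. Taking the trivial hereditary torsion pair $(0,\heart{\mathbf{t}})$, which is of finite type, the localisation $\heart{\mathbf{t}}/0$ is $\heart{\mathbf{t}}$ itself, so $\heart{\mathbf{t}}$ has the weak isolation property. By definition, $\N_{\mathbf{t}}$ then has the weak isolation property.

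For (2), we reduce to (1) using the additive equivalence $\H{\mathbf{t}}\colon \Prod{\N_{\mathbf{t}}}\to\Inj{\heart{\mathbf{t}}}$ of Theorem~\ref{Thm: cosilting and Zg}(3). An additive equivalence preserves and reflects direct summands and indecomposability. Hence an object $X\in\Prod{\N_{\mathbf{t}}}$ is superdecomposable exactly when $\H{\mathbf{t}}(X)$ is a superdecomposable injective object of $\heart{\mathbf{t}}$. Every injective of $\heart{\mathbf{t}}$ is, up to isomorphism, of the form $\H{\mathbf{t}}(X)$ by essential surjectivity. So the absence of superdecomposables in $\Prod{\N_{\mathbf{t}}}$ is equivalent to the hypothesis of (1).

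For (3), let $\N\in\CosiltZg{A}$, say $\N=\N_{\mathbf{t}}$. It suffices to check that $\Prod{\N_{\mathbf{t}}}$ consists of pure-injective objects of $\K{A}$ and then apply (2). The objects of $\Prod{\N}=\Prod{\sigma_\N}$ are summands of products of copies of the cosilting complex $\sigma_\N$, which is pure-injective and lies in $\K{A}$ (Theorem~\ref{bij_cosilt mod cpx}). Products and summands of pure-injective objects are pure-injective. The main point to verify is that $\K{A}$ is closed under products and summands in $\Der{A}$. Closure under products holds because products of injective modules are injective and products in $\Der{A}$ of such complexes are computed degreewise. Closure under summands holds, for instance, because $\K{A}=\D^{\geq 0}\cap{}^{\perp_{>0}}(\text{injectives in degree }1)$, or equivalently because these are exactly the complexes with cohomology in degrees $0,1$ and injective dimension conditions that are closed under summands. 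I expect this closure check to be the only point needing some care. Once it is in place, a superdecomposable object of $\Prod{\N}$ would be a superdecomposable pure-injective in $\K{A}$, contradicting the hypothesis, and (2) applies.
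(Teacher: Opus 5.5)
Your proposal is correct and is essentially the paper's proof: (1) is Burke's result applied to the locally coherent heart $\heart{\mathbf{t}}$, (2) transports superdecomposability along the additive equivalence $\H{\mathbf{t}}\colon\Prod{\N_{\mathbf{t}}}\to\Inj{\heart{\mathbf{t}}}$, and (3) reduces to (2) because $\Prod{\N}$ consists of pure-injective objects lying in $\K{A}$ and is closed under summands. One small slip is in your closure check. The formula $\D^{\geq 0}\cap{}^{\perp_{>0}}(\text{injectives in degree }1)$ does not describe $\K{A}$; with the paper's convention for ${}^{\perp_I}$ it gives $\D^{\geq 1}$. The alternative you sketch does work: $X\in\D^{\geq 0}$ with $\Hom{\Der{A}}(M,X[i])=0$ for all modules $M$ and all $i\geq 2$ is a correct characterisation, and it is visibly closed under direct summands.
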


\begin{proof}
$(1)$ follows immediately from \cite[Prop.~2.5]{Burke:01}; $(2)$ follows from (1) and Theorem \ref{Thm: cosilting and Zg}(3);

 $(3)$ follows from $(2)$ since $\Prod{\N}$ consists of pure-injective objects and is contained in $\K{A}$ for every $\N\in\CosiltZg{A}$.
\end{proof}

\begin{remark}
Every $\N\in\CosiltZg{A}$ has the weak isolation property for $A$ tame hereditary and for $A$ derived discrete.

Indeed, if $A$ is tame hereditary, then the Krull-Gabriel dimension of $A$ is 2 \cite{Geigle:85}. It follows from \cite{Burke:01} and \cite[Thm.~5.5]{Popescu:73} that $\Mod{A}$ does not contain any superdecomposable pure-injective modules.  Since $A$ is hereditary, every object in $\Der{A}$ is a direct sum of stalk complexes (see, for example, \cite[Sec.~1.6]{Krause:07}) and so it follows that also $\Der{A}$ does not contain any superdecomposable pure-injective objects.  In particular, the conditions of Proposition \ref{prop: superdecomp suff cond tri}(3) hold for $A$.

If $A$ is derived discrete, then the claim follows by combining the main result of \cite{BobinskiKrause:15} with \cite{Burke:01} and \cite[Thm.~5.5]{Popescu:73}.
\end{remark}

\subsection{The weak isolation property in the module category}
In general, much more is known about the existence of superdecomposable pure-injective modules over specific classes of artinian rings, than about the existence of superdecomposable pure-injective objects in derived categories.  We finish this section by proving that when there are no superdecomposable pure-injective $A$-modules, every maximal rigid set has the weak isolation property.  

Let $\tpair{t}{f}\in\tors{A}$ and consider $(\Z_{\mathbf{t}}, \I_{\mathbf{t}}) \in \Cosiltpair{A}$.  We say that $M\in \Z_{\mathbf{t}}$ (respectively $I\in\I_{\mathbf{t}}$) is \textbf{mutable} in $(\Z_{\mathbf{t}}, \I_{\mathbf{t}})$ if $\mu_M$ (respectively I[-1]) is mutable in $\N_{\mathbf{t}}$. Recall from Corollary \ref{cor: mutable means exactly two completions} that we can identify mutable elements of $\N_{\mathbf{t}}$ as those for which there is a unique way to replace them and obtain a new maximal rigid set. We can translate this directly into a statement about cosilting pairs. 

\begin{corollary} \label{cor: mutable means exactly two completions pairs}
Let $(\Z,\I) \in\Cosiltpair{A}$ and $M\in\Z$ (respectively $M\in \I$).  Then $M$ is mutable if and only if there exists a unique $(\Z', \I') \in\Cosiltpair{A}$ such that $(\Z',\I')\neq (\Z,\I)$ and $\Z \setminus \{M\} \subseteq \Z'$ and $\I \subseteq \I'$ (respectively $\Z  \subseteq \Z'$ and $\I \setminus \{M\}\subseteq \I'$).
\end{corollary}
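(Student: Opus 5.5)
The plan is to translate Corollary~\ref{cor: mutable means exactly two completions} across the order-preserving bijection of Theorem~\ref{bij_posets}(3) between $\Cosiltpair{A}$ and $\CosiltZg{A}$. Write $\N$ for the maximal rigid set corresponding to $(\Z,\I)$, so that $\N=\{\mu_X\mid X\in\Z\}\cup\{I[-1]\mid I\in\I\}$. By definition, $M$ is mutable in $(\Z,\I)$ exactly when $\mu:=\mu_M$ (respectively $\mu:=M[-1]$) is mutable in $\N$. By Corollary~\ref{cor: mutable means exactly two completions}, this holds if and only if there is a unique $\N'\in\CosiltZg{A}$ with $\N'\neq\N$ and $\N\setminus\{\mu\}\subseteq\N'$. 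It therefore suffices to show that, under the bijection, the maximal rigid sets $\N'$ containing $\N\setminus\{\mu\}$ correspond exactly to the cosilting pairs $(\Z',\I')$ satisfying the stated containments.

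The key step is the dictionary for containments. Let $(\Z',\I')$ correspond to $\N'$. The inverse assignment in Theorem~\ref{bij_posets}(3) gives $\Z'=\H{}(\N')$ and $\I'=\{I\mid I[-1]\in\N'\}$. Conversely, $\N'$ is built as $\{\mu_X\mid X\in\Z'\}\cup\{I[-1]\mid I\in\I'\}$. I need two facts about these two kinds of elements:
\begin{itemize}
\item they never coincide, since $\mu_X$ has nonzero zeroth cohomology while $I[-1]$ has none;
\item the assignment $X\mapsto\mu_X$ is injective on modules, with $\H{}(\mu_X)\cong X$.
\end{itemize}
From these, $\N\setminus\{\mu\}\subseteq\N'$ holds if and only if both of the following hold:
\begin{itemize}
\item $\{\mu_X\mid X\in\Z\setminus\{M\}\}\subseteq\N'$, equivalently $\Z\setminus\{M\}\subseteq\Z'$;
\item $\{I[-1]\mid I\in\I\}\subseteq\N'$, equivalently $\I\subseteq\I'$.
\end{itemize}
The second case, where $M\in\I$, is symmetric: there the condition becomes $\Z\subseteq\Z'$ together with $\I\setminus\{M\}\subseteq\I'$. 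Finally, $\N'\neq\N$ if and only if $(\Z',\I')\neq(\Z,\I)$, because the correspondence is a bijection.

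Putting these together, uniqueness of $\N'$ is equivalent to uniqueness of $(\Z',\I')$, which gives the statement. The only point requiring care is the injectivity of the dictionary: one must check that a module in $\Z'$ determines its complex $\mu_X$ and that no $\mu_X$ with $X\neq0$ is of the form $I[-1]$. Both follow directly from the construction in Theorem~\ref{bij_posets}(3), since $\mu_X$ is the minimal injective copresentation of $X$. I expect no genuine obstacle beyond this bookkeeping.
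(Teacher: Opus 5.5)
Your proposal is correct and follows the same route as the paper. The paper presents this corollary as a direct translation of Corollary~\ref{cor: mutable means exactly two completions} through the bijection of Theorem~\ref{bij_posets}(3), without further argument. Your containment dictionary, which uses $H^0(\mu_X)\cong X$ and $H^0(I[-1])=0$, spells out the bookkeeping that the paper leaves implicit.
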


We wish to relate mutability to isolation in the induced Ziegler topology on $\Z$. To simplify matters, we will assume that $A$ is a finite-dimensional algebra.  Recall from Theorem \ref{Thm: I is special fd algebra} that, in this case, all $I\in\I$ are mutable because $I[-1]$ is an $\N_{\mathbf{t}}$-injective envelope of a simple of the form $S=T[-1]$, and such simple objects are always finitely presented by Proposition~\ref{simples in the heart}. In the next theorem, we give a condition for the mutability of $M\in\Z_{\mathbf{t}}$.

Following the notation of \cite[App.~A]{ALS1}, we let $\G_A$ denote the locally coherent Grothendieck category consisting of additive covariant functors from $\mod{A^{op}}$ to the category of abelian groups.

\begin{lemma}\label{isolation quotient}\cite[Prop.~11.17]{KrauseHab}
Let $I$ be an ideal of $A$.  If $\G_A$ has the isolation property, then so does $\G_{A/I}$.
\end{lemma}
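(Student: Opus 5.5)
The plan is to realise $\G_{A/I}$ as a localisation of $\G_A$ at a hereditary torsion pair of finite type. Once that is done, the isolation property passes down formally, because localisations of finite type compose.

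\emph{Step 1 (restriction functor).} Right $A/I$-modules are exactly the right $A$-modules annihilated by $I$. So $\mod{(A/I)^{op}}$ is a full subcategory of $\mod{A^{op}}$, closed under submodules, quotients and finite direct sums. Restricting a functor $F\in\G_A$ to this subcategory gives an exact functor $r\colon \G_A\to\G_{A/I}$, since exactness in functor categories is computed pointwise. Let $\T:=\ker r$ be the class of functors vanishing on all $A/I$-modules. It is closed under subobjects, quotients, extensions and coproducts, so $(\T,\T^{\perp_0})$ is a hereditary torsion pair.

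\emph{Step 2 (finite type and identification of the quotient).} We show that $\T$ is generated by its finitely presented objects, and that $r$ induces an equivalence $\G_A/\T\simeq\G_{A/I}$. On finitely presented objects, $r$ sends the representable functor $(X\otimes_A -)$ to $(X/XI\otimes_{A/I}-)$. Hence every finitely presented object of $\G_{A/I}$ lies in the image of $r$ restricted to $\fp{\G_A}$. Together with exactness, this identifies $\fp{\G_{A/I}}$ with the Serre quotient $\fp{\G_A}/(\T\cap\fp{\G_A})$; the ingredients are full faithfulness modulo $\T$ and the usual calculus of fractions. Passing to $\varinjlim$ then gives $\G_A/\varinjlim(\T\cap\fp{\G_A})\simeq\G_{A/I}$, and this forces $\T=\varinjlim(\T\cap\fp{\G_A})$, so $\T$ is of finite type. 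On spectra this step recovers the known fact that $\Zg{A/I}$ is the closed subset of $\Zg{A}$ consisting of the points annihilated by $I$ (\cite[Thm.~1, Cor.~9]{Prest:96}). I expect this step to be the main obstacle, namely checking carefully that the induced functor on finitely presented objects is full modulo $\T$. If that comparison is delicate, the fallback is to argue on spectra: the closed set $\Zg{A/I}\subseteq\Zg{A}\cong\Sp{\G_A}$ determines a finite type hereditary torsion pair $(\T',\F')$ by \cite{Herzog:97, Krause:97}. One then shows that $\G_A/\T'$ and $\G_{A/I}$ are both locally coherent with the same finitely presented objects, namely the functors on $\mod{(A/I)^{op}}$.

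\emph{Step 3 (transfer of isolation).} Let $(\S,\R)$ be a hereditary torsion pair of finite type in $\G_{A/I}\simeq\G_A/\T$. Its preimage $\S'$ in $\G_A$ (the objects whose image under the quotient functor lies in $\S$) is a hereditary torsion class containing $\T$. It is of finite type: it is generated by $\fp{\T}$ together with finitely presented lifts of the finitely presented generators of $\S$. Equivalently, on spectra, a closed subset of the closed subset $\Zg{A/I}$ is closed in $\Zg{A}$. Moreover $(\G_A/\T)/\S\simeq\G_A/\S'$. Since $\G_A$ has the isolation property, $\G_A/\S'$ has the weak isolation property, and hence so does $\G_{A/I}/\S$. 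As $(\S,\R)$ was an arbitrary hereditary torsion pair of finite type, $\G_{A/I}$ has the isolation property.
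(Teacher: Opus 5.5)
Your strategy is the natural one for this result; the paper gives no argument of its own and simply cites \cite[Prop.~11.17]{KrauseHab}. The idea is to exhibit $\G_{A/I}$ as the localisation of $\G_A$ at a hereditary torsion class of finite type (on spectra, $\Zg{A/I}$ is a closed subset of $\Zg{A}$), and then use that finite-type localisations compose. Step~1 and Step~3 are fine as written. The load-bearing point you leave unproved is the identification in Step~2. Exactness of $r$ together with essential surjectivity on finitely presented objects does not by itself give $\fp{\G_{A/I}}\simeq\fp{\G_A}/(\T\cap\fp{\G_A})$. You also need fullness modulo $\T$, which you flag but do not supply. Your fallback has the same hole, since it still has to identify the finitely presented objects of $\G_A/\T'$ with those of $\G_{A/I}$.

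This closes without any calculus of fractions. The inclusion $\iota\colon\mod{(A/I)^{op}}\to\mod{A^{op}}$ has the left adjoint $\pi=-\otimes_A A/I$, because $\Hom{A}(X,Y)\cong\Hom{A/I}(X/XI,Y)$ whenever $YI=0$. Hence $r=\iota^*$ has the right adjoint $\pi^*\colon G\mapsto G\circ\pi$. This adjoint is fully faithful because $\pi\iota\cong\mathrm{id}$, and it commutes with directed colimits because everything is computed pointwise. By Gabriel's theorem, an exact functor with a fully faithful right adjoint is the quotient functor at its kernel, so $\G_{A/I}\simeq\G_A/\T$. Moreover, the torsion radical is $tF=\ker\bigl(F\to F\circ\iota\circ\pi\bigr)$, that is, $tF(X)=\ker\bigl(F(X)\to F(X/XI)\bigr)$, and this commutes with directed colimits. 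So $\T^{\perp_0}$ is closed under directed colimits and $\T$ is of finite type.

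Two smaller corrections. First, the representable objects of $\G_A$ are $\Hom{A}(X,-)$ for $X\in\mod{A^{op}}$, and $r$ sends them to $\Hom{A/I}(X/XI,-)$. The functor $X\otimes_A-$ is defined on left modules, so it is not an object of $\G_A$. It is the representables, not tensor functors (which only produce right exact functors), that give essential surjectivity on $\fp{\G_{A/I}}$. Second, viewing $\mod{(A/I)^{op}}$ as a full subcategory of $\mod{A^{op}}$ uses that $A$ is finite-dimensional, which is the standing assumption where the lemma is used. For a general left artinian ring, $A/I$ need not be finitely presented as a right $A$-module.
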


\begin{theorem}\label{thm: iso prop}
Suppose that $\G_A$ has the isolation property.  Then the following statements are equivalent for $(\Z, \I)\in\Cosiltpair{A}$ and $M\in \Z$.
\begin{enumerate}
\item There exists a (unique) $(\Z', \I')\in\Cosiltpair{A}$ with $\Z' \neq \Z$, $\I \subseteq \I'$ and $\Z \setminus \{M\} \subseteq \Z'$.
\item $M$ is isolated in $\Z$.
\item $M$ is mutable in $(\Z, \I)$.
\end{enumerate}
\end{theorem}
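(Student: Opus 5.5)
Throughout we take $A$ to be a finite-dimensional algebra, as in the surrounding discussion. We prove (1)$\Leftrightarrow$(3), (3)$\Rightarrow$(2) and (2)$\Rightarrow$(3). The first equivalence is Corollary~\ref{cor: mutable means exactly two completions pairs}. The only point to check is the shape of the competing pair. The unique $(\Z',\I')\neq(\Z,\I)$ with $\Z\setminus\{M\}\subseteq\Z'$ and $\I\subseteq\I'$ must have $\Z'\neq\Z$: if $\Z'=\Z$, then maximality in Definition~\ref{rpair}(iii) forces $\I'=\I$.

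For (3)$\Rightarrow$(2), suppose $M$ is mutable. Then $\mu_M$ is isolated in $\N$: by the discussion after Corollary~\ref{cor: mutable means exactly two completions}, $\N\setminus\{\mu_M\}$ is closed in $\Zg{\Der{A}}$. By Proposition~\ref{prop: closed sets}, (1)$\Rightarrow$(3), the set $\H{}(\N\setminus\{\mu_M\})=\Z\setminus\{M\}$ is closed in $\Zg{A}$. Hence $M$ is isolated in $\Z$.

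For (2)$\Rightarrow$(3), suppose $M$ is isolated in $\Z$. The plan is to show that $\mu_M$ is the $\N$-injective envelope of a finitely presented simple object of $\heart{\mathbf{t}}$, and then conclude by Corollary~\ref{Thm: mutation}. Let $\bar A=A/\Ann{\Z}$. By Proposition~\ref{prop: cotilt/cosilt}, $C=\prod_{X\in\Z}X$ is a cotilting $\bar A$-module with cotilting class $\F$, and $\Z=\Ind{\Prod{C}}$ is a closed subset of $\Zg{\bar A}$. Lemma~\ref{isolation quotient} says that $\G_{\bar A}$ has the isolation property. Under the identification $\Zg{\bar A}\cong\Sp{\G_{\bar A}}$, the closed set $\Z$ corresponds to a hereditary torsion pair of finite type in $\G_{\bar A}$, with quotient $\G_{\bar A}/\mathcal{S}$ and $\Sp{\G_{\bar A}/\mathcal{S}}\cong\Z$. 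By the isolation property this quotient has the weak isolation property. So the isolated point $M$ is the injective envelope of a finitely presented simple functor, and, in the Ziegler language, there is a minimal pair, i.e.\ a pp-pair isolating $M$ within $\Z$.

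The main obstacle is converting this simple functor into a neg-isolation statement for $M$ inside $\F$. What we want is a left almost split map $M\to\overline M$ in $\F$ for which the corresponding simple object of $\heart{\mathbf{t}}$ (either a torsion-free almost torsion $F$ or some $T[-1]$) is finitely presented. The first attempt is the standard argument in the theory of cotilting modules (as in \cite{AHL}, and the dual of Prest's theory of minimal pairs for definable subcategories), applied to the definable category $\Prod{C}$-cogenerated class $\F$, which is definable. An isolated point of $\Z=\Ind{\F\cap\F^{\perp_1}}$ with a finitely presented simple in the localised functor category gives a finitely presented module $B\in\mod{A}$ with $\Hom{A}(B,M)$ having a simple socle as a module over the endomorphism ring of $M$. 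Taking the dual of the resulting morphism yields a left almost split map in $\F$ with finitely generated cokernel or kernel.

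By Remark~\ref{Rem: critical or special}, $M$ is then critical or special. In the critical case, Theorem~\ref{Thm: critical tf/t} together with finite generation gives a simple object $F\in\mod{A}$ of $\fp{\heart{}}_{\mathbf{t}}$. In the special case, Proposition~\ref{prop: upgrade t/tf special}(2) gives a torsion almost torsion-free module $T$, which is finitely generated by Proposition~\ref{simples in the heart}(2). Corollary~\ref{cor: inj env of simples} identifies $\mu_M$ as the corresponding $\N$-injective envelope, so $M$ is mutable by Corollary~\ref{Thm: mutation}. The finiteness check in the critical case, namely that the almost torsion module $F$ is finitely generated, is where the isolation hypothesis is actually needed, and I expect that to be the delicate step.
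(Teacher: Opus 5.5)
Your implications (3)$\Rightarrow$(2) and (1)$\Leftrightarrow$(3) are fine. The first is the argument of Corollary~\ref{global}; the second goes via Corollary~\ref{cor: mutable means exactly two completions pairs}. If (1) is read as mere existence, add that $\Z\cap\Z'=\Z\setminus\{M\}$ is closed in $\Z$, which gives (2) directly.

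The gap is in (2)$\Rightarrow$(3), exactly at the step you call the main obstacle. You correctly obtain that $M$ corresponds to the injective envelope of a finitely presented simple object of $\G_{\bar{A}}/\mathcal{S}$. After that you only assert that a ``standard argument'' produces a finitely presented $B$ and, after dualising, a left almost split map in $\F$ with finitely generated kernel or cokernel. This is not justified. A minimal pair isolating $M$ in $\Z$ only controls $\Prod{\Z}$, or the definable subcategory generated by $\Z$, which is far smaller than $\F$. So it does not by itself give a left almost split map in $\F$, and nothing in your sketch controls the kernel. You concede that finite generation of $F$ in the critical case is unresolved, but that is the entire content of the implication. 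The special case is automatic, since torsion almost torsion-free modules are always finitely generated.

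The missing ingredient is \v{S}\v{t}ov\'{\i}\v{c}ek's theorem \cite{stovicek}. The equivalence $\Prod{\Z}\to\Inj{\G_{\bar{A}}/\mathcal{S}}$, $N\mapsto q(-\otimes_{\bar{A}}N)$, extends to an equivalence between $\G_{\bar{A}}/\mathcal{S}$ and the heart $\heart{}$ of the HRS-tilt of $\Der{\bar{A}}$ at the cotilting torsion pair $(\bar{\T},\bar{\F})$. Equivalences preserve finitely presented simple objects, so $M$ is the injective envelope of a finitely presented simple $S$ in this cotilting heart.

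From there the argument closes as follows:
\begin{enumerate}
\item By \cite[Thm.~A and Thm.~B]{AHL} there is a left almost split map in $\bar{\F}=\F$, with either $S\cong F$ for its torsion-free almost torsion kernel $F$, or $S\cong T[-1]$ for its torsion almost torsion-free cokernel $T$.
\item Proposition~\ref{simples in the heart}(1) shows that finite presentation of $S$ forces $F$, respectively $T$, to be finitely generated. So no separate finiteness check is needed.
\item You still have to pass from $\bar{A}$ to $A$. Use \cite[Prop.~5.10]{ALS1} in the critical case and Proposition~\ref{prop: upgrade t/tf special} in the special case.
\item Conclude with Corollaries~\ref{cor: inj env of simples} and~\ref{Thm: mutation}.
\end{enumerate}
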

\begin{proof}
(1)$\Rightarrow$(2) The subset $\Z \cap \Z'$ is closed in $\Z$ by Theorem \ref{Thm: cosilting and Zg}(2) and $\{M\} = \Z \setminus (\Z \cap \Z')$.

(2)$\Rightarrow$(3)  Suppose $\t\in\tors{A}$ such that $(\Z, \I) = (\Z_{\mathbf{t}}, \I_{\mathbf{t}})$ and $t = \Tpair{T}{F}$ the corresponding torsion pair of finite type. We must show that $\mu_M$ is mutable or, equivalently by Corollary \ref{Thm: mutation}, that $\mu_M$ the $\N_{\mathbf{t}}$-injective envelope of a finitely presented simple object in $\heart{\mathbf{t}}$.

As in Subsection \ref{sec: cosilting}, we use the notation $\bar{A} = A/\Ann{\Z}$. By \cite[Thm.~1, Cor.~9]{Prest:96} the natural map $\Zg{\bar{A}} \to \Zg{A}$ induces a homeomorphism onto a closed subset of $\Zg{A}$ and so $M$ is also isolated in $\Z$ considered as a closed subset of $\Zg{\bar{A}}$.  Since $\Z$ is closed in $\Zg{\bar{A}}$, we have that $(\S, \R) := ({}^{\perp_0}(-\otimes_{\bar{A}}\Z), \Cogen{-\otimes_{\bar{A}}\Z})$ is a hereditary torsion pair of finite type in $\G_{\bar{A}}$. The functor $\Mod{\bar{A}} \to \G_{\bar{A}} / \R$ given by $N \mapsto q(-\otimes_{\bar{A}}N)$ where $q$ is the canonical localisation functor induces a homeomorphism $\Z \to \Sp{\G_{\bar{A}} / \R}$ and an additive equivalence $\Prod{\Z} \to \Inj{\G_{\bar{A}} / \R}$.  By Lemma \ref{isolation quotient}, the quotient category $\G_{\bar{A}}/\R$ satisfies the weak isolation property, so $q(-\otimes_{\bar{A}}M)$ is the injective envelope of a finitely presented simple object in $\G_{\bar{A}} / \R$.  

Now, let $(\bar{\T}, \bar{\F}):= ({}^{\perp_0}\Z, \Cogen{\Z})$ be the cotilting torsion pair associated to $(\Z, \I)$ in $\Mod{\bar{A}}$.  By \cite{stovicek}, we have that the additive  equivalence $\Prod{\Z} \to \Inj{\G_{\bar{A}} / \R}$ given by $N \mapsto q(-\otimes_{\bar{A}}N)$ extends to an equivalence of categories between the HRS-tilt $\heart{}$ at $(\bar{\T}, \bar{\F})$ in $\Der{\bar{A}}$ and $\G_{\bar{A}} / \R$. Combining this with the previous paragraph, we obtain that $M$ is the injective envelope of a finitely presented simple object $S$ in the cotilting heart $\heart{}$. 

By \cite[Thm.~A and Thm.~B]{AHL}, there exists one of the following two short exact sequences in $\Mod{\bar{A}}$: \begin{equation} 0 \to F \to M  \overset{f}{\to} \bar{M} \to 0 \:\:\:\:\text{ or }\:\:\:\: 0 \to M \overset{f}{\to} \bar{M} \to T \to 0\end{equation} where, in either case, the morphism $f$ is a left almost split map in $\bar{\F}$ and $F$ (respectively $T$) is torsion-free almost torsion (respectively torsion almost torsion-free) with respect to $(\bar{\T}, \bar{\F})$.  Moreover $M$ is the injective envelope of the simple object $S \cong F$ (respectively $S \cong T[-1]$) in $\heart{}$.  Since $S$ is finitely presented in $\heart{}$, we conclude that the module $F$ (respectively $T$) is finitely presented in $\Mod{\bar{A}}$ by Proposition \ref{simples in the heart}. 

In the first case $F$ is finitely presented torsion-free almost torsion also in $\Mod{A}$ by \cite[Prop.~5.10]{ALS1} (noting that finitely presented $\bar{A}$-modules are also finitely presented when considered as $A$-modules).  Therefore $M$ is the critical that corresponds to $F$ under the bijection in Theorem \ref{Thm: critical tf/t} and hence $\mu_M$ is the $\N_{\mathbf{t}}$-injective envelope of the finitely presented simple $S\cong F$ in $\heart{\mathbf{t}}$ by Corollary \ref{cor: inj env of simples}.  

In the second case $T$ is finitely presented torsion almost torsion-free also in $\Mod{A}$ by Proposition \ref{prop: upgrade t/tf special}.  Therefore $M$ is the special that corresponds to $T$ under the bijection in Theorem \ref{all in Ical appear} and hence $\mu_M$ is the $\N_{\mathbf{t}}$-injective envelope of the finitely presented simple $S\cong T[-1]$ in $\heart{\mathbf{t}}$ by Corollary \ref{cor: inj env of simples}.

(3)$\Rightarrow$(1) This follows from Theorem \ref{bij_posets}(3) and Corollary \ref{cor: mutable means exactly two completions}.
\end{proof}

We have the following corollary, meaning that, whenever $\G_A$ has the isolation property, every element of a maximal rigid subset is either mutable or immutable.

\begin{corollary}\label{global}
If $\G_A$ has the isolation property, then every $\N\in \CosiltZg{A}$ has the weak isolation property. 
\end{corollary}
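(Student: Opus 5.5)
Throughout, the finite-dimensional setting of this subsection is in force, as in Theorem~\ref{thm: iso prop}. The plan is to reduce the weak isolation property of $\N=\N_{\mathbf{t}}$ to Theorem~\ref{thm: iso prop} via Lemma~\ref{lem: weak isol mutability}. By that lemma it suffices to show that every point $\mu\in\N$ which is isolated in $\N$ is mutable in $\N$; the converse, that mutable points are isolated, is already known from Corollary~\ref{cor: mutable means exactly two completions}. By Theorem~\ref{bij_posets}(3), every $\mu\in\N$ is either of the form $I[-1]$ with $I\in\I_{\mathbf{t}}$ or of the form $\mu_M$ with $M\in\Z_{\mathbf{t}}$, so we split into these two cases.

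\emph{Case $\mu=I[-1]$.} Nothing needs to be shown. By Theorem~\ref{Thm: I is special fd algebra} every $I\in\I_{\mathbf{t}}$ is special, so $I[-1]$ is the $\N$-injective envelope of a simple object $T[-1]$. Such simples are finitely presented by Proposition~\ref{simples in the heart}. Hence $I[-1]$ is mutable by Corollary~\ref{Thm: mutation}, regardless of whether it is isolated.

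\emph{Case $\mu=\mu_M$ with $M\in\Z_{\mathbf{t}}$.} Assume $\mu_M$ is isolated in $\N$. Since $\mu_M$ is mutable in $\N$ exactly when $M$ is mutable in $(\Z_{\mathbf{t}},\I_{\mathbf{t}})$, Theorem~\ref{thm: iso prop}, implication (2)$\Rightarrow$(3), finishes the proof once we know that $M$ is isolated in $\Z_{\mathbf{t}}$. So the key step is to transport isolation from $\N$, with the topology of $\Zg{\Der{A}}$, to $\Z_{\mathbf{t}}=\H{}(\N)\cap\Zg{A}$, with the topology of $\Zg{A}$.

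\emph{Main obstacle: transporting isolation.} I would use closed sets. Isolation of $\mu_M$ gives that $\M:=\N\setminus\{\mu_M\}$ is closed in $\N$, hence closed in $\Zg{\Der{A}}$ because $\N$ is closed by Theorem~\ref{Thm: cosilting and Zg}(1). As $\M\subseteq\N$, it is rigid, so $\M\in\ClRigid{A}$. Proposition~\ref{prop: closed sets}, (1)$\Rightarrow$(3), then says that $\H{}(\M)$ is closed in $\Zg{A}$. Now $\H{}(\M)$ consists of the nonzero modules $\H{}(\nu)$ for $\nu\in\M$. The elements $I[-1]$ have zero cohomology and contribute nothing. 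The elements $\mu_{M'}$ with $M'\neq M$ contribute $M'$. Here one uses that $\mu_{M'}\cong\mu_M$ exactly when $M'\cong M$, by the bijection of Theorem~\ref{bij_posets}(3). Therefore $\H{}(\M)=\Z_{\mathbf{t}}\setminus\{M\}$, so $\{M\}$ is open in $\Z_{\mathbf{t}}$, i.e.\ $M$ is isolated in $\Z_{\mathbf{t}}$. The one point to double-check is that Proposition~\ref{prop: closed sets} places no hypothesis on $\M$ beyond being a closed rigid set; as stated, it does not.

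Putting the two cases together, every isolated point of $\N$ is mutable. By Lemma~\ref{lem: weak isol mutability}, $\N$ has the weak isolation property.
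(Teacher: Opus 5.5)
Your proof is correct and follows the paper's argument essentially step for step. You reduce via Lemma~\ref{lem: weak isol mutability} to showing that isolated points are mutable, handle the points $I[-1]$ via Corollary~\ref{Thm: mutation} together with Theorem~\ref{Thm: I is special fd algebra}, and for $\mu_M$ pass through the closed rigid set $\N\setminus\{\mu_M\}$ and Proposition~\ref{prop: closed sets} to get $M$ isolated in $\Z$ before applying Theorem~\ref{thm: iso prop}; you only spell out details the paper leaves implicit, such as why $\N\setminus\{\mu_M\}$ is closed in $\Zg{\Der{A}}$ and why $\H{}(\N\setminus\{\mu_M\})=\Z\setminus\{M\}$.
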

\begin{proof}
Let $(\Z, \I)$ be the cosilting pair associated to $\N$. By Lemma \ref{lem: weak isol mutability}, we must prove that, if $\mu\in \N$ is isolated, then it is mutable in $\N$. Assume $\mu \in \N$ is isolated. First note that, if $\mu = I[-1]$. then $\mu$ is mutable by Corollary \ref{Thm: mutation}. Now suppose $\mu = \mu_M$ for some $M\in \Z$. By assumption, $\M := \N \setminus \{\mu\}$ is closed and rigid. By Proposition \ref{prop: closed sets}, we have that $\H{}(\M) = \Z \setminus \{M\}$ is closed. In other words, $M$ is isolated in $\Z$. By Theorem \ref{thm: iso prop}, we conclude that $\mu$ is mutable in $\N$.
\end{proof}

Once again we can we can apply \cite[Prop.~2.5]{Burke:01} to obtain a sufficient condition for $\G_A$ to have the isolation property.

\begin{proposition}
If $\Mod{A}$ doesn't have any superdecomposable pure-injective modules, then every $\N\in \CosiltZg{A}$ has the weak isolation property.
\end{proposition}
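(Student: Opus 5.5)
The plan is to reduce the statement to Corollary~\ref{global}, via Burke's result. By Corollary~\ref{global}, it suffices to show that $\G_A$ has the isolation property whenever $\Mod{A}$ has no superdecomposable pure-injective modules. By \cite[Prop.~2.5]{Burke:01}, a locally coherent Grothendieck category without superdecomposable injective objects has the isolation property. So the whole argument comes down to the following claim: $\G_A$ has no superdecomposable injective objects.

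To prove the claim, I would use the standard equivalence from the functor-category approach to purity. The functor
\[ N\mapsto -\otimes_A N \colon \Mod{A}\to\G_A \]
restricts to an equivalence between the category of pure-injective $A$-modules and $\Inj{\G_A}$ (see \cite[App.~A]{ALS1} and the references there). This equivalence preserves and reflects direct summands. It therefore also preserves and reflects indecomposability, and hence superdecomposability. Consequently, a superdecomposable injective object of $\G_A$ would be of the form $-\otimes_A N$ for a superdecomposable pure-injective module $N$, which is excluded by hypothesis. This gives the claim, so $\G_A$ has the isolation property, and Corollary~\ref{global} yields the conclusion for every $\N\in\CosiltZg{A}$.

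The main point to check is that the equivalence really is onto all of $\Inj{\G_A}$, and not merely onto its indecomposables. This is the classical Gruson--Jensen / Crawley-Boevey result that every injective object of $\G_A$ is isomorphic to $-\otimes_A N$ for a pure-injective module $N$. I would cite it in the form used in \cite[App.~A]{ALS1}. Once that is in place, the transfer of superdecomposability is immediate, because an additive equivalence respects direct sum decompositions.
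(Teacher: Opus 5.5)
Your proposal is correct and takes the same route as the paper. The paper also obtains the statement from Corollary~\ref{global} by applying \cite[Prop.~2.5]{Burke:01} to $\G_A$; you add the one step it leaves implicit, namely transferring the absence of superdecomposables from pure-injective $A$-modules to injective objects of $\G_A$ via the Gruson--Jensen equivalence $N\mapsto -\otimes_A N$.
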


\begin{remark}
By applying the previous proposition, we obtain that every $\N\in\CosiltZg{A}$ has the weak isolation property for the following finite-dimensional algebras:
\begin{description}
\item[\cite{Auslander}] Finite representation type;
\item[\cite{Geigle:85}] Tame hereditary algebras;
\item[\cite{LPP}] Domestic string algebras;
\item[\cite{JaworskaPastuszakPastuszakBobinski}] Tilted algebras and cluster tilted algebras of tame type;
\item[\cite{Pastuszak}] Domestic standard self-injective algebras;
\item[\cite{Wenderlich}] Strongly simply connected and domestic algebras;
\item[\cite{Skowronski}] Domestic cycle finite algebras.
\end{description}
\end{remark}

\section{Example: cluster-tilted algebras of type $\tilde{A}$}\label{Sec: example}
In this final section, we will consider a family of finite-dimensional algebras over an algebraically closed field $\mathbb{K}$ that arise from triangulations of an annulus. We will give an overview of the cosilting pairs and their mutations for these algebras without giving too many technical details. We refer to \cite{BaurLaking:22} and the references therein for details. 

\subsection{The algebras} We fix an annulus $\mathsf{S}$ with a finite set $\mathsf{M}$ of marked points in the boundary such that both boundary components contain at least one marked point.

For every triangulation $\Gamma = \{\gamma_1, \dots, \gamma_n\}$ of $(\mathsf{S}, \mathsf{M})$, one can define an algebra $A(\Gamma)$ to be the bound path algebra of a quiver with relations determined by the triangulation. The arcs in the triangulation give the vertices of the quiver; the orientation of the surface determines the arrows; and the relations are given by all paths of length two that are contained inside an internal triangle. 

\begin{figure}[h!]
\includegraphics[width=.6\textwidth]{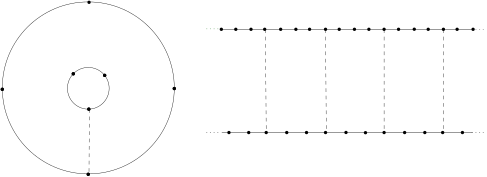}
\includegraphics[width=.8\textwidth]{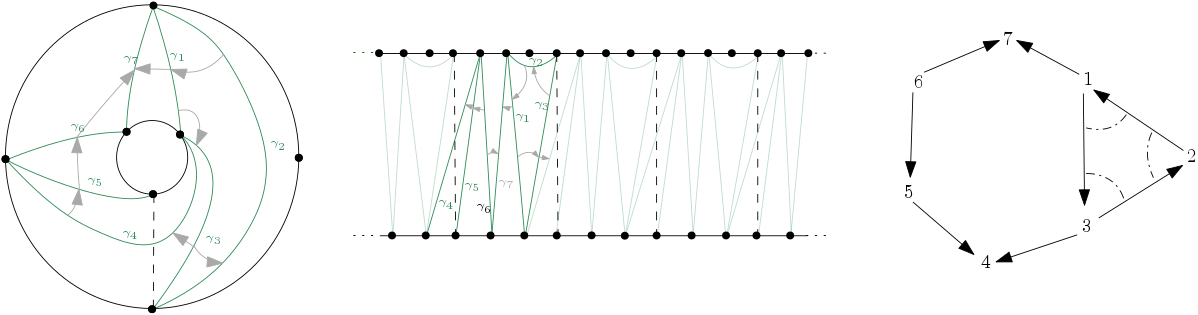}
\caption{Top left: An annulus with 3 marked points on the inner boundary and 4 marked points on the outer boundary. Top right: The universal cover of the same annulus with marked points. Bottom left: Example of a triangulation with arrows of quiver indicated. Bottom middle: The same triangulation in the universal cover. Bottom right: Quiver with relations determined by the triangulation.}\label{fig:quiver}
\end{figure} 

\subsection{The Ziegler spectrum $\Zg{A(\Gamma)}$}\label{Subsec: Zg} The indecomposable pure-injective modules over $A(\Gamma)$ were classified in \cite{Puninski}. We refer to \cite{BaurLaking:22} for details on how to construct these modules explicitly. They fall into two families: string modules and band modules. 

For every (finite or asymptotic) arc $\alpha$ in $(\mathsf{S}, \mathsf{M})$ with end-point(s) in $\mathsf{M}$ that is not already contained in $\Gamma$, there exists a string module $M(\alpha)\in\Zg{A(\Gamma)}$. 

\begin{figure}[h!]
\includegraphics[width=.6\textwidth]{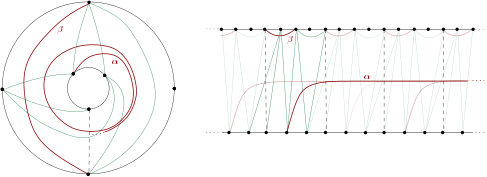}
\caption{Left: Example of an asymptotic are $\alpha$ and a finite arc $\beta$. Right: The same arcs $\alpha$ and $\beta$ occurring in the universal cover.}\label{fig:arcs}
\end{figure} 

The unique closed curve (up to homotopy) represents an infinite family of band modules, all but one of which are indexed by a parameter of the form $(\lambda, n) \in \mathbb{K}^* \times  \mathbb{N} \cup \{\infty, -\infty\}$. We denote the module with parameter $(\lambda, n)$ by $M(\lambda, n)$.  The modules $M(\lambda, \infty)$ are called \textbf{Pr\"ufer modules} and the modules $M(\lambda, -\infty)$ are called \textbf{adic modules}. The remaining band module $G$ is called the \textbf{generic module}. 

\begin{figure}[h!]
\includegraphics[width=.6\textwidth]{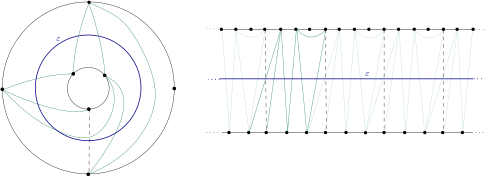}
\caption{The unique closed curve up to homotopy representing the family of band modules.}\label{fig:closed curve}
\end{figure} 

A string module $M(\alpha)$ is finite-dimensional if and only if $\alpha$ is a finite arc. A band module is finite-dimensional if and only if it is of the form $M(\lambda, n)$ with $n\in \mathbb{N}$.

Since the quiver defining $A(\Gamma)$ has only one band, we can apply the results of the paper \cite{PrestPuninski} to describe the topology on $\Zg{A(\Gamma)}$. Firstly we note that the finite-dimensional modules are both closed and isolated points of $\Zg{A(\Gamma)}$.  Let $\U_0$ denote the open set consisting of all finite-dimensional modules. Then the modules $M(\alpha)$ for $\alpha$ asymptotic, and the modules $M(\lambda, \infty)$ and $M(\lambda, -\infty)$ for $\lambda\in \mathbb{K}^*$ are isolated in the subspace topology on $\Zg{A(\Gamma)}\setminus \U_0$. If we let $\U_1$ be the set of all these infinite-dimensional modules, then the set $\U_0\cup \U_1$ is open and the closed set $\Zg{A(\Gamma)}\setminus \U_0\cup \U_1$ contains only $G$.

\subsection{The cosilting pairs}
In \cite[Thm.~4.14]{ALS1} we showed that the cosilting pairs over $A(\Gamma)$ are in bijection with the cosilting modules up to equivalence. Since the cosilting modules over $A(\Gamma)$ were classified up to equivalence in \cite{BaurLaking:22}, we can use this to describe the elements of $\Cosiltpair{A(\Gamma)}$.

A vital notion is when two arcs are considered to cross. We do not give a formal definition of this (but the intuitive one is correct) and we refer to the paper \cite{BaurDupont} for full explanation; they call pairwise non-crossing arcs \emph{compatible}.

The data indexing the cosilting $A(\Gamma)$-modules is a tuple $(\mathcal{C}, \P, \mathcal{A}, \ast)$ where \begin{itemize}
\item $\mathcal{C}$ is a set of non-crossing arcs that may contain asymptotic and/or finite arcs, including those contained in $\Gamma$. Arcs that share end-points are not considered to be crossing;
\item $\P$ and $\mathcal{A}$ are subsets of $\mathbb{K}^*$ such that $\mathcal{A} \cap \P = \emptyset$;
\item The symbol $\ast$ either represents $G$ or $\cancel{G}$.\end{itemize} 
The tuple is subject to the following rules. \begin{description}
\item[(C1)] The set $\mathcal{C}$ is maximal among all sets of non-crossing arcs.
\item[(C2)] If $\mathcal{C}$ contains an arc that crosses $\varepsilon$, then $\mathcal{A} = \P = \emptyset$ and $\ast = \cancel{G}$.
\item[(C3)] If $\mathcal{C}$ does not contain an arc that crosses $\varepsilon$, then $\mathcal{A} \cup \P = \mathbb{K}^*$ and $\ast= G$.
\end{description}

\begin{remark}\label{rmk: BD results}
The sets $\C$ that are maximal among all sets of non-crossing arcs are studied in the paper \cite{BaurDupont}. It is shown that the number of arcs in every set $\C$ is equal to the cardinality of $\mathsf{M}$. It is also shown that $\C$ contains an arc that crosses $\varepsilon$ if and only if $\C$ contains no asymptotic arcs. They also show that, if $\C$ contains an arc that crosses $\varepsilon$, then $\C$ contains at least two arcs that cross $\varepsilon$ and that, if $\C$ contains an asymptotic arc, then $\C$ contains at least two asymptotic arcs.
\end{remark}

\begin{figure}[h!]
\includegraphics[width=.6\textwidth]{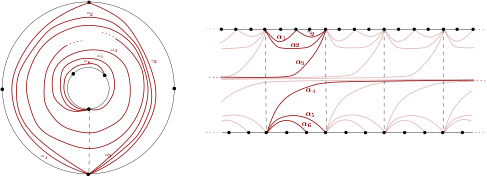}
\caption{An example of a set $\C$ that satisfies the conditions with respect to $\Gamma$ defined in Figure \ref{fig:quiver} but does not containing any arc that crosses $\varepsilon$. We note that $\gamma_2$ is contained in $\Gamma$.}\label{fig:asymp triangulation}
\end{figure}

\begin{figure}[h!]
\includegraphics[width=.6\textwidth]{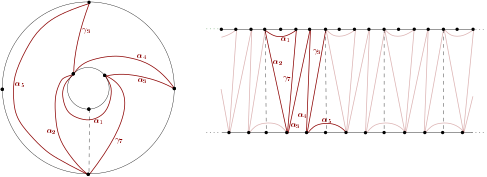}
\caption{An example of a set $\C$ that satisfies the conditions with respect to $\Gamma$ defined in Figure \ref{fig:quiver} and contains an arc that crosses $\varepsilon$. We note that $\gamma_3$ and $\gamma_7$ are contained in $\Gamma$.}\label{fig:finite triangulation}
\end{figure} 

\newpage

\begin{proposition}
The cosilting pairs over $A(\Gamma)$ are indexed by the tuples $(\mathcal{C}, \P, \mathcal{A}, \ast)$ and, given such a tuple, the corresponding cosilting pair $(\Z, \I)$ is given by the following.
\begin{itemize}
\item $\Z = \{ M(\alpha) \mid \alpha \in \mathcal{C}\setminus \Gamma \}\cup \{M(\lambda, \infty) \mid \lambda \in \P\} \cup \{M(\lambda, -\infty) \mid \lambda \in \mathcal{A}\} \cup \{G \mid \text{ if } \ast= G\}$.
\item $\I = \{ I(i) \mid \alpha = \gamma_i \in \mathcal{C} \cap \Gamma\}$.
\end{itemize}
\end{proposition}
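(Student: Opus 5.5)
The plan is to transport the classification of cosilting modules from \cite{BaurLaking:22} to cosilting pairs, using the bijection $\psi$ of Theorem~\ref{bij_cosilt mod cpx}. By \cite[Thm.~4.14]{ALS1} this bijection is a one-to-one correspondence here, since $A(\Gamma)$ is finite-dimensional. For a cosilting module $C$ the corresponding pair is $(\Z_C,\I_C)$, where
\[
\Z_C=\Ind{\Prod{C}}, \qquad \I_C=C^{\perp_0}\cap\InjInd{A}.
\]
The first step is to recall from \cite{BaurLaking:22} the cosilting module $C(\mathcal{C},\P,\mathcal{A},\ast)$ attached to a tuple. It is a product of string modules $M(\alpha)$ for $\alpha\in\mathcal{C}\setminus\Gamma$, Pr\"ufer modules $M(\lambda,\infty)$ for $\lambda\in\P$, adic modules $M(\lambda,-\infty)$ for $\lambda\in\mathcal{A}$, and the generic module $G$ when $\ast=G$. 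Arcs of $\mathcal{C}$ lying in $\Gamma$ contribute nothing to $C$; they record the kernel $\Ann{C}$, i.e.\ they correspond to vertices $i$ with $e_iC=0$.

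Next I compute $\Z_C=\Ind{\Prod{C}}$. By Theorem~\ref{Thm: cosilting and Zg}(2) this is a closed subset of $\Zg{A}$, namely the closure of the set of listed summands. I then check against the topology of Subsection~\ref{Subsec: Zg} that the listed set is already closed.
\begin{itemize}
\item Finite-dimensional points are closed and isolated.
\item In case (C2) there are no band modules in $\Z$. Only finitely many string modules occur, since $|\mathcal{C}|=|\mathsf{M}|$ by Remark~\ref{rmk: BD results}, so the set is finite. It is closed because finite-dimensional points, and also the asymptotic string modules (which are closed points by \cite{PrestPuninski}), are closed.
\item In case (C3) all of $\mathcal{A}\cup\P=\mathbb{K}^*$ is present. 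The only possible accumulation point of the band family is $G$, which is included. So the set is closed.
\end{itemize}
Since closed sets are product-rigid (Remark~\ref{closedisproductrigid}), this gives $\Z_C$ as stated.

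To compute $\I$, I use Theorem~\ref{Thm: I is special fd algebra}: $\I$ consists of the indecomposable injectives $I(i)$ with $\Hom{A}(\Z,I(i))=0$, that is, with $e_iM=0$ for every $M\in\Z$. From the string and band combinatorics, the vertex $i=\gamma_i$ lies in the support of $M(\alpha)$ exactly when $\alpha$ crosses $\gamma_i$. It lies in the support of every band module, since the closed curve crosses every arc of $\Gamma$. I then need to show that $\gamma_i$ is crossed by no arc of $\mathcal{C}$ if and only if $\gamma_i\in\mathcal{C}$. One direction is compatibility. The other uses maximality (C1): if $\gamma_i$ crosses nothing in $\mathcal{C}$, it can be added, so it lies in $\mathcal{C}$. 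One further case must be excluded, namely that $\mathcal{C}$ contains no arc crossing $\varepsilon$ while bands are present; here every $\gamma_i$ is crossed by a band, so nothing extra appears and $\gamma_i\notin\mathcal{C}$ is consistent. This last step needs care, because bands exist exactly in case (C3).

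Finally, I conclude the indexing by injectivity of $\psi^{-1}$ together with the bijectivity from \cite{BaurLaking:22}. The main obstacle I expect is the support statement: matching the vanishing of $\Hom{A}(M(\alpha),I(i))$ with crossings of $\alpha$ and $\gamma_i$, especially for asymptotic arcs and infinite-dimensional modules. I would take this directly from the explicit construction of these modules in \cite{BaurLaking:22} and reduce it to the finite-dimensional string case via directed colimits.
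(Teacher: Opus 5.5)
Your route is the paper's: transport the classification of \cite{BaurLaking:22} through $\psi$, show that the listed set $\Z$ is closed so that it equals $\Ind{\Prod{C}}$, and identify $\I=C^{\perp_0}\cap\InjInd{A(\Gamma)}$ by comparing supports with crossings. The gap is in the last step.

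\textbf{The support of band modules.} Your claim that the closed curve $\varepsilon$ crosses every arc of $\Gamma$, so that band modules are sincere, is false. An arc of $\Gamma$ with both endpoints on the same boundary component can be homotoped into a collar of that component. Such an arc does not cross $\varepsilon$, and every band module vanishes at the corresponding vertex.

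Such arcs can lie in $\C$ when $\ast=G$. In the example following the proposition (Figure~\ref{fig:asymp triangulation}) one has $\gamma_2\in\C\cap\Gamma$, $G\in\Z$ and $\I=\{I(2)\}$. Taken literally, your claim gives $\Hom{A(\Gamma)}(G,I(i))\neq 0$ for every $i$. That forces $\I=\emptyset$ in every case (C3), which contradicts the statement you are proving. Your sentence ``$\gamma_i\notin\C$ is consistent'' amounts to the false assertion $\C\cap\Gamma=\emptyset$ in case (C3). So the implication $\gamma_i\in\C\cap\Gamma\Rightarrow I(i)\in\I$ is not established when $\ast=G$.

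The correct input is that a band module $M$ has $e_iM\neq 0$ if and only if $\gamma_i$ crosses $\varepsilon$. Then:
\begin{itemize}
\item If $\gamma_i\in\C$, compatibility gives $e_iM(\alpha)=0$ for every string summand, and (C3) gives $e_iM=0$ for every band summand. Hence $e_iC=0$.
\item Conversely, $e_iC=0$ means no arc of $\C$ crosses $\gamma_i$, so $\gamma_i\in\C$ by (C1).
\end{itemize}
This is what the paper's phrase ``$C$ is supported at $i$ if and only if some arc of $\C$ crosses $\gamma_i$'' encodes. A band contribution at $i$ forces $\gamma_i\notin\C$ by (C3), and hence a crossing arc in $\C$ by (C1).

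\textbf{The closedness step.} There is also a smaller error here: asymptotic string modules are not closed points. \cite{PrestPuninski} only makes the points of $\U_1$ isolated in $\Zg{A(\Gamma)}\setminus\U_0$. Just as for Pr\"ufer and adic modules, the closure of an asymptotic string module contains $G$. For instance, with one marked point on each boundary, $A(\Gamma)$ is the Kronecker algebra and these modules are the Pr\"ufer and adic modules at $0$ and $\infty$.
\begin{itemize}
\item In case (C2) you do not need them. By Remark~\ref{rmk: BD results}, $\C$ then contains no asymptotic arcs, so $\Z$ is a finite set of finite-dimensional, hence closed, points.
\item In case (C3), argue as the paper does. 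Every point of $\Zg{A(\Gamma)}\setminus\U_0$ other than $G$ is isolated there, so $\Z\setminus\U_0$, which contains $G$, is closed. Moreover $\Z\cap\U_0$ is finite.
\end{itemize}
Finally, no reduction via directed colimits is needed for the supports. The equivalence $\Hom{A(\Gamma)}(M,I(i))\neq 0$ if and only if $e_iM\neq 0$ holds for every module $M$.
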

\begin{proof}
It was shown in \cite[Thm.~2.36]{BaurLaking:22} that the cosilting modules (up to equivalence) are indexed by the tuples $(\mathcal{C}, \P, \mathcal{A}, \ast)$. Given a tuple $(\mathcal{C}, \P, \mathcal{A}, \ast)$, the corresponding cosilting module is shown to be $C:=\prod_{N \in \Z} N$ where $\Z$ is defined as in the statement of the proposition.  

We confirm that, if $C$ is the cosilting module corresponding the the tuple $(\mathcal{C}, \P, \mathcal{A}, \ast)$, then $(\Z, \I)$ is the cosilting pair determined by $C$ in the bijection given in Theorem \ref{bij_cosilt mod cpx}. According to Theorem \ref{bij_cosilt mod cpx}, we must check that $\Z = \Ind{\Prod{C}}$ and that $\I = \InjInd{A(\Gamma)} \cap C^{\perp_0}$.

To show that $\Z = \Ind{\Prod{C}}$, it suffices to show that $\Z$ is a closed subset of the Ziegler spectrum, see Remark~\ref{closedisproductrigid}.

First we assume that $\C$ contains an arc that crosses $\varepsilon$. By Remark \ref{rmk: BD results}, $\C$ contains only finitely many finite arcs, $\A = \P = \emptyset$ and $\ast= \cancel{G}$. Thus the set $\Z$ is a finite set of closed points and is therefore closed. Next we suppose that $\C$ does not contain any arc that crosses $\varepsilon$ so that $\A \cup \P = \mathbb{K}^*$ and $\ast= G$. By the description of the topology given in Subsection \ref{Subsec: Zg}, we have that the set $\Z\setminus \U_0$ is obtained from the closed set $\Zg{A(\Gamma)}\setminus \U_0$ by removing isolated points and so is closed. Moreover the set $\Z \cap \U_0$ consists of finitely many closed points. Thus $\Z$ is a finite union of closed sets and hence is closed.

Finally, we show that $\I = \InjInd{A(\Gamma)} \cap C^{\perp_0}$. Consider an indecomposable injective $A(\Gamma)$-module $I(i)$ with simple socle $S(i)$ corresponding to the vertex of the quiver determined by the arc $\gamma_i\in \Gamma$. Then a standard argument yields that $\Hom{A(\Gamma)}(C, I(i))$ is non-zero if and only if the simple module $S(i)$ occurs as a subquotient of $C$. This happens exactly when $C$ is supported at vertex $i$ or, in other words, when there is an arc in $\C$ that crosses $\gamma_i$. It follows that $\Hom{A(\Gamma)}(C, I(i))=0$ if and only if $\gamma_i \in \C$.
\end{proof}

\begin{example}
Consider the tuple $(\mathcal{C}, \mathbb{K}^*, \emptyset, G)$ where $\mathcal{C}$ is the set given in Figure \ref{fig:asymp triangulation}. Then the corresponding cosilting pair is $(\Z, \I)$ where $\Z = \{M(\alpha_i) \mid i= 1,\dots, 6\} \cup \{M(\lambda, \infty) \mid \lambda \in \mathbb{K}^*\} \cup \{G\}$ and $\I = \{I(2)\}$. 
\end{example}

\begin{example}
Consider the tuple $(\mathcal{C}, \emptyset, \emptyset, \cancel{G})$ where $\mathcal{C}$ is the set given in Figure \ref{fig:finite triangulation}. Then the corresponding cosilting pair is $(\Z, \I)$ where $\Z = \{M(\alpha_i) \mid i= 1,\dots, 5\}$ and $\I = \{I(3), I(7)\}$. 
\end{example}

 Using the bijection in Theorem \ref{bij_posets}, we also obtain a description of the maximal rigid sets over $A(\Gamma)$.
 
 \begin{corollary}
 The maximal rigid sets over $A(\Gamma)$ are indexed by the tuples $(\mathcal{C}, \P, \mathcal{A}, \ast)$ and, given such a tuple, the corresponding maximal rigid set is 
 	\begin{align*} \{\mu_{M(\alpha)} \mid\alpha \in \mathcal{C}\setminus \Gamma \} \cup  \{\mu_{M(\lambda, \infty)} \mid \lambda \in \P\}\cup \{\mu_{M(\lambda, -\infty)} \mid & \lambda \in \mathcal{A}\} \cup \\ &\{\mu_G \mid \text{ if } \ast= G\} \cup \{ I(i)[-1] \mid \alpha = \gamma_i \in \mathcal{C} \cap \Gamma\}. \end{align*}
 \end{corollary}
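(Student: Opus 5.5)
This is a direct translation of the preceding proposition through the bijection of Theorem~\ref{bij_posets}(3). Given a tuple $(\mathcal{C}, \P, \mathcal{A}, \ast)$, the proposition gives the associated cosilting pair $(\Z,\I)$. Theorem~\ref{bij_posets}(3) sends $(\Z,\I)$ to the maximal rigid set
\[\N=\{\mu_M \mid M\in\Z\}\cup\{I[-1]\mid I\in\I\}.\]
Since the composite of bijections $\Cosiltpair{A(\Gamma)}\to\CosiltZg{A(\Gamma)}$ is bijective, the tuples index the maximal rigid sets. No separate injectivity or surjectivity check is needed beyond the proposition.

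Next I would substitute the explicit descriptions of $\Z$ and $\I$. The module $M(\alpha)$ with $\alpha\in\mathcal{C}\setminus\Gamma$ gives $\mu_{M(\alpha)}$. The Prüfer modules $M(\lambda,\infty)$ with $\lambda\in\P$ give $\mu_{M(\lambda,\infty)}$, and the adic modules $M(\lambda,-\infty)$ with $\lambda\in\mathcal{A}$ give $\mu_{M(\lambda,-\infty)}$. The generic module $G$ gives $\mu_G$ exactly when $\ast=G$. Each injective $I(i)$ with $\gamma_i\in\mathcal{C}\cap\Gamma$ gives $I(i)[-1]$.

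The only point needing care is that the union stays indexed faithfully. The $\mu_M$ are pairwise non-isomorphic for distinct $M\in\Z$, because $\H{}(\mu_M)\cong M$. They are also distinct from the $I[-1]$, which have zero $\H{}$. This is already built into the bijection of Theorem~\ref{bij_posets}(3), whose inverse $\N\mapsto(\H{}(\N),\I)$ recovers the pair. So there is no genuine obstacle; the statement follows immediately.
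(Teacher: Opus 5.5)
Your proposal is correct and follows the paper's own route. The paper obtains the corollary by pushing the preceding proposition's explicit description of $(\Z,\I)$ through the bijection of Theorem~\ref{bij_posets}(3). Your extra check that the $\mu_M$ and the $I[-1]$ are pairwise distinct via $\H{}$ is harmless, but it is already guaranteed by that bijection.
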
 

\subsection{Irreducible mutations} 

Let $(\Z,\I)$ be a cosilting pair over $A(\Gamma)$ indexed by the tuple $(\C, \P, \A, \ast)$. In Corollary \ref{cor: mutable means exactly two completions pairs} we saw that a module $M$ in $\Z$ or $\I$ is mutable if and only if there is a unique way to replace $M$ to obtain a new cosilting pair. Since we have a full classification of cosilting pairs, we can find all of the mutable elements by observation. 

A key element to being able to identify mutations is to understand how we can combinatorially `mutate' the tuples $(\C, \P, \A, \ast)$. In the case where $\C$ contains an arc that crosses $\varepsilon$, we have that $\P$ and $\A$ must satisfy the conditions that $\P\cap\A= \emptyset$ and $\P\cup\A= \mathbb{K}^*$. It is clear that we can move a parameter from $\P$ to $\A$ or vice versa and this will yield another tuple that satisfies the required conditions. It's also clear that it is impossible to change the value of $\ast$ from $G$ to $\cancel{G}$ and add only one element to $\C, \A$ or $\P$ without violating the conditions (C1) - (C3).

It remains to consider when it is possible to `mutate' the elements of $\C$. This is the main topic of the paper \cite{BaurDupont} in which they show that, for any $\alpha\in\C$, there is a flip operation that allows one to replace $\alpha$ with an arc $\beta\notin\C$ such that the resulting set is still a maximal set of non-crossing arcs. By Remark \ref{rmk: BD results}, the set $\C$ contains an arc that crosses $\varepsilon$ if and only if the mutation of $\C$ contains an arc that crosses $\varepsilon$.

Combining these combinatorial observations with Corollary \ref{cor: mutable means exactly two completions pairs}, we can describe all the possible irreducible mutations. We summarise these results in the following proposition.

\begin{proposition}\label{prop: annulus mutable points}
Let $(\Z,\I)$ be a cosilting pair over $A(\Gamma)$ indexed by the tuple $(\C, \P, \A, \ast)$. Then the following statements hold.
\begin{enumerate}
\item If $\C$ contains an arc that crosses $\varepsilon$, then every element of $(\Z, \I)$ is mutable and the resulting cosilting pair will correspond to a tuple that contains an arc that crosses $\varepsilon$.
\item If $\C$ does not contain an arc that crosses $\varepsilon$, then every element of $(\Z, \I)$ except $G$ is mutable and the resulting cosilting pair will correspond to a tuple that does not contain an arc that crosses $\varepsilon$. 
\item We have exactly the following possible ways to mutate $(\Z, \I)$ to obtain some $(\Z', \I')$.
\begin{enumerate}
\item For $\alpha \in \C \setminus \Gamma$, let $\beta \notin \C$ be the flip of $\alpha$. If $\beta \notin \Gamma$, then $\Z' = (\Z \setminus \{M(\alpha)\}) \cup \{M(\beta)\}$ and $\I = \I'$. If $\beta = \gamma_j \in \Gamma$, then $\Z'=\Z\setminus \{M(\alpha)\}$ and $\I' = \I \cup \{I(j)\}$.

\item For $\gamma_i\in\C\cap \Gamma$, let $\beta$ be the flip of $\gamma_i$. If $\beta \notin \Gamma$, then $\Z' = \Z \cup \{M(\beta)\}$ and $\I' = \I\setminus \{I(i)\}$. If $\beta = \gamma_j \in \Gamma$, then $\Z'=\Z$ and $\I' = (\I \setminus \{I(i)\}) \cup \{I(j)\}$.
\item For every $\lambda\in\P$ (respectively $\lambda\in\A$), the module $M(\lambda, \infty)\in\Z$ (respectively the module $M(\lambda, -\infty)\in\Z$) is replaced by $M(\lambda, -\infty)\in \Z'$ (respectively by $M(\lambda, \infty)\in \Z'$).
\end{enumerate}
\end{enumerate}
\end{proposition}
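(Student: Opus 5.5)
The plan is to determine the mutable elements by a combinatorial check, using Corollary~\ref{cor: mutable means exactly two completions pairs} as the main tool. By that corollary, a module $M$ in $\Z$ or $\I$ is mutable exactly when there is a unique cosilting pair $(\Z',\I')\neq(\Z,\I)$ containing what is left after removing $M$. The classification in the preceding proposition turns this into a question about tuples: given the tuple $(\C,\P,\A,\ast)$ with one datum deleted, in how many ways can it be completed to a tuple satisfying (C1)--(C3)? The data are an arc of $\C\setminus\Gamma$, an arc $\gamma_i\in\C\cap\Gamma$, a parameter of $\P$ or $\A$, or the symbol $G$. I will treat each of these in turn, and statements (1)--(3) will then be read off from the case analysis.

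\emph{Arcs.} Suppose we delete an arc $\alpha\in\C$, which corresponds to $M(\alpha)$ or to $I(i)$ according to whether $\alpha\notin\Gamma$ or $\alpha=\gamma_i\in\Gamma$. Any completion must keep all other data, and I expect this to be forced by maximality. First I would argue that a completion cannot change $\ast$, $\P$ or $\A$. By Remark~\ref{rmk: BD results}, $\C$ contains either at least two arcs crossing $\varepsilon$ or at least two asymptotic arcs. So $\C\setminus\{\alpha\}$ still contains an arc crossing $\varepsilon$, or it still contains asymptotic arcs and hence, again by the remark, no arc crossing $\varepsilon$ can be added. This fixes which of (C2) and (C3) applies. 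In case (C3), the parameters $\P\cup\A=\mathbb{K}^*$ are already fully used.

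Next, by (C1) and the count $|\C|=|\mathsf{M}|$ from \cite{BaurDupont}, a completion must add exactly one arc compatible with $\C\setminus\{\alpha\}$. The Baur--Dupont flip result says there are exactly two such arcs, namely $\alpha$ and its flip $\beta$. Translating $\beta$ back through the classification gives the four subcases in (3)(a),(b), according to whether $\alpha$ and $\beta$ lie in $\Gamma$. The main obstacle is the claim that there are exactly two completions, with uniqueness of the flip. I would take this from \cite{BaurDupont} rather than reprove it, being careful that the flip is defined for every arc, asymptotic ones included.

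\emph{Band parameters.} Delete $M(\lambda,\infty)$ with $\lambda\in\P$; this case occurs only under (C3). The arc set is untouched and already maximal, so by (C3) any completion must satisfy $\P'\cup\A'=\mathbb{K}^*$ with the two disjoint. Hence $\lambda$ must be reinserted into exactly one of $\P$ and $\A$, giving exactly two completions: the original tuple and the one with $\lambda$ moved to $\A$. This is (3)(c); the adic case is symmetric.

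\emph{The generic module.} Delete $G$. Then $(\C,\P,\A,\cancel{G})$ violates (C3), so we cannot simply drop $G$. We also cannot modify $\C$, since all its arcs are kept and (C1) makes $\C$ maximal. Nor can we change parameters, since $\P\cup\A=\mathbb{K}^*$ must be retained. So $(\Z,\I)$ is the only completion, and $G$ is not mutable. Putting the cases together yields (1) and (2), and the preservation of the crossing-$\varepsilon$ dichotomy under flips (Remark~\ref{rmk: BD results}) gives the final assertions in (1) and (2).
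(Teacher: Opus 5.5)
Your proposal is correct and follows essentially the paper's route. The paper likewise reduces mutability to counting completions via Corollary~\ref{cor: mutable means exactly two completions pairs} and reads the answer off the classification: single parameters move between $\P$ and $\A$, $G$ admits no alternative completion, and Baur--Dupont flips together with Remark~\ref{rmk: BD results} show the crossing-$\varepsilon$ dichotomy is preserved; your case analysis is more explicit than the paper's sketch, and, as you note, both arguments rest on \cite{BaurDupont} for uniqueness of the flip, which is what the ``exactly'' in (3) requires.
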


\begin{figure}[h!]
\includegraphics[width=.6\textwidth]{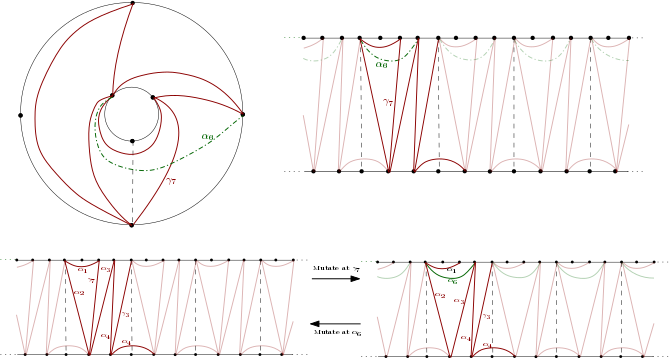}
\caption{An example of the flip operation on a set $\C$ that contains an arc crossing $\varepsilon$. The arc $\alpha_6$ is the unique way to replace the arc $\gamma_7$ to obtain a new maximal collection of non-crossing arcs.}\label{fig:finite mutation}
\end{figure} 

\begin{example}
Consider the tuple $(\mathcal{C}, \emptyset, \emptyset, \cancel{G})$ where $\mathcal{C}$ is the set given by the left hand image in Figure \ref{fig:finite mutation}. Then the corresponding cosilting pair is $(\Z, \I)$ where $\Z = \{M(\alpha_i) \mid i= 1,\dots, 5\}$ and $\I = \{I(3), I(7)\}$. We can then mutate at the module $I(7)$ and we obtain the mutated cosilting pair $(\Z', \I')$ where $\Z' = \{M(\alpha_i) \mid i= 1,\dots, 6\}$ and $\I' = \{I(3)\}$.
\end{example}

\begin{figure}[h!]
\includegraphics[width=.6\textwidth]{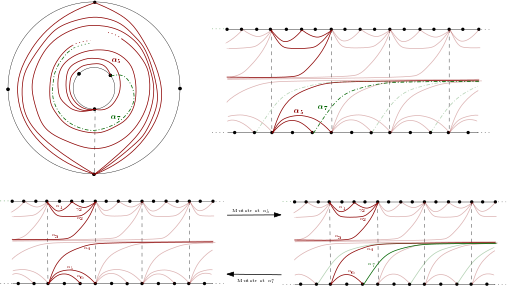}
\caption{An example of the flip operation on a set $\C$ that does not contain an arc crossing $\varepsilon$. The arc $\alpha_7$ is the unique way to replace the arc $\alpha_5$ to obtain a new maximal collection of non-crossing arcs.}\label{fig:asymp mutation}
\end{figure}

\begin{example}
Consider the tuple $(\mathcal{C}, \mathbb{K}^*, \emptyset, G)$ where $\mathcal{C}$ is the set given in the left hand image in Figure \ref{fig:asymp mutation}. Then the corresponding cosilting pair is $(\Z, \I)$ where $\Z = \{M(\alpha_i) \mid i= 1,\dots, 6\} \cup \{M(\lambda, \infty) \mid \lambda \in \mathbb{K}^*\} \cup \{G\}$ and $\I = \{I(2)\}$. We can then mutate at the module $M(\alpha_5)$  and we obtain the mutated cosilting pair $(\Z', \I')$ where \[\Z' = \{M(\alpha_i) \mid i= 1,\dots, 4, 6, 7\} \cup \{M(\lambda, \infty) \mid \lambda \in \mathbb{K}^*\} \cup \{G\} \text{ and } \I' = \{I(2)\}.\]
\end{example}

\begin{remark}
From our description of the irreducible mutations of cosilting pairs over $A(\Gamma)$ we can deduce the underlying graph of the Hasse quiver of $\tors{A}$.  The part of the Hasse quiver of $\tors{A(\Gamma)}$ that corresponds to tuples where $\C$ contains an arc that crosses $\varepsilon$ (or equivalently to functorially finite torsion pairs) has a 3-regular underlying graph isomorphic to the dual of the arc complex described in \cite[Sec.~3]{FominShapiroThurston}. The remainder of the underlying graph of the Hasse quiver of $\tors{A(\Gamma)}$ is isomorphic to the product of the Hasse diagram of the power set $\mathcal{P}(\mathbb{K}^*)$ with the boundary of the asymptotic exchange graph described in \cite{BaurDupont}
\end{remark}

\bibliographystyle{plain}
\bibliography{refs}

\end{document}